\theoremstyle{plain}
\newtheorem{theorem}{Theorem}[section]
\newtheorem{proposition}[theorem]{Proposition}
\numberwithin{equation}{section}
\newtheorem{corollary}[theorem]{Corollary}
\newtheorem{Remark}[theorem]{Remark}
\newtheorem{Problem}[theorem]{Problem}
\newtheorem{lemma}[theorem]{Lemma}
\def\cH{\mathcal{H}}
\def\bN{\mathbb{N}}
\def\OM{\mathrm{OM}}
\def\OC{\mathrm{OC}}
\def\OMD{\mathrm{OMD}}
\def\eps{\varepsilon}
\def\bR{\mathbb{R}}
\def\bC{\mathbb{C}}
\def\cK{\mathcal{K}}
\def\<{\langle}
\def\>{\rangle}
\def\PMD{\mathrm{PMD}}
\def\PMI{\mathrm{PMI}}
\date{}
\begin{document}

\centerline{\LARGE Ando-Hiai type inequalities for}
\centerline{\LARGE operator means and operator perspectives}

\bigskip
\bigskip
\centerline{\large
Fumio Hiai\footnote{{\it E-mail:} hiai.fumio@gmail.com},
Yuki Seo\footnote{{\it E-mail:} yukis@cc.osaka-kyoiku.ac.jp}
and Shuhei Wada\footnote{{\it E-mail:} wada@j.kisarazu.ac.jp}}

\medskip
\begin{center}
$^1$\,Graduate School of Information Sciences, Tohoku University, \\
Aoba-ku, Sendai 980-8579, Japan
\end{center}

\begin{center}
$^2$\,Department of Mathematics Education, Osaka Kyoiku University, \\
Asahigaoka, Kashiwara, Osaka 582-8582, Japan
\end{center}

\begin{center}
$^3$\,Department of Information and Computer Engineering, \\
National Institute of Technology (KOSEN), \\
Kisarazu College, Kisarazu, Chiba 292-0041, Japan
\end{center}

\begin{abstract}
We improve the existing Ando-Hiai inequalities for operator means and present new ones for
operator perspectives in several ways. We also provide the operator perspective version of
the Lie-Trotter formula and consider the extension problem of operator perspectives to
non-invertible positive operators.

\bigskip\noindent
{\it 2010 Mathematics Subject Classification:}
47A64, 47A63, 47B65

\medskip\noindent
{\it Key words and phrases:}
Operator mean, Operator perspective, Ando-Hiai inequality, Operator monotone function,
Operator convex function, Weak log-majorization, Lie-Trotter formula, Log-Euclidean mean
\end{abstract}

%{\baselineskip12pt
%\tableofcontents
%}

%--------------------------------------
\section{Introduction}
%--------------------------------------

Since the first appearance in the case of weighted operator geometric means in \cite{AH1},
Ando-Hiai type inequalities for operator means have been in active consideration, e.g.,
\cite{Hi3,KS,NS,Se1,Se2,wada1,wada2}, and have taken an important part in recent
developments of multivariable operator means, in particular, of multivariable geometric
means, e.g., \cite{FS1,FSY,HSW,LY,Ya1,Ya2}. When $\sigma$ is a (two-variable) operator
mean (\cite{KA}) and $A,B>0$ are positive invertible operators, the Ando-Hiai inequality is
typically stated as follows:
\begin{align}
&A\sigma B\le I\ \implies\ A^p\sigma B^p\le I,\quad p\ge1, \label{F-1.1}\\
&A\sigma B\ge I\ \implies\ A^p\sigma B^p\ge I,\quad p\ge1. \label{F-1.2}
\end{align}
These have sometimes the slightly stronger formulations as
\begin{align}
&A^p\sigma B^p\le\|A\sigma B\|_\infty^{p-1}(A\sigma B),\qquad\ p\ge1,
\label{F-1.3}\\
&A^p\sigma B^p\ge\lambda_{\min}^{p-1}(A\sigma B)(A\sigma B),\qquad p\ge1,
\label{F-1.4}
\end{align}
where $\|X\|_\infty$ and $\lambda_{\min}(X)$ are the operator norm and the minimum of the
spectrum of a positive invertible operator $X$, respectively.

Among others, a major result in the subject is the characterization of operator means $\sigma$
for which \eqref{F-1.1} or \eqref{F-1.2} holds true, which was given in \cite{wada1} and
says that \eqref{F-1.1} (resp., \eqref{F-1.2}) holds for all $A,B>0$ and $p\ge1$ if and only
if the operator monotone function $f_\sigma$ on $(0,\infty)$ representing $\sigma$ is pmi
(resp., pmd). Here, a positive continuous function $f$ on $(0,\infty)$ is said to be pmi
(power monotone increasing) if $f(t^p)\ge f(t)^p$ for all $t>0$ and $p\ge1$, and pmd (power
monotone decreasing) if the inequality is opposite. Moreover, it was implicitly shown in
\cite{wada1} that the stronger inequalities \eqref{F-1.3} (resp., \eqref{F-1.4}) holds when
$f_\sigma$ is pmi (resp., pmd).

Operator perspectives recently discussed in, e.g., \cite{Ef,ENG,EH} are two-variable
operator functions defined for continuous functions $f$ on $(0,\infty)$ by
$$
P_f(A,B):=B^{1/2}f(B^{-1/2}AB^{-1/2})B^{1/2},\qquad A,B>0.
$$
When $f$ is a positive operator monotone function with $f(1)=1$, the operator perspective
$P_f$ reduces to the operator mean $\sigma_f$ with the representing function $f$ (\cite{KA});
to be precise, $P_f(A,B)=B\sigma_fA$. On the other hand, the operator perspectives for power
functions $f(t)=t^\alpha$ for $\alpha\in\bR\setminus[0,1]$ were formerly treated as
complements of the weighted operator geometric means by several authors (see, e.g.,
\cite{FS2,Fu}). The operator perspectives $P_f$ for operator convex functions have joint
operator convexity (\cite{Ef,ENG}) and are of significant use in quantum information
(\cite{HM}).

The Ando-Hiai inequality has recently been proved in \cite{KS}, together with its stronger
form of log-majorization, for the operator perspectives $P_f$ for power functions
$f(t)=t^\alpha$ with $-1\le\alpha\le0$ (also referred to as matrix geometric means of
negative powers), which implies the inequality for $P_f$ when $f(t)=t^\alpha$,
$1\le\alpha\le2$, as well. Similar result is also contained in \cite{Hi3} for the operator
perspectives $P_f$ when $f(t)=t^\alpha$, $\alpha\ge2$. Motivated by these results, in
the present paper, we consider Ando-Hiai type inequalities for operator perspectives $P_f$
when the functions $f$ on $(0,\infty)$ are more general. Apart from the most typical case
of operator monotone functions $h$, our target functions are operator monotone decreasing
functions $g$, operator convex functions $f$ with $f(0^+)=0$, and functions of the form
$t^nh(t)$ with positive integers $n$ and operator monotone functions $h$. For the operator
perspectives for those functions, we present various Ando-Hiai type inequalities of the
forms \eqref{F-1.1}--\eqref{F-1.4} when $p\ge1$ and their complementary versions when
$0<p\le1$.

The paper is organized as follows. Section 2 is a preliminary, showing close
relations between the above mentioned three kinds of functions -- operator monotone $h$,
operator monotone decreasing $g$, and operator convex $f$ with $f(0^+)=0$. The
characteristics of functions $t^nh(t)$ with operator monotone $h$ are also clarified.

Sections 3 and 4 are main parts of the paper. In Section 3.1 we improve the known Ando-Hiai
inequalities \eqref{F-1.1}--\eqref{F-1.4} for operator means $\sigma_h$ to generalized
stronger forms, together with their complementary versions for $0<p\le1$. Section 3.2
presents new Ando-Hiai type inequalities for the perspectives $P_g$ and $P_f$ when
$g$ and $f$ as  such functions as mentioned above. The typical statements corresponding to
\eqref{F-1.1} and \eqref{F-1.2} are as follows: 
\begin{align}
\mbox{if $f$ is pmi},\,\quad
P_f(A,B) \le I\ &\implies\ P_f(A^p,B^p)\le I,\quad0<p\le1, \label{F-1.5}\\
\mbox{if $f$ is pmd},\quad
P_f(A,B) \ge I\ &\implies\ P_f(A^p,B^p)\ge I,\quad0<p\le1. \label{F-1.6}
\end{align}
when $f$ is an operator convex function with $f(0^+)=0$; the same hold when $g$ is an
operator monotone decreasing function.  Interestingly, the roles of the two parameter regions
$p\ge1$ and $0<p\le1$ are reversed between Sections 3.1 and 3.2. In Section 3.3 some
inequalities in Sections 3.2 are slightly strengthened into weak log-majorizations in the
case of positive definite matrices. Section 3.4 contains an estimation of bounds
which repeatedly appear in the inequalities in Sections 3.1--3.3. In Section 3.5 the range
of parameter $p$ for which the statements in \eqref{F-1.5} and \eqref{F-1.6} hold is
determined, similarly to \cite{HSW,wada2} where the range of $p$ in \eqref{F-1.1}
and \eqref{F-1.2} was determined. In Section 4
we extend the statements \eqref{F-1.5} and \eqref{F-1.6} to the perspectives $P_{t^nh}$ for
the functions $t^nh(t)$ mentioned above when $0<p\le1/2$. But it is left unsettled whether
the statements still hold for the remaining $1/2<p\le1$ or not.

Section 5.1 gives an operator perspective version of the Lie-Trotter formula.
Section 5.2 treats miscellaneous operator norm inequalities for operator means and operator
perspectives related to the Ando-Hiai inequality, including the extension of the results in
\cite{An1,Ya1}. Finally, in Section 6 we consider the extension of operator perspectives to
non-invertible positive operators and extend some inequalities in Sections 3.3, 3.4 and 5.2
to non-invertible case. The existence of such limits as
$\lim_{\eps\searrow0}P_f(A+\eps I,B+\eps I)$ for operator perspectives is quite a non-trivial
problem, while the existence of such limits for operator means is incorporated in their
definition.

%-----------------------------------------------------
\section{Certain positive functions on $(0,\infty)$ and operator perspectives}
%-----------------------------------------------------

 Throughout the paper, $\cH$ is a Hilbert space, $B(\cH)^+$ is the set of bounded positive
operators on $\cH$, and $B(\cH)^{++}$ is the set of invertible $A\in B(\cH)^+$. We also
write $A\ge0$ when $A\in B(\cH)^+$, and $A>0$ when $A\in B(\cH)^{++}$. 

A real continuous function $f$ on $(0,\infty)$ is said to be \emph{operator monotone} if
$$
0<A\le B\ \implies\ f(A)\le f(B)
$$
(where $\cH$ may be any infinite-dimensional Hilbert space), and \emph{operator monotone
decreasing} if $-f$ is operator monotone. Also, $f$ is said to be \emph{operator convex} if
$$
f(\lambda A+(1-\lambda)B)\le\lambda f(A)+(1-\lambda)f(B),\qquad A,B>0,\ \lambda\in[0,1].
$$
For the convenience of presentation, we use the brief notations for the following three
classes of positive functions on $(0,\infty)$:
\begin{align*}
\OM_+&:=\{h:\mbox{operator monotone on}\ (0,\infty),\ h>0\}, \\
\OC_+&:=\{f:\mbox{operator convex on}\ (0,\infty),\ f>0\}, \\
\OMD_+&:=\{g:\mbox{operator monotone decreasing on}\ (0,\infty),\ g>0\}.
\end{align*}
Moreover, we write $\OM_+^1$ for the set of $h\in\OM_+$ with $h(1)=1$, and similarly
$\OC_+^1$ and $\OMD_+^1$.

For any real continuous function $f>0$ on $(0,\infty)$ define its \emph{transpose} function
$\widetilde f$ and its \emph{adjoint} function $f^*$ by
$$
\widetilde f(t):=tf(t^{-1})\quad \mbox{and} \quad f^*(t):=f(t^{-1})^{-1},\qquad t>0.
$$
We set
$$
f(0^+):=\lim_{t\to 0^+}f(t) \quad \mbox{and} \quad f'(\infty):=\lim_{t\to\infty}{f(t)\over t},
$$
whenever these limits exist in $[0,\infty]$. In fact, the limits exist if $f$ is convex or
concave on $(0,\infty)$. If $f$ is a differentiable convex or concave function
on $(0,\infty)$, then $f'(\infty)=\lim_{t\to\infty}f'(t)$, which justifies the notation
$f'(\infty)$. It is easy to verify that $f$ is convex (resp., concave) on $(0,\infty)$ if and
only if so is $\widetilde f$, and moreover
\begin{align}\label{f(0^+)-f'(infty)}
\widetilde f(0^+)=f'(\infty),\qquad{\widetilde f}\,'(\infty)=f(0^+).
\end{align}

The perspective of a real continuous function $f$ on $(0,\infty)$ is a two-variable function
defined by $P_f(x,y):=yf(x/y)$ for $x,y\in(0,\infty)$. The \emph{operator perspective}
associated with $f$ is the extension of $P_f(x,y)$ to operators in $B(\cH)^{++}$ as follows:
\begin{align}\label{F-2.4}
P_f(A,B):=B^{1/2}f(B^{-1/2}AB^{-1/2})B^{1/2},\qquad A,B\in B(\cH)^{++}.
\end{align}
In particular, when $h\in\OM_+$, the operator perspective $P_h(A,B)$ for $A,B>0$ is nothing
but the \emph{operator connection} $B\sigma_hA$ in Kubo-Ando's sense \cite{KA}
corresponding to $h$. Thus, the operator perspectives include the operator connections
(in particular, operator means when $h(1)=1$) as their special case.

For any continuous function $f>0$ on $(0,\infty)$ the following equalities are
easy to verify (as shown in \cite[Lemma 2.1]{HM} for the first): for every $A,B>0$,
\begin{align}
P_{\widetilde f}(A,B)&=P_f(B,A), \label{F-2.5}\\
P_{f^*}(A,B)&=P_f(A^{-1},B^{-1})^{-1}. \label{F-2.6}
\end{align}

Our main aim of the paper is to obtain Ando-Hiai type inequalities for the operator
perspectives $P_f(A,B)$ for the positive functions $f$ on $(0,\infty)$ of the form
$f(t)=t^nh(t)$, where $n\in\bN$ and $h\in\OM_+$. In this section we give several descriptions
of the positive functions on $(0,\infty)$ of such form $t^nh(t)$. Those descriptions may
independently be of some interest, while they are not fully necessary in our later discussions.

The next proposition is concerned with the functions of the form $th(t)$ with $h\in\OM_+$.
The equivalence relations in the proposition are mostly known, while we briefly give the
proof for completeness.

\begin{proposition}\label{P-2.1}
For any function $f>0$ on $(0,\infty)$ set $g:=\widetilde f$ and $h(t):=f(t)/t$
for $t>0$. Then $g(t)=h(t^{-1})$ and the following conditions are equivalent:
\begin{itemize}
\item[\rm(i)] $h\in\OM_+$;
\item[\rm(ii)] $g\in\OMD_+$;
\item[\rm(iii)] $g\in\OC_+$ and $\lim_{t\to\infty}g(t)<\infty$;
\item[\rm(iv)] $f\in\OC_+$ and $f(0^+)=0$;
\item[\rm(v)] $f\in\OC_+$ and $\lim_{t\to0^+}f(t)/t<\infty$.
\end{itemize}
\end{proposition}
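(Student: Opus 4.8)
The plan is to establish the equivalences by exploiting the transpose relation and a few standard facts about operator monotone, operator monotone decreasing, and operator convex functions on $(0,\infty)$. First I would record the elementary identities: since $f(t)=th(t)$ by definition of $h$, we get $g(t)=\widetilde f(t)=tf(t^{-1})=t\cdot t^{-1}h(t^{-1})=h(t^{-1})$, so $g$ and $h$ are related by the variable change $t\mapsto t^{-1}$. Because $t\mapsto t^{-1}$ is an order-reversing operator map (i.e.\ $0<A\le B\implies B^{-1}\le A^{-1}$), the equivalence (i)$\iff$(ii) is immediate: $h$ is operator monotone increasing iff $g(t)=h(t^{-1})$ is operator monotone decreasing, and positivity transfers trivially. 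Similarly, positivity of $g$ is equivalent to positivity of $h$, so throughout we only need to track the monotonicity/convexity structure.

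Next I would handle (ii)$\iff$(iii). Here the key input is the classical characterization: a function $g>0$ on $(0,\infty)$ is operator monotone decreasing if and only if $g$ is operator convex and (operator) decreasing, together with the fact that a nonconstant operator convex function which is bounded is automatically monotone in the appropriate direction. More precisely, an operator convex function $g$ on $(0,\infty)$ has the integral representation forcing $g$ to be either eventually increasing or to have $g'(\infty)=\lim_{t\to\infty}g(t)/t\ge 0$; if in addition $\lim_{t\to\infty}g(t)<\infty$ then $g'(\infty)=0$ and $g$ must be non-increasing, hence operator monotone decreasing. Conversely, if $g\in\OMD_+$ then $g$ is well known to be operator convex (a bounded-below operator monotone decreasing function on $(0,\infty)$ is operator convex), and being decreasing and bounded below it has a finite limit at $\infty$, giving (iii). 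The delicate direction — showing that ``operator convex $+$ bounded at $\infty$'' upgrades to operator monotone decreasing — is the place I expect to lean hardest on known structure theory (the integral representation of operator convex functions on a half-line, e.g.\ as in Hansen--Pedersen), so that is the main obstacle.

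Finally, (iii)$\iff$(iv) and (iv)$\iff$(v) come from translating back through the transpose, using \eqref{f(0^+)-f'(infty)}. Since $f=\widetilde g$ (because $\widetilde{\widetilde f}=f$), and $f$ is operator convex iff $g$ is operator convex, and since \eqref{f(0^+)-f'(infty)} gives $\widetilde g(0^+)=g'(\infty)$ and $(\widetilde g)'(\infty)=g(0^+)$, the condition $\lim_{t\to\infty}g(t)<\infty$ is equivalent to $g'(\infty)=0$, i.e.\ to $f(0^+)=\widetilde g(0^+)=0$; this yields (iii)$\iff$(iv). For (iv)$\iff$(v), I note that for an operator convex (in particular convex) $f>0$, the limit $\lim_{t\to 0^+}f(t)/t$ exists in $[0,\infty]$, and by convexity $f(0^+)=0$ forces this limit to be finite (it equals the right-derivative-type slope at $0$, which is $\le f(1)-f(0^+)<\infty$ by convexity), while conversely $\lim_{t\to 0^+}f(t)/t<\infty$ forces $f(0^+)=0$ since otherwise $f(t)/t\to\infty$. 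Stringing these together closes the cycle (i)$\Rightarrow$(ii)$\Rightarrow$(iii)$\Rightarrow$(iv)$\Rightarrow$(v)$\Rightarrow$(i), where the last arrow (v)$\Rightarrow$(i) follows by reversing the chain since (v)$\Rightarrow$(iv)$\Rightarrow$(iii)$\Rightarrow$(ii)$\Rightarrow$(i) are each already established. I would present it as this single implication cycle to keep the write-up compact.
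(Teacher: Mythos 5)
Your proof is correct, and the overall toolkit is the one the paper uses, but the load-bearing citation is routed differently. The paper first proves the equivalence of (i), (ii), (iv), (v) in one stroke via the Hansen--Pedersen theorem (\cite[Theorem 2.4]{HP}: for operator convex $f$ with $f(0^+)\le0$, the quotient $f(t)/t$ is operator monotone, and conversely), and only then attaches (iii) either by citing \cite[Theorem 3.1]{AH2} or by the direct transpose argument for (iii)\,$\iff$\,(iv). You instead build the single chain (i)\,$\iff$\,(ii)\,$\iff$\,(iii)\,$\iff$\,(iv)\,$\iff$\,(v), never invoking Hansen--Pedersen, and place all the weight on the Ando--Hiai characterization at the (ii)\,$\iff$\,(iii) link (positive operator convex $+$ numerically non-increasing $\iff$ operator monotone decreasing). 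That is legitimate --- your chain closes and every link is a biconditional --- but be aware that the step ``non-increasing, hence operator monotone decreasing'' is exactly the non-trivial content of \cite[Theorem 3.1]{AH2}; the integral representation of operator convex functions does not hand it to you without an argument, so you should cite that theorem explicitly rather than gesture at structure theory. (Alternatively, you could prove (iii)\,$\implies$\,(ii) from Hansen--Pedersen alone: $\lim_{t\to\infty}g(t)<\infty$ gives $\widetilde g(0^+)=g'(\infty)=0$, so $\widetilde g(t)/t=g(t^{-1})$ is operator monotone, i.e.\ $g\in\OMD_+$ --- which would bring you back to the paper's main tool.) Your (i)\,$\iff$\,(ii) via order reversal of the inverse, your (iii)\,$\iff$\,(iv) via the transpose-preserves-operator-convexity fact of \cite[Proposition A.1]{HM} together with \eqref{f(0^+)-f'(infty)}, and your elementary convexity argument for (iv)\,$\iff$\,(v) all match the paper.
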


\begin{proof}
That $g(t)=h(t^{-1})$ is easily verified, and so (i)\,$\iff$\,(ii) is obvious.
(v)\,$\implies$\,(iv) is also clear. Both (i)\,$\implies$\,(v) and (iv)\,$\implies$\,(i)
are immediately seen from \cite[Theorem 2.4]{HP}. Hence (i), (ii), (iv) and (v) are
equivalent.

For a convex function $g>0$ on $(0,\infty)$, it is obvious that
$\lim_{t\to\infty}g(t)<\infty$ if and only if $g$ is non-increasing. Hence
(ii)\,$\iff$\,(iii) follows from \cite[Theorem 3.1]{AH2}, but we here include a more direct
proof of (iii)\,$\iff$\,(iv). It was shown in \cite[Proposition A.1]{HM} that a real function
$f$ on $(0,\infty)$ is operator convex if and only if so is $\widetilde f$. When $f>0$ is
convex on $(0,\infty)$, we further note that $f(0^+)=0$ $\iff$ ${\widetilde f}\,'(\infty)=0$
$\iff$ $\lim_{t\to\infty}\widetilde f(t)<\infty$. Hence, from $g=\widetilde f$,
(iii)\,$\iff$\,(iv) follows. (Since
$\lim_{t\to0^+}f(t)/t=\lim_{t\to\infty}\widetilde f(t)$, we have (iv)\,$\iff$\,(v) as well.)
\end{proof}

Proposition \ref{P-2.2} says that the classes $\OM_+$, $\OC_+$ and $\OMD_+$ are closely
related to one another. Since $h\in\OM_+$ $\iff$ $\widetilde h\in\OM_+$ $\iff$
$h^*\in\OM_+$ (see \cite{KA}), we see that the class $\{f\in\OC_+:f(0^+)=0\}$ is closed
under the operations corresponding to $h\mapsto h^*$ and $h\mapsto\widetilde h$. When
$h$, $g$ and $f$ are given as above, we have $th^*(t)=f^*(t)$ and
$t\widetilde h(t)=t^2g(t)=t^2\widetilde f(t)$. Hence $\{f\in\OC_+:f(0^+)=0\}$ is closed
under the operations $f\mapsto f^*$ and $f\mapsto t^2\widetilde f(t)$. Furthermore, we note
that
\begin{align}\label{F-2.7}
\{f\in\OC_+:f(0^+)=0\}=\{th(t):h\in\OM_+\}=\{t^2g(t):g\in\OMD_+\}.
\end{align}

The functions in Proposition \ref{P-2.1} can be characterized by properties of their
operator perspectives. For instance, we state the following based on \cite{Ef,ENG}.

\begin{proposition}\label{P-2.2}
Let $f$, $g$ and $h$ be given as in Proposition \ref{P-2.1}. Then the equivalent conditions of
Proposition \ref{P-2.1} are also equivalent to any of the following:
\begin{itemize}
\item[\rm(vi)] $f(0^+)=0$ and $P_f$ is jointly operator convex, i.e.,
$$
P_f(\lambda A_1+(1-\lambda)A_2,\lambda B_1+(1-\lambda)B_2)
\le\lambda P_f(A_1,B_1)+(1-\lambda)P_f(A_2,B_2)
$$
for all $A_i,B_i\in B(\cH)^{++}$ ($i=1,2$) and $\lambda\in[0,1]$;
\item[\rm(vii)] $P_f$ is right operator decreasing, i.e.,
$$
0<B_1\le B_2\ \implies\ P_f(A,B_1)\ge P_f(A,B_2)
$$
for any (equivalently, some) $A>0$;
\item[\rm(viii)] $P_g$ is left operator decreasing, i.e.,
$$
0<A_1\le A_2\ \implies\ P_g(A_1,B)\ge P_g(A_2,B)
$$
for any (equivalently, some) $B>0$.
\end{itemize}
\end{proposition}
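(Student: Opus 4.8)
The plan is to reduce everything to the two short equivalences (ii)\,$\iff$\,(viii)\,$\iff$\,(vii) and (iv)\,$\iff$\,(vi): since (ii) and (iv) already occur in Proposition \ref{P-2.1} and are equivalent there to each other and to (i), (iii), (v), adding these links places (vi), (vii), (viii) into the same equivalence class. So only the four implications (ii)\,$\implies$\,(viii), (viii)\,$\implies$\,(ii), (vi)\,$\implies$\,(iv), (iv)\,$\implies$\,(vi) really need attention, and (vii)\,$\iff$\,(viii) will come for free from \eqref{F-2.5}.

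First I would handle (viii) by a direct conjugation argument. For (ii)\,$\implies$\,(viii): if $g\in\OMD_+$ and $B>0$ is fixed, then $0<A_1\le A_2$ gives $B^{-1/2}A_1B^{-1/2}\le B^{-1/2}A_2B^{-1/2}$, hence $g(B^{-1/2}A_1B^{-1/2})\ge g(B^{-1/2}A_2B^{-1/2})$ by operator monotone decreasingness of $g$, and conjugating by $B^{1/2}$ yields $P_g(A_1,B)\ge P_g(A_2,B)$; since $B$ is arbitrary this gives even the ``for any $B$'' form. For the converse, assuming only (viii) for some single $B>0$, I would conjugate $P_g(A_1,B)\ge P_g(A_2,B)$ by $B^{-1/2}$ and put $C_i:=B^{-1/2}A_iB^{-1/2}$; as $A_i$ ranges over $B(\cH)^{++}$ this $C_i$ ranges over all of $B(\cH)^{++}$ and the correspondence is an order isomorphism, so the inequality becomes $g(C_1)\ge g(C_2)$ for all $0<C_1\le C_2$ (and the same runs on any Hilbert space), i.e. $g\in\OMD_+$. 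Thus (ii), ``(viii) for all $B$'' and ``(viii) for some $B$'' all coincide. Finally, from \eqref{F-2.5} and $g=\widetilde f$ one has $P_f(A,B)=P_{\widetilde f}(B,A)=P_g(B,A)$, so the statement that $P_f$ is right operator decreasing for a given $A$ is \emph{verbatim} the statement that $P_g$ is left operator decreasing with that same operator in the second slot; hence (vii)\,$\iff$\,(viii), quantifiers and all.

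For (vi)\,$\iff$\,(iv) I would use the known link between the perspective and operator convexity. Putting $B_1=B_2=I$ in the joint convexity inequality in (vi) and using $P_f(X,I)=f(X)$ gives $f(\lambda A_1+(1-\lambda)A_2)\le\lambda f(A_1)+(1-\lambda)f(A_2)$ for all $A_i>0$ and $\lambda\in[0,1]$, so $f\in\OC_+$; together with the hypothesis $f(0^+)=0$ in (vi) this is exactly (iv). Conversely, if $f\in\OC_+$ then $P_f$ is jointly operator convex by Effros's perspective theorem \cite{Ef} (see also \cite{ENG}), and adjoining $f(0^+)=0$ gives (vi). Invoking Proposition \ref{P-2.1} then ties (ii) and (iv)---hence (vi)--(viii)---to (i), (iii), (v), completing the argument.

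The proof is essentially routine; the only genuine external input is Effros's joint operator convexity of the perspective, used just for (iv)\,$\implies$\,(vi), and the only point worth a moment's care is the ``equivalently, some'' clause in (vii)--(viii), which is harmless precisely because conjugation by $B^{\pm1/2}$ is an order isomorphism of $B(\cH)^{++}$. In particular, Proposition \ref{P-2.1} is not actually needed to see that (ii), (vii) and (viii) are mutually equivalent.
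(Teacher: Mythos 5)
Your proof is correct and follows essentially the same route as the paper: the paper likewise gets (ii)\,$\iff$\,(viii) from the identity $g(A)=B^{-1/2}P_g(B^{1/2}AB^{1/2},B)B^{-1/2}$ (your conjugation/order-isomorphism argument spelled out), (vii)\,$\iff$\,(viii) from \eqref{F-2.5}, and (iv)\,$\iff$\,(vi) from \cite[Theorem 2.2]{ENG}. The only cosmetic difference is that you derive (vi)\,$\implies$\,(iv) directly by specializing $B_1=B_2=I$ instead of citing the full equivalence, which is a harmless (slightly more self-contained) variant.
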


\begin{proof}
(iv)\,$\iff$\,(vi) is \cite[Theorem 2.2]{ENG}. (ii)\,$\iff$\,(viii) is immediately seen
since $g(A)=B^{-1/2}P_g(B^{1/2}AB^{1/2},B)B^{-1/2}$. (vii)\,$\iff$\,(viii) is
obvious from \eqref{F-2.5}.
\end{proof}

To characterize the functions of the form $t^nh(t)$ with $n\ge2$ and $h\in\OM_+$, we need
the notion of operator $k$-tone functions. The original definition of $k$-tone functions in
\cite{FHR} is not so simple, so we here give, among many others, its two equivalent
conditions, restricted to real functions on $(0,\infty)$, see
\cite[Definition 1.4, Theorems 3.3 and 5.1]{FHR} for more details. A real function
$f$ on $(0,\infty)$ is \emph{operator $k$-tone} if and only if any of the following
conditions holds:
\begin{itemize}
\item[(A)] $f$ is $C^{k-2}$ on $(0,\infty)$ (this is void for $k=1$) and
$f^{[k-1]}(x,\alpha,\dots,\alpha)$ with $k-1$ $\alpha$'s is operator monotone on $(0,\infty)$
for some (equivalently, any) $\alpha\in(0,\infty)$ (with continuation of value at $x=\alpha$),
where $f^{[k-1]}$ is the $(k-1)$st divided difference of $f$;
\item[(B)] $f$ is analytic on $(0,\infty)$ and
$$
{d^k\over dt^k}\,f(A+tX)\Big|_{t=0}\ge0
$$
for every $A\in B(\cH)^{++}$ and $X\in B(\cH)^+$, where $\cH$ is infinite-dimensional (the
above derivative of order $k$ can be defined in the operator norm).
\end{itemize}

In particular, condition (A) reduces L\"owner's characterization of operator monotone
functions \cite{Lo} when $k=1$, and to Kraus' characterization of operator convex functions
\cite{Kr} when $k=2$; a concise exposition on L\"owner's and Kraus' theories is found in
\cite[Section 2.4]{Hi2}. Thus, the $1$-tonicity and the $2$-tonicity are nothing but the
operator monotonicity and the operator convexity, respectively.

The next proposition is the characterization of the functions $t^nh$ with $h\in\OM_+$. When
$n=1$, conditions (a), (c) and (d) are (i), (iv) and (v) of Proposition \ref{P-2.1},
respectively, and (b) is incorporated in the equalities in \eqref{F-2.7}. Since we shall not
directly use this proposition in the subsequent sections, the reader may skip its proof that
heavily depends on \cite{FHR}.

\begin{proposition}\label{P-2.3}
For any function $f>0$ on $(0,\infty)$ and $n\in\bN$, the following conditions are
equivalent:
\begin{itemize}
\item[$(${\rm a}$)$] $f(t)=t^nh(t)$, $t>0$, with $h\in\OM_+$;
\item[$(${\rm b}$)$] $f(t)=t^{n+1}g(t)$, $t>0$, with $g\in\OMD_+$;
\item[$(${\rm c}$)$] $f$ is operator $(n+1)$-tone on $(0,\infty)$ and
$\lim_{t\to0^+}f(t)/t^{n-1}=0$;
\item[$(${\rm d}$)$] $f$ is operator $(n+1)$-tone on $(0,\infty)$ and
$\lim_{t\to0^+}f(t)/t^n<\infty$.
\end{itemize}
\end{proposition}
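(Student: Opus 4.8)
The proof will proceed by establishing the equivalences in two groups: first the ``algebraic'' equivalence (a)\,$\iff$\,(b), which is essentially a reformulation, and then the ``analytic'' equivalences (a)\,$\iff$\,(c)\,$\iff$\,(d), which will rely on the characterizations of operator $(n{+}1)$-tone functions recalled before the proposition, together with Proposition~\ref{P-2.1} as the base case $n=1$.

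\medskip
\textbf{Step 1: (a)\,$\iff$\,(b).} If $f(t)=t^nh(t)$ with $h\in\OM_+$, write $g(t):=h(t)/t$; then $f(t)=t^{n+1}g(t)$, and by the equivalence (i)\,$\iff$\,(ii) of Proposition~\ref{P-2.1} applied to $th(t)=:f_1(t)$ (whose transpose is the relevant object), one has $g\in\OMD_+$. Conversely, from $g\in\OMD_+$ set $h(t):=tg(t)$; again Proposition~\ref{P-2.1} (in the form \eqref{F-2.7}, or directly (i)\,$\iff$\,(ii)) gives $h\in\OM_+$, and $f(t)=t^{n+1}g(t)=t^n\cdot tg(t)=t^nh(t)$. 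So this step is just bookkeeping on top of Proposition~\ref{P-2.1}.

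\medskip
\textbf{Step 2: (a)\,$\implies$\,(c) and (a)\,$\implies$\,(d).} Suppose $f(t)=t^nh(t)$ with $h\in\OM_+$. The growth conditions are immediate: $f(t)/t^{n-1}=t\,h(t)\to0$ as $t\to0^+$ since $h(0^+)\in[0,\infty)$ exists (an operator monotone function on $(0,\infty)$ is concave, hence has a finite limit or is bounded near $0$; more simply it is increasing and positive, so $h(0^+)\ge0$ is finite), and $f(t)/t^n=h(t)\to h(0^+)<\infty$. For operator $(n{+}1)$-tonicity I would use characterization (B): $h$ is analytic (operator monotone functions on $(0,\infty)$ are real-analytic, being Pick functions), so $f(t)=t^nh(t)$ is analytic, and I must show $\frac{d^{n+1}}{dt^{n+1}}f(A+tX)\big|_{t=0}\ge0$ for $A>0$, $X\ge0$. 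The cleaner route is characterization (A): I claim that if $h$ is operator monotone (operator $1$-tone) then $t^nh(t)$ is operator $(n{+}1)$-tone. This should follow by induction on $n$ from the statement that multiplication by $t$ raises the degree of tonicity by one, i.e., \emph{if $\varphi$ is operator $k$-tone and positive with suitable boundary behaviour, then $t\varphi(t)$ is operator $(k{+}1)$-tone}. The base instance $k=1$ is exactly (i)\,$\implies$\,(iv) of Proposition~\ref{P-2.1} (i.e.\ $th(t)\in\OC_+$, which is operator $2$-tonicity). For the inductive step I would invoke the divided-difference description (A) and the product/Leibniz rule for divided differences, or alternatively cite the relevant results of \cite{FHR} on how multiplication by monomials interacts with $k$-tonicity.

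\medskip
\textbf{Step 3: (c)\,$\implies$\,(a) and (d)\,$\implies$\,(a).} Since (d)\,$\implies$\,(c) is trivial ($\lim f(t)/t^{n-1}=\lim t\cdot f(t)/t^n=0$ once $f(t)/t^n$ is bounded near $0$), it suffices to prove (c)\,$\implies$\,(a), or better (d)\,$\implies$\,(a). Assume $f$ is operator $(n{+}1)$-tone with $f(t)/t^n$ bounded near $0^+$. Set $h(t):=f(t)/t^n$; I must show $h\in\OM_+$. The idea is to peel off one power of $t$ at a time: show that operator $(n{+}1)$-tonicity of $f$ together with the boundary condition forces $f(t)/t$ to be operator $n$-tone with $f(t)/t^{\,n}$ bounded near $0$, then iterate down to $n=0$, where operator $1$-tonicity of $h$ is precisely $h\in\OM_+$ (positivity of $h$ is clear from $f>0$). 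The single descent step is the converse direction of the lemma in Step~2: \emph{if $\psi$ is operator $k$-tone and $\psi(t)/t$ is bounded near $0^+$, then $\psi(t)/t$ is operator $(k{-}1)$-tone}. Its base case $k=2$ is exactly (iv)\,$\implies$\,(i) (equivalently (v)\,$\implies$\,(i)) of Proposition~\ref{P-2.1}. For general $k$ this is where I expect to lean most heavily on \cite{FHR}; the natural tool is again characterization (A), noting that the $(k{-}1)$st divided difference of $\psi(t)/t$ can be expressed via the $k$th (and lower) divided differences of $\psi$, and that the boundary hypothesis kills the term that would otherwise obstruct operator monotonicity of the relevant divided difference.

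\medskip
\textbf{Main obstacle.} The routine parts are the boundary-behaviour computations and the two ``trivial'' implications (v)-type reductions. The crux is the pair of lemmas in Steps~2 and~3 relating operator $k$-tonicity of $\varphi$ to operator $(k{\pm}1)$-tonicity of $t\varphi(t)$ and $\varphi(t)/t$ under a boundary condition at $0^+$. Proving these cleanly from the divided-difference definition (A) requires care with divided-difference identities for products $t\mapsto t\cdot\varphi(t)$ and with the continuation of values at coinciding nodes; this is precisely the point where the proof ``heavily depends on \cite{FHR}.'' An alternative that may be smoother for the forward direction (Step~2) is characterization (B) via derivatives of $f(A+tX)$, expanding $t^nh(A+tX)$ by the Leibniz rule and using that the relevant mixed terms are built from the already-nonnegative derivatives coming from the operator monotonicity of $h$; but turning this into a full argument, and especially running it backwards in Step~3, is the genuine work, and I would likely present it by citing the structural results of \cite{FHR} rather than rederiving them.
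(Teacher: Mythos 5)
Your skeleton matches the paper's: (a)\,$\iff$\,(b) is the bookkeeping you describe on top of Proposition \ref{P-2.1}; (a)\,$\implies$\,(c),(d) reduces to showing $t^nh$ is operator $(n+1)$-tone plus the trivial boundary computations; and (c),(d)\,$\implies$\,(a) is an induction that peels off one factor of $t$ per step with Proposition \ref{P-2.1} as the base case. Your Step 3 is in particular exactly the paper's mechanism: the descent is carried out by forming $\widehat f_\eps(t):=f^{[1]}(t,\eps)$, using the divided-difference identity $\widehat f_\eps^{[m]}(t,\eps,\dots,\eps)=f^{[m+1]}(t,\eps,\dots,\eps)$ together with \cite[Theorem 3.3]{FHR} (applied twice) to conclude that $\widehat f_\eps$ is operator $m$-tone, and then letting $\eps\searrow0$, where the hypothesis $f(0^+)=0$ is precisely what makes $\widehat f_\eps\to f(t)/t$ pointwise and \cite[Proposition 3.9]{FHR} preserves tonicity under pointwise limits. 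You correctly located where the boundary condition enters, but the identity itself and the double application of \cite[Theorem 3.3]{FHR} are the entire content of that step and are only gestured at in your write-up.

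The genuine divergence, and the one real gap, is in Step 2. Your proposed ascent lemma --- if $\varphi$ is positive and operator $k$-tone then $t\varphi(t)$ is operator $(k+1)$-tone --- is considerably more general than what is needed, is not a statement you can quote from \cite{FHR}, and its truth for arbitrary positive $k$-tone $\varphi$ should not be taken for granted: the parity behaviour of these classes (operator monotone does not imply operator convex, and $\pm(t+\lambda)^{-1}$ have opposite tonicity parities) makes a naive Leibniz-rule argument on divided differences delicate. The paper sidesteps this entirely: \cite[Corollary 3.4]{FHR} gives directly that $(t-\eps)^nh(t)$ is operator $(n+1)$-tone for $h\in\OM_+$ and $\eps>0$ (the perturbation is needed to place the root of the polynomial factor inside the domain $(0,\infty)$), and then $\eps\searrow0$ together with \cite[Proposition 3.9]{FHR} yields the $(n+1)$-tonicity of $t^nh$. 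To complete your proof you should replace the unproven incremental lemma by this one-shot citation-plus-limit argument; everything else, including your observation that (d)\,$\implies$\,(c) is immediate so that one of the two inductions can be dispensed with, is fine.
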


\begin{proof}
(a)\,$\iff$\,(b).\enspace
For functions $h>0$ and $g(t):=h(t)/t$ on $(0,\infty)$, note that
$h\in\OM_+$ $\iff$ $g\in\OMD_+$. Hence (a)\,$\iff$\,(b) follows.

(a)\,$\implies$\,(d).\enspace
Assume that $f=t^nh$ as stated in (a). For any $\eps>0$ define $f_\eps(t):=(t-\eps)^nh(t)$ for
$t>0$. By \cite[Corollary 3.4]{FHR}, $f_\eps$ is operator $(n+1)$-tone on $(0,\infty)$.
Since $f_\eps(t)\to f(t)$ as $\eps\searrow0$ for $t>0$, $f$ is operator $(n+1)$-tone on
$(0,\infty)$ by \cite[Proposition 3.9]{FHR}. Moreover, since $h>0$ on $(0,\infty)$
from the assumption $f>0$, $\lim_{t\to0^+}f(t)/t^n=\lim_{t\to0^+}h(t)<\infty$.

(a)\,$\implies$\,(c).\enspace
The proof is similar to that of (a)\,$\implies$\,(d) above. For the last part,
$\lim_{t\to0^+}f(t)/t^{n-1}=\lim_{t\to0^+}th(t)=0$.

(c)\,$\implies$\,(a).\enspace
Prove this implication by induction on $n$. Since the operator $2$-tonicity means the
operator convexity, the case $n=1$ holds by (iv)\,$\implies$\,(i) of Proposition \ref{P-2.1}.
Suppose that (c)\,$\implies$\,(a) when $n=m$, and prove the case $n=m+1$. Now, assume (c)
for $n=m+1$. Since $f$ is operator $(m+1)$-tone on $(0,\infty)$, $f$ is analytic in
$(0,\infty)$ by \cite[Lemma 3.1]{FHR} (also by condition (B) above). Let
$\widehat f(t):=f(t)/t$ for $t>0$. Then
$\lim_{t\searrow0}\widehat f(t)/t^{m-1}=\lim_{t\searrow0}f(t)/t^m=0$. For any $\eps>0$,
define
$$
\widehat f_\eps(t):=f^{[1]}(t,\eps)
=\begin{cases}{f(t)-f(\eps)\over t-\eps} & \text{for $t>0$, $t\ne\eps$}, \\
f'(\eps) & \text{for $t=\eps$}.\end{cases}
$$
Then since $f(0^+)=0$, $\widehat f_\eps(t)\to\widehat f(t)$ as $\eps\searrow0$ for all $t>0$.
Furthermore, it is easy to see that
$$
\widehat f_\eps^{[m]}(t,\underbrace{\eps,\dots,\eps}_m)
=f^{[m+1]}(t,\underbrace{\eps,\dots,\eps}_{m+1}),\qquad t>0,
$$
where $\widehat f_\eps^{[m]}$ is the $m$th divided difference of $\widehat f_\eps$. By using
\cite[Theorem 3.3]{FHR} twice, it follows that $\widehat f_\eps$ is operator $m$-tone
on $(0,\infty)$. Hence $\widehat f$ is operator $m$-tone by
\cite[Proposition 3.9]{FHR}. By the induction hypothesis for $n=m$,
$\widehat f(t)=t^{m-1}h(t)$, $t>0$, with $h\in\OM_+$, so that $f(t)=t^mh(t)$. Hence
(c)\,$\implies$\,(a) when $n=m+1$ is proved.

(d)\,$\implies$\,(a).\enspace
The proof is similar to that of (c)\,$\implies$\,(a) with slight modifications, where the
initial case $n=1$ of induction on $n$ is (v)\,$\implies$\,(i) of Proposition \ref{P-2.1}.
\end{proof}

%----------------------------------------------------
\section{Ando-Hiai type inequalities}
%----------------------------------------------------

When $f$ is a continuous function on $(0,\infty)$ such that $f>0$ and $f(1)=1$, we consider,
for a positive real number $p$, the following statements for the operator perspective $P_f$:
\begin{align}
A,B>0,\ P_f(A,B) \le I\ &\implies\ P_f(A^p,B^p)\le I, \label{ando-hiai}\\
A,B>0,\ P_f(A,B) \ge I\ &\implies\ P_f(A^p,B^p)\ge I. \label{ando-hiai2}
\end{align}
These statements were first shown in \cite{AH1} in the case where $f(t)=t^\alpha$ with
$0\le\alpha\le1$ so that $P_f(A,B)=B\#_\alpha A:=B^{1/2}(B^{-1/2}AB^{-1/2})^{\alpha}B^{1/2}$,
the \emph{weighted operator geometric mean}. So we refer to \eqref{ando-hiai} and
\eqref{ando-hiai2} as \emph{Ando-Hiai} (or \emph{AH} for short) type inequalities.
The correspondences $P_f\leftrightarrow P_{\widetilde f}$ and
$P_f\leftrightarrow P_{f^*}$ based on \eqref{F-2.5} and \eqref{F-2.6} will be useful for
our discussions on AH type inequalities. In particular, note that $P_f$ satisfies
\eqref{ando-hiai} if and only if $P_{f^*}$ satisfies \eqref{ando-hiai2}.

In the case where $f\in\OM_+^1$, we have the following basic result about statements
\eqref{ando-hiai} and \eqref{ando-hiai2}. As noted in Section 2, $P_f(A,B)=B\sigma_fA$ for
$A,B>0$.
 
\begin{proposition}[\cite{wada1}] \label{P-3.1}
Assume that $f\in\OM_+^1$. Then the following conditions are equivalent:
\begin{itemize}
\item[$(${\rm i}$)$] $P_f$ (or $\sigma_f$) satisfies \eqref{ando-hiai2} for all $p\ge1$;
\item[$(${\rm ii}$)$] $f(t)^p\le f(t^p)$ for all $t>0$, $p\ge1$.
\end{itemize}
\end{proposition}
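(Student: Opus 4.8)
The statement is an equivalence between an Ando-Hiai type operator inequality for the mean $\sigma_f$ and the scalar pmi condition $f(t)^p\le f(t^p)$. The plan is to prove both directions by exploiting the homogeneity and transformation properties of operator means together with a reduction to the case $A=B$.

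First I would establish (ii)\,$\implies$\,(i). Assume $P_f(A,B)=B\sigma_fA\ge I$ with $A,B>0$; I want $B^p\sigma_fA^p\ge I$. The standard device here is Ando-Hiai's original technique: since $\sigma_f$ is an operator mean with $f\in\OM_+^1$, it is jointly operator concave and satisfies the transformer inequality $C(B\sigma_fA)C\le(CBC)\sigma_f(CAC)$ for any invertible $C$, with equality when $C$ is a positive definite scalar times the identity on each summand. From $B\sigma_fA\ge I$ one deduces, using the transformer inequality with $C=(B\sigma_fA)^{-1/2}$, that one may normalize; but the crux is the interpolation/tensor-power trick. Concretely, the key reduction is to the diagonal: by the subadditivity of the map $t\mapsto t^p$ on the operator level is false, so instead I would use the integral representation $f(t)=a+bt+\int_{(0,\infty)}\frac{t(1+s)}{t+s}\,d\mu(s)$ and the fact that $P_f(A^p,B^p)\ge I$ reduces, via this representation, to controlling each parallel-sum term $(A^p)\,:\,(sB^p)$. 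Actually the cleaner route, and the one I would commit to, is: reduce \eqref{ando-hiai2} for $p\ge1$ to the single inequality $f(t)^p\le f(t^p)$ evaluated through the functional calculus by noting that \eqref{ando-hiai2} holding for all $A,B$ is equivalent (by taking $A=B=tI$, using $P_f(tI,tI)=f(1)tI=tI$... no) — rather, by taking $B=I$ and $A=X>0$, so that $P_f(X,I)=f(X)$, the hypothesis $f(X)\ge I$ must imply $f(X^p)\ge I$, which after spectral calculus is exactly "$f(t)\ge1\implies f(t^p)\ge1$", and one upgrades this to the full pmi statement by a scaling argument using homogeneity $P_f(\lambda A,\lambda B)=\lambda P_f(A,B)$.

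So the backbone of the argument is the two-way reduction. For (i)\,$\implies$\,(ii): specialize to $B=I$, $A=(t/f(t))\,s$-scaled so that the hypothesis is met with equality, then apply \eqref{ando-hiai2} and read off the scalar inequality via the spectral theorem — this direction is essentially bookkeeping with the homogeneity relation $P_f(cA,cB)=cP_f(A,B)$ and the identity $P_f(X,I)=f(X)$. For (ii)\,$\implies$\,(i): this is the genuine Ando-Hiai mechanism. Here I would invoke that $\sigma_f$ for $f\in\OM_+^1$ admits the integral decomposition into weighted harmonic means $H_s(A,B)=2\big((sA^{-1}+B^{-1})^{... }\big)$ type building blocks, plus the observation that the pmi property of $f$ transfers to the pmi property of each representing function $f_s(t)=\frac{t(1+s)}{t+s}$ in a monotone way — but since the $f_s$ themselves need not be pmi, one cannot argue termwise. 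The correct classical argument (from \cite{wada1}) uses instead the self-improving nature of \eqref{ando-hiai2}: if it holds for exponent $2$ then by iteration it holds for all $p=2^n$, hence for all $p\ge1$ by a continuity/interpolation argument, and \eqref{ando-hiai2} for $p=2$ is equivalent to $f(t)^2\le f(t^2)$ via the tensor-power embedding $A\mapsto A\otimes A$ together with $P_f(A\otimes A,B\otimes B)=P_f(A,B)\otimes P_f(A,B)$ composed with the Hadamard/Schur pinching to the diagonal.

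\textbf{Main obstacle.} The hard part is the (ii)\,$\implies$\,(i) direction, specifically justifying that the scalar inequality $f(t)^p\le f(t^p)$ — a one-variable statement — controls the two-variable operator inequality. The tensor-power trick $A\mapsto A\otimes A$ gives $P_f(A,B)\ge I\implies P_f(A\otimes A,B\otimes B)\ge I$, and one wants to conclude $P_f(A^2,B^2)\ge I$; the passage from $A\otimes A$ to $A^2$ is the delicate point, handled by compressing to a suitable subspace (Ando's original observation that $A^2$ appears as a compression of $A\otimes A$ under the embedding $\xi\mapsto\xi\otimes\xi$ is false for vectors but works via the symmetric tensor / the fact that $\langle(A\otimes A)(\xi\otimes\eta),\xi\otimes\eta\rangle$ factors). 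Getting this compression argument to interact correctly with the operator mean $\sigma_f$ — using that operator means are monotone under compressions (the transformer inequality $C^*(A\sigma B)C\le (C^*AC)\sigma(C^*BC)$) — is where all the care is needed; once $p=2$ is done, extending to $p=2^n$ by iteration and then to general $p\ge1$ by the standard interpolation between $p$ and $1$ (using operator concavity of $t\mapsto t^{1/p}$ and monotonicity) is routine. I expect the write-up to cite \cite{wada1} for the substance of this direction and to supply only the reduction identities $P_f(X,I)=f(X)$ and the homogeneity relation in detail.
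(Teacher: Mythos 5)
Your direction (i)\,$\implies$\,(ii) is essentially right and coincides with the device the paper uses for the analogous statement in Proposition \ref{P-3.5}: the bare specialization $B=I$ only yields ``$f(t)\ge1\implies f(t^p)\ge1$'', but the equality-normalized scalars $A=\frac{t}{f(t)}I$, $B=\frac{1}{f(t)}I$, for which $P_f(A,B)=I$, turn \eqref{ando-hiai2} into $P_f(A^p,B^p)=f(t^p)/f(t)^p\ge1$, which is exactly (ii). So that half goes through once you commit to that substitution rather than to an unspecified ``upgrade by homogeneity''.

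The direction (ii)\,$\implies$\,(i), however, has a genuine gap: the tensor-power mechanism you put at the center of the argument does not produce $P_f(A^2,B^2)$. Tensor multiplicativity $P_f(A\otimes A,B\otimes B)=P_f(A,B)\otimes P_f(A,B)$ is correct, and the transformer inequality does preserve ``$\ge I$'' under compression by an isometry; but the isometry $Ve_i=e_i\otimes e_i$ gives $V^*(A\otimes A)V=A\circ A$, the \emph{Hadamard} square, not the operator square $A^2$, and there is no common isometry realizing $A^2$ and $B^2$ simultaneously as compressions of $A\otimes A$ and $B\otimes B$ for non-commuting $A,B$. (Anti)symmetric tensor powers are the tool for upgrading the already-proved operator inequality to log-majorization, not for proving the operator inequality itself, so this step would fail. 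The actual argument of \cite{wada1} --- reproduced in this paper inside the proof of Theorem \ref{T-3.2} --- is a direct computation: with $C:=A^{-1/2}BA^{-1/2}$ one has the algebraic identity
$$
A^p\sigma_hB^p=A^{1/2}\bigl(A^{p-1}\sigma_h(A^{-1}\#_pC)\bigr)A^{1/2},
$$
and for $1\le p\le2$ the hypothesis $A\sigma_hB\ge I$, i.e.\ $h(C)\ge A^{-1}$, combined with the L\"owner--Heinz theorem (using $0\le p-1\le1$) and the joint monotonicity of $\sigma_h$ yields $A^p\sigma_hB^p\ge A^{1/2}h(C^p)h(C)^{1-p}A^{1/2}$; the pmi hypothesis $h(C^p)\ge h(C)^p$ then gives $A^p\sigma_hB^p\ge A^{1/2}h(C)A^{1/2}=A\sigma_hB\ge I$. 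The extension to all $p\ge1$ is by the doubling iteration $p=2p'$ exactly as in the proof of Corollary \ref{C-3.3}, which needs the full range $[1,2]$ as the base case --- not just $p=2$ followed by ``interpolation''. (For the record, the paper itself offers no proof of Proposition \ref{P-3.1} beyond the citation of \cite{wada1}; the mechanism above is what that citation contains and what Theorem \ref{T-3.2} generalizes.)
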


We say that $f$ is \emph{power monotone increasing} (\emph{pmi} for short) if it satisfies
condition (ii) of Proposition \ref{P-3.1}. On the other hand, $f$ is said to be
\emph{power monotone decreasing} (\emph{pmd} for short) if $f^*$ is pmi, i.e.,
$f(t)^p\ge f(t^p)$ for all $t>0$, $p\ge1$. Also, as noted in \cite{wada1}, it is
clear from the correspondence $P_f\leftrightarrow P_{f^*}$ that if $f\in \OM_+^1$, then
$P_f$ satisfies \eqref{ando-hiai} for all $p\ge 1$ if and only if $f$ is pmd.

In this section we shall first refine the known AH inequality for operator means and show
its complementary versions. Then we discuss AH type inequalities for operator perspectives
associated with functions described in Propositions \ref{P-2.1}--\ref{P-2.3},
other than those in $\OM_+^1$.

%-------------------------------------
\subsection{Operator means}
%-------------------------------------

In this subsection we present several AH type inequalities for operator means, which
generalize and supplement the AH inequality stated in Proposition \ref{P-3.1} and further
discussed recently in \cite{HSW} in a more general setting of multivariable operator means.
The next theorem is a generalized version of the AH inequality though restricted
to $1\le p\le2$, together with its complementary version for $0<p\le1$. Our stress
here is that the inequalities hold for general operator means without the pmi or pmd
assumption on their representing functions. For a positive invertible operator $X>0$ let
$\|X\|_\infty$ be the operator norm of $X$ and $\lambda_{\min}(X)$ be the minimum
of the spectrum of $X$.

\begin{theorem}\label{T-3.2}
Let $h\in\OM_+^1$ and $A,B>0$. Set $C:=A^{-1/2}BA^{-1/2}$. Then the following
inequalities hold:
\begin{align}
A^p\sigma_hB^p&\ge\lambda_{\min}\biggl({h(C^p)\over h(C)^p}\biggr)
\lambda_{\min}^{p-1}(A\sigma_hB)(A\sigma_hB)
\quad\mbox{for $1\le p\le2$}, \label{F-7.1}\\
A^p\sigma_hB^p&\le\bigg\|{h(C^p)\over h(C)^p}\bigg\|_\infty
\|A\sigma_hB\|_\infty^{p-1}(A\sigma_hB)
\qquad\ \ \mbox{for $1\le p\le2$}, \label{F-7.2}\\
A^p\sigma_hB^p&\le\bigg\|{h(C^p)\over h(C)^p}\bigg\|_\infty
\lambda_{\min}^{p-1}(A\sigma_hB)(A\sigma_hB)
\qquad\ \,\mbox{for $0<p\le1$}, \label{F-7.3}\\
A^p\sigma_hB^p&\ge\lambda_{\min}\biggl({h(C^p)\over h(C)^p}\biggr)
\|A\sigma_hB\|_\infty^{p-1}(A\sigma_hB)
\quad\ \,\mbox{for $0<p\le1$}. \label{F-7.4}
\end{align}
\end{theorem}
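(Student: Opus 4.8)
The plan is first to reduce the four inequalities to two normalized statements, and then to establish those.

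\smallskip
\emph{Step 1: Reduction.} Operator means are positively homogeneous, so $(tA)\sigma_h(tB)=t(A\sigma_hB)$ and $(tA)^p\sigma_h(tB)^p=t^p(A^p\sigma_hB^p)$ for $t>0$, whereas $C$ and hence $h(C^p)/h(C)^p$ are scale invariant. Dividing $A$ and $B$ by $\|A\sigma_hB\|_\infty$ turns \eqref{F-7.2} into ``$A\sigma_hB\le I\Rightarrow A^p\sigma_hB^p\le\|h(C^p)/h(C)^p\|_\infty(A\sigma_hB)$'' and \eqref{F-7.4} into ``$A\sigma_hB\le I\Rightarrow A^p\sigma_hB^p\ge\lambda_{\min}(h(C^p)/h(C)^p)(A\sigma_hB)$'', while dividing by $\lambda_{\min}(A\sigma_hB)$ turns \eqref{F-7.1} and \eqref{F-7.3} into the corresponding implications under the hypothesis $A\sigma_hB\ge I$. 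Next, since $h^*\in\OM_+^1$ and $(h^*)(t^p)/(h^*)(t)^p=\bigl(h(t^{-p})/h(t^{-1})^p\bigr)^{-1}$, replacing $A,B$ by $A^{-1},B^{-1}$ in \eqref{F-2.6} (i.e.\ $A\sigma_{h^*}B=(A^{-1}\sigma_hB^{-1})^{-1}$) changes $C$ into $C^{-1}$ and interchanges the two spectral quantities together with the two directions of the hypothesis $A\sigma_hB\lessgtr I$; hence the ``$A\sigma_hB\ge I$'' statements for all $h\in\OM_+^1$ follow from the ``$A\sigma_hB\le I$'' statements for all $h\in\OM_+^1$. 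It thus suffices to prove, for every $h\in\OM_+^1$ and all $A,B>0$ with $A\sigma_hB\le I$,
\[
A^p\sigma_hB^p\le\Bigl\|\frac{h(C^p)}{h(C)^p}\Bigr\|_\infty(A\sigma_hB)\ \ (1\le p\le2),
\qquad
A^p\sigma_hB^p\ge\lambda_{\min}\!\Bigl(\frac{h(C^p)}{h(C)^p}\Bigr)(A\sigma_hB)\ \ (0<p\le1).
\]

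\smallskip
\emph{Step 2: The core estimate.} Write $A\sigma_hB=A^{1/2}h(C)A^{1/2}$, and, using the congruence identity $(T^*XT)\sigma_h(T^*YT)=T^*(X\sigma_hY)T$ for invertible $T$ (with $T=A^{(p-1)/2}$),
\[
A^p\sigma_hB^p=A^{(p-1)/2}\Bigl[A\,\sigma_h\bigl(A^{(1-p)/2}B^pA^{(1-p)/2}\bigr)\Bigr]A^{(p-1)/2}.
\]
The plan is to pass from $B^p$ to $C^p$ inside the mean. For $1\le p\le2$ the function $t\mapsto t^p$ is operator convex on $[0,\infty)$ and vanishes at $0$ (for $0<p\le1$ it is operator concave there); combining this with the operator monotonicity and operator concavity of $h$ and with the normalization $h(C)\le A^{-1}$, one aims to bound $A\sigma_h\bigl(A^{(1-p)/2}B^pA^{(1-p)/2}\bigr)$ in terms of $A\sigma_h(A\#_pB)=A^{1/2}h(C^p)A^{1/2}$, where $A\#_pB:=A^{1/2}C^pA^{1/2}$. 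The factorization $h(C^p)=h(C)^{p/2}\bigl(h(C^p)/h(C)^p\bigr)h(C)^{p/2}$ and the operator bounds $\lambda_{\min}(h(C^p)/h(C)^p)\,I\le h(C^p)/h(C)^p\le\|h(C^p)/h(C)^p\|_\infty\,I$ then bring out the scalar factor, after which the surviving powers of $A$ collapse (using again $h(C)\le A^{-1}$) to leave $A\sigma_hB$ on the right-hand side. As an alternative one could use the L\"owner integral representation of $h$ to split $\sigma_h$ into elementary means $t\mapsto t(1+s)/(t+s)$ and argue term by term, or adapt the multivariable estimates of \cite{HSW}.

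\smallskip
\emph{The main obstacle.} The hard part is precisely the passage from $A^{(1-p)/2}B^pA^{(1-p)/2}$ (equivalently $A^{-p/2}B^pA^{-p/2}$) to $C^p=(A^{-1/2}BA^{-1/2})^p$: these operators are \emph{not} comparable in general and coincide only when $A$ and $B$ commute, so the whole non-commutative discrepancy must be absorbed by the single spectral scalar $\|h(C^p)/h(C)^p\|_\infty$ (resp.\ $\lambda_{\min}(h(C^p)/h(C)^p)$), with no extraneous factor such as $\|A\|_\infty$ or $\|C\|_\infty$ surviving. This is what confines the argument to $1\le p\le2$ (resp.\ $0<p\le1$), where the power function is operator convex (resp.\ concave). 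I expect this comparison, with the constant kept exactly right, to be the only genuinely substantial part of the proof; the homogeneity reduction of Step 1, the $h\leftrightarrow h^*$ duality, and the use of operator monotonicity of $h$ are routine.
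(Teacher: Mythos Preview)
Your reduction in Step~1 is correct and matches the paper's: homogeneity normalizes to $A\sigma_hB\le I$ or $\ge I$, and the $h\leftrightarrow h^*$ duality halves the work. But Step~2 leaves the substantial part unproved. The congruence you write,
\[
A^p\sigma_hB^p=A^{(p-1)/2}\bigl[A\,\sigma_h(A^{(1-p)/2}B^pA^{(1-p)/2})\bigr]A^{(p-1)/2},
\]
is valid but unhelpful: the inner mean has $A$ as its first argument, which carries no information from the normalization $h(C)\lessgtr A^{-1}$, and your sketch of how ``the surviving powers of $A$ collapse'' would require comparing $A^{p/2}h(C)^pA^{p/2}$ with $A^{1/2}h(C)A^{1/2}$ via $h(C)\le A^{-1}$ and $t\mapsto t^p$, which is not operator monotone for $p>1$. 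You correctly identify the obstacle---the incomparability of $A^{-p/2}B^pA^{-p/2}$ and $C^p$---but do not resolve it.

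The paper's key move (taken from \cite{wada1}) is to pull out $A^{1/2}$ rather than $A^{(p-1)/2}$, writing
\[
A^p\sigma_hB^p=A^{1/2}\bigl[A^{p-1}\,\sigma_h\,(A^{-1}\#_pC)\bigr]A^{1/2},
\]
where $A^{-1}\#_pC:=A^{-1/2}(A^{1/2}CA^{1/2})^pA^{-1/2}=A^{-1/2}B^pA^{-1/2}$ is the weighted geometric-mean expression (used formally even when $p\notin[0,1]$). Now \emph{both} inner arguments involve $A^{-1}$, and the normalization bites on both: L\"owner--Heinz gives $A^{p-1}\lessgtr h(C)^{1-p}$ since $|p-1|\le1$, while monotonicity of $\#_p$ in the first variable (for $0<p\le1$) or anti-monotonicity (for $1\le p\le2$, cf.\ Proposition~\ref{P-2.2}\,(vii) applied to $t^p$) gives $A^{-1}\#_pC\lessgtr h(C)\#_pC=h(C)^{1-p}C^p$. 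Joint monotonicity of $\sigma_h$ then yields the commuting bound $A^{1/2}h(C)^{1-p}h(C^p)A^{1/2}$, from which the scalar $h(C^p)/h(C)^p$ factors out directly, leaving exactly $A\sigma_hB$. This is precisely the comparison ``with the constant kept exactly right'' that you flagged as the only substantial step but did not carry out.
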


\begin{proof}
When $1\le p\le2$, the proof of \cite[Lemma 2.1]{wada1} shows that
$$
A\sigma_hB\ge I\ \implies\ A^p\sigma_hB^p
\ge\lambda_{\min}\biggl({h(C^p)\over h(C)^p}\biggr)(A\sigma_hB).
$$
Indeed, from the proof in \cite{wada1} we find that if $A\sigma_h B \ge I$, then
\begin{align*}
A^p\sigma_h B^p & \ge A^{1/2}h(C^p)h(C)^{1-p}A^{1/2}\\
& \geq \lambda_{\min}\left( \frac{h(C^p)}{h(C)^p}\right) A^{1/2}h(C)A^{1/2}\\
& = \lambda_{\min}\left( \frac{h(C^p)}{h(C)^p}\right)(A\sigma_h B).
\end{align*}
For every $A,B>0$, apply the above to $\alpha^{-1}A$ and $\alpha^{-1}B$ with
$\alpha:=\lambda_{\min}(A\sigma_h B)$ to show \eqref{F-7.1}. Inequality \eqref{F-7.2}
immediately follows from \eqref{F-7.1} by replacing $h$, $A$ and $B$ in \eqref{F-7.1} with
$h^*$, $A^{-1}$ and $B^{-1}$.

Next, when $0< p\le1$, we show that
$$
A\sigma_hB\ge I\ \implies\ A^p\sigma_hB^p\le 
\bigg\| {{h(C^p)}\over {h(C)^p}}\bigg\|_\infty (A\sigma_h B). 
$$
Assume that $A\sigma_hB\ge I$; then $h(C)\ge A^{-1}$ and the L\"{o}wner-Heinz theorem gives
$h(C)^{1-p}\ge A^{-(1-p)}$ since $0\le 1-p\le 1$. Hence we have
\begin{align*}
A^p\sigma_hB^p& = A^{p/2}h(A^{-p/2}(A^{1/2}CA^{1/2})^pA^{-p/2})A^{p/2} \\
&=A^{p/2}h(A^{1-p\over2}(A^{-1}\#_pC)A^{1-p\over2})A^{p/2} \\
&=A^{1/2}(A^{-(1-p)}\sigma_h(A^{-1}\#_pC))A^{1/2}\\
&\le A^{1/2}(h(C)^{1-p}\sigma_h(h(C) \#_pC))A^{1/2}\\
&= A^{1/2}(h(C)^{1-p}h(C^p))A^{1/2} \\
&\le \bigg\|{{h(C^p)}\over {h(C)^p}}\bigg\|_\infty A^{1/2}h(C) A^{1/2}
= \bigg\|{{h(C^p)}\over{h(C)^p}}\bigg\|_\infty (A\sigma_h B).
\end{align*}
Hence inequality \eqref{F-7.3} is shown as in the above proof of \eqref{F-7.1}, and
\eqref{F-7.4} follows from \eqref{F-7.3} as \eqref{F-7.2} does from \eqref{F-7.1}.
\end{proof}

The general formulation of Theorem \ref{T-3.2} explicitly specifies the role of the
pmi (or pmd) assumption on $h$ in the AH inequality in \cite{wada1},
thus giving the inequalities under the pmi (pmd) assumption as follows:

\begin{corollary}\label{C-3.3}
If $h\in\OM_+^1$ is pmi, then
\begin{align}
A^p\sigma_hB^p&\ge\lambda_{\min}^{p-1}(A\sigma_hB)(A\sigma_hB),
\qquad p\ge1, \label{F-7.5}\\
A^p\sigma_hB^p&\le\lambda_{\min}^{p-1}(A\sigma_hB)(A\sigma_hB),
\qquad 0< p\le1. \label{F-7.6}
\end{align}
If $h\in\OM_+^1$ is pmd, then
\begin{align}
A^p\sigma_hB^p&\le\|A\sigma_hB\|_\infty^{p-1}(A\sigma_hB),
\qquad p\ge1, \label{F-7.7}\\
A^p\sigma_hB^p&\ge\|A\sigma_hB\|_\infty^{p-1}(A\sigma_hB),
\qquad 0< p\le1. \label{F-7.8}
\end{align}
\end{corollary}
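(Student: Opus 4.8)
The plan is to derive the whole corollary from Theorem~\ref{T-3.2} by recognizing that the pmi (resp.\ pmd) hypothesis on $h$ is exactly what forces the spectral factor $h(C^p)/h(C)^p$ appearing in \eqref{F-7.1}--\eqref{F-7.4} to lie above (resp.\ below) $I$, and then by bootstrapping the parameter range $1\le p\le2$ up to all $p\ge1$. First I would record two reformulations of the pmi property: for a positive function $h$ on $(0,\infty)$, $h$ is pmi if and only if $h(t^p)\ge h(t)^p$ for all $t>0$ and $p\ge1$, equivalently (substituting $t\mapsto t^{1/p}$) $h(t^p)\le h(t)^p$ for all $t>0$ and $0<p\le1$; applying these to $h^*$ reverses every inequality and gives the corresponding statements for pmd $h$. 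Since $C^p$ and $C$ have the same spectral decomposition, with $C=A^{-1/2}BA^{-1/2}$ as in Theorem~\ref{T-3.2}, these pointwise inequalities become operator inequalities: for pmi $h$ one gets $h(C^p)\ge h(C)^p$, i.e.\ $\lambda_{\min}\bigl(h(C^p)h(C)^{-p}\bigr)\ge1$, when $p\ge1$, and $h(C^p)\le h(C)^p$, i.e.\ $\bigl\|h(C^p)h(C)^{-p}\bigr\|_\infty\le1$, when $0<p\le1$.

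Given this, the ``non-iterative'' ranges are immediate. For pmi $h$: substituting $\lambda_{\min}(h(C^p)h(C)^{-p})\ge1$ into \eqref{F-7.1} yields \eqref{F-7.5} for $1\le p\le2$, while substituting $\|h(C^p)h(C)^{-p}\|_\infty\le1$ into \eqref{F-7.3} yields \eqref{F-7.6} for all $0<p\le1$ with nothing further to do. To extend \eqref{F-7.5} to all $p\ge1$ I would prove the multiplicativity statement: if \eqref{F-7.5} holds with exponent $p$ (for all $A,B>0$ and all pmi $h\in\OM_+^1$) and with exponent $q$, then it holds with exponent $pq$, for any $p,q\ge1$. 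The argument is a two-step composition: writing $T:=A\sigma_hB$ and $\mu:=\lambda_{\min}(T)$, the case $p$ gives $T_1:=A^p\sigma_hB^p\ge\mu^{p-1}T$, hence $\lambda_{\min}(T_1)\ge\mu^p$; then the case $q$ applied to the pair $(A^p,B^p)$ gives $A^{pq}\sigma_hB^{pq}=(A^p)^q\sigma_h(B^p)^q\ge\lambda_{\min}(T_1)^{q-1}T_1$, and monotonicity of $x\mapsto x^{q-1}$ for $q\ge1$ together with $\lambda_{\min}(T_1)\ge\mu^p$ and $T_1\ge\mu^{p-1}T$ lets the prefactor compound to exactly $\lambda_{\min}(T_1)^{q-1}\mu^{p-1}\ge\mu^{p(q-1)+p-1}=\mu^{pq-1}$, which is \eqref{F-7.5} with exponent $pq$. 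Since every $p\ge1$ is a finite product of numbers in $[1,2]$, \eqref{F-7.5} follows for all $p\ge1$.

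Finally I would obtain the pmd statements \eqref{F-7.7} and \eqref{F-7.8} from \eqref{F-7.5} and \eqref{F-7.6} by the same duality device the paper uses to pass from \eqref{F-7.1} to \eqref{F-7.2}: if $h$ is pmd then $h^*$ is pmi, and applying \eqref{F-7.5}, \eqref{F-7.6} to $h^*$, $A^{-1}$, $B^{-1}$, using $A^{-p}\sigma_{h^*}B^{-p}=(A^p\sigma_hB^p)^{-1}$ (a consequence of \eqref{F-2.6}), $\lambda_{\min}\bigl((A\sigma_hB)^{-1}\bigr)=\|A\sigma_hB\|_\infty^{-1}$, and the order-reversing property of $X\mapsto X^{-1}$, one reads off \eqref{F-7.7} for $p\ge1$ and \eqref{F-7.8} for $0<p\le1$; alternatively one can run the same scheme directly from \eqref{F-7.2} and \eqref{F-7.4}, bootstrapping \eqref{F-7.7} with the operator norm in place of $\lambda_{\min}$. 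The only genuinely delicate point --- and the main obstacle --- is the bookkeeping in this composition step: because Theorem~\ref{T-3.2} supplies the sharp prefactor only on $1\le p\le2$, one must verify that the prefactors multiply up to precisely $\lambda_{\min}^{p-1}$ (resp.\ $\|\cdot\|_\infty^{p-1}$) rather than to a weaker bound such as $\lambda_{\min}^{p}$; this is why the intermediate estimate $\lambda_{\min}(T_1)\ge\mu^p$ and the monotonicity of the power map must be used in exactly the order above.
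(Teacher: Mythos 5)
Your proposal is correct and follows essentially the same route as the paper: the cases $1\le p\le 2$ and $0<p\le 1$ are read off from Theorem~\ref{T-3.2} by noting that the pmi hypothesis makes the spectral factor $h(C^p)/h(C)^p$ at least (resp.\ at most) $I$, the extension to all $p\ge1$ is the same compounding of prefactors (the paper does it by doubling $p=2p'$, you by general products $pq$ of exponents in $[1,2]$, which is the identical computation), and the pmd statements follow by the $h\mapsto h^*$, $A,B\mapsto A^{-1},B^{-1}$ duality exactly as the paper uses to pass from \eqref{F-7.1} to \eqref{F-7.2}.
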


\begin{proof}
Note that $h$ is pmi (resp., pmd), then $h(C^p)\ge h(C)^p$ (resp., $h(C^p)\le h(C^p)$) when
$p\ge1$, and the inequalities are reversed when $0<p\le1$. Hence \eqref{F-7.5} and
\eqref{F-7.7} for $1\le p\le2$ as well as \eqref{F-7.6} and \eqref{F-7.8} immediately
follow from \eqref{F-7.1}--\eqref{F-7.4}. Inequalities \eqref{F-7.5} and
\eqref{F-7.7} for general $p\ge1$ can be seen by a simple induction argument as in the
last part of the proof of \cite[Theorem 3.1]{HSW}. We here give the proof of \eqref{F-7.5}
for completeness. Assume that \eqref{F-7.5} is true when $1\le p\le 2^k$, and extend it to
$1\le p\le2^{k+1}$. When $2^k<p\le2^{k+1}$, letting $p=2p'$ with $2^{k-1}<p'\le2^k$ one has
\begin{align*}
A^p\sigma_hB^p&\ge\lambda_{\min}(A^{p'}\sigma_hB^{p'})(A^{p'}\sigma_hB^{p'}) \\
&\ge\lambda_{\min}\bigl(\lambda_{\min}^{p'-1}(A\sigma_hB)(A\sigma_hB)\bigr)
\cdot\lambda_{\min}^{p'-1}(A\sigma_hB)(A\sigma_hB) \\
&=\lambda_{\min}^{2p'-1}(A\sigma_hB)(A\sigma_hB)
=\lambda_{\min}^{p-1}(A\sigma_hB)(A\sigma_hB).
\end{align*}
\end{proof}

The AH inequalities are conventionally written in the forms \eqref{ando-hiai} and
\eqref{ando-hiai2}, whose stronger formulations are \eqref{F-7.5} and \eqref{F-7.7} as
discussed in \cite{HSW}. The inequalities in \eqref{F-7.6} and \eqref{F-7.8}, complementary
respectively to \eqref{F-7.5} and \eqref{F-7.7}, are new, but we note that those
complementary versions do not have conventional forms like \eqref{ando-hiai} and
\eqref{ando-hiai2}.

Although it does not seem possible to extend the inequalities in \eqref{F-7.1} and
\eqref{F-7.2} to $p>2$, we have their modifications which hold for all $p\ge1$.

\begin{proposition}\label{P-3.4}
For every  $h\in \OM_+^1$ and every $A,B>0$, 
$$
\lambda_{\min}\biggl({{h(C_p^p)} \over {h(C_p)^p}}\biggr)
\lambda_{\min}^{p-1}(A\sigma_h B)(A\sigma_h B)
\le A^p \sigma_h B^p
\le\bigg\|{{h(C_p^p)} \over {h(C_p)^p}}\bigg\|_\infty
\|A\sigma_h B\|_\infty^{p-1}(A\sigma_h B)
$$
for all $p\ge1$, where $C_p:=(A^{-p/2}B^p A^{-p/2})^{1/p}$. 
\end{proposition}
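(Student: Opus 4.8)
The plan is to reduce Proposition \ref{P-3.4} to the $p=2$ case of Theorem \ref{T-3.2} by an appropriate substitution, exploiting the fact that $C_p$ is constructed precisely so that $A^p\sigma_h B^p$ matches a perspective evaluated at $C_p$. The key observation is that
$$
A^p\sigma_h B^p = A^{p/2}h(A^{-p/2}B^pA^{-p/2})A^{p/2} = A^{p/2}h(C_p^p)A^{p/2},
$$
so if we set $A':=A^{p/2}$ and $B':=A^{p/2}C_pA^{p/2}$ (so that $C_p = (A')^{-1/2}B'(A')^{-1/2}$ reproduces the role of the $C$ in Theorem \ref{T-3.2} with ``$p=2$''), then $(A')^2\sigma_h(B')^2 = A^p\sigma_h B^p$ while $A'\sigma_h B' = A^{p/2}h(C_p)A^{p/2}$. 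Hence applying \eqref{F-7.1} and \eqref{F-7.2} with the pair $(A',B')$ and the exponent $2$ yields
$$
\lambda_{\min}\!\biggl(\frac{h(C_p^2)}{h(C_p)^2}\biggr)\lambda_{\min}(A'\sigma_h B')(A'\sigma_h B') \le A^p\sigma_h B^p \le \bigg\|\frac{h(C_p^2)}{h(C_p)^2}\bigg\|_\infty\|A'\sigma_h B'\|_\infty(A'\sigma_h B').
$$

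The difficulty is that this produces the factor $h(C_p^2)/h(C_p)^2$, not the desired $h(C_p^p)/h(C_p)^p$, and the intermediate perspective is $A'\sigma_h B' = A^{p/2}h(C_p)A^{p/2}$ rather than $A\sigma_h B = A^{1/2}h(C)A^{1/2}$ with $C=A^{-1/2}BA^{-1/2}$. So the naive one-step reduction is not quite enough; one really wants to run the inductive doubling argument of Corollary \ref{C-3.3} but keeping track of the scalar bounds. The cleaner route, which I would pursue, is to prove directly the two implications
$$
A\sigma_h B\ge I \ \implies\ A^p\sigma_h B^p \ge \lambda_{\min}\!\biggl(\frac{h(C_p^p)}{h(C_p)^p}\biggr)(A\sigma_h B),\qquad p\ge1,
$$
and its dual with $\|\cdot\|_\infty$, and then homogenize as in the proof of Theorem \ref{T-3.2} (applying to $\alpha^{-1}A,\alpha^{-1}B$ with $\alpha=\lambda_{\min}(A\sigma_h B)$ or $\alpha=\|A\sigma_h B\|_\infty$, noting $C_p$ is unchanged under $A\mapsto\alpha^{-1}A$, $B\mapsto\alpha^{-1}B$).

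To prove the boxed implication: assume $A\sigma_h B\ge I$, i.e., $A^{1/2}h(C)A^{1/2}\ge I$, equivalently $h(C)\ge A^{-1}$. I want to bound $A^{p/2}h(C_p^p)A^{p/2}$ from below. Write $h(C_p^p) = \dfrac{h(C_p^p)}{h(C_p)^p}$ acting together with $h(C_p)^p$; more precisely $h(C_p^p)\ge\lambda_{\min}\!\bigl(h(C_p^p)h(C_p)^{-p}\bigr)\,h(C_p)^p$ since $h(C_p^p)h(C_p)^{-p/2}\cdot h(C_p)^{-p/2}$... — here one uses $X\ge\lambda_{\min}(Y^{-1/2}XY^{-1/2})Y$ for commuting $X=h(C_p^p)$, $Y=h(C_p)^p$ (they commute since both are functions of $C_p$). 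Thus $A^p\sigma_h B^p\ge\lambda_{\min}\!\bigl(h(C_p^p)/h(C_p)^p\bigr)A^{p/2}h(C_p)^pA^{p/2}$, and it remains to show $A^{p/2}h(C_p)^pA^{p/2}\ge A^{1/2}h(C)A^{1/2}$, i.e. (conjugating) that $A\sigma_h B\ge I$ forces $A^{p/2}h(C_p)^pA^{p/2}\ge I$. Now $A^{p/2}h(C_p)^pA^{p/2}$ is comparable to a geometric-mean-type expression: since $h\in\OM_+^1$, $t^{1/p}h(t)^{1/p}\cdots$ — the right tool is that $A\sigma_h B\ge I$ implies $A^{p}\,\sigma_{h_p}\,B^{p}\ge I$ where $h_p(t):=h(t^{1/p})^p$, because $h_p\in\OM_+^1$ is automatically pmd-adjacent; actually the simplest is to invoke that $h(t)\ge t\,h(t^{-1})^{-1}\cdots$ — I expect the main obstacle here is precisely identifying the right monotonicity/pmd-type inequality relating $h(C_p)^p$ and $h(C)$ and packaging it so the $p$-th power doubling goes through. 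Concretely I would handle $1\le p\le2$ first via Theorem \ref{T-3.2} and then bootstrap to all $p\ge1$ by the doubling induction of Corollary \ref{C-3.3}, carrying the $\lambda_{\min}(h(C_{p'}^{p'})/h(C_{p'})^{p'})$ factors and using that at each halving step the relevant scalar factor is bounded below by $1$ after the normalization — the bookkeeping of these factors across the induction is the part requiring care, while each individual inequality is routine given Theorem \ref{T-3.2}.
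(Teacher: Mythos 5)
There is a genuine gap, and it sits exactly where you flagged it yourself. Your reduction is in fact the right one and essentially matches the paper's: writing $A^p\sigma_hB^p=A^{p/2}h(C_p^p)A^{p/2}$ and using that $h(C_p^p)$ and $h(C_p)^p$ commute to pull out the scalar $\lambda_{\min}\bigl(h(C_p^p)h(C_p)^{-p}\bigr)$, everything comes down to the two-sided bound
$$
\lambda_{\min}^{p-1}(A\sigma_hB)(A\sigma_hB)\ \le\ A^{p/2}h(C_p)^pA^{p/2}\ \le\ \|A\sigma_hB\|_\infty^{p-1}(A\sigma_hB),\qquad p\ge1,
$$
where $A^{p/2}h(C_p)^pA^{p/2}=A^p\sigma_{h_{[1/p]}}B^p$ with $h_{[1/p]}(t):=h(t^{1/p})^p$. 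This is precisely \cite[Corollary 4.6]{HSW}, an Ando--Hiai type inequality for the power-deformed mean $\sigma_{h_{[1/p]}}$ that holds for \emph{every} $h\in\OM_+^1$ with no pmi/pmd hypothesis, and it is the one nontrivial external input of the paper's proof. You neither state nor prove it; the point where your argument trails off (``the right tool is that $A\sigma_hB\ge I$ implies $A^p\sigma_{h_p}B^p\ge I$\dots I expect the main obstacle here is precisely identifying the right monotonicity/pmd-type inequality'') is exactly this missing lemma, and it is not routine: $h_{[1/p]}$ is not pmi or pmd in general, so it cannot be obtained from Proposition \ref{P-3.1}. Also note a small logical slip: from $A\sigma_hB\ge I$ you need $A^{p/2}h(C_p)^pA^{p/2}\ge A\sigma_hB$, which is strictly stronger than $A^{p/2}h(C_p)^pA^{p/2}\ge I$; the two are not equivalent as you suggest.

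The fallback you propose --- handle $1\le p\le2$ by Theorem \ref{T-3.2} and bootstrap by the doubling induction of Corollary \ref{C-3.3} while ``carrying the factors'' --- does not repair this. Each doubling step $p=2p'$ produces a factor $\lambda_{\min}\bigl(h(D^2)/h(D)^2\bigr)$ with $D$ depending on the current pair $(A^{p'},B^{p'})$, and these factors accumulate multiplicatively over $\approx\log_2 p$ steps; since each can be strictly less than $1$ when $h$ is not pmi, the resulting product has no reason to dominate (or equal) the single factor $\lambda_{\min}\bigl(h(C_p^p)/h(C_p)^p\bigr)$ in the statement. Indeed the paper explicitly remarks that \eqref{F-7.1}--\eqref{F-7.2} do not seem to extend beyond $p=2$, which is why Proposition \ref{P-3.4} is proved from \eqref{absorbing} rather than by iterating Theorem \ref{T-3.2}. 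To close the gap you must either import \cite[Corollary 4.6]{HSW} or prove the deformed-mean inequality above independently.
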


\begin{proof}
It follows from \cite[Corollary 4.6]{HSW} that 
\begin{equation}\label{absorbing}
\lambda_{\min}^{p-1}(A\sigma_h B)(A\sigma_h B) 
\le A^p \sigma_{h_{[1/p]}} B^p 
\le
\|A\sigma_h B\|_\infty^{p-1}(A\sigma_h B),
\end{equation}
where $h_{[1/p]}(t):=h(t^{1/p})^p$; here note that $h_{[1/p]}\in\OM_+^1$ again.
The first inequality in \eqref{absorbing} implies that
\begin{align*}
&\lambda_{\min}^{p-1}(A\sigma_h B)
\bigl[( A^p \sigma_h B^p )^{-1/2}(A\sigma_h B)
( A^p \sigma_h B^p )^{-1/2}\bigr] \\
&\qquad\le( A^p \sigma_h B^p )^{-1/2}
\bigl(A^p \sigma_{h_{[1/p]}} B^p\bigr)
( A^p \sigma_h B^p )^{-1/2} \\
&\qquad\le
\big\|( A^p \sigma_h B^p )^{-1/2}
\bigl(A^p \sigma_{h_{[1/p]}} B^p\bigr)
( A^p \sigma_h B^p )^{-1/2}
\big\|_\infty I \\
&\qquad=
r\bigl( (A^p \sigma_{h_{[1/p]}} B^p)
\left( A^p \sigma_h B^p \right)^{-1}\bigr) I \\
&\qquad=
r\biggl( A^{p/2}\,{{h_{[1/p]}(C_p^p)} \over {h(C_p^p)}}\,A^{-p/2}\biggr)I \\
&\qquad=
r\biggl({{h_{[1/p]}(C_p^p)} \over {h(C_p^p)}}\biggr)I
=\bigg\|{{h(C_p)^p} \over {h(C_p^p)}}\bigg\|_\infty I,
\end{align*}
where $r(X)$ denotes the spectral radius of $X$. Hence the first asserted inequality
is obtained. The second inequality is shown in a similar way to the above with use of the
second inequality in \eqref{absorbing}.
\end{proof}

%----------------------------------------------
\subsection{Operator perspectives}
%-----------------------------------------------

The aim of this subsection is to prove AH type inequalities for $g\in\OMD_+^1$ and
$f\in\OC_+^1$ with $f(0^+)=0$. We first note a basic fact about functions
$f$ satisfying \eqref{ando-hiai}.

\begin{proposition}\label{P-3.5}
Let $f>0$ be a continuous function on $(0,\infty)$ and $p>0$. If $P_f$ satisfies
\eqref{ando-hiai} for $p$, then $f(t^p)\le f(t)^p$ for all $t>0$.
\end{proposition}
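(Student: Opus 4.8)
The plan is to test the hypothesis on scalar data. Fix $t>0$; since $f>0$ is continuous, $f(t)$ and $f(t^p)$ are well-defined positive numbers. I would take
\[
A:=t\,f(t)^{-1}I,\qquad B:=f(t)^{-1}I
\]
in $B(\cH)^{++}$ (equivalently, one may work on a one-dimensional Hilbert space, with $A,B$ positive reals). Then $B^{-1/2}AB^{-1/2}=tI$, so by the definition \eqref{F-2.4} of the operator perspective together with the continuous functional calculus,
\[
P_f(A,B)=f(t)^{-1/2}f(tI)f(t)^{-1/2}=f(t)^{-1}f(t)\,I=I,
\]
and in particular $P_f(A,B)\le I$.

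Next I would invoke the assumption that $P_f$ satisfies \eqref{ando-hiai} for this $p$, which yields $P_f(A^p,B^p)\le I$. Since $A^p=t^pf(t)^{-p}I$ and $B^p=f(t)^{-p}I$, we have $(B^p)^{-1/2}A^p(B^p)^{-1/2}=t^pI$, hence
\[
P_f(A^p,B^p)=f(t)^{-p/2}f(t^pI)f(t)^{-p/2}=f(t)^{-p}f(t^p)\,I.
\]
Combining the two displays gives $f(t)^{-p}f(t^p)\le1$, i.e. $f(t^p)\le f(t)^p$. As $t>0$ was arbitrary, this is the assertion.

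There is essentially no obstacle in this argument; the only point worth isolating is that one should look for test operators making $P_f(A,B)$ \emph{equal} to $I$ rather than merely $\le I$ (so that the implication is sharp), and scalar multiples of the identity achieve exactly this. I would also remark that the identical construction shows, symmetrically, that if $P_f$ satisfies \eqref{ando-hiai2} for $p$ then $f(t^p)\ge f(t)^p$ for all $t>0$, which is consistent with the operator monotone case recorded in Proposition \ref{P-3.1}.
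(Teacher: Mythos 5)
Your proposal is correct and is essentially the paper's own proof: the authors likewise test \eqref{ando-hiai} on the scalar pair $A=t/f(t)$, $B=1/f(t)$, for which $P_f(A,B)=1$, and read off $f(t^p)\le f(t)^p$ from $P_f(A^p,B^p)\le1$. Nothing to add.
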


\begin{proof}
For any $t>0$, since $P_f\left({t\over {f(t)}}, {1\over {f(t)}}\right)=1$, we have 
$$
P_f\left(\left({t\over {f(t)}}\right)^p, \left({1\over {f(t)}}\right)^p\right)\le 1,
$$
which implies that $f(t^p)\le f(t)^p$. 
\end{proof}

\begin{corollary}\label{C-3.6}
Let $f>0$ be a continuous function on $(0,\infty)$.
If $P_f$ satisfies \eqref{ando-hiai} for all $p\in (0,1)$, then $f$ is pmi.
\end{corollary}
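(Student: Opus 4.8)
The plan is to reduce the statement immediately to Proposition \ref{P-3.5}. That proposition says that the hypothesis here --- $P_f$ satisfies \eqref{ando-hiai} for every $p\in(0,1)$ --- delivers, for each such $p$, the pointwise inequality $f(t^p)\le f(t)^p$ valid for all $t>0$. So the entire content of the corollary is to repackage this one-parameter family of inequalities into the pmi form $f(s)^q\le f(s^q)$ for all $s>0$ and all $q\ge1$.

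First I would dispose of the trivial case $q=1$, where the asserted inequality is an equality. For $q>1$, I would set $p:=1/q$, which lies in the open interval $(0,1)$ exactly because $q>1$, and apply the conclusion of Proposition \ref{P-3.5} with this $p$. Substituting $t=s^q$ makes $t^p=s$, so the inequality $f(t^p)\le f(t)^p$ becomes $f(s)\le f(s^q)^{1/q}$. Raising both sides to the power $q$ --- legitimate since $f>0$ --- yields $f(s)^q\le f(s^q)$, which is precisely the pmi condition as defined just after Proposition \ref{P-3.1}. Since $s>0$ and $q>1$ were arbitrary, $f$ is pmi.

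I do not expect any genuine obstacle: the argument is a change of variable in the exponent together with the positivity of $f$, needed only to take $q$-th powers. The single place that warrants a word of care is the bookkeeping of ranges --- confirming that $p=1/q\in(0,1)$ precisely when $q>1$, so that Proposition \ref{P-3.5} is applicable, and observing that the excluded endpoint $q=1$ is handled vacuously.
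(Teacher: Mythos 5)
Your proposal is correct and is exactly the intended argument: the paper states this as an immediate consequence of Proposition \ref{P-3.5} without writing out the substitution, and the substitution $p=1/q$, $t=s^q$ (using $f>0$ to take $q$-th powers) is precisely what turns the conclusion of that proposition into the pmi condition. The bookkeeping of the ranges $p\in(0,1)\leftrightarrow q>1$ and the trivial case $q=1$ are handled correctly.
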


Now, we are ready to show the following theorem, which says that the pmi (pmd)
characterization of operator means satisfying the AH inequality can be expanded
to certain relevant operator perspectives.

\begin{theorem}\label{T-3.7}
Let $h\in\OM_+^1$. Then the following conditions are equivalent:
\begin{itemize}
\item[\rm(i)] $h$ is pmi (resp., pmd);
\item[\rm(ii)] $P_h$ satisfies \eqref{ando-hiai2} (resp., \eqref{ando-hiai}) for all
$p\in[1,\infty)$;
\item[\rm(iii)] $P_{1/h}$ satisfies \eqref{ando-hiai2} (resp., \eqref{ando-hiai}) for all
$p\in(0,1]$;
\item[\rm(iv)] $P_{h(1/t)}$ satisfies \eqref{ando-hiai} (resp., \eqref{ando-hiai2}) for all
$p\in(0,1]$;
\item[\rm(v)] $P_{th}$ satisfies \eqref{ando-hiai} (resp., \eqref{ando-hiai2}) for all
$p\in(0,1]$.
\end{itemize}
\end{theorem}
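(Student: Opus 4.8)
The plan is to establish the chain of equivalences in Theorem \ref{T-3.7} by combining Proposition \ref{P-3.1} with the two functorial correspondences $P_f\leftrightarrow P_{f^*}$ and $P_f\leftrightarrow P_{\widetilde f}$ recorded in \eqref{F-2.5} and \eqref{F-2.6}, together with a parameter-rescaling trick that turns a statement for $p\ge1$ into one for $0<p\le1$. Throughout I will prove the ``pmi'' version; the ``pmd'' version then follows immediately by replacing $h$ with $h^*$ (note $h^*\in\OM_+^1$ when $h\in\OM_+^1$, $h$ is pmd iff $h^*$ is pmi, and $P_{h^*}$ satisfying \eqref{ando-hiai} is equivalent to $P_h$ satisfying \eqref{ando-hiai2} by the remark after \eqref{ando-hiai2}).

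First, (i)\,$\iff$\,(ii) is exactly Proposition \ref{P-3.1}. For (ii)\,$\iff$\,(iii): the adjoint function of $h$ is $h^*(t)=h(t^{-1})^{-1}$, and $1/h$ is the reflection of $h^*$ in the sense $(1/h)(t)=h(t)^{-1}$; more directly I will use \eqref{F-2.6}, which gives $P_{h^*}(A,B)=P_h(A^{-1},B^{-1})^{-1}$, to translate $P_h$ satisfying \eqref{ando-hiai2} for $p\ge1$ into $P_{h^*}$ satisfying \eqref{ando-hiai} for $p\ge1$. To get down to the range $(0,1]$ I substitute $A\mapsto A^{1/p}$, $B\mapsto B^{1/p}$: the implication ``$P_f(A,B)\le I\Rightarrow P_f(A^p,B^p)\le I$ for all $p\ge1$'' becomes, after this substitution with $q=1/p\in(0,1]$, the implication ``$P_f(A^q,B^q)\le I\Rightarrow P_f(A,B)\le I$''; running this the other way one sees $P_f$ satisfies \eqref{ando-hiai} for all $p\ge1$ iff $P_f$ satisfies the reverse implication for all $q\in(0,1]$, and since $f(1)=1$ forces the statements \eqref{ando-hiai},\eqref{ando-hiai2} to be genuinely equivalent to their own converses under this rescaling, one concludes $P_{h^*}$ satisfies \eqref{ando-hiai2} for $p\in(0,1]$. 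I then identify $1/h$ with $h^*$ up to the variable change $t\mapsto t^{-1}$ handled by \eqref{F-2.5}, or simply observe directly that $P_{1/h}(A,B)\le I$ means $f(B^{-1/2}AB^{-1/2})\le B^{-1}$ with $f=1/h$, i.e. $h(B^{-1/2}AB^{-1/2})\ge B$, and carry the rescaling argument through verbatim.

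For (iii)\,$\iff$\,(iv): $1/h$ and $h(1/t)$ are related by $(1/h)\widetilde{\phantom{x}}(t)=t/h(1/t)$, which is not quite $h(1/t)$, so instead I note that $P_{h(1/t)}(A,B)=P_{\widetilde{h(1/t)}}(B,A)$ by \eqref{F-2.5}, and that $\widetilde{h(1/\cdot)}(t)=th(t)\cdot$(something); the cleanest route is to use that $1/h$ and $h(1/t)$ have a common reflection and that \eqref{ando-hiai} is symmetric under $P_f\leftrightarrow P_{\widetilde f}$ in the sense that $P_f$ satisfies \eqref{ando-hiai} for $p$ iff $P_{\widetilde f}$ does (swap $A,B$). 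Finally (iv)\,$\iff$\,(v) is the substantive geometric step: $t\mapsto th(t)$ is the function $g=\widetilde{h(1/\cdot)}$ up to variable reflection — indeed with $h_1(t):=h(1/t)$ one has $\widetilde{h_1}(t)=t h_1(1/t)=t h(t)$, so $P_{th}(A,B)=P_{\widetilde{h_1}}(A,B)=P_{h_1}(B,A)=P_{h(1/\cdot)}(B,A)$, and the equivalence of (iv) and (v) is just the symmetry of \eqref{ando-hiai} under $(A,B)\mapsto(B,A)$ combined with this identity $P_{th}(A,B)=P_{h(1/\cdot)}(B,A)$.

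The main obstacle I anticipate is bookkeeping the four transpose/adjoint/reciprocal operations so that each implication lands in exactly the right one of \eqref{ando-hiai}, \eqref{ando-hiai2} with exactly the right parameter range, since the operation $f\mapsto f^*$ reverses the direction of the inequality ($\le$ versus $\ge$) while the rescaling $p\mapsto 1/p$ both inverts the parameter range and exchanges a statement with its converse. The safest implementation is to not chase the equivalences linearly (i)\,$\Rightarrow$\,(ii)\,$\Rightarrow\cdots$ but to prove (i)\,$\iff$\,(ii) from Proposition \ref{P-3.1}, then prove (ii)\,$\iff$\,(iii), (iii)\,$\iff$\,(iv), (iv)\,$\iff$\,(v) each as a standalone two-line lemma using one of \eqref{F-2.5}, \eqref{F-2.6}, the $(A,B)\mapsto(B,A)$ swap, or the $p\mapsto1/p$ rescaling, being scrupulous each time about which form of the AH inequality and which range of $p$ results; the $f(1)=1$ normalization is what makes all the rescaled converses line up, so I will invoke it explicitly at the one place it is used.
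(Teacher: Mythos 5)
Your reductions among (iii), (iv) and (v) are essentially sound and agree with the paper: with $g(t):=h(1/t)$ one has $1/h=g^*$ and $th=\widetilde g$, so \eqref{F-2.6} gives (iii)\,$\iff$\,(iv) and \eqref{F-2.5} together with the $(A,B)\mapsto(B,A)$ swap gives (iv)\,$\iff$\,(v), all with the same parameter range $(0,1]$. Likewise (i)\,$\iff$\,(ii) is Proposition \ref{P-3.1}. The problem is the step that is supposed to carry you from the $p\ge1$ statement (ii) to the $p\in(0,1]$ statements, and there your argument has a genuine gap. The substitution $A\mapsto A^{1/p}$, $B\mapsto B^{1/p}$ shows only that ``\eqref{ando-hiai} holds for $p$'' is equivalent to ``the \emph{converse} of \eqref{ando-hiai} holds for $1/p$''; it does not yield \eqref{ando-hiai} (or \eqref{ando-hiai2}) itself on $(0,1]$. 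Your assertion that $f(1)=1$ makes these implications ``genuinely equivalent to their own converses'' is false. For a concrete scalar counterexample, take $h(t)=(1+t)/2$, which is pmi, so $P_h$ satisfies \eqref{ando-hiai2} for all $p\ge1$ by Proposition \ref{P-3.1}; with $A=2I$, $B=10^{-4}I$ one has $P_h(A,B)\ge I$ but $P_h(A^{1/2},B^{1/2})\approx 0.712\,I\not\ge I$, so \eqref{ando-hiai2} fails for $P_h$ on $(0,1]$ even though its rescaled converse holds there. More structurally, if an AH implication were equivalent to its converse then every set $\Lambda(f)$ would be invariant under $p\mapsto1/p$, contradicting Theorem \ref{T-3.19} ($\Lambda(th)=(0,1]$) and \eqref{pmd-ando-hiai} ($\Lambda(h^*)=[1,\infty)$). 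The whole content of the theorem is precisely that the roles of $[1,\infty)$ and $(0,1]$ are exchanged when one passes from $h$ to the \emph{different} functions $1/h$, $h(1/t)$, $th$ — this cannot be a purely formal bookkeeping exercise.

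What is missing is the one substantive implication, which the paper proves as (i)\,$\implies$\,(iv) (the converse (iv)\,$\implies$\,(i) being the easy scalar test of Proposition \ref{P-3.5}/Corollary \ref{C-3.6}). Assuming $h$ pmi and $P_g(A,B)\le I$ with $g=h(1/t)$, one sets $C:=B^{1/2}A^{-1}B^{1/2}$, so that the hypothesis reads $B\le h(C)^{-1}$, and for $p\in[1/2,1]$ writes $B^{1/2}A^{-p}B^{1/2}=B\#_pC$ to get
$$
P_g(A^p,B^p)=B^{p-{1\over2}}\bigl(B^{1-p}\sigma_h(B\#_pC)\bigr)B^{p-{1\over2}}
\le B^{p-{1\over2}}\bigl(h(C)^{p-1}\sigma_h(h(C)^{-1}\#_pC)\bigr)B^{p-{1\over2}}
=B^{p-{1\over2}}h(C)^{p-1}h(C^p)B^{p-{1\over2}},
$$
using the joint monotonicity of $\sigma_h$ and $\#_p$; the pmi hypothesis bounds this by $B^{p-{1\over2}}h(C)^{2p-1}B^{p-{1\over2}}$, and the L\"owner--Heinz theorem (applicable since $0\le2p-1\le1$) bounds that by $I$. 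Iterating extends the result from $[1/2,1]$ to all of $(0,1]$. None of this computation — the $\#_p$ identity, the monotonicity step, the use of pmi, the L\"owner--Heinz step, or the iteration — appears in your proposal, so as written the proof does not go through.
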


\begin{proof}
Noting the correspondence $P_f\leftrightarrow P_{f^*}$, we may prove only the result
when $h$ is pmi. Set $g(t):=h(1/t)$. (i)\,$\iff$\,(ii) is Proposition \ref{P-3.1}.
(iii)\,$\iff$\,(iv) follows from \eqref{F-2.6} since $1/h=g^*$. (iv)\,$\iff$\,(v) follows
from \eqref{F-2.5} since $\widetilde g(t)=th(t)$.

(iv)\,$\implies$\,(i).\enspace
Assume (iv), i.e., $P_g$ satisfies \eqref{ando-hiai} for all $p\in(0,1)$. Hence Corollary
\ref{C-3.6} implies that $g$ is pmi and so is $h$.

(i)\,$\implies$\,(iv).\enspace
Assume (i). Let $A,B>0$ and assume that $P_g(A,B)\le I$. Put $C:=B^{1/2}A^{-1}B^{1/2}$ so that
$A^{-1}=B^{-1/2}CB^{-1/2}$. Then $P_g(A,B)\le I$ is equivalent to 
$$
h(C)=g(B^{-1/2}AB^{-1/2})\le B^{-1},\quad\mbox{or}\ \ B\le h(C)^{-1}.
$$
Assume that $p\in[1/2,1]$. Note that $B^{1/2}A^{-p}B^{1/2}=B\#_pC$. With the operator mean
$\sigma_h$ corresponding to $h$, we thus have 
\begin{align}
P_g(A^p,B^p)&=B^{p/2}h(B^{p/2}A^{-p}B^{p/2})B^{p/2} \nonumber\\
&=B^p(B^{-p}\sigma_h A^{-p})B^p \nonumber\\
&=B^{p-{1\over2}}(B^{1-p} \sigma_h (B\#_p C))B^{p-{1\over2}} \nonumber\\
&\le B^{p-{1\over2}}(h(C)^{p-1}\sigma_h (h(C)^{-1}\#_p C))B^{p-{1\over2}} \nonumber\\
&=B^{p-{1\over2}}h(C)^{p-1}h(C^p)B^{p-{1\over2}} \nonumber\\
&\le B^{p-{1\over2}}h(C)^{2p-1}B^{p-{1\over2}}, \label{F-3.3}
\end{align}
where the last inequality is derived from the assumption that $h$ is pmi. Since
$h(C)^{2p-1}\le B^{-(2p-1)}$ thanks to $0\le2p-1\le1$, we now obtain
$$
P_g(A,B)\le I\ \implies\ P_g(A^p, B^p) \le I
$$
for all $p\in[1/2,1]$. Iterating this yields (iv).
\end{proof}

By Theorem \ref{T-3.7} with Proposition \ref{P-2.1} we have the following AH type
inequalities for operator perspectives associated with certain functions in $\OMD_+^1$ and
$\OC_+^1$.

\begin{corollary}\label{C-3.8}
If $g\in\OMD_+^1$ is pmi (resp., pmd), then $P_g$ satisfies \eqref{ando-hiai} (resp.,
\eqref{ando-hiai2}) for all $p\in(0,1]$. The same statement holds for $P_f$ when
$f\in\OC_+^1$ with $f(0^+)=0$ is pmi (resp., pmd) in place of $g$.
\end{corollary}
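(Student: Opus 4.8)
The plan is to derive Corollary~\ref{C-3.8} as an immediate consequence of Theorem~\ref{T-3.7} together with the structural equivalences recorded in Proposition~\ref{P-2.1} and equations~\eqref{F-2.5}--\eqref{F-2.6}. The key observation is that the three function classes occurring here are linked by the explicit transformations $g(t)=h(1/t)$ (equivalently $g=\widetilde h$ up to the relations in Section~2) and $f(t)=th(t)=\widetilde g(t)$, and these are exactly conditions (iv) and (v) appearing in Theorem~\ref{T-3.7}. So the strategy is: start with the function at hand ($g$ or $f$), recognize it as $h(1/t)$ or $th(t)$ for an appropriate $h\in\OM_+^1$, transfer the pmi/pmd hypothesis from the given function to $h$, invoke Theorem~\ref{T-3.7}, and read off the AH conclusion for the original perspective.

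First I would handle the case $g\in\OMD_+^1$. By Proposition~\ref{P-2.1}, $g\in\OMD_+$ is equivalent to $h\in\OM_+$ where $h(t):=g(1/t)$; since $g(1)=1$ we get $h\in\OM_+^1$, and $g(t)=h(1/t)$. It remains to check that $g$ pmi $\iff$ $h$ pmi. This is a one-line verification: $g(t^p)=h(t^{-p})$ and $g(t)^p=h(t^{-1})^p$, so $g(t^p)\le g(t)^p$ for all $t>0,\ p\ge1$ holds precisely when $h(s^p)\le h(s)^p$ for all $s>0,\ p\ge1$ (substitute $s=t^{-1}$). Hence $g$ pmi $\iff$ $h$ pmi, and likewise for pmd. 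Now apply the implication (i)$\implies$(iv) of Theorem~\ref{T-3.7}: if $h$ is pmi then $P_{h(1/t)}=P_g$ satisfies~\eqref{ando-hiai} for all $p\in(0,1]$; the pmd case follows by the same reasoning applied to $h^*$ (or directly from the ``resp.'' clause of Theorem~\ref{T-3.7}).

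Next I would treat $f\in\OC_+^1$ with $f(0^+)=0$. By the equivalence (i)$\iff$(iv) in Proposition~\ref{P-2.1}, such an $f$ is exactly of the form $f(t)=th(t)$ with $h\in\OM_+$, and $f(1)=1$ forces $h(1)=1$, so $h\in\OM_+^1$. As in the previous paragraph, one checks that $f=\widetilde g$ with $g(t)=h(1/t)\in\OMD_+^1$, or simply verifies directly that $f(t^p)\le f(t)^p$ transfers to $h(t^p)\le h(t)^p$: from $f(t)=th(t)$ we get $f(t^p)=t^ph(t^p)$ and $f(t)^p=t^ph(t)^p$, so the inequality $f(t^p)\le f(t)^p$ is equivalent to $h(t^p)\le h(t)^p$ for every $t>0$. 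Thus $f$ pmi $\iff$ $h$ pmi (and similarly for pmd). Then (i)$\implies$(v) of Theorem~\ref{T-3.7} gives that $P_{th}=P_f$ satisfies~\eqref{ando-hiai} for all $p\in(0,1]$ when $h$ (equivalently $f$) is pmi, and the pmd case is the ``resp.'' statement.

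Since every ingredient is already in place, there is no real obstacle; the only thing requiring care is the bookkeeping of which transpose/adjoint operation sends which class to which, and confirming that the pmi/pmd property is preserved under the substitutions $t\mapsto t^{-1}$ (for $h\leftrightarrow g$) and $f(t)=th(t)$ (for $h\leftrightarrow f$) — both of which are elementary since these substitutions commute appropriately with the power map $t\mapsto t^p$. One should also note that $f(1)=1$ (resp.\ $g(1)=1$) is exactly what guarantees $h(1)=1$, so that Theorem~\ref{T-3.7}, stated for $h\in\OM_+^1$, applies without adjustment. Hence the proof is essentially a two-line invocation of Theorem~\ref{T-3.7} via Proposition~\ref{P-2.1}.
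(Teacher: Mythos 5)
Your proof is correct and follows essentially the same route as the paper: set $h(t):=g(1/t)$ (resp.\ write $f(t)=th(t)$) with $h\in\OM_+^1$ via Proposition~\ref{P-2.1}, transfer the pmi/pmd property to $h$, and invoke (i)\,$\implies$\,(iv) (resp.\ (i)\,$\implies$\,(v)) of Theorem~\ref{T-3.7}. The only cosmetic difference is that the paper derives the $P_f$ case from the $P_g$ case via $g=\widetilde f$ and \eqref{F-2.5}, while you use the direct implication (i)\,$\implies$\,(v), an alternative the paper itself mentions.
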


\begin{proof}
Set $h(t):= g(1/t)$ for $t>0$; then $h\in \OM_+^1$ by Proposition \ref{P-2.1}.
The statement for $P_g$ follows from (i)\,$\implies$\,(iv) of
Theorem~\ref{T-3.7}. The statements for $P_f$ immediately follow from those for $P_g$,
where $g:=\widetilde{f}\in \OMD_+^1$, by using Proposition \ref{P-2.1} and \eqref{F-2.5}
(or (i)\,$\implies$\,(v) of Theorem~\ref{T-3.7}).
\end{proof}

The following is a generalized version of the above corollary with no restriction on $g$
and $f$, though restricted to $p\in[1/2,1]$.

\begin{proposition}\label{P-3.9}
Let $g\in\OMD_+^1$ and $A,B>0$. Set $C:=B^{-1/2}AB^{-1/2}$. Then for every $p\in[1/2,1]$,
\begin{align*}
P_g(A,B)&\le I\ \implies\ P_g(A^p,B^p)\le\biggl\|{g(C^p)\over g(C)^p}\bigg\|_\infty I, \\
P_g(A,B)&\ge I\ \implies\ P_g(A^p,B^p)\ge\lambda_{\min}\biggl({g(C^p)\over g(C)^p}\biggr)I,
\end{align*}

The same statements hold for $P_f$ when $f\in\OC_+^1$ with $f(0^+)=0$.
\end{proposition}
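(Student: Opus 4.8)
The plan is to imitate the proof of the implication (i)\,$\implies$\,(iv) of Theorem~\ref{T-3.7}, but to keep the exact spectral bounds $\|g(C^p)/g(C)^p\|_\infty$ and $\lambda_{\min}(g(C^p)/g(C)^p)$ instead of discarding them via the pmi/pmd hypothesis. First I set $h(t):=g(1/t)$, which lies in $\OM_+^1$ by Proposition~\ref{P-2.1}, and I record the Kubo--Ando identities $B^{1/2}A^{-p}B^{1/2}=B\#_pC$ and $B^{p/2}A^{-p}B^{p/2}=B^p\#_pC^p$ together with $A^{-1}=B^{-1/2}CB^{-1/2}$, so that $P_g(A,B)\le I$ becomes $h(C)\le B^{-1}$, i.e.\ $B\le h(C)^{-1}$, and $P_g(A,B)\ge I$ becomes $B\ge h(C)^{-1}$.

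Next, assuming $P_g(A,B)\le I$ and $p\in[1/2,1]$, I run exactly the chain of equalities/inequalities \eqref{F-3.3}: writing $P_g(A^p,B^p)=B^{p-1/2}\bigl(B^{1-p}\sigma_h(B\#_pC)\bigr)B^{p-1/2}$, using $B\le h(C)^{-1}$ together with monotonicity of the operator mean $\sigma_h$ and of $\#_p$ (L\"owner--Heinz for the exponent $1-p\in[0,1]$) to get $P_g(A^p,B^p)\le B^{p-1/2}\bigl(h(C)^{p-1}\sigma_h(h(C)^{-1}\#_pC)\bigr)B^{p-1/2}=B^{p-1/2}h(C)^{p-1}h(C^p)B^{p-1/2}$, where the last identity is the transformer/homogeneity formula $h(C)^{p-1}\sigma_h(h(C)^{-1}\#_pC)=h(C)^{p-1/2}h\bigl(h(C)^{1/2}(h(C)^{-1}\#_pC)h(C)^{1/2}\bigr)h(C)^{p-1/2}$ simplifying via $h(C)^{1/2}(h(C)^{-1}\#_pC)h(C)^{1/2}=h(C)^{-1/2}\#_p\bigl(h(C)^{1/2}Ch(C)^{1/2}\bigr)$ — in fact this is exactly the step already used in \eqref{F-3.3}. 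Then instead of invoking pmi I bound $h(C^p)=h(C)^p\cdot h(C)^{-p/2}h(C^p)h(C)^{-p/2}\le\|h(C)^{-p/2}h(C^p)h(C)^{-p/2}\|_\infty h(C)^p=\|h(C^p)/h(C)^p\|_\infty h(C)^p$, giving $P_g(A^p,B^p)\le\|h(C^p)/h(C)^p\|_\infty\,B^{p-1/2}h(C)^{2p-1}B^{p-1/2}$, and since $2p-1\in[0,1]$ and $h(C)\le B^{-1}$, L\"owner--Heinz yields $h(C)^{2p-1}\le B^{-(2p-1)}$, hence $B^{p-1/2}h(C)^{2p-1}B^{p-1/2}\le I$. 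Finally I translate $C:=B^{1/2}A^{-1}B^{1/2}$ (the $C$ appearing through $h$) back to the statement's $C:=B^{-1/2}AB^{-1/2}$ via $h(B^{1/2}A^{-1}B^{1/2})=g(B^{-1/2}AB^{-1/2})$, so $\|h(C^p_{\text{old}})/h(C_{\text{old}})^p\|_\infty=\|g(C^p)/g(C)^p\|_\infty$; this gives the first implication. For the second ($P_g(A,B)\ge I$), I repeat the argument with all inequalities reversed and with $\lambda_{\min}$ in place of $\|\cdot\|_\infty$, using $B\ge h(C)^{-1}$ throughout; note $\sigma_h$ and $\#_p$ are still monotone, and L\"owner--Heinz for the exponents $1-p,2p-1\in[0,1]$ preserves the reversed ordering.

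The statement for $P_f$ with $f\in\OC_+^1$, $f(0^+)=0$, then follows by passing to $g:=\widetilde f$, which lies in $\OMD_+^1$ by Proposition~\ref{P-2.1}, and using $P_f(A,B)=P_g(B,A)$ from \eqref{F-2.5}; one must check that the constant transforms correctly, i.e.\ with $\widetilde C:=A^{-1/2}BA^{-1/2}$ for the $P_f$ side one has $g(\widetilde C)=f((A^{-1/2}BA^{-1/2})^{-1})\cdot(\cdot)$ — more cleanly, since $f(1)=g(1)=1$ and the bound is a spectral quantity, the substitution $(A,B)\mapsto(B,A)$ in $P_g$ turns $C=B^{-1/2}AB^{-1/2}$ into $A^{-1/2}BA^{-1/2}$ and $g(C^p)/g(C)^p$ into the corresponding quotient for $f$; I should state the $P_f$-version with $C:=A^{-1/2}BA^{-1/2}$ and $g(t)/f(t)$ suitably related, so I would phrase the $f$-conclusion as $P_f(A^p,B^p)\le\|f(C^p)/f(C)^p\|_\infty I$ etc.\ with $C$ adapted.

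The main obstacle is the restriction to $p\in[1/2,1]$: unlike Corollary~\ref{C-3.8}, here I cannot iterate the $[1/2,1]$ result to all of $(0,1]$ because the spectral constant $\|g(C^p)/g(C)^p\|_\infty$ is not multiplicative under composition and the intermediate step $P_g(A^p,B^p)\le\|g(C^p)/g(C)^p\|_\infty I$ destroys the precise operator $C$ needed to continue — so the bound is genuinely only for $p\in[1/2,1]$, which is why the proposition is stated that way. Beyond that, the only real care needed is bookkeeping: correctly identifying which $C$ (namely $B^{1/2}A^{-1}B^{1/2}$ versus $B^{-1/2}AB^{-1/2}$) appears at each stage, verifying that the homogeneity identity $X^{1/2}h(X^{-1/2}YX^{-1/2})X^{1/2}$-type manipulations in \eqref{F-3.3} go through verbatim, and checking the three exponent ranges $1-p\in[0,1]$, $p\in(0,1]$, $2p-1\in[0,1]$ for L\"owner--Heinz — all routine.
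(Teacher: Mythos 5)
Your proposal is correct and follows essentially the same route as the paper: re-run the chain \eqref{F-3.3} from the proof of Theorem \ref{T-3.7} but retain the spectral bound $\|h(C^p)/h(C)^p\|_\infty$ (resp.\ $\lambda_{\min}$) instead of discarding it via pmi, then apply L\"owner--Heinz with $2p-1\in[0,1]$ to get $B^{p-1/2}h(C)^{2p-1}B^{p-1/2}\le I$, identify the constant under the change $C=B^{1/2}A^{-1}B^{1/2}\leftrightarrow B^{-1/2}AB^{-1/2}$, and pass to $P_f$ via $g=\widetilde f$ and \eqref{F-2.5} with the identity $f(C^p)/f(C)^p=g(C^{-p})/g(C^{-1})^p$. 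The only cosmetic difference is that the paper derives the second implication by the substitution $g,A,B\mapsto g^*,A^{-1},B^{-1}$ rather than by rerunning the argument with reversed inequalities, but both are valid.
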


\begin{proof}
Assume that $P_g(A,B)\le I$. The inequality in \eqref{F-3.3} yields that
\begin{align}\label{F-3.13}
P_g(A,B)\le I\ \implies\ P_g(A^p,B^p)\le\biggl\|{g(C^p)\over g(C)^p}\bigg\|_\infty
B^{p-{1\over2}}g(C)^{2p-1}B^{p-{1\over2}},
\end{align}
where $h(C)$ with $C=B^{1/2}A^{-1}B^{1/2}$ is replaced here by $g(C)$ with
$C=B^{-1/2}AB^{-1/2}$. Since $g(C)\le B^{-1}$ and $0\le2p-1\le1$, we have
$B^{p-{1\over2}}g(C)^{2p-1}B^{p-{1\over2}}\le I$. Hence the first statement
follows. Then it is immediate to show the second by replacing $g,A,B$ with
$g^*,A^{-1},B^{-1}$.

When $f\in\OC_+^1$ with $f(0^+)=0$, we have $g:=\widetilde f\in\OMD_+^1$ by Proposition
\ref{P-2.1}. Since
\begin{align}\label{F-3.14}
{f(C^p)\over f(C)^p}={C^pg(C^{-p})\over(Cg(C^{-1}))^p}
={g(C^{-p})\over g(C^{-1})^p},
\end{align}
we note that
\begin{align}\label{bound-f-g}
\bigg\|{f(C^p)\over f(C)^p}\bigg\|_\infty
=\bigg\|{g((B^{1/2}A^{-1}B^{1/2})^p)\over g(B^{1/2}A^{-1}B^{1/2})^p}\bigg\|_\infty
=\bigg\|{g((A^{-1/2}BA^{-1/2})^p)\over g(A^{-1/2}BA^{-1/2})^p}\bigg\|_\infty
\end{align}
and similarly for $\lambda_{\min}$. In view of \eqref{F-2.5}, the result for
$P_f$ follows from that of $P_g$ by interchanging $A$ and $B$.
\end{proof}

We remark that the situation for $P_g$ and $P_f$ in Proposition \ref{P-3.9} is not so good as
that for operator means in the previous subsection, since
$B^{p-{1\over2}}g(C)^{2p-1}B^{p-{1\over2}}$ in \eqref{F-3.13} is different from
$P_g(A,B)^{2p-1}$.

We next consider a complementary version of Proposition \ref{P-3.9} for
$p\in[1,2]$. To do this, we need an extra constant of Kantorovich type.
Recall the \emph{generalized Kantorovich constant} $K(\xi,p)$ defined by
\begin{equation} \label{eq:K}
K(\xi,p) := \frac{\xi^p-\xi}{(p-1)(\xi-1)}
\left( \frac{p-1}{p}\cdot \frac{\xi^p-1}{\xi^p-\xi} \right)^p \quad
\mbox{for $\xi>1$ and $p\in\mathbb{R}$},
\end{equation}
where $K(\xi,1):=\lim_{p\to 1}K(\xi,p)=1$, see \cite[Definition 2.2]{FMPS}. It is known in
\cite[Theorem 4.3]{FMPS} that if
$B\le A$ with either $m\le A \le M$ or $m\le B\le M$ for some scalars
$0<m\leq M$, then $B^p\le K(\xi,p)A^p$ for all $p>1$, where $\xi:=M/m$.

\begin{proposition}\label{P-3.10}
Let $f\in\OC_+^1$ with $f(0^+)=0$ and $A,B>0$. Set $C:=A^{1/2}B^{-1}A^{1/2}$ and
$\xi:=\|A\|_\infty/\lambda_{\min}(A)$ (i.e., the condition number of $A$).
For every $p\in[1,2]$,
\begin{align*}
P_{f}(A,B) &\leq I\ \implies
\ P_{f}(A^p,B^p) \leq K(\xi,2p-1)\bigg\|{f(C^p)\over f(C)^p}\bigg\|_\infty
\lambda_{\min}^{1-p}(P_{f}(A,B))I, \\
P_{f}(A,B) &\ge I\ \implies
\ P_{f}(A^p,B^p) \ge K(\xi,2p-1)^{-1}\lambda_{\min}\biggl({f(C^p)\over f(C)^p}\biggr)
\|P_{f}(A,B)\|_\infty^{1-p}I,
\end{align*}
where $K(\xi, 2p-1)$ is the generalized Kantorovich constant in \eqref{eq:K}.\par

The same statements hold for $P_g$ when $g\in\OMD_+^1$ and
$\xi:=\|B\|_\infty/\lambda_{\min}(B)$.
\end{proposition}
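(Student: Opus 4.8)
The plan is to derive all four inequalities from a single one by using the two symmetries of the operator perspective. By Proposition~\ref{P-2.1}, saying $f\in\OC_+^1$ with $f(0^+)=0$ is the same as saying $g:=\widetilde f\in\OMD_+^1$ together with $f=\widetilde g$, and \eqref{F-2.5} gives $P_f(A,B)=P_g(B,A)$; moreover, by \eqref{F-3.14} and the fact that $\|\cdot\|_\infty$ and $\lambda_{\min}$ depend only on the spectrum, $\bigl\|f(C^p)/f(C)^p\bigr\|_\infty$ with $C=A^{1/2}B^{-1}A^{1/2}$ equals the corresponding quantity built from $g$, while $\|A\|_\infty/\lambda_{\min}(A)$ turns into $\|B\|_\infty/\lambda_{\min}(B)$ once the roles of $A$ and $B$ are exchanged. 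Hence the $P_g$ assertions follow from the $P_f$ assertions. Likewise, since $f\mapsto f^*$ keeps $f$ inside $\{f\in\OC_+^1:f(0^+)=0\}$ and $P_{f^*}(A^{-1},B^{-1})=P_f(A,B)^{-1}$ by \eqref{F-2.6}, applying the first $P_f$-implication to $f^*,A^{-1},B^{-1}$ yields the second one (with the exponents and roles of $\lambda_{\min},\|\cdot\|_\infty$ changing accordingly). So it is enough to prove the first implication for $P_f$.

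For this I would follow the scheme of the proof of Theorem~\ref{T-3.2} (the complementary Ando--Hiai inequality for operator means in the range $1\le p\le2$), now in the perspective setting, using that a perspective of this type is itself an operator-mean expression. Write $f(t)=th(t)$ with $h\in\OM_+^1$ (Proposition~\ref{P-2.1}); then, combining \eqref{F-2.5} with $\widetilde f(t)=h(t^{-1})$, one has $P_f(X,Y)=X^{1/2}h\bigl(X^{1/2}Y^{-1}X^{1/2}\bigr)X^{1/2}$, so with $C=A^{1/2}B^{-1}A^{1/2}$ the hypothesis $P_f(A,B)\le I$ reads $h(C)\le A^{-1}$, and, since $A^{1/2}B^{-p}A^{1/2}=A\#_pC$, one obtains the identity
\[
P_f(A^p,B^p)=A^{p-\frac12}\bigl(A^{1-p}\,\sigma_h\,(A\#_pC)\bigr)A^{p-\frac12},
\]
the exact analogue of the geometric-mean identity of \cite[Lemma~2.1]{wada1} underlying Proposition~\ref{P-3.9}. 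Running the monotonicity chain out of $h(C)\le A^{-1}$ --- via the reversal $A\#_pC=C\#_{1-p}A$, L\"owner--Heinz where it still applies, the joint monotonicity of $\sigma_h$, and the commuting-operator reduction $h(C)^{p-1}\sigma_h(h(C)^{-1}\#_pC)=h(C)^{p-1}h(C^p)$ --- and factoring out $\bigl\|f(C^p)/f(C)^p\bigr\|_\infty=\|h(C^p)/h(C)^p\|_\infty$, one is left to collapse a term of the form $A^{p-\frac12}h(C)^{2p-1}A^{p-\frac12}$. This is where $p\in[1,2]$ differs from $p\in[\tfrac12,1]$: the exponent $2p-1$ now lies in $[1,3]$, so L\"owner--Heinz no longer turns $h(C)\le A^{-1}$ into $h(C)^{2p-1}\le A^{-(2p-1)}$ (for $p\in[\tfrac12,1]$ one has $2p-1\in[0,1]$, which is precisely why Proposition~\ref{P-3.9} needs no extra constant). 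Instead one invokes the Kantorovich-type reverse inequality \cite[Theorem~4.3]{FMPS} with $\xi=\|A\|_\infty/\lambda_{\min}(A)$ to get $h(C)^{2p-1}\le K(\xi,2p-1)A^{-(2p-1)}$, whence $A^{p-\frac12}h(C)^{2p-1}A^{p-\frac12}\le K(\xi,2p-1)I$; this is the single place where $K(\xi,2p-1)$ enters. Finally the factor $\lambda_{\min}^{1-p}(P_f(A,B))$ is produced by the homogeneity normalization used at the end of the proof of Theorem~\ref{T-3.2}: run the argument on $\alpha^{-1}A,\alpha^{-1}B$ with $\alpha:=\lambda_{\min}(P_f(A,B))$, noting that $C$, $\xi$ and $\|f(C^p)/f(C)^p\|_\infty$ are scale-invariant and $P_f(\alpha^{-1}A,\alpha^{-1}B)=\alpha^{-1}P_f(A,B)$.

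The step I expect to be the crux is exactly the adaptation of that monotonicity chain to the range $p\in[1,2]$. In Proposition~\ref{P-3.9} (for $p\in[\tfrac12,1]$) two of the steps rest on L\"owner--Heinz with an exponent in $[0,1]$ and on the Kubo--Ando monotonicity of $\#_p$ ($p\in[0,1]$); for $p\in[1,2]$ both of these flip sign and sense of monotonicity, so the chain has to be re-routed --- essentially as in the passage from Proposition~\ref{P-3.9} to Theorem~\ref{T-3.2} --- so that those steps point in the convenient direction and the only genuinely out-of-range operation left is the single power-$2p-1$ step on operators lying between $\lambda_{\min}(A)I$ and $\|A\|_\infty I$, which is what \cite[Theorem~4.3]{FMPS} is designed to control, so that exactly one factor $K(\xi,2p-1)$ appears. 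Once this re-routing is in place, the remaining manipulations are routine and parallel those of Sections~3.1 and~3.2; the lower-bound implication and the $P_g$ statements then come for free from the reductions of the first paragraph.
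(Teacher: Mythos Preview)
Your reductions (via $f\leftrightarrow f^*$ and $f\leftrightarrow\widetilde f$) and the identification $P_f(X,Y)=X^{1/2}h(X^{1/2}Y^{-1}X^{1/2})X^{1/2}$ with $f=th$, $h\in\OM_+^1$, are exactly what the paper uses, as is the Kantorovich step $h(C)^{2p-1}\le K(\xi,2p-1)A^{1-2p}$ at the end. However, there is a genuine gap in the middle: the ``re-routing'' of the monotonicity chain is only asserted, not carried out, and your account of how $\lambda_{\min}^{1-p}(P_f(A,B))$ enters is wrong. From $h(C)\le A^{-1}$ one gets $A^{1-p}=(A^{-1})^{p-1}\ge h(C)^{p-1}$ and $CA^{-1}C\ge C^2h(C)$, both in the \emph{wrong} direction for the joint monotonicity of $\sigma_h$ and $\#$; writing $A\#_pC=C\#_{1-p}A$ does not help since $1-p\in[-1,0]$ gives no monotone mean. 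Hence there is no intermediate inequality of the form ``$P_f(A,B)\le I\Rightarrow P_f(A^p,B^p)\le K(\xi,2p-1)\|f(C^p)/f(C)^p\|_\infty\,I$'' to which a post-hoc homogeneity normalisation by $\alpha=\lambda_{\min}(P_f(A,B))$ could be applied (and that choice of $\alpha$ would not even preserve the hypothesis $P_f(\alpha^{-1}A,\alpha^{-1}B)\le I$).

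What the paper actually does is introduce $\lambda:=\lambda_{\min}^{-1}(P_f(A,B))=\|A^{-1/2}h(C)^{-1}A^{-1/2}\|_\infty$ \emph{inside} the chain, via the elementary reversal $A^{-1}\le\lambda h(C)$; this gives $A^{1-p}\le(\lambda h(C))^{p-1}$ and $CA^{-1}C\le\lambda C^2h(C)$, now in the right direction. Combined with the rewriting $A\#_pC=(CA^{-1}C)\#_{2-p}C$ (so the weight $2-p\in[0,1]$ yields a genuine Kubo--Ando mean), the monotonicity of $\sigma_h$ and $\#_{2-p}$ applies and the commuting reduction collapses to $\lambda^{p-1}h(C)^{p-1}h(C^p)$, producing the factor $\lambda^{p-1}=\lambda_{\min}^{1-p}(P_f(A,B))$ directly. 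After factoring out $\|f(C^p)/f(C)^p\|_\infty$ one is left with $A^{p-\frac12}h(C)^{2p-1}A^{p-\frac12}$, and only then does the single Kantorovich step you identified finish the proof. So the missing idea is precisely this $\lambda$-reversal; once you have it, no homogeneity normalisation is needed.
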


\begin{proof}
Set $h(t):=\widetilde{f}(t^{-1})=t^{-1}f(t)$ for $t>0$; then $h\in\OM_+^1$ by
Proposition~\ref{P-2.1} and $P_f(A,B)=P_{\widetilde f}(B,A)=A^{1/2}h(C)A^{1/2}$, where
$C:=A^{1/2}B^{-1}A^{1/2}$ and so $B=A^{1/2}C^{-1}A^{1/2}$. Assume that $P_f(A,B)\le I$,
i.e., $h(C)\le A^{-1}$. For any $p\in[1,2]$, since $0\le2-p\le1$,
\begin{align*}
P_f(A^p,B^p)&=P_{\widetilde f}(B^p,A^p)
=A^{p/2}h(A^{p/2}(A^{1/2}C^{-1}A^{1/2})^{-p}A^{p/2})A^{p/2} \\
&=A^{p/2}h\bigl(A^{p-1\over2}CA^{-1/2}(A^{1/2}C^{-1}A^{1/2})^{2-p}
A^{-1/2}CA^{p-1\over2}\bigr)A^{p/2} \\
&=A^{p/2}h\bigl(A^{p-1\over2}C(A^{-1}\#_{2-p}C^{-1})CA^{p-1\over2}\bigr)A^{p/2} \\
&=A^{p-{1\over2}}\bigl(A^{1-p}\sigma_h[(CA^{-1}C)\#_{2-p}C]\bigr)A^{p-{1\over2}}.
\end{align*}
Now, set $\lambda:=\|A^{-1/2}h(C)^{-1}A^{-1/2}\|_\infty=\lambda_{\min}^{-1}(P_f(A,B))$. Since
$0\le p-1\le1$ and $A^{-1}\le\|h(C)^{-1/2}A^{-1}h(C)^{-1/2}\|_\infty h(C)=\lambda h(C)$,
we have $A^{1-p}\le(\lambda h(C))^{p-1}$ and $CA^{-1}C\le\lambda C^2h(C)$, which imply that
\begin{align}
P_f(A^p,B^p)&\le A^{p-{1\over2}}\bigl((\lambda h(C))^{p-1}\sigma_h
[(\lambda C^2h(C))\#_{2-p}C]\bigr)A^{p-{1\over2}} \nonumber\\
&=\lambda^{p-1}A^{p-{1\over2}}h(C)^{p-1}h(C^p)A^{p-{1\over2}} \nonumber\\
&=\lambda^{p-1}A^{p-{1\over2}}\bigl(f(C)^{-p}f(C^p)h(C)^{2p-1}\bigr)A^{p-{1\over2}}
\nonumber\\
&\le\lambda^{p-1}\bigg\|{f(C^p)\over f(C)^p}\bigg\|_\infty
A^{p-{1\over2}}h(C)^{2p-1}A^{p-{1\over2}}. \label{F-3.16}
\end{align}
Since $\|A\|_\infty^{-1}\le A^{-1}\le\lambda_{\min}(A)^{-1}$, applying the
Kantorovich inequality mentioned above to $h(C)\le A^{-1}$, we have
$h(C)^{2p-1}\le K(\xi,2p-1)A^{1-2p}$. Therefore,
$$
P_f(A^p,B^p)\le\lambda^{p-1}\bigg\|{f(C^p)\over f(C)^p}\bigg\|_\infty K(\xi,2p-1)I,
$$
which is the inequality in the first assertion.

The proof of the second assertion is similar to the above, so we omit the details.
The statements for $P_g$ immediately follow from those for $P_f$ by using \eqref{F-2.5} and
the arguments in \eqref{F-3.14} and \eqref{bound-f-g}.
\end{proof}

Note that the bounds $\big\|{f(C^p)\over f(C)^p}\big\|_\infty$ and
$\lambda_{\min}\bigl({f(C^p)\over f(C)^p}\bigr)$ (also those for $g$) in Proposition
\ref{P-3.10} are unchanged when $C=A^{1/2}B^{-1}A^{1/2}$ is replaced with
$C=B^{-1/2}AB^{-1/2}$, as in \eqref{bound-f-g}.

We remark that 
\[
\lambda_{\min}^{1-p}(P_{f}(A,B))\bigg\|{f(C^p)\over f(C)^p}\bigg\|_\infty K(\xi,2p-1)=1
\]
in the case of $p=1$.

\begin{corollary} \label{C-3.11}
Let $f\in\OC_+^1$ with $f(0^+)=0$ and $A,B>0$. Set
$\xi:=\|A\|_\infty/\lambda_{\min}(A)$. If $f$ is pmd, then
$$
P_f(A,B) \leq I\ \implies \ P_f(A^p,B^p) \leq K(\xi,2p-1)\lambda_{\min}^{1-p}(P_f(A,B)) I,
\quad1\le p\le2.
$$
If $f$ is pmi, then
$$
P_f(A,B) \ge I\ \implies \ P_f(A^p,B^p) \ge K(\xi,2p-1)^{-1}\|P_f(A,B)\|_\infty^{1-p} I,
\quad1\le p\le2.
$$

The same statements hold for $P_g$ when $\xi:=\|B\|_\infty/\lambda_{\min}(B)$ and
$g\in\OMD_+^1$ is pmd or pmi.
\end{corollary}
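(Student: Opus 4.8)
The plan is to deduce this corollary directly from Proposition \ref{P-3.10} by feeding the power-monotonicity hypothesis into the two spectral bounds $\big\|f(C^p)/f(C)^p\big\|_\infty$ and $\lambda_{\min}\big(f(C^p)/f(C)^p\big)$ that appear there. First I would observe that $f(C^p)$ and $f(C)^p$ are commuting positive operators, both being continuous functions of the single operator $C=A^{1/2}B^{-1}A^{1/2}$; hence by the spectral mapping theorem the relevant estimates reduce to the corresponding scalar inequalities for $f$ on $(0,\infty)$. Concretely, if $f$ is pmd then $f(t^p)\le f(t)^p$ for all $t>0$ and $p\ge1$, so $f(C^p)\le f(C)^p$ and therefore $\big\|f(C^p)/f(C)^p\big\|_\infty\le1$; and if $f$ is pmi then $f(C)^p\le f(C^p)$, so $\lambda_{\min}\big(f(C^p)/f(C)^p\big)\ge1$.

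Next I would substitute these bounds into the two implications of Proposition \ref{P-3.10}. In the pmd case, the first implication gives $P_f(A^p,B^p)\le K(\xi,2p-1)\big\|f(C^p)/f(C)^p\big\|_\infty\lambda_{\min}^{1-p}(P_f(A,B))I$, and bounding the middle factor by $1$ yields the first asserted inequality. In the pmi case, the second implication gives $P_f(A^p,B^p)\ge K(\xi,2p-1)^{-1}\lambda_{\min}\big(f(C^p)/f(C)^p\big)\|P_f(A,B)\|_\infty^{1-p}I$; since $K(\xi,2p-1)^{-1}>0$ (with $K(\xi,1)=1$ and $K(\xi,r)>0$ for $r\ge1$), multiplying the scalar inequality $\lambda_{\min}\big(f(C^p)/f(C)^p\big)\ge1$ by this positive constant preserves the order relation, and we obtain the second asserted inequality. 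By the remark following Proposition \ref{P-3.10} the same bounds on $f(C^p)/f(C)^p$ hold with $C=B^{-1/2}AB^{-1/2}$, so no ambiguity in the choice of $C$ arises.

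Finally, the statements for $P_g$ with $g\in\OMD_+^1$ pmd or pmi follow verbatim from the ``$P_g$'' part of Proposition \ref{P-3.10}, now with $\xi=\|B\|_\infty/\lambda_{\min}(B)$, using $g(C^p)\le g(C)^p$ when $g$ is pmd and $g(C)^p\le g(C^p)$ when $g$ is pmi. There is essentially no obstacle in this argument; the only points requiring care are bookkeeping — matching pmd with the operator-norm bound and the upper estimate, matching pmi with the $\lambda_{\min}$ bound and the lower estimate, and noting that at $p=1$ both inequalities collapse to the trivial identity $P_f(A,B)=P_f(A,B)$, consistent with the observation that $\lambda_{\min}^{1-p}(P_f(A,B))\big\|f(C^p)/f(C)^p\big\|_\infty K(\xi,2p-1)=1$ when $p=1$.
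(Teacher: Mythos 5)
Your proposal is correct and is exactly the intended derivation: the paper states Corollary \ref{C-3.11} without proof precisely because it follows from Proposition \ref{P-3.10} by noting that pmd gives $\bigl\|f(C^p)/f(C)^p\bigr\|_\infty\le1$ and pmi gives $\lambda_{\min}\bigl(f(C^p)/f(C)^p\bigr)\ge1$ for $p\ge1$. Your bookkeeping of which hypothesis pairs with which bound, and the reduction to scalar inequalities via commutativity of $f(C^p)$ and $f(C)^p$, are all in order.
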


On the other hand, we showed the following result in \cite[p.\ 137, Corollary 5.3.]{FMPS}:
Let $A$ and $B$ be positive invertible operators with $m\leq A, B \leq M$ for some scalars
$0<m<M$, and put $\xi:=M/m$. For any $\alpha >1$ and every $p>1$,
\begin{equation} \label{eq:FMPS}
P_{t^{\alpha}}(A,B) \leq I\ \implies\ P_{t^{\alpha}}(A^p,B^p)
\leq K(\xi^{2p},\alpha) K(\xi,p)^{\alpha}I.
\end{equation}
We remark that in the case of $p=1$, we have
$\lambda_{\min}^{1-p}(P_f(A,B)) K(\xi,2p-1)=1$ in Corollary \ref{C-3.11}, but 
$K(\xi^{2p},\alpha) K(\xi,p)^{\alpha}>1$ in \eqref{eq:FMPS}.\par

\begin{Problem}\label{Q-3.12}\rm
We have shown that the operator perspectives $P_g$ and $P_f$ satisfy the AH type inequality
\eqref{ando-hiai} for all $p\in(0,1]$ when $g\in\OMD_+^1$ and $f\in\OC_+^1$ with $f(0^+)=0$
and $g,f$ are pmi. A natural question is whether the inequality can hold for more general pmi
functions in $\OC_+^1$. A typical example of such pmi functions is $f(t)=wt^2+1-w$ ($0<w<1$).
It seems to us that this $f$ fails to satisfy \eqref{ando-hiai} for $p\in(0,1]$, while we
cannot produce a counter-example.
\end{Problem}

%----------------------------------------------------
\subsection{Weak log-majorization for matrices}
%----------------------------------------------------

In this subsection we assume that $\cH$ is \emph{finite-dimensional}, so $B(\cH)$ is
identified with the $n\times n$ matrix algebra with $n=\dim\cH$. Let $A$ and $B$ be $n\times n$
positive semidefinite matrices. Let $\lambda_1(A)\ge\dots\ge\lambda_n(A)$ be the eigenvalues of
$A$ in decreasing order counting multiplicities. The \emph{weak majorization} $A\prec_wB$ says
that $\sum_{i=1}^k\lambda_i(A)\le\sum_{i=1}^k\lambda_i(B)$ for all $k=1,\dots,n$.
The \emph{weak log-majorization} $A\prec_{w\log}B$ means that
\begin{align}\label{F-3.4}
\prod_{i=1}^k\lambda_i(A)\le\prod_{i=1}^k\lambda_i(B),\qquad1\le k\le n,
\end{align}
and the \emph{log-majorization} $A\prec_{\log}B$ means that $A\prec_{w\log B}$ and equality
holds in \eqref{F-3.4} for the last $k=n$, i.e., $\det A=\det B$. Also, the
\emph{log-supermajorization} $A\prec^{w\log}B$ is defined by
$$
\prod_{i=1}^k\lambda_{n+1-k}(A)\ge\prod_{i=1}^k\lambda_{n+1-k}(B),\qquad1\le k\le n.
$$
When $A,B$ are positive definite, $A\prec^{w\log}B$ $\iff$ $A^{-1}\prec_{w\log}B^{-1}$. Note
that $A\prec_{w\log}B$ $\implies$ $A\prec_wB$, and see, e.g., \cite{Bhatia,Hi2} for more about
majorizations for matrices. The notions of (weak) log-majorization and the
log-supermajorization are quite useful to produce matrix norm inequalities for symmetric
(or unitarily invariant) norms (see \cite{Hi2}) and symmetric anti-norms (see \cite{BH}).

For the perspective $P_f$ of a power function $f(t)=t^\alpha$, the standard antisymmetric
tensor power technique (see \cite{Bhatia,AH1}) can be used to obtain log-majorizations from
AH type inequalities, as was done in \cite{AH1} for the weighted matrix geometric means
$A\#_\alpha B$ ($0\le\alpha\le1$). From Corollary \ref{C-3.8} specialized to power functions
with the antisymmetric tensor technique, one can obtain the log-majorization as follows:
For any $\alpha\in[-1,0]\cup[1,2]$,
\begin{align}\label{F-3.5}
P_{t^\alpha}(A^p,B^p)\prec_{\log}P_{t^\alpha}(A,B)^p,\qquad0<p\le1,
\end{align}
or equivalently,
\begin{align}\label{F-3.6}
P_{t^\alpha}(A^q,B^q)^{1/q}\prec_{\log}P_{t^\alpha}(A^p,B^p)^{1/p},\qquad0<q\le p.
\end{align}
In fact, \eqref{F-3.5} and \eqref{F-3.6} for $-1\le\alpha\le0$ have recently been obtained in
\cite{KS}, where the symbol $A\natural_\alpha B$ is used for $P_{t^\alpha}(B,A)$.
Also, \eqref{F-3.6} for $1\le\alpha\le2$ has been given in \cite[(5.2)]{Hi3}.

Even for non-power functions we can obtain the following weak log-majorizations though not
log-majorizations. The weak log-(super)majorizations in \eqref{F-3.21} and
\eqref{F-3.22} are stronger versions of Propositions \ref{P-3.9}, though restricted to
matrices. On the other hand, those in \eqref{F-3.23} and \eqref{F-3.24} are rather
considered as the reverse versions of Proposition \ref{P-3.10} without the generalized
Kantorovich constant. Indeed, \eqref{F-3.23} in particular implies that for every $p\in [1,2]$,
$$
\|P_g(A^p,B^p)\|_\infty\ge\lambda_{\min}\biggl({g(C^p)\over g(C)^p}\biggr)
\|P_g(A,B)\|_\infty^{1-p}\|P_g(A,B)\|_\infty^{2p-1},
$$
while the first inequality for $P_g$ in Proposition \ref{P-3.10} implies that for every
$p\in [1,2]$,
$$
\|P_g(A^p,B^p)\|_\infty\le K(\xi,2p-1)\bigg\|{g(C^p)\over g(C)^p}\bigg\|_\infty
\lambda_{\min}^{1-p}(P_g(A,B))\|P_g(A,B)\|_\infty^{2p-1}.
$$
The above two are in opposite directions. Similarly, \eqref{F-3.24} and the second inequality
in Proposition \ref{P-3.10} give the inequalities for $\lambda_{\min}(A^p,B^p)$ in the
opposite directions.

\begin{proposition}\label{P-3.13}
Let $g\in\OMD_+^1$ and $A,B$ be positive definite matrices. Set $C:=B^{-1/2}AB^{-1/2}$. Then
\begin{align}
&P_g(A^p,B^p)\prec_{w\log}\bigg\|{g(C^p)\over g(C)^p}\bigg\|_\infty
\|P_g(A,B)\|_\infty^{1-p}P_g(A,B)^{2p-1},
\qquad\ \ 1/2\le p\le1, \label{F-3.21}\\
&P_g(A^p,B^p)\prec^{w\log}\lambda_{\min}\biggl({g(C^p)\over g(C)^p}\biggr)
\lambda_{\min}^{1-p}(P_g(A,B))P_g(A,B)^{2p-1},
\quad1/2\le p\le1, \label{F-3.22}\\
&\lambda_{\min}\biggl({g(C^p)\over g(C)^p}\biggr)
\|P_g(A,B)\|_\infty^{1-p}P_g(A,B)^{2p-1}\prec_{w\log}P_g(A^p,B^p),
\quad\ 1\le p\le2, \label{F-3.23}\\
&\bigg\|{g(C^p)\over g(C)^p}\bigg\|_\infty
\lambda_{\min}^{1-p}(P_g(A,B))P_g(A,B)^{2p-1}\prec^{w\log}P_g(A^p,B^p),
\qquad\ 1\le p\le2. \label{F-3.24}
\end{align}

The same statements hold for $P_f$ when $f\in\OC_+^1$ with $f(0^+)=0$.
\end{proposition}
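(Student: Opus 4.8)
The plan is to deduce each of \eqref{F-3.21}--\eqref{F-3.24} from an operator inequality already available in (or extracted from the proofs of) Sections~3.1--3.2, by applying the monotonicity of eigenvalues (Weyl's theorem) together with Araki's log-majorization, and then to pass between the ``upper'' pair \eqref{F-3.21}--\eqref{F-3.22} and the ``lower'' pair \eqref{F-3.23}--\eqref{F-3.24} via the substitution $(g,A,B)\mapsto(g^*,A^{-1},B^{-1})$, the identity \eqref{F-2.6}, and the duality $X\prec^{w\log}Y\iff X^{-1}\prec_{w\log}Y^{-1}$ valid for positive definite matrices. As usual, the statements for $P_f$ with $f\in\OC_+^1$, $f(0^+)=0$, follow from those for $P_g$ with $g:=\widetilde f\in\OMD_+^1$ by \eqref{F-2.5} (i.e.\ by interchanging $A$ and $B$), the relevant bounds being insensitive to whether $C$ is $B^{-1/2}AB^{-1/2}$ or $A^{-1/2}BA^{-1/2}$ exactly as in \eqref{F-3.14}--\eqref{bound-f-g}; so it suffices to treat $P_g$.

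First I would promote \eqref{F-3.13} to arbitrary $A,B>0$. Writing $Q:=P_g(A,B)$, $\alpha:=\|Q\|_\infty$, one has $P_g(\alpha^{-1}A,\alpha^{-1}B)=\alpha^{-1}Q\le I$, and since $C=B^{-1/2}AB^{-1/2}$ is unchanged by $(A,B)\mapsto(\alpha^{-1}A,\alpha^{-1}B)$ while $P_g$ is positively homogeneous of degree one, substituting $\alpha^{-1}A,\alpha^{-1}B$ into \eqref{F-3.13} yields
\[
P_g(A^p,B^p)\le\bigg\|{g(C^p)\over g(C)^p}\bigg\|_\infty\,\|Q\|_\infty^{1-p}\,B^{p-{1\over2}}g(C)^{2p-1}B^{p-{1\over2}},\qquad\tfrac12\le p\le1.
\]
Now $P_g(A,B)=B^{1/2}g(C)B^{1/2}$ and $p-\tfrac12=\tfrac{2p-1}2$, so for $0\le2p-1\le1$ Araki's log-majorization, applied to the positive definite matrices $B$ and $g(C)$, gives $\prod_{i=1}^k\lambda_i\big(B^{p-{1\over2}}g(C)^{2p-1}B^{p-{1\over2}}\big)\le\prod_{i=1}^k\lambda_i\big(P_g(A,B)^{2p-1}\big)$. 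Combining this with Weyl's monotonicity theorem applied to the displayed inequality yields \eqref{F-3.21}. Then \eqref{F-3.22} is \eqref{F-3.21} for $(g^*,A^{-1},B^{-1})$ (again in $\OMD_+^1$) read through \eqref{F-2.6} and the duality above, using $P_{g^*}(A^{-1},B^{-1})=Q^{-1}$, $\|Q^{-1}\|_\infty=\lambda_{\min}(Q)^{-1}$, and $\big\|g^*((C^{-1})^p)/g^*(C^{-1})^p\big\|_\infty^{-1}=\lambda_{\min}\big(g(C^p)/g(C)^p\big)$.

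For \eqref{F-3.23} the range is $1\le p\le2$, and the operator inequality I would use is the Kantorovich-free inequality contained in the proof of Proposition~\ref{P-3.10}: that computation shows, in its ``lower'' variant and before the estimate $h(C)^{2p-1}\le K(\xi,2p-1)A^{1-2p}$ (the hypothesis $P_f(A,B)\ge I$ of Proposition~\ref{P-3.10} being needed only for that last estimate), that for all $A,B>0$, with $f\in\OC_+^1$, $f(0^+)=0$, $h(t)=t^{-1}f(t)$ and $C=A^{1/2}B^{-1}A^{1/2}$,
\[
P_f(A^p,B^p)\ge\|P_f(A,B)\|_\infty^{1-p}\,\lambda_{\min}\bigg({f(C^p)\over f(C)^p}\bigg)\,A^{p-{1\over2}}h(C)^{2p-1}A^{p-{1\over2}}.
\]
Taking $f:=\widetilde g$, interchanging $A$ and $B$, and using \eqref{F-2.5} to rewrite $P_g(A^p,B^p)=P_f(B^p,A^p)$ — so that $h(B^{1/2}A^{-1}B^{1/2})=B^{-1/2}P_g(A,B)B^{-1/2}=g(C)$ with $C=B^{-1/2}AB^{-1/2}$, and the scalar becomes $\lambda_{\min}(g(C^p)/g(C)^p)$ as in \eqref{F-3.14} — gives $P_g(A^p,B^p)\ge\lambda_{\min}\big(g(C^p)/g(C)^p\big)\|Q\|_\infty^{1-p}B^{p-{1\over2}}g(C)^{2p-1}B^{p-{1\over2}}$. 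Since now $2p-1\ge1$, Araki's log-majorization runs the other way, $\prod_{i=1}^k\lambda_i\big(P_g(A,B)^{2p-1}\big)\le\prod_{i=1}^k\lambda_i\big(B^{p-{1\over2}}g(C)^{2p-1}B^{p-{1\over2}}\big)$, and Weyl's theorem then yields \eqref{F-3.23}; \eqref{F-3.24} follows from \eqref{F-3.23} for $(g^*,A^{-1},B^{-1})$ by the same inversion as for \eqref{F-3.22}.

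The step I expect to need the most care is exactly this passage, via Araki's log-majorization, between the operator bound $B^{p-{1\over2}}g(C)^{2p-1}B^{p-{1\over2}}$ that the operator inequalities naturally produce and the bound $P_g(A,B)^{2p-1}$ occurring in the statement: one must observe that $B^{p-1/2}g(C)^{2p-1}B^{p-1/2}$ and $(B^{1/2}g(C)B^{1/2})^{2p-1}=P_g(A,B)^{2p-1}$ are log-comparable, with the direction of comparison switching precisely at $2p-1=1$ — which is why the $p\le1$ inequalities emerge as bounds from above on $P_g(A^p,B^p)$ (hence $\prec_{w\log}$ and $\prec^{w\log}$) and the $p\ge1$ ones as bounds from below. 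The remainder — keeping straight which expression for $C$ is in force at each stage, and translating $\|\cdot\|_\infty$, $\lambda_{\min}$ and the operators involved between $g$, $g^*$ and $\widetilde g$ — is routine but has to be carried out consistently.
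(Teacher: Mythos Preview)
Your argument is correct and follows essentially the same route as the paper's proof: combine the operator inequality \eqref{F-3.13} (respectively its counterpart from the computation \eqref{F-3.16} in the proof of Proposition~\ref{P-3.10}) with Araki's log-majorization for $B^{p-1/2}g(C)^{2p-1}B^{p-1/2}$ versus $P_g(A,B)^{2p-1}$, and then pass between the four inequalities via the duality $(g,A,B)\mapsto(g^*,A^{-1},B^{-1})$ and between $P_g$ and $P_f$ via \eqref{F-2.5}. The only organizational differences are that you normalize by $\alpha=\|P_g(A,B)\|_\infty$ \emph{before} invoking Araki and Weyl, while the paper first establishes the weak log-majorization under $P_g(A,B)\le I$ and then scales (the order is immaterial), and that for the range $1\le p\le2$ the paper proves \eqref{F-3.24} for $P_f$ directly from the upper estimate \eqref{F-3.16} and obtains \eqref{F-3.23} by passing to $f^*$, whereas you start from the lower variant of that computation and reach \eqref{F-3.23} for $P_g$ first; these are equivalent traversals of the same duality square. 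Your observation that the hypothesis $P_f(A,B)\le I$ (or $\ge I$) in \eqref{F-3.16} is used only at the final Kantorovich step, so that the preceding operator inequality holds for all $A,B>0$, is correct and slightly streamlines the argument.
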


\begin{proof}
Assume that $g\in\OMD_+^1$ and $1/2\le p\le1$. Since $0\le2p-1\le1$, Araki's log-majorization
\cite{Ar} (also \cite{AH1}) implies that
$$
B^{p-{1\over2}}g(C)^{2p-1}B^{p-{1\over2}}\prec_{\log}
(B^{1/2}g(C)B^{1/2})^{2p-1}=P_g(A,B)^{2p-1}.
$$
Combining this with \eqref{F-3.13} shows that
$$
P_g(A,B)\le I\ \implies\ P_g(A^p,B^p)\prec_{\log}
\bigg\|{g(C^p)\over g(C)^p}\bigg\|_\infty P_g(A,B)^{2p-1}.
$$
For any $A,B>0$ apply the above to $\alpha^{-1}A,\alpha^{-1}B$ with
$\alpha:=\|P_g(A,B)\|_\infty$; then \eqref{F-3.21} for $P_g$ follows.
To prove \eqref{F-3.22} for $P_g$, replace $g,A,B$ in \eqref{F-3.21} with $g^*,A^{-1},B^{-1}$;
then we have
$$
P_g(A^p,B^p)^{-1}\prec_{\log}
\bigg\|\biggl({g(C^p)\over g(C)^p}\biggr)^{-1}\bigg\|_\infty
\|P_g(A,B)^{-1}\|_\infty^{1-p}P_g(A,B)^{-(2p-1)},
$$
which is equivalent to \eqref{F-3.22}.

Next, assume that $f\in\OC_+^1$ with $f(0^+)=0$ and $1\le p\le2$. Let
$\widetilde C:=A^{1/2}B^{-1}A^{1/2}$. The inequality in \eqref{F-3.16} yields that
\begin{align}\label{F-3.25}
P_f(A,B)\le I\ \implies\ P_f(A^p,B^p)\le
\bigg\|{f\bigl(\widetilde C^p\bigr)\over f\bigl(\widetilde C\bigr)^p}\bigg\|_\infty
\lambda_{\min}^{1-p}(P_f(A,B))A^{p-{1\over2}}h(\widetilde C)^{2p-1}A^{p-{1\over2}}.
\end{align}
Since $2p-1\ge1$, Araki's log-majorization implies that
\begin{align}\label{F-3.26}
A^{p-{1\over2}}h(\widetilde C)^{2p-1}A^{p-{1\over2}}\succ_{\log}
(A^{1/2}h\bigl(\widetilde C\bigr)A^{1/2}\bigr)^{2p-1}=P_f(A,B)^{2p-1}.
\end{align}
Combining \eqref{F-3.25} and \eqref{F-3.26} gives
$$
P_f(A,B)\le I\ \implies\ \bigg\|{f(C^p)\over f(C)^p}\bigg\|_\infty
\lambda_{\min}^{1-p}(P_f(A,B))P_f(A,B)^{2p-1}
\prec^{w\log}P_f(A^p,B^p),
$$
since $\Big\|{f(\widetilde C^p)\over f(\widetilde C)^p}\Big\|_\infty
=\big\|{f(C^p)\over f(C)^p}\big\|_\infty$. Hence \eqref{F-3.24} for $P_f$ follows by
applying the above to $\alpha^{-1}A,\alpha^{-1}B$ with $\alpha:=\|P_f(A,B)\|_\infty$ (but
the effect of $\alpha$ disappears in this case). Replacing $f,A,B$ in \eqref{F-3.24} for
$P_f$ with $f^*,A^{-1},B^{-1}$, we have \eqref{F-3.23} for $P_f$.

Finally, \eqref{F-3.21} and \eqref{F-3.22} for $P_f$ immediately follow from those for $P_g$,
while \eqref{F-3.23} and \eqref{F-3.24} does from those of $P_g$.
\end{proof}

Proposition \ref{P-3.13} immediately implies the following:

\begin{corollary}\label{C-3.14}
Let $g\in\OMD_+^1$ and $A,B$ be positive definite matrices.
\begin{itemize}
\item[$(${\rm 1}$)$] If $g$ is pmi, then
\begin{align*}
&P_g(A^p,B^p)\prec_{w\log}\|P_g(A,B)\|_\infty^{1-p}P_g(A,B)^{2p-1},
\qquad1/2\le p<1, \\
&\|P_g(A,B)\|_\infty^{1-p}P_g(A,B)^{2p-1}\prec_{w\log}P_g(A^p,B^p),
\qquad1\le p\le2.
\end{align*}
\item[$(${\rm 2}$)$] If $g$ is pmd, then
\begin{align*}
&P_g(A^p,B^p)\prec^{w\log}\lambda_{\min}^{1-p}(P_g(A,B))P_g(A,B)^{2p-1},
\qquad1/2\le p<1, \\
&\lambda_{\min}^{1-p}(P_g(A,B))P_g(A,B)^{2p-1}\prec^{w\log}P_g(A^p,B^p),
\qquad\ 1\le p\le2.
\end{align*}
\end{itemize}

The same statements hold for $P_f$ when $f\in\OC_+^1$ with $f(0^+)=0$ is pmi or pmd.
\end{corollary}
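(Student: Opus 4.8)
The plan is to deduce Corollary \ref{C-3.14} directly from Proposition \ref{P-3.13} by specializing the bounds $\bigl\|g(C^p)/g(C)^p\bigr\|_\infty$ and $\lambda_{\min}\bigl(g(C^p)/g(C)^p\bigr)$ using the pmi/pmd hypotheses. First I would recall that for $C>0$ the functional calculus gives $g(C^p)/g(C)^p>0$, and that for any $t>0$ the scalar $g(t^p)/g(t)^p$ is exactly the ratio appearing in the definitions of pmi and pmd. Hence if $g$ is pmi, then $g(t^p)\ge g(t)^p$ for $p\ge1$ and $g(t^p)\le g(t)^p$ for $0<p\le1$, so by the spectral mapping theorem $\lambda_{\min}\bigl(g(C^p)/g(C)^p\bigr)\ge1$ when $p\ge1$ and $\bigl\|g(C^p)/g(C)^p\bigr\|_\infty\le1$ when $0<p\le1$; symmetrically, if $g$ is pmd the two opposite estimates hold.

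Next I would plug these into the four majorizations of Proposition \ref{P-3.13}. For part (1), assume $g$ is pmi. In \eqref{F-3.21}, which holds for $1/2\le p\le1$, the prefactor $\bigl\|g(C^p)/g(C)^p\bigr\|_\infty\le1$, and since $\prec_{w\log}$ is preserved under multiplication by a constant in $(0,1]$ (it only makes the left side smaller), we drop that factor to obtain $P_g(A^p,B^p)\prec_{w\log}\|P_g(A,B)\|_\infty^{1-p}P_g(A,B)^{2p-1}$ for $1/2\le p\le1$, which is the first line of (1) (the strict inequality $p<1$ being merely cosmetic). In \eqref{F-3.23}, which holds for $1\le p\le2$, the prefactor $\lambda_{\min}\bigl(g(C^p)/g(C)^p\bigr)\ge1$, so the left-hand side is made larger, hence dropping it preserves $\prec_{w\log}$; this yields the second line of (1). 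Part (2) is entirely analogous with $g$ pmd: in \eqref{F-3.22} (for $1/2\le p\le1$) the prefactor $\lambda_{\min}\bigl(g(C^p)/g(C)^p\bigr)\ge1$ can be dropped from a $\prec^{w\log}$ on the right (removing a factor $\ge1$ from the right side of a log-supermajorization still leaves it valid since the right side only decreases), and in \eqref{F-3.24} (for $1\le p\le2$) the prefactor $\bigl\|g(C^p)/g(C)^p\bigr\|_\infty\le1$ is dropped from the left side of a $\prec^{w\log}$. Finally, the assertion for $P_f$ with $f\in\OC_+^1$, $f(0^+)=0$, pmi or pmd follows by the same reasoning, or directly from the "same statements hold for $P_f$'' clause already established in Proposition \ref{P-3.13} together with the observation that $f$ pmi/pmd transfers to the relevant bounds exactly as in \eqref{bound-f-g}.

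The only point requiring a little care — and the step I expect to be the main (though minor) obstacle — is checking that one may legitimately delete a scalar factor from one side of a weak log-majorization or log-supermajorization in the indicated direction. Concretely: if $c\ge1$ and $cX\prec_{w\log}Y$ then $X\prec_{w\log}Y$ because each partial product of eigenvalues of $X$ is $\le c^{-k}$ times that of $cX$, hence $\le$ that of $Y$; if $0<c\le1$ and $X\prec_{w\log}cY$ then $X\prec_{w\log}Y$; and the dual statements hold for $\prec^{w\log}$ with the roles of the two sides interchanged. These are immediate from the definitions in \eqref{F-3.4} and the displayed definition of $\prec^{w\log}$, so no genuine difficulty arises; the corollary is essentially a transcription of Proposition \ref{P-3.13} once the pmi/pmd inequalities are fed into the spectral bounds.

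\begin{proof}
Let $g\in\OMD_+^1$ and let $A,B$ be positive definite with $C:=B^{-1/2}AB^{-1/2}$.

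Assume first that $g$ is pmi. For $0<p\le1$ we have $g(t^p)\le g(t)^p$ for all $t>0$, hence by the spectral mapping theorem $\bigl\|g(C^p)/g(C)^p\bigr\|_\infty\le1$; for $p\ge1$ we have $g(t^p)\ge g(t)^p$, hence $\lambda_{\min}\bigl(g(C^p)/g(C)^p\bigr)\ge1$. In \eqref{F-3.21}, valid for $1/2\le p\le1$, the coefficient $\bigl\|g(C^p)/g(C)^p\bigr\|_\infty\le1$ only decreases the left side of the $\prec_{w\log}$, so it may be omitted, giving
$$
P_g(A^p,B^p)\prec_{w\log}\|P_g(A,B)\|_\infty^{1-p}P_g(A,B)^{2p-1},\qquad 1/2\le p\le1.
$$
In \eqref{F-3.23}, valid for $1\le p\le2$, the coefficient $\lambda_{\min}\bigl(g(C^p)/g(C)^p\bigr)\ge1$ only increases the left side, so deleting it preserves the $\prec_{w\log}$, giving
$$
\|P_g(A,B)\|_\infty^{1-p}P_g(A,B)^{2p-1}\prec_{w\log}P_g(A^p,B^p),\qquad 1\le p\le2.
$$
This proves (1) for $P_g$.

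Assume now that $g$ is pmd, i.e.\ $g^*$ is pmi. Then for $0<p\le1$, $g(t^p)\ge g(t)^p$, so $\lambda_{\min}\bigl(g(C^p)/g(C)^p\bigr)\ge1$; for $p\ge1$, $g(t^p)\le g(t)^p$, so $\bigl\|g(C^p)/g(C)^p\bigr\|_\infty\le1$. In \eqref{F-3.22}, valid for $1/2\le p\le1$, the coefficient $\lambda_{\min}\bigl(g(C^p)/g(C)^p\bigr)\ge1$ may be removed from the right side of the $\prec^{w\log}$ (this only decreases the right side, which preserves log-supermajorization), giving
$$
P_g(A^p,B^p)\prec^{w\log}\lambda_{\min}^{1-p}(P_g(A,B))P_g(A,B)^{2p-1},\qquad 1/2\le p\le1.
$$
In \eqref{F-3.24}, valid for $1\le p\le2$, the coefficient $\bigl\|g(C^p)/g(C)^p\bigr\|_\infty\le1$ may be removed from the left side, giving
$$
\lambda_{\min}^{1-p}(P_g(A,B))P_g(A,B)^{2p-1}\prec^{w\log}P_g(A^p,B^p),\qquad 1\le p\le2.
$$
This proves (2) for $P_g$.

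Finally, when $f\in\OC_+^1$ with $f(0^+)=0$ is pmi or pmd, the four majorizations of Proposition \ref{P-3.13} hold for $P_f$ with the coefficients $\bigl\|f(C^p)/f(C)^p\bigr\|_\infty$ and $\lambda_{\min}\bigl(f(C^p)/f(C)^p\bigr)$; the pmi (resp.\ pmd) property of $f$ yields, exactly as above, that these coefficients are $\le1$ or $\ge1$ in the required directions, and the same deletions apply. Hence the stated inequalities hold for $P_f$ as well.
\end{proof}
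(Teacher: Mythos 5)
Your proof is correct and is exactly the argument the paper intends: the paper offers no proof beyond the remark that Proposition \ref{P-3.13} ``immediately implies'' the corollary, and your specialization of the spectral bounds $\bigl\|g(C^p)/g(C)^p\bigr\|_\infty$ and $\lambda_{\min}\bigl(g(C^p)/g(C)^p\bigr)$ via pmi/pmd, combined with the scalar-deletion observations for $\prec_{w\log}$ and $\prec^{w\log}$, is precisely that argument. (One cosmetic slip: in \eqref{F-3.21} the coefficient sits on the \emph{right}-hand side, so deleting a factor $\le1$ enlarges the right side rather than ``decreasing the left side'' as you phrase it, but your displayed conclusion and the general lemma ``$X\prec_{w\log}cY$ with $c\le1$ implies $X\prec_{w\log}Y$'' are correct.)
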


%----------------------------------------------------
\subsection{Bounds of ${{h(C^p)}\over {h(C)^p}}$}
%----------------------------------------------------

The bounds $\lambda_{\min}\Bigl({h(C^p)\over h(C)^p}\Bigr)$ 
and $\Big\|{h(C^p)\over h(C)^p}\Big\|_\infty$ repeatedly appear in the inequalities obtained
in Sections 3.1--3.3. Although it might not be easy to compute the values, they can be
estimated for a certain $h$ as follows: 

\begin{proposition}\label{P-3.15}
Assume that $h\in \OM_+^1$ is geometrically convex, i.e., $\log h(e^x)$ is convex on
$(-\infty,\infty)$. Let $C>0$ and set $m:=\lambda_{\min}(C)$ and $M:=\|C\|_\infty$. Then
\begin{align*}
&I\le {{h(C^p)}\over {h(C)^p}} \le \max\left\{
 {{h(m^p)}\over {h(m)^p}},
 {{h(M^p)}\over {h(M)^p}}
\right\}I \quad\mbox{for $p>1$}, \\
&I\ge {{h(C^p)}\over {h(C)^p}} \ge \min\left\{
 {{h(m^p)}\over {h(m)^p}},
 {{h(M^p)}\over {h(M)^p}}
\right\}I \quad\mbox{for $0<p<1$}.
\end{align*}
In particular, if $C\ge I$ (\mbox{resp.}, $C\le I$), then 
$$
I \le  {{h(m^p)}\over {h(m)^p}}I \le {{h(C^p)}\over {h(C)^p}} \le {{h(M^p)}\over {h(M)^p}}I
\quad \left( \mbox{resp.},
I\le {{h(M^p)}\over {h(M)^p}}I \le {{h(C^p)}\over {h(C)^p}} \le {{h(m^p)}\over {h(m)^p}}I
\right)
$$
hold for $p>1$, and all the inequalities above are reversed for $0<p<1$. 
\end{proposition}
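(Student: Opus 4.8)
The plan is to reduce everything to a scalar inequality via the spectral theorem, and then to exploit geometric convexity to locate the extrema of the relevant one-variable function on $[m,M]$. First I would observe that since $h\in\OM_+^1$, $h$ is operator monotone, hence in particular $t\mapsto h(t)$ is ordinary monotone increasing and positive on $(0,\infty)$; moreover an operator monotone function on $(0,\infty)$ that is geometrically convex is necessarily pmi (this is essentially the content of the pmi notion: $\log h(e^x)$ convex together with $h(1)=1$ forces $h(t^p)\ge h(t)^p$ for $p\ge1$ and the reverse for $0<p\le1$, by a standard convexity/slope argument on the function $x\mapsto\log h(e^x)$ through the origin). This already gives $I\le h(C^p)/h(C)^p$ for $p>1$ and $I\ge h(C^p)/h(C)^p$ for $0<p<1$, establishing the one-sided bounds in the displayed inequalities.

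For the two-sided bounds, set $\varphi_p(t):=\dfrac{h(t^p)}{h(t)^p}$ for $t>0$. Writing $t=e^x$ and using geometric convexity, $\log\varphi_p(e^x)=\psi(px)-p\,\psi(x)$ where $\psi(x):=\log h(e^x)$ is convex. Since $C>0$ has spectrum contained in $[m,M]$, the spectral theorem gives
\begin{align*}
\Bigl(\min_{t\in[m,M]}\varphi_p(t)\Bigr)I\le\varphi_p(C)=\frac{h(C^p)}{h(C)^p}\le\Bigl(\max_{t\in[m,M]}\varphi_p(t)\Bigr)I,
\end{align*}
so it suffices to show that $\varphi_p$ attains its maximum (for $p>1$) and its minimum (for $0<p<1$) on $[m,M]$ at one of the endpoints. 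The key step is to analyze $g_p(x):=\psi(px)-p\,\psi(x)$: I would show $g_p$ has no interior local maximum when $p>1$ (resp.\ no interior local minimum when $0<p<1$). The clean way is via convexity of $\psi$: for $p>1$, $g_p$ is a difference of a ``stretched'' convex function and a scaled convex function, and one checks that $g_p$ itself is \emph{convex} in $x$ — indeed $\psi(px)$ is convex and $-p\psi(x)$ is concave, so this is not automatic, but using that $\psi$ is convex one has for $x<y$ that the slope of $\psi$ on $[px,py]$ dominates that on $[x,y]$, from which the chord condition for $g_p$ follows after a short computation. A convex function on $[m,M]$ attains its maximum at an endpoint, giving the $p>1$ case; the $0<p<1$ case follows by the same argument applied with the roles reversed (or by replacing $p$ with $1/p$ and $t$ with $t^p$, since $\varphi_{1/p}(t^p)=\varphi_p(t)^{-1/p}$ up to bookkeeping). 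This yields $\max$ or $\min$ over $\{\varphi_p(m),\varphi_p(M)\}$, which is exactly the claimed bound.

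Finally, for the ``in particular'' statement: if $C\ge I$ then $m\ge1$, so all spectral values of $C$ lie in $[m,M]\subseteq[1,\infty)$; since $\varphi_p$ is (by the convexity of $g_p$, hence monotonicity of its derivative) increasing on $[1,\infty)$ for $p>1$ — note $g_p'(1\text{-point})=0$ is not assumed, but $g_p'(x)=p\psi'(px)-p\psi'(x)=p(\psi'(px)-\psi'(x))\ge0$ for $x\ge0$ because $\psi'$ is nondecreasing and $px\ge x$ — we get $\varphi_p(m)\le\varphi_p(C)\le\varphi_p(M)$, together with $\varphi_p(1)=1\le\varphi_p(m)$; the case $C\le I$ is symmetric (spectrum in $(0,1]$, $\varphi_p$ increasing there too by the same sign analysis, but now $1$ is the right endpoint so the ordering of $\varphi_p(m),\varphi_p(M)$ flips), and the $0<p<1$ statements follow by reversing inequalities throughout since then $g_p'(x)=p(\psi'(px)-\psi'(x))\le0$ for $x\ge0$. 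The main obstacle I anticipate is the bookkeeping in establishing that $g_p(x)=\psi(px)-p\psi(x)$ is convex (equivalently that $\varphi_p$ is log-convex) from convexity of $\psi$ alone; this is the crux and should be handled by a direct slope/chord estimate rather than differentiation, since $\psi$ need not be twice differentiable, though in fact $h\in\OM_+$ is automatically analytic so one may freely use $g_p''(x)=p^2\psi''(px)-p\psi''(x)$ and argue $p^2\psi''(px)\ge p\psi''(x)$ when $p\ge1$ via $\psi''\ge0$ and monotonicity considerations — but this last inequality is \emph{false} in general, so the correct route really is the chord estimate: for $p\ge1$, convexity of $\psi$ gives $\psi(py)-\psi(px)\ge\psi'(px)(py-px)=p\psi'(px)(y-x)$ and $\psi(y)-\psi(x)\le\psi'(y)(y-x)$ wait—one must be careful; the honest statement is that $g_p$ need not be convex, so instead I would argue directly that $g_p'(x)=p(\psi'(px)-\psi'(x))$ changes sign at most once on any interval (in fact it has constant sign on $[0,\infty)$ and on $(-\infty,0]$ by monotonicity of $\psi'$), which already forces the endpoint extremum on $[m,M]$ and handles all cases uniformly.
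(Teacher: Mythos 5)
Your final argument is correct and is essentially the paper's own: the paper proves this via Lemma \ref{L-3.16}, computing the logarithmic derivative $\bigl(\log(h(e^{px})/h(e^x)^p)\bigr)'=p(\psi'(px)-\psi'(x))$ with $\psi(x)=\log h(e^x)$ and using monotonicity of $\psi'$ to get the sign pattern (negative on $(-\infty,0)$, positive on $(0,\infty)$ for $p>1$, reversed for $0<p<1$), which is exactly where you land after discarding the (indeed false) claim that $\psi(px)-p\psi(x)$ is convex. The detour is harmless since you correctly abandon it, and the rest (spectral reduction, endpoint extrema, the pmi observation giving $\varphi_p\gtrless1$) matches the paper.
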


This immediately follows from the following:
\begin{lemma}\label{L-3.16}
Let $h\in \OM_+^1$. Then the following conditions are equivalent: 
\begin{itemize}
\item[$(${\rm i}$)$] $t\mapsto {{h(t^p)}\over {h(t)^p}}$ is decreasing on $(0,1)$ 
and is increasing on $(1,\infty)$  for all $p>1$;
\item[$(${\rm ii}$)$] $t\mapsto {{h(t^p)}\over {h(t)^p}}$ is increasing on $(0,1)$ 
and is decreasing on $(1,\infty)$  for all $p\in(0,1)$;
\item[$(${\rm iii}$)$] $h$ is geometrically convex.
\end{itemize}
\end{lemma}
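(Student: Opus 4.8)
The plan is to pass to logarithmic--exponential coordinates. Put $\phi(x):=\log h(e^x)$ for $x\in\bR$; since every function in $\OM_+$ is real-analytic on $(0,\infty)$, $\phi$ is $C^\infty$ on $\bR$, and geometric convexity of $h$ (condition (iii)) is exactly convexity of $\phi$. For $p>0$ set $G_p(x):=\phi(px)-p\phi(x)$, so that $\log\bigl(h(t^p)/h(t)^p\bigr)=G_p(\log t)$. Because $t\mapsto\log t$ is an increasing bijection carrying $(0,1)$ onto $(-\infty,0)$ and $(1,\infty)$ onto $(0,\infty)$, and $\log$ is increasing, the monotonicity statements in (i) and (ii) translate verbatim into monotonicity statements for $G_p$ on $(-\infty,0)$ and on $(0,\infty)$. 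I will also record the elementary identity $h(s^{1/p})/h(s)^{1/p}=\bigl(h(t^p)/h(t)^p\bigr)^{-1/p}$ with $s=t^p$, i.e. $G_{1/p}(px)=-\tfrac1p G_p(x)$; since $x\mapsto px$ preserves each half-line and the sign of $x$ while $z\mapsto-\tfrac1p z$ reverses order, this shows directly that (i) for a given $p>1$ is equivalent to (ii) for $1/p\in(0,1)$. As $p\mapsto1/p$ is a bijection of $(1,\infty)$ onto $(0,1)$, this gives (i)$\iff$(ii), and it remains to prove (iii)$\iff$(i).

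For (iii)$\implies$(i), differentiate: $G_p'(x)=p\bigl(\phi'(px)-\phi'(x)\bigr)$. If $p>1$ and $x>0$ then $px>x$, so convexity of $\phi$ (i.e. $\phi'$ non-decreasing) gives $G_p'(x)\ge0$; if $p>1$ and $x<0$ then $px<x$, so $G_p'(x)\le0$. Hence $G_p$ is non-increasing on $(-\infty,0)$ and non-decreasing on $(0,\infty)$, which is (i).

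For (i)$\implies$(iii), read the same computation backwards. Condition (i) says $G_p$ is non-decreasing on $(0,\infty)$ for every $p>1$, hence $G_p'(x)\ge0$ there, i.e. $\phi'(px)\ge\phi'(x)$ for all $x>0$, $p>1$; writing $y=px$, this is precisely the statement that $\phi'$ is non-decreasing on $(0,\infty)$. Likewise, $G_p$ non-increasing on $(-\infty,0)$ forces $\phi'(px)\le\phi'(x)$ for $x<0$, $p>1$, i.e. $\phi'$ non-decreasing on $(-\infty,0)$. Since $\phi'$ is continuous at $0$, these two facts combine to give $\phi'$ non-decreasing on all of $\bR$, so $\phi$ is convex, which is (iii). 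Proposition~\ref{P-3.15} then follows by feeding $C=\exp X$ in its spectral decomposition into the (thus-established) monotonicity on each of $(0,1)$ and $(1,\infty)$ and using $m\le C\le M$.

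The argument is essentially bookkeeping once the coordinate change is in place; the two points needing a little care are (a) the sign reversal when $x<0$ and $p>1$ (so that $px<x$, not $px>x$), which is why $G_p$ is \emph{non-increasing} rather than non-decreasing on the left half-line, and (b) the gluing of ``$\phi'$ non-decreasing on $(-\infty,0)$'' and ``on $(0,\infty)$'' into ``non-decreasing on $\bR$'', for which one uses the differentiability (indeed analyticity) of $h$ on $(0,\infty)$. If one prefers to avoid derivatives, (iii)$\implies$(i) can instead be deduced from monotonicity of the difference quotients of the convex function $\phi$ (comparing the slope of $\phi$ over $[px_1,px_2]$ with that over $[x_1,x_2]$), and (i)$\implies$(iii) from choosing $p=y/x$ for given $0<x<y$, resp.\ $p=x/y$ for $y<x<0$, to read off $\phi'(y)\ge\phi'(x)$; but the smooth version above is shorter.
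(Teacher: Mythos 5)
Your proof is correct and follows essentially the same route as the paper's: both pass to $\phi(x)=\log h(e^x)$ and reduce the monotonicity of $t\mapsto h(t^p)/h(t)^p$ on each of $(0,1)$ and $(1,\infty)$ to the sign of $p\bigl(\phi'(px)-\phi'(x)\bigr)$, which is controlled exactly by $\phi'$ being non-decreasing, i.e.\ by the geometric convexity of $h$. The paper's version is a one-line computation of the logarithmic derivative; you merely spell out the sign bookkeeping on the two half-lines, the gluing at $0$, and the duality $p\leftrightarrow 1/p$, all of which the paper leaves implicit.
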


\begin{proof}
Put $f(x):=\log h (e^x)$. Since 
$$
{{\left( {{h(e^{px})}\over {h(e^x)^p}} \right)'} \over {{h(e^{px})}\over {h(e^x)^p}}}
= \left( \log {{h(e^{px})}\over {h(e^x)^p}} \right)'
=p(f'(px)- f'(x)), 
$$
the condition that  $f'$ is increasing is equivalent to each of (i) and (ii). 
\end{proof}

The estimate in Proposition \ref{P-3.15} is applicable to $f\in\OC_+^1$ with $f(0^+)=0$
and $g\in\OMD_+^1$ as well. Indeed, we have $f=th$ and $g=h(t^{-1})=\widetilde f$ for some
$h\in\OM_+^1$ so that, as in \eqref{F-3.14},
$$
{f(C^p)\over f(C)^p}={g(C^{-p})\over g(C^{-1})^p}={h(C^p)\over h(C)^p}.
$$

A study of operator means whose representing functions are geometrically convex
is found in a recent paper \cite{wada3}. An operator mean is called a \emph{geodesic mean}
if it has the representing function $h(t)=\int_0^1t^\alpha\,d\nu(\alpha)$ with a probability
measure $\nu$ on $[0,1]$. As readily verified, such a function $h$ is geometrically convex.
For example, when $h(t)={{t^\alpha+t^{1-\alpha}}\over2}$ with $\alpha\in(0,1)$, note by
Proposition \ref{P-3.15} that
$$
I\le {{h(C^p)}\over {h(C)^p}} \le 
\max\left\{\lim_{t\rightarrow 0^+}{{h(t^p)}\over {h(t)^p}} , 
\lim_{t\rightarrow  \infty}{{h(t^p)}\over {h(t)^p}}\right\}I
=2^{p-1}I
$$
for any $C>0$ and $p>1$.

%----------------------------------------------------
\subsection{Range of parameter $p$}
%----------------------------------------------------

We assume that $f$ is a continuous function on $(0,\infty)$ such
that $f>0$ and $f(1)=1$. We denote by $\Lambda (f)$ the set of the parameter $p>0$ for which
$P_f$ satisfies \eqref{ando-hiai}, or equivalently, $P_{f^*}$ satisfies \eqref{ando-hiai2}.
As follows from Theorem \ref{T-3.7}, if $h\in \OM_+^1$ is pmi, then
$\Lambda (h^*)\supseteq [1,\infty)$. Furthermore, when $h\in \OM_+^1\setminus\{1,t\}$ is pmi,
the set $\Lambda (h^*)$ was determined in \cite[Corollary 3.1]{wada2} as follows:
\begin{equation}\label{pmd-ando-hiai}
\Lambda (h^*)=[1,\infty).
\end{equation}

On the other hand, it follows from Theorem \ref{T-3.7} that 
if $h\in\OM_+^1$ is pmi, then $\Lambda (th)\supseteq (0,1]$. 
In this section we shall prove that $\Lambda (th)=(0,1]$ when $h\in\OM_+^1\setminus\{1\}$ is
pmi. 

\begin{proposition}\label{P-3.17}
Assume that $f$ satisfies the following three conditions: 
\begin{itemize}
\item[$(${\rm a}$)$] $\lim_{t\rightarrow 0^+} tf(t) =0$;
\item[$(${\rm b}$)$] $f$ is pmi (resp., pmd);
\item[$(${\rm c}$)$] $f$ is strictly increasing (resp., strictly decreasing).
\end{itemize}
Then $\Lambda(tf)\subseteq(0,1]$ (resp., $\Lambda(tf)\subseteq[1,\infty)$). 
\end{proposition}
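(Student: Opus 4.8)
The plan is to prove the pmi/strictly‑increasing case in detail and to obtain the pmd/strictly‑decreasing case by the parallel argument, so assume $f$ is pmi and strictly increasing with $\lim_{t\to0^+}tf(t)=0$, and suppose for contradiction that $p\in\Lambda(tf)$ for some $p>1$. The first step is to apply Proposition~\ref{P-3.5} to the perspective $P_{tf}$: it yields $(tf)(t^p)\le\bigl((tf)(t)\bigr)^p$ for all $t>0$, i.e.\ $f(t^p)\le f(t)^p$, and since $f$ is pmi we also have $f(t^p)\ge f(t)^p$ for $p\ge1$; hence $f(t^p)=f(t)^p$ for all $t>0$.

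The second step uses this functional equation, together with strict monotonicity, to pin $f$ down. With $\phi(u):=\log f(e^u)$ the identity becomes $\phi(pu)=p\phi(u)$, so $\rho(u):=\phi(u)/u$ is invariant under $u\mapsto pu$; on the other hand pmi reads $\phi(su)\ge s\phi(u)$ for $s\ge1$, which forces $\rho$ to be non‑decreasing on $(0,\infty)$ and on $(-\infty,0)$. A function that is monotone and multiplicatively $p$‑periodic on a half‑line is constant there, and strict monotonicity of $\phi$ makes the two constants positive, so $f(t)=t^{c_+}$ for $t\ge1$ and $f(t)=t^{c_-}$ for $0<t\le1$ with $c_\pm>0$. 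Thus $\psi:=tf$ coincides with a power $t^{a}$ on $[1,\infty)$, where $a:=1+c_+>1$ (and with $t^{1+c_-}$, $1+c_->1$, on $(0,1]$).

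The third step reduces the contradiction to the power perspective $P_{t^a}$. If $A,B>0$ are such that the spectra of $B^{-1/2}AB^{-1/2}$ and of $B^{-p/2}A^pB^{-p/2}$ both lie in $(1,\infty)$, then $\psi$ acts as $t^a$ on each of them, so $P_\psi(A,B)=P_{t^a}(A,B)$ and $P_\psi(A^p,B^p)=P_{t^a}(A^p,B^p)$; consequently it suffices to exhibit such a pair with $P_{t^a}(A,B)\le I$ but $P_{t^a}(A^p,B^p)\not\le I$, contradicting $p\in\Lambda(tf)$. The driving mechanism is the failure of operator monotonicity of $t\mapsto t^p$ for $p>1$. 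It is already visible when $a=2$, where $P_{t^2}(A,B)=AB^{-1}A$, so that $P_{t^2}(A,B)\le I\iff A^2\le B$ while $P_{t^2}(A^p,B^p)=A^pB^{-p}A^p\le I\iff A^{2p}\le B^p$, and $A^2\le B$ does not force $A^{2p}\le B^p$; for general $a>1$ one anchors the example at a commuting pair $A_0=S^{1-a}$, $B_0=S^{-a}$ with the spectrum of $S$ in $(1,\infty)$ — where $P_{t^a}(A_0,B_0)=P_{t^a}(A_0^p,B_0^p)=I$ — and perturbs by a genuinely non‑commuting pair, the second‑order effect of the non‑commutativity breaking the Ando--Hiai inequality.

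The hard part is exactly this last construction: producing an honest non‑commuting example (a $2\times2$ one already suffices) for the power perspective $P_{t^a}$, checking $P_{t^a}(A,B)\le I$ and $P_{t^a}(A^p,B^p)\not\le I$, and — since $t\mapsto t^p$ is not operator monotone — arranging that both $B^{-1/2}AB^{-1/2}$ and $B^{-p/2}A^pB^{-p/2}$ still have spectrum inside $(1,\infty)$, so that $P_\psi$ genuinely coincides with $P_{t^a}$ on the pair; the scaling freedom in the example is what makes these constraints compatible, and some extra care is needed when the broken‑power exponents $c_+$ and $c_-$ differ. The pmd/strictly‑decreasing case runs along the same lines: Proposition~\ref{P-3.5} again gives $f(t^p)=f(t)^p$, now for $p<1$, whence $f$ is a broken power with negative exponents — the one on $(0,1]$ lying in $(-1,0)$, by condition~(a) — and on the branch $A<B$ the perspective $P_\psi$ becomes the weighted geometric‑mean perspective $P_{t^b}$ with $b\in(0,1)$, for which \eqref{ando-hiai} fails when $p<1$ (this is \eqref{pmd-ando-hiai} applied to $h(t)=t^b\in\OM_+^1$); there the branch bookkeeping is easier, since $t\mapsto t^p$ is operator monotone for $p<1$, so $A<B$ automatically gives $A^p<B^p$.
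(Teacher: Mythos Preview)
Your argument has a real gap: you yourself identify the ``hard part'' (producing non-commuting $A,B$ with the required spectral localization so that $P_{tf}$ coincides with $P_{t^a}$, with $P_{t^a}(A,B)\le I$ but $P_{t^a}(A^p,B^p)\not\le I$) and then do not carry it out. The perturbative sketch around the commuting pair $A_0=S^{1-a}$, $B_0=S^{-a}$ is not a proof: at that pair both perspectives equal $I$, so a genuine second-order computation would be needed, and you have not shown that the sign comes out the way you want. Nor can you short-circuit this by quoting $\Lambda(t^a)\subseteq(0,1]$ for $a>1$, since in the paper that fact (Proposition~\ref{P-4.6}) is a consequence of the very proposition you are trying to prove. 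The ``branch bookkeeping'' when $c_+\ne c_-$ is a further loose end, and in the pmd case your reduction to $P_{t^b}$ with $b\in(0,1)$ also needs, but does not supply, a counterexample whose spectra stay in $(0,1)$.

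The paper avoids all of this by extracting \emph{more} from the hypothesis $p\in\Lambda(tf)$ than Proposition~\ref{P-3.5} gives. Instead of testing only on scalars (which yields merely $f(t^p)\le f(t)^p$ and drives you to the broken-power classification), the paper tests $P_{tf}$ on a rank-one projection $A$ against a diagonal $B$ in $2\times2$ (Lemma~\ref{L-3.18}); condition~(a) is what permits the extension of \eqref{F-4.1} to singular $A$. This yields the two-parameter inequality
\[
f(\lambda a^p+(1-\lambda)b^p)\le f(\lambda a+(1-\lambda)b)^p,\qquad a,b>0,\ \lambda\in[0,1].
\]
Now pmi gives $f(\lambda a+(1-\lambda)b)^p\le f((\lambda a+(1-\lambda)b)^p)$, and strict monotonicity of $f$ forces $\lambda a^p+(1-\lambda)b^p\le(\lambda a+(1-\lambda)b)^p$, an immediate contradiction with the strict convexity of $t\mapsto t^p$ for $p>1$ (and analogously for the pmd case with $p<1$). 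The whole operator counterexample you are chasing is thereby replaced by a single scalar convexity violation. The moral is that your first step discards exactly the non-commutative information you later struggle to reinsert; the paper's $2\times2$ test captures it at the outset.
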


The following technical lemma is critical in our proof of this result.

\begin{lemma}\label{L-3.18}
Assume that $f$ satisfies $($a$)$ of Proposition \ref{P-3.17}. If $p\in\Lambda(tf)$, then
$$
f(\lambda a^p +(1-\lambda) b^p ) \le f(\lambda a  +(1-\lambda) b )^p  
$$
holds for all $a,b>0$ and all $\lambda\in [0,1]$. 
\end{lemma}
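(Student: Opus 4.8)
The plan is to exploit the definition of $\Lambda(tf)$ by feeding carefully chosen scalar (i.e., $1\times1$) and $2\times2$ operator inputs into the AH-type implication \eqref{ando-hiai} for $P_{tf}$. Recall that $P_{tf}(A,B)=B^{1/2}(B^{-1/2}AB^{-1/2})f(B^{-1/2}AB^{-1/2})B^{1/2}=A^{1/2}f(A^{-1/2}BA^{-1/2})A^{1/2}$ after using \eqref{F-2.5} with $\widetilde{tf}(t)=f(1/t)$; more concretely, since $tf(t)$ is the function whose perspective we control, the cleanest route is to note $P_{tf}(A,B)=A^{1/2}\,h_0(A^{-1/2}BA^{-1/2})\,A^{1/2}$ where $h_0(t)=f(t^{-1})$... so I would instead work directly from the scalar perspective identity $P_{tf}(x,y)=y\cdot\frac{x}{y}f(x/y)=x f(x/y)$ and extend it. The key observation is that for a diagonal choice $A=\mathrm{diag}(a,b)$ (with $a,b>0$) and $B=$ a rank-one type perturbation, the operator $B^{-1/2}AB^{-1/2}$ has eigenvalues that, under the map $A\mapsto A^p$, get replaced by $a^p,b^p$ while $B^p$ stays controllable; the off-diagonal coupling is what will let a convex combination $\lambda a^p+(1-\lambda)b^p$ emerge.

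Concretely, first I would fix $a,b>0$ and $\lambda\in[0,1]$ and set $s:=\lambda a+(1-\lambda)b$. I want to build $A,B>0$ (on $\mathbb{C}^2$, or on $\mathbb{C}$ if $\lambda\in\{0,1\}$) such that $P_{tf}(A,B)\le I$ \emph{and} such that $P_{tf}(A^p,B^p)$ has a diagonal entry (or a compression to a unit vector) equal to $f(\lambda a^p+(1-\lambda)b^p)$ up to the normalization forced by $f(s)$. The natural candidate: take $B=f(s)^{-1}I$ (a scalar) and $A=f(s)^{-1}\,\mathrm{diag}(a,b)$ on $\mathbb{C}^2$ — then $B^{-1/2}AB^{-1/2}=\mathrm{diag}(a,b)$, so $P_{tf}(A,B)=B\cdot(B^{-1/2}AB^{-1/2})f(B^{-1/2}AB^{-1/2})=f(s)^{-1}\mathrm{diag}(af(a),bf(b))$; this is $\le I$ iff $af(a)\le f(s)$ and $bf(b)\le f(s)$, which need not hold. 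So a pure diagonal choice fails, and I must genuinely use a non-commuting pair. The correct device, following the standard trick in this circle of ideas (as in \cite{wada2,HSW}), is to take $A$ a fixed positive matrix with eigenvalues $a,b$ but with eigenvectors chosen so that $B^{-1/2}AB^{-1/2}$, after raising $A$ to the $p$-th power, lands a chosen rank-one projection's expectation at $\lambda a^p+(1-\lambda)b^p$. The cleanest implementation: put $A=\mathrm{diag}(a,b)$, let $\xi=(\sqrt\lambda,\sqrt{1-\lambda})^{T}$, so $\langle A^p\xi,\xi\rangle=\lambda a^p+(1-\lambda)b^p$ and $\langle A\xi,\xi\rangle=s$; then take $B=f(s)^{-1}\,\xi\xi^{T}$-ish — but $B$ must be invertible, so take $B=f(s)^{-1}P_\xi+\varepsilon(I-P_\xi)$ and send $\varepsilon\to0^+$ at the end (using condition (a), $\lim_{t\to0^+}tf(t)=0$, precisely to control the $\varepsilon$-block of the perspective, since on the complementary subspace $B^{-1/2}AB^{-1/2}$ blows up like $\varepsilon^{-1}$ and $tf(t)$ is what multiplies it). With this choice, $P_{tf}(A,B)$ restricted to $\mathbb{C}\xi$ behaves like $\langle A\xi,\xi\rangle f(\langle A\xi,\xi\rangle/\cdots)$ — here I must be careful that $P_{tf}$ is not "expectation of a scalar function" but a genuine operator function, so the reduction uses the variational / compression characterization: for the perspective $P_{g}$ with $g$ operator convex one has $\langle P_g(A,B)\xi,\xi\rangle\ge P_g(\langle A\xi,\xi\rangle,\langle B\xi,\xi\rangle)$, and for the opposite (concave-type) one gets $\le$. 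Since $tf$ with $f$ pmi is \emph{not} assumed operator convex here, I would instead avoid that and use only that $P_{tf}$ is monotone in each entry where I need it, together with the limiting argument.

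Given the technical delicacy, I expect the main obstacle to be exactly this: extracting the \emph{scalar} inequality $f(\lambda a^p+(1-\lambda)b^p)\le f(\lambda a+(1-\lambda)b)^p$ from an \emph{operator} inequality $P_{tf}(A^p,B^p)\le I$ without an operator-convexity hypothesis on $tf$. The way around it is to run the whole argument on $2\times2$ matrices but take compressions to the unit vector $\xi$ only on the "output" side, where I want a \emph{lower} bound $\langle P_{tf}(A^p,B^p)\xi,\xi\rangle\ge$ (something involving $f(\lambda a^p+(1-\lambda)b^p)$), and this lower bound should follow just from the definition \eqref{F-2.4} of $P_{tf}$ together with operator monotonicity/convexity facts that \emph{are} available — namely that $X\mapsto X^{1/2}\psi(X)X^{1/2}$ for $\psi(t)=f(t)$... hmm. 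Honestly, the cleanest correct route, and the one I would commit to, is: use Proposition \ref{P-2.1} to write $tf$-related data in terms of an operator monotone $h$ only when $f$ itself is in $\OM_+$, but here $f$ is merely pmi; so instead I would mimic the proof of the analogous scalar-reduction lemmas in \cite{wada2} verbatim, taking $A=\mathrm{diag}(a,b)$, $B$ the above $\varepsilon$-regularized rank-one operator normalized so that $P_{tf}(A,B)=I$ on the relevant block, invoking $p\in\Lambda(tf)$ to get $P_{tf}(A^p,B^p)\le I$, reading off the $\xi\xi^{T}$-compression, and finally letting $\varepsilon\searrow0$ where assumption (a) guarantees the spurious block contributes nothing. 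The pmi/pmd and strict monotonicity hypotheses of Proposition \ref{P-3.17} are \emph{not} needed for this lemma — it uses only (a) — which is consistent with the statement. I will check that in the limit the compression identity collapses to $f(\lambda a+(1-\lambda)b)^{-1}\cdot\bigl[\text{term}\bigr]\le 1$ with the bracketed term $\ge f(\lambda a^p+(1-\lambda)b^p)\cdot(\text{positive normalization})$, yielding the claim after rearrangement, and that the $\lambda\in\{0,1\}$ cases reduce to the scalar statement $f(a^p)\le f(a)^p$, i.e., Proposition \ref{P-3.5} applied to $tf$ (note $\Lambda(tf)\ni p$ gives $(tf)(t^p)\le(tf)(t)^p$, but one needs $f(t^p)\le f(t)^p$, which is a strictly stronger scalar statement and is exactly where the genuine $2\times2$ construction, not a $1\times1$ one, is unavoidable).
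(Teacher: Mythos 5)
Your proposal does not close the argument, and the construction you commit to at the end would not work as stated. The decisive issue is that you place the (near-)degenerate operator in the \emph{second} slot: with $A=\mathrm{diag}(a,b)$ and $B=f(s)^{-1}P_\xi+\eps(I-P_\xi)$, the operator $B^{-1/2}AB^{-1/2}$ has eigenvalues of order $\eps^{-1}$ on $\xi^\perp$, and there the perspective contributes terms of the form $\mu f(\eps^{-1}\mu)$ with $\mu>0$ essentially fixed. Condition (a), $\lim_{t\to0^+}tf(t)=0$, controls $tf$ near $0$ and says nothing about $f$ at infinity, so it does not make this block vanish; worse, if $f$ is unbounded the hypothesis $P_{tf}(A,B)\le I$ itself fails for small $\eps$, so you cannot even invoke $p\in\Lambda(tf)$. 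You also correctly identify, but never resolve, the "main obstacle" of extracting a scalar inequality from the operator inequality $P_{tf}(A^p,B^p)\le I$ without any operator convexity of $tf$; the compression inequalities you gesture at are not available here. (A minor additional slip: $(tf)(t^p)\le((tf)(t))^p$ is $t^pf(t^p)\le t^pf(t)^p$, which is \emph{equivalent} to $f(t^p)\le f(t)^p$, not strictly weaker.)

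The paper's proof dualizes your picture and thereby avoids both problems. It takes $B=\mathrm{diag}(a^{-1},b^{-1})$ (invertible, no regularization of $B$ needed) and lets $A$ be the rank-one \emph{projection} onto $(\cos\theta,\sin\theta)^T$; condition (a) is used exactly where it helps, namely to extend $tf$ continuously to $[0,\infty)$ with $(tf)(0)=0$, so that $P_{tf}(A,B)$ makes sense for $A\ge0$, $B>0$ and \eqref{F-4.1} passes to this case via $P_{tf}(A+\eps I,B)\to P_{tf}(A,B)$. Then $B^{-1/2}AB^{-1/2}$ is rank one with eigenvalue $a\cos^2\theta+b\sin^2\theta$, whence $P_{tf}(A,B)=f(a\cos^2\theta+b\sin^2\theta)A$ is an explicit scalar multiple of the projection $A$; and since $A^p=A$, likewise $P_{tf}(A^p,B^p)=f(a^p\cos^2\theta+b^p\sin^2\theta)A$. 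The operator-norm form $\|P_{tf}(A^p,B^p)\|_\infty\le\|P_{tf}(A,B)\|_\infty^p$ then collapses immediately to the desired scalar inequality with $\lambda=\cos^2\theta$ — no compression, no convexity, no $\eps$-limit on the output side. If you rebuild your argument with the rank-one projection in the first slot and exploit $A^p=A$, it goes through.
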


\begin{proof}
From condition (a) the function $tf$ can extend continuously to $[0,\infty)$ by setting
$(tf)(0):=0$. Assume that $p\in\Lambda(tf)$, i.e., $P_{tf}$ satisfies \eqref{ando-hiai} for
$p$, which is equivalently rewritten as
\begin{align}\label{F-4.1}
\|P_{tf}(A^p,B^p)\|_\infty\le\|P_{tf}(A,B)\|_\infty^p,\qquad A,B>0.
\end{align}
From the definition in \eqref{F-2.4} it is clear that $P_{tf}(A,B)$ is well defined for all
$A\ge0$ and $B>0$. Then the inequality in \eqref{F-4.1} extends to $A\ge0$ and $B>0$, since
$P_{tf}(A+\eps I,B)\to P_{tf}(A,B)$ in the operator norm as $\eps\searrow0$.

Here, for $a,b>0$, we define 
$$ A
=\begin{pmatrix}
\cos^2\theta & \cos\theta\sin\theta \\
\cos\theta\sin\theta & \sin^2\theta
\end{pmatrix}, \qquad 
B=\begin{pmatrix}
a^{-1} & 0 \\ 0 & b^{-1} \end{pmatrix}.
$$
With $c:=\sqrt{a}\cos\theta$ and $d:=\sqrt{b}\sin\theta$, we then compute
$$
B^{-1/2}AB^{-1/2} = \begin{pmatrix}
c^2 \ cd \\ cd \ d^2 \end{pmatrix}
$$
and 
$$
(tf) (B^{-1/2}AB^{-1/2} ) =
{{(tf)(c^2+d^2)}\over {c^2+d^2}}
\begin{pmatrix}
c^2 \ cd \\ cd \ d^2 \end{pmatrix}
=f(c^2+d^2)B^{-1/2}AB^{-1/2}, 
$$
so that
$$P_{tf} (A,B)=f (a \cos^2 \theta +b \sin^2 \theta)
A.$$
In a similar fashion, we have 
$$P_{tf} (A^p,B^p)=P_{tf} (A,B^p)=
f(a^p\cos^2\theta+b^p\sin^2\theta)A.
$$
From \eqref{F-4.1} for $A\ge0$ and $B>0$ it follows that
$$
f(a^p\cos^2\theta+b^p\sin^2\theta)
\le f(a \cos^2 \theta +b \sin^2 \theta )^p.
$$
\end{proof}

\begin{proof}[Proof of Proposition \ref{P-3.17}] 
Suppose that there exists a $p>1$ (resp., $p\in(0,1)$) such that $p\in \Lambda (tf)$. Then 
from the above lemma and the fact that $f$ is pmi (resp., pmd), 
$$
f(\lambda a^p +(1-\lambda) b^p ) \le
f(\lambda a  +(1-\lambda) b )^p \le f((\lambda a  +(1-\lambda) b )^p ) 
$$
holds for all $a,b>0$ and all $\lambda\in [0,1]$.
Since $f$ is strictly increasing (resp., strictly decreasing),  
$$
\lambda a^p +(1-\lambda) b^p \le
(\lambda a  +(1-\lambda) b )^p 
$$
$$
(\mbox{resp.},\ \lambda a^p +(1-\lambda) b^p \ge
(\lambda a  +(1-\lambda) b )^p ) 
$$
holds for all $a,b>0$ and for all $\lambda\in [0,1]$,
contradicting $p>1$ (resp., $p\in(0,1)$). 
\end{proof}

\begin{theorem}\label{T-3.19}
If $h\in OM_+^1\setminus \{1\}$ is pmi, then $\Lambda(th)=\Lambda(h(1/t))=(0,1]$.
\end{theorem}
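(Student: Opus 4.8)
The plan is to combine the two halves already established in the text. Recall that Theorem~\ref{T-3.7} gives, for $h\in\OM_+^1$ pmi, that $P_{th}$ satisfies \eqref{ando-hiai} for all $p\in(0,1]$, i.e. $(0,1]\subseteq\Lambda(th)$, and likewise $(0,1]\subseteq\Lambda(h(1/t))$ since $h(1/t)=g$ with $\widetilde g(t)=th(t)$ and \eqref{F-2.5} translates \eqref{ando-hiai} for $P_g$ into \eqref{ando-hiai} for $P_{th}$. So only the reverse inclusions $\Lambda(th)\subseteq(0,1]$ and $\Lambda(h(1/t))\subseteq(0,1]$ remain, and it suffices to handle $\Lambda(th)$: once $\Lambda(th)=(0,1]$ is known, the correspondence $P_{th}\leftrightarrow P_g$ via \eqref{F-2.5} (which is an order isomorphism preserving the condition \eqref{ando-hiai} and the parameter $p$) gives $\Lambda(h(1/t))=(0,1]$ as well.

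For $\Lambda(th)\subseteq(0,1]$ I would apply Proposition~\ref{P-3.17} with $f:=h$. I need to check its three hypotheses. Condition~(b), that $h$ is pmi, is assumed. Condition~(a), $\lim_{t\to0^+}th(t)=0$: this holds because $th\in\OC_+$ with $(th)(0^+)=0$ by Proposition~\ref{P-2.1} (equivalence of (i) and (iv) applied to $f=th$), so in particular the limit is $0$. The one genuinely new point is condition~(c), that $h$ is \emph{strictly} increasing. Here I use the standing assumption $h\in\OM_+^1\setminus\{1\}$: a nonconstant operator monotone function on $(0,\infty)$ is automatically strictly increasing. Indeed an operator monotone function is (numerically) monotone increasing, hence real-analytic on $(0,\infty)$ with $h'\ge0$; if $h$ were constant on some subinterval then, being analytic, it would be constant on all of $(0,\infty)$, forcing $h\equiv h(1)=1$, contradicting $h\ne1$. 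Therefore $h$ is strictly increasing, and Proposition~\ref{P-3.17} yields $\Lambda(th)\subseteq(0,1]$.

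Putting the two inclusions together gives $\Lambda(th)=(0,1]$, and then the transpose correspondence gives $\Lambda(h(1/t))=(0,1]$, completing the proof. I do not expect any serious obstacle: the substantive work was already done in Theorem~\ref{T-3.7} (for the inclusion $(0,1]\subseteq$) and in Proposition~\ref{P-3.17} together with Lemma~\ref{L-3.18} (for $\subseteq(0,1]$); the only thing to verify carefully is that the hypothesis $h\ne1$ really delivers strict monotonicity, which is the analyticity argument above, and that the transpose identity \eqref{F-2.5} faithfully transports membership in $\Lambda$ between $P_{th}$ and $P_{h(1/t)}$ with the same $p$ — both are routine.
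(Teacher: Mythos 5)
Your proof is correct and follows essentially the same route as the paper: the inclusion $(0,1]\subseteq\Lambda(th)$ from Theorem~\ref{T-3.7}, the reverse inclusion from Proposition~\ref{P-3.17}, and the identification $\Lambda(th)=\Lambda(h(1/t))$ via the transpose correspondence (Theorem~\ref{T-3.7}, (iv)$\iff$(v)). Your explicit verification that $h\ne1$ forces strict monotonicity (via analyticity of operator monotone functions) is a detail the paper leaves implicit, and it is argued correctly.
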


\begin{proof}
That $\Lambda(th)=(0,1]$ is immediate from Proposition \ref{P-3.17} since
$\Lambda(th)\supseteq(0,1]$ as stated just before the proposition. That
$\Lambda(th)=\Lambda(h(1/t))$ is also immediate from Theorem \ref{T-3.7}.
\end{proof}

%----------------------------------------------------
\section{Further Ando-Hiai type inequalities}
%----------------------------------------------------

When $h\in\OM_+^1$ is pmi, Theorem \ref{T-3.7} asserts that $P_{th}$ satisfies
\eqref{ando-hiai} for all $p\in(0,1]$. As noticed in Proposition \ref{P-2.3}, the class
$\{t^nh:h\in\OM_+^1,\,n\in\bN\}$ of positive functions on $(0,\infty)$ is meaningful from
the operator analytical point of view. So the following result is regarded as a natural
continuation of Theorem \ref{T-3.7}.

\begin{theorem}\label{T-4.1}
Let $h\in \OM_+^1$ and $n\in\bN$ with $n\ge2$.
\begin{itemize}
\item[\rm(1)] If $h$ is pmi, then $P_{t^nh}$ satisfies \eqref{ando-hiai} for all
$p\in(0,1/2]$.
\item[\rm(2)] If $h$ is pmd, then $P_{t^nh}$ satisfies \eqref{ando-hiai2} for all
$p\in(0,1/2]$.
\end{itemize}
\end{theorem}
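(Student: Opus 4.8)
The plan is to reduce part (2) to part (1) and then prove (1) by adapting the argument of Theorem~\ref{T-3.7}. For the reduction, note that $(t^nh)^*(t)=t^nh^*(t)$ and that $h$ is pmd if and only if $h^*\in\OM_+^1$ is pmi; since $P_f$ satisfies \eqref{ando-hiai} exactly when $P_{f^*}$ satisfies \eqref{ando-hiai2}, part (2) for $h$ coincides with part (1) applied to $h^*$. So assume $h$ is pmi. Since $P_{t^nh}$ is positively homogeneous of degree one, the usual scaling $A,B\mapsto\alpha^{-1}A,\alpha^{-1}B$ with $\alpha:=\|P_{t^nh}(A,B)\|_\infty$ shows that \eqref{ando-hiai} for a fixed $p$ is equivalent to the homogeneous inequality $\|P_{t^nh}(A^p,B^p)\|_\infty\le\|P_{t^nh}(A,B)\|_\infty^p$ for all $A,B>0$; moreover the set of admissible $p$ is closed under multiplication (apply \eqref{ando-hiai} twice), and finite products of any interval $[c,1/2]$ with $c\le1/4$ exhaust $(0,1/2]$, so it is enough to verify \eqref{ando-hiai} for $p$ in such a subinterval (exactly as in Theorem~\ref{T-3.7}, where the range $[1/2,1]$ was iterated to $(0,1]$).

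I would then set up the computation as in the proof of Theorem~\ref{T-3.7}, (i)\,$\implies$\,(iv). Using \eqref{F-2.5} and $\widetilde{t^nh}(t)=t^{1-n}h(t^{-1})$, one gets $P_{t^nh}(A,B)=A^{1/2}C^{\,n-1}h(C)\,A^{1/2}$ with $C:=A^{1/2}B^{-1}A^{1/2}$, so the hypothesis $P_{t^nh}(A,B)\le I$ reads $A\le\theta(C)^{-1}$, where $\theta(t):=t^{n-1}h(t)$. Since $A^{1/2}B^{-p}A^{1/2}=A\#_pC$ for $0<p\le1$, the operator $C_p:=A^{p/2}B^{-p}A^{p/2}$ equals $A^{(p-1)/2}(A\#_pC)A^{(p-1)/2}$, and the desired conclusion $P_{t^nh}(A^p,B^p)\le I$ is equivalent to $\theta(C_p)\le A^{-p}$. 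Feeding in $A\le\theta(C)^{-1}$, monotonicity of $\#_p$ and the fact that $C$ commutes with $\theta(C)$ give
$$
A\#_pC\le\theta(C)^{-1}\#_pC=\theta(C)^{-(1-p)}C^{p}=C^{\,n(p-1)+1}h(C)^{p-1}.
$$
From here the aim is to bound $C_p$ and then $\theta(C_p)$ from above by repeated use of the L\"owner--Heinz inequality — controlling the exponents $1-p$, $n(p-1)+1$, the ``$2p-1$''-type exponent produced by the outer factor $A^{p/2}$, and the pmi inequality $h(C^p)\le h(C)^p$ — so that everything cancels against $A^{-p}$, in parallel with the chain \eqref{F-3.3} in the case $n=1$. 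The restriction $p\le1/2$ enters precisely to keep all these exponents inside $[-1,1]$ (so that $A\le\theta(C)^{-1}$ survives being raised to them). Once the estimate is in place for $p\in[c,1/2]$, iterate as above.

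The main obstacle is exactly the step that estimates $\theta(C_p)$. For $n=1$ one has $\theta=h$, so $P_h=\sigma_h$ is a Kubo--Ando operator connection: jointly operator monotone, and collapsing to a plain function of $C$ once its arguments commute with $C$; that is what makes the $n=1$ computation a single clean line. For $n\ge2$ the function $\theta(t)=t^{n-1}h(t)$ is only operator $n$-tone, so $P_\theta$ is not a connection — when $n=2$ it is jointly operator convex and operator \emph{decreasing} in its second variable (Proposition~\ref{P-2.2}), the wrong monotonicity for an upper bound, and for $n\ge3$ even this structure is lost — and, crucially, $\theta$ is not operator monotone, so one cannot simply push an inequality $C_p\le\Gamma$ through $\theta$. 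Thus the bound on $\theta(C_p)$ must be extracted by hand with careful tracking of the L\"owner--Heinz exponents, which is where I expect the real work to lie; it is this same bottleneck (essentially the need for $2p-1\le0$) that forces $p\le1/2$ and leaves $1/2<p\le1$ undecided. If a direct treatment of $\theta(C_p)$ is not available, a natural fallback is an induction on $n$ in which the $(n-1)$-case is invoked only at the same exponent $p$, so that the range does not shrink further beyond $(0,1/2]$.
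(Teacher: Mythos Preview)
Your reduction of (2) to (1) via $(t^nh)^*=t^nh^*$ is correct and matches the paper, as does your rewriting $P_{t^nh}(A,B)=A^{1/2}\theta(C)A^{1/2}$ with $\theta(t)=t^{n-1}h(t)$ and $C=A^{1/2}B^{-1}A^{1/2}$. The iteration remark is also valid but turns out to be unnecessary: the paper's argument works directly for every $p\in(0,1/2]$.

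The gap is exactly where you place it, and it is a genuine obstruction rather than bookkeeping. Since $\theta$ is not operator monotone for $n\ge2$, no upper bound on $C_p$ can be pushed through $\theta$, so the plan ``bound $C_p$, then bound $\theta(C_p)$'' does not close. Your fallback of induction on $n$ at fixed $p$ has the same defect: passing from $P_{t^{n-1}h}$ to $P_{t^nh}$ inserts one more copy of $C_p$, and converting an inequality for $C_p^{\,n-1}h(C_p)$ into one for $C_p^{\,n}h(C_p)$ again asks for monotonicity that $\theta$ lacks.

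The paper's device is different. The key observation (Lemma~\ref{L-4.2}) is that although $t\mapsto t^{N}h(t)$ is not operator monotone, its \emph{inverse} $(t^{N}h)^{[-1]}$ is, and in fact every power $\bigl((t^{N}h)^{[-1]}\bigr)^r$ with $0\le r\le N$ lies in $\OM_+^1$, since $\bigl((t^{N}h)^{[-1]}\bigr)^N=(t\,h(t^{1/N}))^{[-1]}$. Taking $N=2n-2$ for the odd case $P_{2n-1}$ (and $N=2n-1$ for $P_{2n}$), set $f:=\bigl((t^{N}h)^{[-1]}\bigr)^*\in\OM_+^1$, $g:=tf$, and $X:=g^{[-1]}(B)$. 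The hypothesis $P_{N+1}(A,B)\le I$ then reads $B\ge g(A)$, hence $X\ge A$, and every function one now needs to push inequalities through --- $f^{2(n-m)p}$, $g^{[-1]}$, and $t/f(t)^{n-m-1}$ --- is operator monotone by Lemma~\ref{L-4.2} (the last as the transpose of a power of $(t^{N}h)^{[-1]}$). With $n$ fixed, the paper proves by induction on $m=1,\dots,n$ the chain
\[
P_{2m-1}(A^p,B^p)\le f(A)^{2(n-m)p}\quad\bigl(\text{resp.\ }P_{2m}(A^p,B^p)\le f(A)^{2(n-m)p}\bigr),
\]
using the recursion $P_k(A^p,B^p)=A^pB^{-p}P_{k-2}(A^p,B^p)B^{-p}A^p$. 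The base case $m=1$ collapses to a function of a single commuting variable via the identity $f^{[-1]}(t)\,h(t^{-1})=t^{N}$, and the pmi of $h$ is used only there; the inductive step needs only L\"owner--Heinz with exponent $2p\le1$, which is precisely the source of the restriction $p\le1/2$. In short, the idea your outline is missing is to \emph{invert} $\theta$ rather than to apply it.
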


To prove the theorem, we need the following:
 
\begin{lemma}\label{L-4.2}
Let $h\in\OM_+^1$ and let $n$ be a positive integer. Let 
$f=(t^nh)^{[-1]}$ be the inverse function of the function $t\mapsto t^n h(t)$ on $(0,\infty)$. 
Then $f^r$ is in $\OM_+^1$ for any $r\in[0,n]$. 
\end{lemma}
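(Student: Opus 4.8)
The plan is to show that the inverse function $f$ of $\varphi(t):=t^nh(t)$ has the property that $f^r$ is operator monotone for $r\in[0,n]$, and the natural route is via the characterization of operator monotonicity by analytic continuation to the upper half-plane $\Pi^+$ (Löwner's theorem, as recalled in the excerpt through condition (A) with $k=1$). Since $h\in\OM_+^1$, $h$ extends to an analytic map $\Pi^+\to\Pi^+$ (it is a nonconstant Pick function, positive on $(0,\infty)$), and $t\mapsto t^n$ likewise maps $\Pi^+$ into the sector $\{z:0<\arg z<n\pi\}$; more precisely, on $\Pi^+$ we can write $h=\exp(\psi)$ where $\psi$ maps $\Pi^+$ into the strip $\{0<\mathrm{Im}\,w<\pi\}$ (because $h>0$ on $(0,\infty)$ and $\arg h(z)\in(0,\pi)$ there). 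Hence $\varphi(z)=z^n h(z)=\exp(n\log z+\psi(z))$ has $\arg\varphi(z)=n\arg z+\mathrm{Im}\,\psi(z)\in(0,(n+1)\pi)$ for $z\in\Pi^+$. So $\varphi$ maps $\Pi^+$ conformally (it is injective: $\varphi$ is real and strictly increasing on $(0,\infty)$, and the boundary-correspondence / argument-principle argument upgrades this to injectivity on $\Pi^+$) onto a domain $\Omega\subseteq\{0<\arg w<(n+1)\pi\}$, a subdomain of the Riemann surface of $w^{1/(n+1)}$.

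First I would make the injectivity and mapping statement precise: $\varphi:\Pi^+\to\Omega$ is a biholomorphism with $\varphi((0,\infty))=(\varphi(0^+),\varphi(\infty))\subseteq(0,\infty)$ as the corresponding boundary arc, and $\Omega$ lies in the $(n+1)\pi$-sector described above. Then $f=\varphi^{-1}:\Omega\to\Pi^+$ is analytic, and in particular its restriction to the interval $(\varphi(0^+),\varphi(\infty))$ — which contains a neighborhood of $1$ since $\varphi(1)=h(1)=1$ — extends analytically to $\Pi^+$ (because $\Omega\supseteq\Pi^+$, as $\Omega$ contains a boundary arc of $(0,\infty)$ and, being in a sector of opening $>\pi$... here one must be a little careful: what is actually needed is that $f$ extends analytically from the real interval across to all of $\Pi^+$, which follows because $\Omega$, viewed in the $(n+1)\pi$-sheeted surface, contains $\Pi^+$ sitting over the first sheet). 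Granting this, $f$ maps $\Pi^+$ into $\Pi^+$, so $f$ is operator monotone on its interval of definition. For the powers, the key point is the sector estimate: for $z\in\Pi^+$, $\arg f(z)=\arg\varphi^{-1}(z)\in(0,\pi)$, but the finer information is $\arg\varphi(w)>n\arg w$ for $w\in\Pi^+$ (since $\mathrm{Im}\,\psi>0$ there), equivalently $\arg f(z)<\tfrac1n\arg z$ — wait, rather $\arg f(z)$ can be bounded: writing $z=\varphi(w)$, $\arg z\ge n\arg w=n\arg f(z)$, hence $\arg f(z)\le\tfrac1n\arg z\le\tfrac{\pi}{n}$ is false in general; the correct bound is $\arg f(z)\le \tfrac1n\arg z$ only when $\mathrm{Im}\,\psi\ge0$, which does hold. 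Therefore $\arg(f(z)^r)=r\arg f(z)\le \tfrac{r}{n}\arg z\le\tfrac{r}{n}\pi\le\pi$ for $0\le r\le n$, so $f^r$ maps $\Pi^+$ into the closed upper half-plane; since $f^r$ is nonconstant it actually maps into the open $\Pi^+$, giving $f^r\in\mathrm{OM}$, and $f^r>0$ with $f(1)^r=1$ gives $f^r\in\OM_+^1$.

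A second, perhaps cleaner, route I would keep in reserve: Proposition \ref{P-2.3} tells us $\varphi=t^nh$ is operator $(n+1)$-tone, and there is a classical principle (going back to Löwner, and used in the $k$-tone literature cited, e.g.\ \cite{FHR}) that the inverse of an operator $k$-tone function with positive derivative has its $r$-th power operator monotone for suitable $r$; in the $k=1$ case this is the familiar fact that the inverse of an operator monotone function $\varphi$ with $\varphi>0$ satisfies $\varphi^{-1}\in\OM$, and for geometric-mean-type $\varphi(t)=t^n$ one gets $(\varphi^{-1})^r=t^{r/n}\in\OM$ exactly for $r\le n$. The general statement interpolates between these, and the complex-analytic argument above is precisely the proof of that interpolation. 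I would present the complex-analytic argument as the main proof.

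The main obstacle is the injectivity/univalence of $\varphi$ on $\Pi^+$ and the verification that $\Omega$ genuinely contains $\Pi^+$ (equivalently, that $f$ extends analytically from the real interval all the way across $\Pi^+$ rather than only to a subdomain). Strict monotonicity of $\varphi$ on $(0,\infty)$ handles the boundary; pushing injectivity into $\Pi^+$ requires either an argument-principle count using $\arg\varphi(z)=n\arg z+\mathrm{Im}\,\psi(z)$ with $\psi$ a strip-valued Pick-type function, or invoking that a Pick function composed appropriately stays univalent. Once univalence and the sector bound $\mathrm{Im}\,\psi\ge0$ on $\Pi^+$ are in hand, the power estimate $r\arg f(z)\le\tfrac{r}{n}\arg z\le\pi$ for $r\in[0,n]$ is immediate and the rest is routine.
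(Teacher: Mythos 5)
Your overall strategy (extend $f$ analytically to $\Pi^+$ and bound $\arg f(z)\le\tfrac1n\arg z$, so that $\arg f(z)^r\le\tfrac rn\pi\le\pi$ for $r\in[0,n]$) would prove the lemma if the two facts you defer --- univalence of $\varphi:=z^nh(z)$ on $\Pi^+$ and the containment $\Pi^+\subseteq\Omega$ --- were available. They are not, and the first is actually false as you state it. Take $n=2$ and $h(t)=\frac{2t}{1+t}\in\OM_+^1$, so $\varphi(z)=\frac{2z^3}{1+z}$. Then $\varphi(z)=\varphi(w)$ with $z\ne w$ reduces to $z^2+zw+w^2+zw(z+w)=0$, i.e.\ $p=\frac{s^2}{1-s}$ with $s:=z+w$, $p:=zw$; choosing $s=2i$ gives $p=\frac{-4-8i}{5}$, and the two roots of $X^2-sX+p$ are approximately $0.8404+1.9520\,i$ and $-0.8404+0.0480\,i$, both in $\Pi^+$. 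Thus $\varphi$ takes the value $\approx-6.40+3.20\,i$ (itself a point of $\Pi^+$) at two distinct points of $\Pi^+$, so ``$\varphi:\Pi^+\to\Omega$ is a biholomorphism onto a plane domain containing $\Pi^+$'' fails, and one cannot define $f$ on $\Pi^+$ by naive inversion. The collision disappears on the Riemann surface of $\log$ (the two image points have surface-arguments $\approx2.68$ and $\approx2.68+2\pi$), which is what your parenthetical about ``$\Pi^+$ sitting over the first sheet'' is gesturing toward; but what you then actually need is univalence of $\Phi(z):=n\log z+\log h(z)$ on $\Pi^+$ together with $\{0<\mathrm{Im}\,w<\pi\}\subseteq\Phi(\Pi^+)$, and neither is proved in your text. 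Univalence of $\Phi$ is not a formality: it is a sum of two Pick functions, and sums of Pick functions need not be univalent (e.g.\ $z-1/z$ identifies $z$ with $-1/z$ inside $\Pi^+$). So the argument has a genuine hole exactly at the step you flagged as ``the main obstacle,'' and your fallback route via operator $(n+1)$-tonicity is, by your own admission, the same argument restated. (The facts you want are \emph{true} --- they follow a posteriori from the lemma itself, since $f^n\in\OM_+$ maps sectors into sectors --- but that is circular here.)

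The paper sidesteps all of this with a two-line reduction. Since $t^{r/n}$ is operator monotone for $r/n\in[0,1]$ and $f^r=(f^n)^{r/n}$, it suffices to treat $r=n$. Writing $h_n(t):=h(t^{1/n})\in\OM_+^1$, one has $t^nh(t)=\bigl(sh_n(s)\bigr)\big|_{s=t^n}$, hence $f^n=\bigl(((th_n)\circ t^n)^{[-1]}\bigr)^n=(th_n)^{[-1]}$, and the only nontrivial input is Ando's lemma that $(tg)^{[-1]}\in\OM_+^1$ for every $g\in\OM_+^1$ \cite[Lemma 5]{An2}. If you want a self-contained proof you would still need that $n=1$ statement, but that is a far more tractable target than the global mapping properties of $z^nh(z)$ on the upper half-plane.
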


\begin{proof}
First, note that $f=(t^nh)^{[-1]}$ is well defined on $(0,\infty)$. We may prove that $f^n$
is in $\OM_+^1$. When $n=1$, it is known \cite[Lemma 5]{An2} that $(th)^{[-1]}\in\OM_+^1$.
When $n\ge2$, if we put $h_n(t):=h(t^{1/n})$, then $h_n\in\OM_+^1$ and
$$
f^n=\left( (t^nh )^{[-1]}\right)^n
=\left( (th_n\circ t^n)^{[-1]}\right)^n = (th_n)^{[-1]}\in \OM_+^1.
$$
\end{proof}

In the rest of the section we consider a sequence of operator perspectives $(P_n)_{n\ge0}$
defined by 
$$
P_n:=P_{t^n h}.
$$
The following recursive formula of the sequence $P_n$ is easy to verify:
\begin{align*}
P_n(A,B)&=A P_{n-1}(B^{-1},A^{-1}) A = AB^{-1}P_{n-2}(A,B)B^{-1}A,\qquad n\ge2,
\end{align*}
which will be used in the proofs below without reference.

\begin{lemma}\label{L-4.3}
Let $h\in\OM_+^1$ and let $n\in\bN$ with $n\ge2$. If $h$ is pmi, then 
$$
A,B>0,\ \ P_{2n-1}(A,B)\le I\ \implies\ P_{2n-1}(A^p,B^p)\le I
$$
for all $p\in (0,1/2]$.
\end{lemma}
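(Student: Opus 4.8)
\textbf{Proof proposal for Lemma \ref{L-4.3}.}
The plan is to mimic the argument of the implication (i)\,$\implies$\,(iv) in Theorem \ref{T-3.7}, but now exploiting the recursive relation $P_{2n-1}(A,B)=AB^{-1}P_{2n-3}(A,B)B^{-1}A$ in order to reduce the index by $2$ at each step, together with the inverse-function fact in Lemma \ref{L-4.2}. First I would set $C:=B^{1/2}A^{-1}B^{1/2}$ (equivalently $A^{-1}=B^{-1/2}CB^{-1/2}$), so that the hypothesis $P_{2n-1}(A,B)\le I$ can be unfolded, via the identity $P_{t^{2n-1}h}(A,B)=B^{1/2}(B^{-1/2}AB^{-1/2})^{2n-1}h(B^{-1/2}AB^{-1/2})B^{1/2}$ rewritten through the transpose relation \eqref{F-2.5} and the operator-mean notation, into an inequality of the form $A^{1/2}\varphi(C)A^{1/2}\le I$, i.e. $\varphi(C)\le A^{-1}$, where $\varphi(t)=t^{2n-1}h(t)$ up to the appropriate transpose. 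The point of Lemma \ref{L-4.2} is that $f:=(t^{2n-1}h)^{[-1]}$ has $f^r\in\OM_+^1$ for all $r\in[0,2n-1]$; in particular $f$ itself, $f^{1/2}$, and small powers of $f$ are operator monotone, which is exactly what lets us pass the hypothesis through the square-root and power maps by the L\"owner--Heinz theorem.

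The core computation is then to expand $P_{2n-1}(A^p,B^p)$ for $p\in(0,1/2]$ using the recursion $P_{2n-1}=P_{t^{2n-1}h}$ and the substitution rules for operator means, writing $B^{p}=A^{p/2}(A^{-1}\#_{?}C^{\pm1})A^{p/2}$-type factorizations just as in the chain of equalities leading to \eqref{F-3.3}; this produces an expression of the shape
\begin{align*}
P_{2n-1}(A^p,B^p)= (\text{$A$-conjugation})\bigl(\,(\text{power of $A$})\ \sigma_{\text{related to }h}\ (\text{$\#$ of }h(C),C)\,\bigr)(\text{$A$-conjugation}).
\end{align*}
Next, using $h(C)\le A^{-1}$ (rewritten from the hypothesis) and the operator monotonicity of the relevant power $f^r$ from Lemma \ref{L-4.2} — here the constraint $p\le 1/2$ guarantees the exponents that appear, of the form $2p-1\le 0$ and $1-2p\in[0,1]$, land in the ranges where L\"owner--Heinz and the $\OM_+^1$-membership of $f^r$ apply — I would dominate each factor by the corresponding function of $A^{-1}$. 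The pmi property of $h$ (equivalently $h(C^p)\le h(C)^p$ for $p\le1$ is reversed, so one uses $h(C)^p\le h(C^p)$ in the right direction) is invoked precisely once, exactly as in the step producing the last inequality of \eqref{F-3.3}, to collapse a product $h(C)^\alpha h(C^\beta)$ into a single power of $h(C)$ bounded by a power of $A^{-1}$, yielding $P_{2n-1}(A^p,B^p)\le I$.

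The main obstacle I expect is bookkeeping the exponents in the nested $\#$-factorizations so that every power of $A$ and every power of $h(C)$ (or of $f$) that appears has its exponent inside $[0,1]$ (or $[0,2n-1]$ for the $f^r$ factors) — this is where the restriction $p\in(0,1/2]$ is forced, and getting a genuinely clean reduction for general odd index $2n-1$ rather than just $n=2$ requires either a direct induction on $n$ using the recursion $P_{2n-1}(A,B)=AB^{-1}P_{2n-3}(A,B)B^{-1}A$ (peeling off one $AB^{-1}\cdot B^{-1}A$ conjugation and checking it preserves the $\le I$ property under $(A,B)\mapsto(A^p,B^p)$, which is itself nontrivial since conjugation by $A^pB^{-p}$ is not conjugation by $AB^{-1}$), or a single global factorization as above. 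I would try the induction route first: the base case $n=2$, i.e. $P_3$, is handled by the explicit factorization, and the inductive step should follow by combining the recursion with the already-established case of $P_{2n-3}$ after absorbing the extra $AB^{-1}$-conjugation into a modified operator monotone function, again licensed by Lemma \ref{L-4.2}. Finally, to upgrade from "implication for a single $p\in[1/2,1/2]$" — which only gives $p=1/2$ — to all $p\in(0,1/2]$, I would iterate: if the implication holds for $p_0=1/2$ and one re-runs the argument the conclusion actually holds for all $p\in[1/2,1]$ applied to the pair $(A^{1/2},B^{1/2})$, and composing these halving steps covers $(0,1/2]$ densely and hence, by norm-continuity of $p\mapsto P_{2n-1}(A^p,B^p)$, all of $(0,1/2]$.
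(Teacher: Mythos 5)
Your proposal correctly identifies the ingredients (Lemma \ref{L-4.2}, the recursion $P_{k}(A,B)=AB^{-1}P_{k-2}(A,B)B^{-1}A$, a single use of the pmi hypothesis, L\"owner--Heinz, and the role of $2p\le1$), but the organizing idea is missing and the route you sketch has two genuine gaps. First, the downward induction ``reduce $P_{2n-1}$ to $P_{2n-3}$'' does not get off the ground: the recursion gives $P_{2n-1}(A,B)\le I$ if and only if $P_{2n-3}(A,B)\le BA^{-2}B$, so the hypothesis needed to invoke the inductive case for $P_{2n-3}$ (namely $P_{2n-3}(A,B)\le I$) is simply not available, and ``absorbing the extra conjugation into a modified operator monotone function'' is exactly the unproved content --- you yourself flag that conjugating by $A^pB^{-p}$ is not the $p$-th power of conjugating by $AB^{-1}$, and that difficulty is never resolved. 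The paper goes in the opposite direction: it fixes the auxiliary function $f:=\bigl((t^{2n-2}h)^{[-1]}\bigr)^*$ (whose powers lie in $\OM_+^1$ by Lemma \ref{L-4.2}) and the auxiliary operator $X:=(tf)^{[-1]}(B)$, deduces from the hypothesis that $B\ge (tf)(A)$ and $X\ge A$, and then proves by \emph{upward} induction on $m=1,\dots,n$ the quantitative estimate $P_{2m-1}(A^p,B^p)\le f(A)^{2(n-m)p}$, whose case $m=n$ is the assertion. The inductive step uses $f^{2(n-m)p}\in\OM_+^1$ and $t/f(t)^{n-m-1}\in\OM_+^1$ together with $X\ge A$ and $2p\le1$, and pmi enters only in the base case $m=1$. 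This strengthened intermediate target $f(A)^{2(n-m)p}$, which is what makes the induction close, is the idea your sketch lacks.

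Second, your final ``iterate and use density/continuity'' step is both unnecessary and incorrect. The direct argument works uniformly for every $p\in(0,1/2]$ because the only constraint it uses is $2p\le1$; no composition of halving steps is needed. And as stated the iteration fails: composing the single value $p=1/2$ yields only the dyadic values $2^{-k}$, which are not dense in $(0,1/2]$; while the alternative reading --- that the argument already yields the implication for all $p\in[1/2,1]$ --- cannot be right, since whether the implication holds for any $p\in(1/2,1]$ is precisely the question left open in Corollary \ref{C-4.5} and Problem \ref{Q-4.8}.
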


\begin{proof}
The assumption 
$$
I\ge P_{2n-1}(A,B)\ \left(=A P_{2n-2}(B^{-1},A^{-1}) A\right)
$$ 
can be rewritten as 
\begin{align}\label{F-5.1}
(t^{2n-2}h)(A^{1/2} B^{-1}A^{1/2})\le A^{-1}.
\end{align}

We put 
$$
f:=\left( (t^{2n-2}h)^{[-1]}\right)^*,\quad g:=tf\quad\text{and}\quad X:=g^{[-1]}(B).
$$
It follows from Lemma \ref{L-4.2} that $(t^{2n-2}h)^{[-1]}$ and hence $f$ are in $\OM_+^1$.
So, from Lemma \ref{L-4.2} again, $g^{[-1]}$ is also in $\OM_+^1$. Hence, inequality
\eqref{F-5.1} implies that
$$
A^{-1/2} BA^{-1/2}\ge f (A),\quad B\ge g(A) \ \ \text{ and }\ \ X\ge A.
$$ 
 
Here, we shall show  the following inequalities by induction:
\begin{equation}\label{aim1}
P_{2m-1}(A^p,B^p) \le f(A)^{2(n-m)p}
\end{equation}
for $m=1, \ldots ,n$.
When $m=1$, 
\begin{align*}
P_1(A^p,B^p)&=P_{th}(A^p,B^p)=P_{h(1/t)}(B^p,A^p) \\
&\le P_{h(1/t)}(g(A)^p,A^p)=A^p h\left(\left({A\over g(A)}\right)^p\right) \\
&=A^p h\left(\left({1\over {f(A)}}\right)^p\right) \\
&\le \left( A h\left({1\over {f(A)}}\right)\right)^p
=f(A)^{2(n-1)p}.  
\end{align*}
In the above, the latter inequality is derived from the pmi of $h$, and the last equality
follows since
\begin{align}\label{F-5.3}
f^{[-1]}(t)h(t^{-1})=(t^{2n-2}h)^*(t)h(t^{-1})=t^{2n-2}.
\end{align}

If we assume that inequality (\ref{aim1}) holds for $m$ ($< n-1$), then 
\begin{align*}
P_{2m+1}(A^p,B^p)
&=A^pB^{-p} P_{2m-1}(A^p,B^p) B^{-p}A^p \\
&\le A^pB^{-p} f(A)^{2(n-m)p} B^{-p}A^p \\
&\le A^pB^{-p} f(X)^{2(n-m)p} B^{-p}A^p \\
&=A^p \left( {{f(X)^{n-m-1} }\over {X}}\right)^{2p} A^p  \\
&\le A^p \left( {{f(A)^{n-m-1} }\over {A}}\right)^{2p} A^p  =f(A)^{2(n-m-1)p}.
\end{align*}
In the above, the second inequality holds since Lemma \ref{L-4.2} implies that
$$
f^{2(n-m)p}=\left(\left( (t^{2n-2}h)^{[-1]}\right)^{2(n-m)p}\right)^*\in\OM_+^1.
$$
Note that $t/f(t)^{n-m-1}$ is the transpose of $\left((t^{2n-2}h)^{[-1]}\right)^{n-m-1}$
and so $t/f(t)^{n-m-1}\in\OM_+^1$. From this and $2p\le1$ the last inequality in the above
follows. Thus, inequality \eqref{aim1} holds for $m=n$, proving that
$P_{2n-1}(A^p,B^p)\le I$.
\end{proof}

\begin{lemma}\label{L-4.4}
Let $h\in OM_+^1$ and let $n\in\bN$. If $h$ is pmi, then 
$$
A,B>0,\ \ P_{2n}(A,B)\le I\ \implies\ P_{2n}(A^p,B^p)\le I
$$
for all $p\in (0,1/2]$.
\end{lemma}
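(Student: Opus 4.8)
The plan is to mirror the structure of the proof of Lemma~\ref{L-4.3}, but adapted to the even index $2n$. The recursive formula $P_{2n}(A,B)=AB^{-1}P_{2n-2}(A,B)B^{-1}A$ terminates at $P_0=P_h$, so the induction will now run down to $m=0$ rather than to $m=1$; the base step will be handled directly by the Ando--Hiai inequality for the operator mean $\sigma_h$. As before I would first translate the hypothesis $P_{2n}(A,B)\le I$ into a spectral inequality: writing $P_{2n}(A,B)=A P_{2n-1}(B^{-1},A^{-1})A$ and unwinding, the assumption becomes $(t^{2n-1}h)(A^{1/2}B^{-1}A^{1/2})\le A^{-1}$, or equivalently $B\ge g(A)$ with $g:=t f$ where $f:=\bigl((t^{2n-1}h)^{[-1]}\bigr)^*$, and then $X:=g^{[-1]}(B)\ge A$. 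By Lemma~\ref{L-4.2}, $(t^{2n-1}h)^{[-1]}$ and its powers up to exponent $2n-1$ (adjoint, transpose, etc.) stay in $\OM_+^1$, which is what makes all the monotonicity steps legal.

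The core is an induction proving a chain of inequalities of the shape
\begin{align*}
P_{2m}(A^p,B^p)\le f(A)^{(2n-1-2m)p},\qquad m=n,n-1,\dots,1,0,
\end{align*}
with the reduction step
\begin{align*}
P_{2m-2}(A^p,B^p)
= A^pB^{-p}P_{2m}(A^p,B^p)B^{-p}A^p
&\le A^pB^{-p} f(X)^{(2n-1-2m)p} B^{-p}A^p \\
&\le A^p\Bigl(f(X)^{(2n-1-2m)/2}\,X^{-1}\Bigr)^{2p}A^p \\
&\le A^p\Bigl(f(A)^{(2n-1-2m)/2}\,A^{-1}\Bigr)^{2p}A^p
= f(A)^{(2n-3-2m)p}.
\end{align*}
Each step uses $B^{-p}\le g(A)^{-p}$ combined with the fact that $g^{[-1]}=X\ge A$ is operator monotone, together with $2p\le 1$ and the Löwner--Heinz theorem, exactly as in Lemma~\ref{L-4.3}. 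The running exponent $2n-1-2m$ is always a nonnegative odd integer during the descent, so the relevant powers of $(t^{2n-1}h)^{[-1]}$ have exponent at most $2n-1$ and Lemma~\ref{L-4.2} applies. The descent stops at $m=0$, giving $P_{0}(A^p,B^p)=P_h(A^p,B^p)\le f(A)^{(2n-1)p}$ — but at the very bottom I expect one must be a little more careful, since $P_0=P_h$ is itself the operator mean $\sigma_h$ and the final collapse to $\le I$ should come out of the pmi hypothesis on $h$ applied to $\sigma_h$, rather than from one more application of the recursion.

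The base case (top of the induction, $m=n$): here $P_{2n}(A^p,B^p)=P_h(A^p,B^p)$ reached via the recursion is instead $P_{2n}$ itself, so actually I would start the induction at the top with a direct estimate of $P_{2n}(A^p,B^p)$ using $B^p\ge g(A)^p$ — writing $P_{2n}(A^p,B^p)=A^p P_{2n-1}(B^{-p},A^{-p})A^p$ and bounding $B^{-p}\le g(A)^{-p}$ — then invoking the pmi of $h$ (through the identity $f^{[-1]}(t)h(t^{-1})=t^{2n-1}$, analogous to \eqref{F-5.3}) to land at $f(A)^{(2n-1)p}$. Then $f(A)\in\OM_+^1$ and $f(A)\le f(X)$ need to be combined with $X\ge A$ to re-express things in terms of powers of $A^{-1}$, which is where the anti-monotone factor $t/f(t)^{k}$ enters.

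The main obstacle will be the bookkeeping at the two ends of the induction: the odd total exponent $2n-1$ forces half-integer powers $f(A)^{(2n-1)p/2}$ to appear inside the recursion step, so one must check at each stage that the exponent being carried is still a genuine nonnegative integer (this is why splitting $2n-1-2m$ as $2\cdot\frac{2n-1-2m}{2}$ is slightly delicate — in fact $2n-1-2m$ is odd, so one should instead peel the recursion in a way that keeps integer exponents, e.g.\ by tracking $f(A)^{(2n-1-2m)p}$ and using $P_{2m-2}(A^p,B^p)=A^pB^{-p}P_{2m}(A^p,B^p)B^{-p}A^p$ directly without an intermediate square root). Getting this normalization right, and verifying that all the powers of $(t^{2n-1}h)^{[-1]}$ invoked lie in the allowed range $[0,2n-1]$ so that Lemma~\ref{L-4.2} applies, is the crux; once that is in place the rest is Löwner--Heinz plus operator monotonicity exactly as in Lemma~\ref{L-4.3}, and combining Lemmas~\ref{L-4.3} and~\ref{L-4.4} over the two parities yields Theorem~\ref{T-4.1}(1), with part~(2) following by the $P_f\leftrightarrow P_{f^*}$ duality.
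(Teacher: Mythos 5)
Your setup is right --- the translation of $P_{2n}(A,B)\le I$ into $(t^{2n-1}h)(A^{1/2}B^{-1}A^{1/2})\le A^{-1}$, hence $B\ge g(A)$ and $X:=g^{[-1]}(B)\ge A$ with $f:=\bigl((t^{2n-1}h)^{[-1]}\bigr)^*$ and $g:=tf$, all justified by Lemma~\ref{L-4.2} --- but the induction itself is structured backwards and its bookkeeping does not close. The recursion reads $P_{2m}(A,B)=AB^{-1}P_{2m-2}(A,B)B^{-1}A$, i.e.\ the \emph{higher} index is a conjugate of the lower one; your reduction step asserts $P_{2m-2}(A^p,B^p)=A^pB^{-p}P_{2m}(A^p,B^p)B^{-p}A^p$, which is false. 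Consequently one must ascend from the bottom of the recursion to the top, not descend: the correct chain is $P_{2m}(A^p,B^p)\le f(A)^{2(n-m)p}$ for $m=1,\dots,n$, with \emph{even} exponents that drop by $2p$ at each conjugation (since $B=Xf(X)$ commutes with $X$, the factor $B^{-p}(\cdot)B^{-p}$ absorbs $f(X)^{2p}X^{-2p}$), terminating at exponent $0$ exactly at the target index $2n$. Your odd exponents $(2n-1-2m)p$ are incompatible with this: at $m=n$ they give $f(A)^{-p}$ rather than $I$, and the value $(2n-3-2m)p$ produced by your reduction step does not match the value $(2n+1-2m)p$ that your chain assigns to $m-1$. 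The half-integer powers you worry about are a non-issue (Lemma~\ref{L-4.2} covers all real exponents in $[0,2n-1]$), but they are also absent from the correct chain.

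The second gap is the base case. It is not $P_0=\sigma_h$ handled by ``the Ando--Hiai inequality for $\sigma_h$'' (the hypothesis $P_{2n}(A,B)\le I$ imposes no sign condition on $B\sigma_hA$, and $p\le1/2$ is the wrong parameter range for Proposition~\ref{P-3.1} in any case), nor a direct estimate of $P_{2n}$ landing at $f(A)^{(2n-1)p}$, which is not $\le I$. The correct base case is $m=1$: write $P_2(A^p,B^p)=A^pB^{-p}(B^p\sigma_hA^p)B^{-p}A^p$, enlarge $A^p$ to $X^p$ by monotonicity of $\sigma_h$, use that $B=g(X)$ commutes with $X$ so that $B^p\sigma_hX^p=B^p\,h\bigl((X/B)^p\bigr)$, apply the pmi of $h$ to this commuting pair, and convert via the identity $f^{[-1]}(t)h(t^{-1})=t^{2n-1}$ to obtain $A^p\bigl(f(X)^{n-1}/X\bigr)^{2p}A^p\le f(A)^{2(n-1)p}$, using $t/f(t)^{n-1}\in\OM_+^1$ and $2p\le1$. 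With that base case and the ascending inductive step the argument closes; as written, yours does not.
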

\begin{proof}
Put
$$
f:=\left( (t^{2n-1}h)^{[-1]}\right)^*,\quad g:=tf\quad \text{and}\quad X:=g^{[-1]}(B).
$$
Then, from Lemma \ref{L-4.2}, $(t^{2n-1}h)^{[-1]}$, $f$ and $g^{[-1]}$ are in $OM_+^1$. 
So the assumption   
$$
I\ge P_{2n}(A,B)\ \left(=A P_{2n-1}(B^{-1},A^{-1}) A\right)
$$ 
implies that
$$A^{-1/2} BA^{-1/2}\ge f (A),\quad B\ge g(A) \ \ \text{ and }\ \ X\ge A .$$ 

Here, we shall show  the following inequalities by induction:
\begin{equation}\label{aim2}
P_{2m}(A^p,B^p) \le f(A)^{2(n-m)p} 
\end{equation}
for $m=1, \ldots ,n$.
When $m=1$, 
\begin{align*}
P_2(A^p,B^p)
&=A^pB^{-p}  (B^p\sigma_h A^p)  B^{-p}A^p  \\
&\le A^pB^{-p} (B^p \sigma_h X^p)  B^{-p}A^p
=A^p \left( {{h \left(\left({X\over B}\right)^p\right) } \over B^p}\right) A^p \\
&\le A^p \left( {{h \left({{X}\over {B}}\right) } \over B}\right)^p A^p
=A^p \left({h\left({1\over f(X)}\right)\over Xf(X)}\right)^p A^p \\
&=A^p\left( {{f(X)^{n-1}}\over X}\right)^{2p}A^p \\
&\le A^p\left( {{f(A)^{n-1}}\over A}\right)^{2p}A^p =f(A)^{2(n-1)p}.
\end{align*}
In the above, the second inequality is due to the pmi of $h$, the fourth equality follows
from $f^{[-1]}(t)h(t^{-1})=t^{2n-1}$ as in \eqref{F-5.3}, and the last inequality follows
since $t/f(t)^{n-1}\in\OM_+^1$ as in the last part of the proof of Lemma \ref{L-4.3}.

If we assume that inequality \eqref{aim2} holds for $m$ ($< n)$, then we can show that
$$
P_{2m+2}(A^p,B^p)\le f(A)^{2(n-m-1)p}
$$
in a similar way to the last paragraph of the proof of Lemma \ref{L-4.3}. Thus, inequality
\eqref{aim2} holds for $m=n$, proving that $P_{2n}(A^p,B^p)\le I$.
\end{proof}

\begin{proof}[Proof of Theorem \ref{T-4.1}]
The first statement (1) is immediate from Lemmas \ref{L-4.3} and \ref{L-4.4}.
Since the adjoint of $t^nh$ is $t^nh^*$, (2) follows as well.
\end{proof}

\begin{corollary}\label{C-4.5}
If $h\in \OM_+^1$ is pmi, then 
$$
(0,1/2]\subseteq \Lambda (t^n h) \subseteq (0,1]
$$
for any integer $n\ge 2$.
\end{corollary}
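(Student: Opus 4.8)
The plan is to establish the two inclusions independently, each being an essentially immediate consequence of a result proved earlier in the paper. The inclusion $(0,1/2]\subseteq\Lambda(t^nh)$ is just a restatement of Theorem~\ref{T-4.1}(1): since $h\in\OM_+^1$ is pmi, $P_{t^nh}$ satisfies \eqref{ando-hiai} for every $p\in(0,1/2]$, and this is precisely the assertion $(0,1/2]\subseteq\Lambda(t^nh)$ by the definition of $\Lambda$.

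For the reverse inclusion $\Lambda(t^nh)\subseteq(0,1]$ I would apply Proposition~\ref{P-3.17}, taking the function ``$f$'' there to be $t\mapsto t^{n-1}h(t)$, so that ``$tf(t)$'' $=t^nh(t)$ and hence $\Lambda(t\cdot t^{n-1}h)=\Lambda(t^nh)$. It therefore suffices to verify that $t^{n-1}h$ satisfies conditions (a)--(c) of Proposition~\ref{P-3.17}. Condition (a): since $h\in\OM_+^1$ is operator monotone and positive, $h(0^+)$ exists and is finite (indeed $0\le h(0^+)\le h(1)=1$), so $\lim_{t\to0^+}t\cdot t^{n-1}h(t)=\lim_{t\to0^+}t^nh(t)=0$ because $n\ge1$. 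Condition (b): multiplying the pmi inequality $h(t^p)\ge h(t)^p$ ($t>0$, $p\ge1$) by $t^{(n-1)p}=(t^{n-1})^p>0$ gives $(t^{n-1}h)(t^p)=t^{(n-1)p}h(t^p)\ge(t^{n-1})^ph(t)^p=\bigl((t^{n-1}h)(t)\bigr)^p$, so $t^{n-1}h$ is pmi. Condition (c): for $n\ge2$ the map $t\mapsto t^{n-1}$ is strictly increasing and positive while $h>0$ is non-decreasing, so their product $t^{n-1}h(t)$ is strictly increasing. Proposition~\ref{P-3.17} then yields $\Lambda(t^nh)\subseteq(0,1]$, which completes the proof.

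I do not expect a genuine obstacle here: the statement merely repackages Theorem~\ref{T-4.1} and Proposition~\ref{P-3.17}. The only mild point of care is that the ``$f$'' of Proposition~\ref{P-3.17} must be chosen as $t^{n-1}h$, not $h$ itself, and one should observe that the extra factor $t^{n-1}$ -- which is itself pmi (with equality) and, for $n\ge2$, strictly increasing -- does not spoil pmi or strict monotonicity. In particular, in contrast with Theorem~\ref{T-3.19} for the case $n=1$, no exclusion $h\ne1$ is needed here, since $t^{n-1}$ already provides the strict monotonicity required by hypothesis (c).
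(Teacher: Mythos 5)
Your proposal is correct and follows exactly the paper's route: the paper's proof of Corollary \ref{C-4.5} is literally ``Immediate from Theorem \ref{T-4.1} and Proposition \ref{P-3.17},'' and you have simply supplied the (correct) verification that $t^{n-1}h$ satisfies hypotheses (a)--(c) of Proposition \ref{P-3.17}, including the observation that no exclusion $h\ne1$ is needed when $n\ge2$.
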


\begin{proof}
Immediate from Theorem \ref{T-4.1} and Proposition \ref{P-3.17}. 
\end{proof}

Specializing to the power functions $t^\alpha$, the set $\Lambda(t^\alpha)$ of the
parameter $p>0$ for which the AH inequality holds is symmetric at $\alpha=1/2$, since
$\widetilde{t^\alpha}=t^{1-\alpha}$. The $\Lambda(t^\alpha)$ known so far is summarized in
the following:

\begin{proposition}\label{P-4.6}
Let $\alpha \in {\mathbb R}\setminus \{0,1\}$. 
Then $\Lambda(t^\alpha)$ is given as follows:
\begin{itemize}
\item[$(${\rm 1}$)$] $\bigl(0,{\alpha\over2(\alpha-1)}\bigr]\subseteq \Lambda(t^\alpha)
\subseteq (0,1]$
\quad$(\alpha >2)$,
\item[$(${\rm 2}$)$] $\Lambda (t^\alpha) = (0,1]$\quad $(1<\alpha \le 2)$,
\item[$(${\rm 3}$)$] $\Lambda (t^\alpha) = [1,\infty)$ \quad $(0<\alpha <1)$,
\item[$(${\rm 4}$)$] $\Lambda (t^\alpha) = (0,1]$ \quad $(-1\le \alpha <0)$,
\item[$(${\rm 5}$)$] $\bigl(0,{1-\alpha\over-2\alpha}\bigr]\subseteq \Lambda(t^\alpha)
\subseteq (0,1]$ \quad $(\alpha < -1)$.
\end{itemize}
\end{proposition}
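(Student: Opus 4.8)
The plan is to assemble Proposition~\ref{P-4.6} entirely from results already proved in the excerpt, using the symmetry $\widetilde{t^\alpha}=t^{1-\alpha}$ together with the correspondences $P_f\leftrightarrow P_{\widetilde f}$ and $P_f\leftrightarrow P_{f^*}$ coming from \eqref{F-2.5} and \eqref{F-2.6}. First I would record the elementary facts that $(t^\alpha)^*=t^\alpha$, that $t^\alpha$ is strictly increasing for $\alpha>0$ and strictly decreasing for $\alpha<0$, and that $t^\alpha$ is pmi iff $\alpha\ge1$ or $\alpha\le0$ (equivalently $(t^p)^\alpha\le(t^\alpha)^p$ for all $t>0$, $p\ge1$ when $0\le\alpha\le1$, reversed outside), so that $t^\alpha$ is pmd precisely for $0\le\alpha\le1$. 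Since $\Lambda(f)$ is the set of $p$ for which $P_f$ satisfies \eqref{ando-hiai} and $P_{f^*}$ then satisfies \eqref{ando-hiai2}, and since $(t^\alpha)^*=t^\alpha$, the inclusion $p\in\Lambda(t^\alpha)$ is equivalent to $P_{t^\alpha}$ satisfying \emph{both} \eqref{ando-hiai} and \eqref{ando-hiai2}; I would also note $\Lambda(\widetilde{f})=\Lambda(f)$ via \eqref{F-2.5}, which gives the stated symmetry $\Lambda(t^\alpha)=\Lambda(t^{1-\alpha})$ and lets me deduce (4) from (2), (5) from (1), and halve the work in general.

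Next I would treat the cases one at a time. Part (3), $0<\alpha<1$: here $t^\alpha\in\OM_+^1$, so $P_{t^\alpha}=\sigma_{t^\alpha}$, and both $t^\alpha$ and $(t^\alpha)^*=t^\alpha$ are simultaneously pmi and pmd only in the trivial sense; more precisely $t^\alpha$ is pmd, so by Proposition~\ref{P-3.1} (and the remark after it that $P_h$ with $h$ pmd satisfies \eqref{ando-hiai} for $p\ge1$) together with $\widetilde{t^\alpha}=t^{1-\alpha}$ also pmd, $P_{t^\alpha}$ satisfies both \eqref{ando-hiai} and \eqref{ando-hiai2} for all $p\ge1$; the reverse inclusion $\Lambda(t^\alpha)\subseteq[1,\infty)$ follows from \eqref{pmd-ando-hiai} of \cite{wada2} applied to $h=t^\alpha\in\OM_+^1\setminus\{1,t\}$ (noting $\Lambda(h^*)=\Lambda(h)$ here). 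Part (4), $-1\le\alpha<0$: write $t^\alpha=t\cdot t^{\alpha-1}$ with $h:=t^{\alpha-1}\in\OM_+^1$ since $0<1-\alpha\le1$... actually $\alpha-1\in[-2,-1)$, so instead use $t^\alpha\in\OC_+^1$ with $(t^\alpha)(0^+)=0$ via Proposition~\ref{P-2.1} condition (iv) (since $-1\le\alpha$ forces operator convexity of $t^\alpha$ for $\alpha\in[-1,0]$) — then Corollary~\ref{C-3.8} gives $(0,1]\subseteq\Lambda(t^\alpha)$ because $t^\alpha$ is pmi, and Proposition~\ref{P-3.17}(b),(c) with $f=t^{\alpha-1}$... more cleanly, apply the symmetry $\Lambda(t^\alpha)=\Lambda(t^{1-\alpha})$ with $1-\alpha\in(1,2]$ to reduce (4) to (2). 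Part (2), $1<\alpha\le2$: $t^\alpha=t\cdot t^{\alpha-1}$ with $h:=t^{\alpha-1}\in\OM_+^1$ (as $0\le\alpha-1\le1$) and $h$ is pmd, so Theorem~\ref{T-3.7}(i)$\Rightarrow$(v) shows $P_{th}=P_{t^\alpha}$ satisfies \eqref{ando-hiai2} for $p\in(0,1]$, while the remark after Corollary~\ref{C-3.6}/the pmd side gives \eqref{ando-hiai} for $p\in(0,1]$ as well — wait, I must be careful which of pmi/pmd yields which; I would simply invoke Corollary~\ref{C-3.8} for both $f=t^\alpha$ (pmi or pmd as appropriate) and its adjoint to get $(0,1]\subseteq\Lambda(t^\alpha)$, then use Theorem~\ref{T-3.19} with $h=t^{\alpha-1}\in\OM_+^1\setminus\{1\}$ pmi to conclude $\Lambda(t\cdot t^{\alpha-1})=(0,1]$ exactly.

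For parts (1) and (5), which carry only one-sided information, I would apply Corollary~\ref{C-4.5} and Proposition~\ref{P-4.6}'s own building block Theorem~\ref{T-4.1}: for $\alpha>2$ write $t^\alpha=t^n h(t)$ with $n:=\lfloor\alpha\rfloor\ge2$ and $h(t):=t^{\alpha-n}\in\OM_+^1$, which is pmi iff $\alpha-n\le0$ or $\alpha-n\ge1$ — but $\alpha-n\in[0,1)$, so $h$ is actually pmd, not pmi, which means I instead need the decomposition with $n:=\lceil\alpha\rceil-1$ or I should use Theorem~\ref{T-4.1}(2)... the cleanest route is to note $t^\alpha=t^n h$ with $n=2$ say and $h=t^{\alpha-2}$ for $\alpha>2$; then $h\in\OM_+^1$ only when $\alpha-2\in[0,1]$, i.e.\ $\alpha\le3$, so this needs iterating the recursion $P_n(A,B)=AP_{n-1}(B^{-1},A^{-1})A$ from Section~4, and in fact the bound $\alpha/(2(\alpha-1))$ is exactly what Lemmas~\ref{L-4.3},~\ref{L-4.4} deliver after tracking the exponent $2(n-m)p$ through the induction with the sharpest choice of $n$. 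The upper bound $\Lambda(t^\alpha)\subseteq(0,1]$ in (1) and (5) comes from Proposition~\ref{P-3.17}: $t^\alpha=t\cdot t^{\alpha-1}$, and $f:=t^{\alpha-1}$ satisfies (a) $\lim_{t\to0^+}tf(t)=\lim t^\alpha=0$ since $\alpha>0$ (for (1)), is pmi, and is strictly increasing, so $\Lambda(tf)\subseteq(0,1]$; for (5) with $\alpha<-1$ use the symmetry $\Lambda(t^\alpha)=\Lambda(t^{1-\alpha})$ with $1-\alpha>2$ to import (1), which also converts the bound $\alpha/(2(\alpha-1))$ into $(1-\alpha)/(-2\alpha)$. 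The main obstacle I anticipate is bookkeeping: getting the pmi-versus-pmd labels and the \eqref{ando-hiai}-versus-\eqref{ando-hiai2} directions consistent across the adjoint and transpose operations, and in (1) identifying precisely which integer $n$ and which iterated application of Lemmas~\ref{L-4.3}/\ref{L-4.4} produces the exact exponent bound $\alpha/(2(\alpha-1))$ rather than something weaker; once the decomposition $t^\alpha=t^{\lfloor\alpha\rfloor}\cdot t^{\{\alpha\}}h_0$ is pinned down and the induction exponent is read off correctly, the rest is assembly from the quoted results.
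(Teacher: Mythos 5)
Your overall architecture matches the paper's: the symmetry $\Lambda(t^\alpha)=\Lambda(t^{1-\alpha})$ (from $P_{t^\alpha}(A,B)=P_{t^{1-\alpha}}(B,A)$) reduces (4) to (2) and (5) to (1); Theorem \ref{T-3.19} applied to $h=t^{\alpha-1}$ gives (2); the exact identity \eqref{pmd-ando-hiai} from \cite{wada2} gives (3); and Proposition \ref{P-3.17} gives the upper bound $\Lambda(t^\alpha)\subseteq(0,1]$ in (1) and (5). Two problems remain, one cosmetic-but-confusing and one substantive.

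First, your ``elementary fact'' that $t^\alpha$ is pmi iff $\alpha\ge1$ or $\alpha\le0$, and pmd precisely for $0\le\alpha\le1$, is false: every power function satisfies $f(t^p)=t^{\alpha p}=f(t)^p$ with \emph{equality}, so $t^\alpha$ is simultaneously pmi and pmd for every $\alpha$. This misbelief is what sends you into the detours in parts (1), (2) and (4) (``$h$ is actually pmd, not pmi, which means I instead need\dots''), and it makes part (2) internally inconsistent: you first classify $t^{\alpha-1}$ for $0<\alpha-1<1$ as pmd-only, then invoke Theorem \ref{T-3.19} ``with $h=t^{\alpha-1}$ pmi.'' The invocation is in fact correct, but only because the classification is wrong; you should simply record that power functions are trivially pmi, after which Theorem \ref{T-3.19} and Theorem \ref{T-4.1}(1) apply without any case analysis on where $\alpha-n$ falls.

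Second, and this is the genuine gap: the sharper lower bound $\bigl(0,\frac{\alpha}{2(\alpha-1)}\bigr]\subseteq\Lambda(t^\alpha)$ for $\alpha>2$ is \emph{not} obtainable from Lemmas \ref{L-4.3} and \ref{L-4.4}. Those lemmas (hence Theorem \ref{T-4.1} and Corollary \ref{C-4.5}) yield only $(0,1/2]\subseteq\Lambda(t^nh)$, whereas $\frac{\alpha}{2(\alpha-1)}>\frac12$ for every $\alpha>2$; the induction in Section 4 is pinned to $2p\le1$ at the step where $t/f(t)^{n-m-1}\in\OM_+^1$ is combined with the L\"owner--Heinz theorem, and no choice of $n$ in the decomposition $t^\alpha=t^nh$ relaxes that constraint. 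The paper does not derive the constant from its own machinery at all: it records the $(0,1/2]$ inclusion from Corollary \ref{C-4.5} and then imports the improvement from the external result \cite[Corollary 5.2]{Hi3}. Your sketch (``tracking the exponent $2(n-m)p$ through the induction with the sharpest choice of $n$'') does not produce the constant and, as written, cannot; to close the gap you must either cite \cite{Hi3} or supply a genuinely new argument for $p\in(1/2,\frac{\alpha}{2(\alpha-1)}]$. Everything else in your assembly is sound.
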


\begin{proof}
When $\alpha>2$, Corollary \ref{C-4.5} immediately implies that
$(0,1/2]\subseteq\Lambda(t^\alpha)\subseteq(0,1]$. But a slightly better result that
$\bigl(0,{\alpha\over2(\alpha-1)}\bigr]\subseteq \Lambda(t^\alpha)$ was obtained in
\cite[Corollary 5.2]{Hi3}. Hence we have (1).
Theorem \ref{T-3.19} contains (2) and (4). We have (3) by \cite{AH1} and \cite{wada2}. 
Since $P_{t^\alpha}(A,B)=P_{t^{1-\alpha}}(B,A)$, (5) follows from (1).
\end{proof}

\begin{Remark}\label{R-4.7}\rm
Let $\alpha \in {\mathbb R}\setminus \{0,1\}$. For any $p\in\Lambda(t^\alpha)$ described
in Proposition \ref{P-4.6}, the log-majorization in \eqref{F-3.5} for $P_{t^\alpha}$
is obtained by the standard antisymmetric tensor power technique. Furthermore, the
log-majorization in \eqref{F-3.6} for $P_{t^\alpha}$ holds for any $p,q>0$ with
$q/p\in\Lambda(t^\alpha)$.
\end{Remark}

\begin{Problem}\label{Q-4.8}\rm
An interesting open problem is to determine $\Lambda(t^nh)$ when $n\ge2$ and $h\in\OM_+^1$ is
pmi, in particular, $\Lambda(t^\alpha)$ for $\alpha>2$.
\end{Problem}

The following is a result related to the above problem.

\begin{proposition}\label{P-4.9}
Let $f>0$ be a pmi (resp., pmd) continuous function on $(0,\infty)$. If 
$f$ is not a power function, then $\Lambda (f) \subseteq (0,1]$
(resp., $\Lambda (f) \subseteq [1,\infty)$).
\end{proposition}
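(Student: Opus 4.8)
The plan is to mimic the strategy used in Proposition \ref{P-3.17}, but without the structural assumption that $f$ factors as $tf_0$ with $f_0$ pmi and strictly monotone. By the correspondence $P_f\leftrightarrow P_{f^*}$ (recall $P_f$ satisfies \eqref{ando-hiai} for $p$ iff $P_{f^*}$ satisfies \eqref{ando-hiai2} for $p$, and $f^*$ is pmd iff $f$ is pmi), it suffices to treat the pmi case and show $\Lambda(f)\subseteq(0,1]$. So suppose toward a contradiction that some $p>1$ lies in $\Lambda(f)$, i.e.\ $P_f$ satisfies \eqref{ando-hiai} for this $p$. By Proposition \ref{P-3.5} we already know $f(t^p)\le f(t)^p$ for all $t>0$; combined with the hypothesis that $f$ is pmi (so $f(t)^p\le f(t^p)$ for $p\ge1$), this forces $f(t^p)=f(t)^p$ for \emph{all} $t>0$. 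The goal is to push this functional identity, together with \eqref{ando-hiai} applied to suitable small matrices, far enough to conclude that $f$ must be a power function, contradicting the assumption.

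First I would exploit the identity $f(t^p)=f(t)^p$ directly. Writing $\varphi(x):=\log f(e^x)$ for $x\in\mathbb R$, the identity reads $\varphi(px)=p\,\varphi(x)$ for all $x$. This is Cauchy-type homogeneity along the single ray of dilations by $p$; iterating gives $\varphi(p^k x)=p^k\varphi(x)$ for all integers $k\in\mathbb Z$ and all $x$. That alone does not pin down $\varphi$ to be linear (there are pathological solutions), so the second ingredient is needed: the $2\times2$ test matrices from Lemma \ref{L-3.18}-style arguments. Concretely, I would adapt the construction in the proof of Lemma \ref{L-3.18}: choosing rank-one $A$ (a projection onto a line making angle $\theta$) and diagonal $B=\mathrm{diag}(a^{-1},b^{-1})$, one computes that $P_f(A,B)$ and $P_f(A^p,B^p)$ become scalar multiples of $A$, with the scalars $f(a\cos^2\theta+b\sin^2\theta)$ and $f(a^p\cos^2\theta+b^p\sin^2\theta)$ respectively, provided $f(0^+)$ is finite; if $f(0^+)=+\infty$ one first argues $f$ cannot be pmi with a nontrivial $\Lambda$ unless... — actually a cleaner route is to replace $th$ in Lemma \ref{L-3.18} by working with $\widetilde f$ or by an affine rescaling so that the relevant limit is finite. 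Granting this, \eqref{ando-hiai} yields $f(\lambda a^p+(1-\lambda)b^p)\le f(\lambda a+(1-\lambda)b)^p$ for all $a,b>0$, $\lambda\in[0,1]$; using $f(t)^p=f(t^p)$ and that $f$ (being pmi, positive, and — as one shows — monotone) is increasing, this gives $\lambda a^p+(1-\lambda)b^p\le(\lambda a+(1-\lambda)b)^p$ for all such $a,b,\lambda$. But for $p>1$ the function $t\mapsto t^p$ is strictly convex, so Jensen's inequality goes the other way, giving equality $\lambda a^p+(1-\lambda)b^p=(\lambda a+(1-\lambda)b)^p$ — forcing $a=b$ whenever $\lambda\in(0,1)$, a contradiction unless the test is vacuous. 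Here the delicate point is that $f$ need not be strictly increasing a priori (that was hypothesis (c) in Proposition \ref{P-3.17}, absent here), so I must first deduce monotonicity/strictness of $f$ from the pmi property plus positivity, or argue that if $f$ is constant on an interval the functional equation $f(t^p)=f(t)^p$ forces $f\equiv1$, which is a power function, closing that branch.

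The remaining work is to handle the degenerate branches cleanly: (i) if $f$ is constant, $f(t^p)=f(t)^p$ forces $f\equiv1=t^0$; (ii) if $f$ is not strictly monotone but nonconstant, I would localize to an interval where $f$ is strictly monotone and run the convexity argument there, or alternatively invoke that a pmi function satisfying $f(t^p)=f(t)^p$ for one $p>1$, once known to be monotone, has $\log f(e^x)$ both additively $p$-homogeneous and monotone, hence (by a standard density/monotonicity argument, since monotone solutions of $\varphi(px)=p\varphi(x)$ that are monotone are linear) of the form $\varphi(x)=cx$, i.e.\ $f(t)=t^c$; (iii) the endpoint behavior ($f(0^+)$ possibly infinite) is absorbed by passing to $P_{\widetilde f}(A,B)=P_f(B,A)$ and noting $\widetilde f(0^+)=f'(\infty)$, one of which is finite when $f$ is convex or concave — but since we are not assuming convexity here, the safest is to run the $2\times2$ argument with $A$ diagonal and $B$ rank-one-perturbed so that only finite values of $f$ are evaluated. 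The main obstacle, I expect, is exactly this last technical point — making the scalarization trick of Lemma \ref{L-3.18} work for a general positive continuous pmi $f$ with no convexity and no control at $0^+$ — rather than the convexity contradiction itself, which is routine once the inequality $\lambda a^p+(1-\lambda)b^p\le(\lambda a+(1-\lambda)b)^p$ is in hand.
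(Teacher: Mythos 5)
Your opening reduction and the observation that $p\in\Lambda(f)$ together with pmi forces $f(t^p)=f(t)^p$ are both correct, but from that point your route diverges from the paper's and leaves genuine gaps, while missing the one idea that makes a purely scalar proof work. The paper never goes back to matrices after Proposition \ref{P-3.5}: it combines the scalar necessary condition $f(t^p)\le f(t)^p$ with the inequality $f(t^x)\le f(t)^x$ for all $x\in(0,1)$ (which is just pmi read backwards, substituting $t\mapsto t^{1/x}$) and \emph{composes} the two, obtaining $f(t^{p^nx})\le f(t)^{p^nx}$ for all $n\ge1$ and $x\in(0,1)$, hence $f(t^y)\le f(t)^y$ for every $y>0$ because $p^n\to\infty$. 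The sandwich $f(t)=f\bigl((t^x)^{1/x}\bigr)\le f(t^x)^{1/x}\le f(t)$ then upgrades this to the equality $f(t^x)=f(t)^x$ for \emph{all} $x>0$, not merely for powers of the single $p$, and that identity pins $f$ down as a power function with no convexity, no monotonicity, and no further matrix input. You correctly note that $\varphi(px)=p\varphi(x)$ for one $p$ admits pathological solutions, but the extra leverage you then go looking for in $2\times2$ matrices is already contained in the pmi hypothesis for the continuum of exponents $x\in(0,1)$; you never exploit it.

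The matrix detour you propose instead does not close. The scalarization in Lemma \ref{L-3.18} depends essentially on $(tf)(0)=0$: the test operator $A$ is a rank-one projection, and applying the function to the rank-one matrix $B^{-1/2}AB^{-1/2}$ yields a multiple of $A$ only because the function vanishes at $0$. For a general positive continuous $f$ the value $f(0^+)$ may fail to exist, be infinite, or simply be nonzero; in the last case $P_f(A,B)$ acquires a component on the orthogonal complement of the range of $A$, so $P_f(A,B)\le I$ no longer reduces to a scalar inequality in $f(a\cos^2\theta+b\sin^2\theta)$. You flag this yourself as the main obstacle, but the proposed fixes (passing to $\widetilde f$, an affine rescaling, perturbing $B$) are not carried out, and with no convexity or boundary control assumed it is not clear they can be. Separately, your final step needs $f$ strictly increasing in order to convert $f(\lambda a^p+(1-\lambda)b^p)\le f\bigl((\lambda a+(1-\lambda)b)^p\bigr)$ into an inequality between the arguments, and pmi does not imply monotonicity (for example $f(t)=\max\{t,t^{-1}\}$ is pmi, continuous, positive and non-monotone); the fallback of localizing to an interval of strict monotonicity does not obviously help, since the two arguments of $f$ need not lie in such an interval. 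As written the proposal is not a proof; the repair is to discard the matrix argument entirely and run the iteration described above.
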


\begin{proof}
Since $f$ is pmi (resp. pmd), $f(t^x)\le f(t)^x$ holds for all $t>0$ and
for all $x\in(0,1)$ (resp., $x>1$).  
Assume that there exists a $p>1$ (resp., $p\in(0,1)$) such that 
$p$ is in $\Lambda(f)$. Then from Proposition \ref{P-3.5}, 
$f(t^{p^n x})\le f(t)^{p^n x}$ for all $t>0$ and for all $n\ge 1$ and
$x\in(0,1)$ (resp., $x>1$). This implies that
$$
f(t^x)\le f(t)^x
$$
holds for all $t>0$ and for all $x>0$. So 
$f(t)=f(t^{x\cdot {1\over x}})\le 
f(t^{x})^{1\over x}
\le
f(t)^{x\cdot {1\over x}}=f(t)
$. Thus $f$ must be a power function. This contradicts the assumption. 
\end{proof}

%----------------------------------------------------
\section{Lie-Trotter formula and norm inequalities}
%----------------------------------------------------

In this section, applying the Lie-Trotter formula to the AH type inequalities
in Sections 3 and 4, we show operator norm inequalities related to operator means and
operator perspectives. Furthermore, we extend some results in \cite{An1,Ya1} to more general
operator means.

%--------------------------------------------
\subsection{Lie-Trotter formula}
%--------------------------------------------

In this subsection we present a general Lie-Trotter formula for operator perspectives
associated with positive $C^1$-functions on $(0,\infty)$. Note that most of operator means
and operator perspectives treated in the paper are associated with positive analytic
functions on $(0,\infty)$; so the following Lie-Trotter formula can be applied to them.

\begin{theorem}\label{T-5.1}
Assume that $f$ is a $C^1$ function on $(0,\infty)$ with $f>0$ and $f(1)=1$. Then for every
$A,B>0$,
$$
\lim_{p\to0}P_f(A^p,B^p)^{1/p}=\exp(\alpha\log A+(1-\alpha)\log B)
\quad\mbox{$($in $\|\cdot\|_\infty$ $)$},
$$
where $\alpha:=f'(1)$.
\end{theorem}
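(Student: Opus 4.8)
The plan is to reduce the statement to the classical Lie--Trotter (Trotter--Kato) product formula $\lim_{p\to0}(A^pB^p)^{1/p}=\exp(\log A+\log B)$ by a careful analysis of the one-parameter family $F(p):=P_f(A^p,B^p)$ near $p=0$. First I would observe that $P_f(I,I)=f(1)I=I$, so $F(0)=I$, and that $p\mapsto F(p)$ is differentiable at $p=0$ (using that $f$ is $C^1$ and the standard fact that $p\mapsto A^p=\exp(p\log A)$ is analytic in operator norm with derivative $\log A$ at $p=0$, and similarly for $B^p$). The key computation is the derivative $F'(0)$: writing $A^p=I+p\log A+o(p)$, $B^p=I+p\log B+o(p)$ in operator norm, one has $B^{-p/2}A^pB^{-p/2}=I+p(\log A-\log B)+o(p)$, and since $f$ is $C^1$ with $f(1)=1$, the operator function $f$ satisfies $f(I+pK+o(p))=I+pf'(1)K+o(p)$ in operator norm (this is the Fr\'echet differentiability of $X\mapsto f(X)$ at $X=I$, where the derivative is just multiplication by $f'(1)$ because the Daleckii--Krein formula at a scalar point $1$ collapses to the scalar derivative). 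Hence
\[
F(p)=B^{p/2}\bigl(I+pf'(1)(\log A-\log B)+o(p)\bigr)B^{p/2}
=I+p\bigl(f'(1)\log A+(1-f'(1))\log B\bigr)+o(p),
\]
so $F'(0)=\alpha\log A+(1-\alpha)\log B$ with $\alpha=f'(1)$.

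Next I would invoke the general principle that if $p\mapsto F(p)$ is a norm-continuous family of positive invertible operators with $F(0)=I$ and $F$ differentiable at $0$, then $\lim_{p\to0}F(p)^{1/p}=\exp(F'(0))$. This is the operator Lie--Trotter lemma in its abstract form; it can be proved, e.g., by noting $F(p)^{1/p}=\exp\bigl(\tfrac1p\log F(p)\bigr)$, that $\log$ is $C^1$ near $I$ with derivative the identity superoperator at $I$, so $\tfrac1p\log F(p)=\tfrac1p\bigl(F(p)-I\bigr)+o(1)\to F'(0)$ in operator norm, and finally that $\exp$ is norm-continuous. Applying this with $F(p)=P_f(A^p,B^p)$ and the derivative computed above yields the claimed limit $\exp(\alpha\log A+(1-\alpha)\log B)$.

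The main obstacle I expect is making the expansion $f(I+pK+o(p))=I+pf'(1)K+o(p)$ rigorous \emph{in operator norm}, uniformly enough, for a merely $C^1$ scalar function $f$. One has to be careful because $X\mapsto f(X)$ need not be Fr\'echet differentiable at a general point for $f$ only $C^1$, but at the scalar point $I$ the situation is benign: if $\|X-I\|_\infty$ is small, the spectrum of $X$ lies in a small interval around $1$, and one can write $f(X)-f(1)I=\int_0^1 f'\bigl(1+t(X-I)\bigr)\,dt\,(X-I)$ via the (norm-convergent) functional-calculus version of the fundamental theorem of calculus for commuting operators — here $X-I$ and $1+t(X-I)$ commute — so $f(X)-I=\bigl(f'(1)I+E(X)\bigr)(X-I)$ with $\|E(X)\|_\infty\to0$ as $X\to I$ by uniform continuity of $f'$ near $1$. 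Substituting $X=B^{-p/2}A^pB^{-p/2}$, for which $X-I=p(\log A-\log B)+o(p)$, then gives the needed expansion. A secondary technical point is controlling the conjugation by $B^{p/2}$: since $\|B^{p/2}\|_\infty$ is bounded for $p$ near $0$ and $B^{p/2}=I+o(1)$, the terms $B^{p/2}(\cdot)B^{p/2}$ contribute $B^{p/2}B^{p/2}=B^p=I+p\log B+o(p)$ from the leading $I$ and only $o(p)$ corrections from the $o(p)$ remainder, so the bookkeeping closes cleanly.
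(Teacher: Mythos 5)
Your proposal is correct and follows essentially the same route as the paper: the paper's Lemma 5.2 is precisely your first-order expansion $f(I+pH+M(p))=I+pf'(1)H+o(p)$ (proved there via the spectral integral and the scalar mean value theorem, versus your commuting-functional-calculus form of the fundamental theorem of calculus — the same reduction to uniform continuity of $f'$ near $1$), and the paper likewise applies that lemma a second time to $\log$ to get $\tfrac1p\log P_f(A^p,B^p)\to\alpha\log A+(1-\alpha)\log B$, which is your abstract Lie--Trotter step.
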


The next lemma will be useful to prove the theorem. The lemma seems rather known, but
there seems no suitable reference in the infinite-dimensional setting, so we give a proof for
completeness. We write $B(\cH)^{sa}$ for the set of self-adjoint operators in $B(\cH)$.

\begin{lemma}\label{L-5.2}
Assume that $f$ is a $C^1$ real function on $(0,\infty)$. Let $H\in B(\cH)^{sa}$, and $M(p)$
be a $B(\cH)^{sa}$-valued function on $(-\delta_0,\delta_0)$ for some $\delta_0>0$ such that
$M(0)=0$ and $\|M(p)\|_\infty/|p|\to0$ as $p\to0$. Then there exists a $B(\cH)^{sa}$-valued
function $L(p)$ on $(-\delta,\delta)$ for some $\delta\in(0,\delta_0)$ such that
$$
f(I+pH+M(p))=f(1)I+pf'(1)H+L(p),\qquad p\in(-\delta,\delta),
$$
$$
{\|L(p)\|_\infty\over|p|} \longrightarrow\ 0\quad\mbox{as $p\to0$}.
$$
\end{lemma}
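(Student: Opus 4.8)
\textbf{Proof plan for Lemma \ref{L-5.2}.}
The plan is to reduce everything to the spectral calculus of the single self-adjoint operator $pH+M(p)$, which for $|p|$ small has spectrum confined to a fixed compact interval $[1-r,1+r]\subset(0,\infty)$; choosing $\delta\in(0,\delta_0)$ so small that $\|pH+M(p)\|_\infty\le r$ for $|p|<\delta$ makes $f(I+pH+M(p))$ well defined via the continuous functional calculus. First I would record the hypotheses as $M(0)=0$ and $\|M(p)\|_\infty=o(|p|)$, so that $\|pH+M(p)\|_\infty\le(\|H\|_\infty+1)|p|$ once $|p|$ is small enough; in particular $pH+M(p)\to0$ in norm as $p\to0$. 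Then I would \emph{define}
$$
L(p):=f(I+pH+M(p))-f(1)I-pf'(1)H
$$
for $|p|<\delta$, which is manifestly self-adjoint, and the entire content of the lemma is the estimate $\|L(p)\|_\infty/|p|\to0$.

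The key step is a first-order Taylor estimate for $f$ with control that is \emph{uniform over the relevant interval}. Since $f$ is $C^1$ on $(0,\infty)$, on the compact interval $J:=[1-r,1+r]$ I would write, for $s\in J$,
$$
f(s)=f(1)+f'(1)(s-1)+\rho(s),\qquad
\rho(s):=\int_0^1\bigl(f'(1+\theta(s-1))-f'(1)\bigr)\,d\theta\,(s-1),
$$
so that $|\rho(s)|\le\omega(|s-1|)\,|s-1|$, where $\omega(\eta):=\sup\{|f'(u)-f'(1)|:u\in J,\ |u-1|\le\eta\}$ is the (finite, monotone) modulus of continuity of $f'$ on $J$, and $\omega(\eta)\to0$ as $\eta\searrow0$ by uniform continuity of $f'$ on $J$. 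Applying the functional calculus to the self-adjoint operator $N(p):=pH+M(p)$ (whose spectrum lies in $[-r,r]$, hence $I+N(p)$ has spectrum in $J$), this pointwise identity becomes the operator identity
$$
f(I+N(p))=f(1)I+f'(1)N(p)+\rho(I+N(p)),
$$
and since the functional calculus is isometric on $C(J)$ one gets $\|\rho(I+N(p))\|_\infty\le\omega(\|N(p)\|_\infty)\,\|N(p)\|_\infty$.

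Assembling the pieces, $L(p)=f'(1)M(p)+\rho(I+N(p))$, hence
$$
\frac{\|L(p)\|_\infty}{|p|}
\le|f'(1)|\,\frac{\|M(p)\|_\infty}{|p|}
+\omega(\|N(p)\|_\infty)\cdot\frac{\|N(p)\|_\infty}{|p|}.
$$
The first term tends to $0$ by hypothesis on $M$. In the second, $\|N(p)\|_\infty/|p|$ stays bounded (by $\|H\|_\infty+1$ for small $p$), while $\|N(p)\|_\infty\to0$ forces $\omega(\|N(p)\|_\infty)\to0$; so the product tends to $0$. This gives the claim. The only genuine obstacle is making sure the linear term in the operator Taylor expansion is exactly $f'(1)N(p)$ with an error controlled \emph{in operator norm uniformly over the spectrum}, rather than merely pointwise — but this is handled cleanly by the isometry of the continuous functional calculus on $C(J)$ together with the uniform modulus of continuity $\omega$ of $f'$ on the fixed compact interval $J$; no differentiability of operator functions beyond scalar $C^1$-calculus is needed. (Note we do not even use $f>0$ here; that hypothesis is only relevant for forming $P_f$ in Theorem \ref{T-5.1}.)
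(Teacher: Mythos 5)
Your proof is correct and follows essentially the same route as the paper: both reduce the lemma to the scalar first-order Taylor expansion of $f$ at $1$ with a remainder controlled uniformly on a fixed compact interval by the continuity of $f'$, and then transfer this to operators via the functional calculus of the self-adjoint perturbation (the paper phrases this through the spectral integral and the mean value theorem, you through the integral remainder and the modulus of continuity, which is a cosmetic difference). The resulting estimate $\|L(p)\|_\infty/|p|\le |f'(1)|\,\|M(p)\|_\infty/|p|+o(1)$ is the same in both arguments.
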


\begin{proof}
Since $\|M(p)\|_\infty/|p|\to0$ as $p\to0$, one can choose an $\alpha>0$ and a
$\delta\in(0,\delta_0)$ such that $\|H+(1/p)M(p)\|_\infty\le\alpha$ for all
$p\in(-\delta,\delta)\setminus\{0\}$ and $\alpha\delta<1$. For each
$p\in(-\delta,\delta)\setminus\{0\}$ let $H+(1/p)M(p)=\int_{-\alpha}^\alpha t\,dE_p(t)$ be
the spectral decomposition of $H+(1/p)M(p)$. Then $f(I+pH+M(p))$ can be given as the spectral
integral as
\begin{align}\label{F-6.2}
f(I+pH+M(p))=\int_{-\alpha}^\alpha f(1+pt)\,dE_p(t).
\end{align}
For any $p$ as above and any $t\in[-\alpha,\alpha]$, by the mean value theorem one has
$$
f(1+pt)=f(1)+ptf'(1+\theta pt)
$$
for some $\theta\in(0,1)$ (depending on $pt$). Set $\phi(p,t):=f'(1+\theta pt)-f'(1)$ for
$p,t$ as above. Then
\begin{align}\label{F-6.3}
f(1+pt)=f(1)+ptf'(1)+pt\phi(p,t),
\end{align}
and from the $C^1$ of $f$ it follows that
\begin{align}\label{F-6.4}
\sup_{|t|\le\alpha}|\phi(p,t)|\ \longrightarrow\ 0\quad\mbox{as $|p|<\delta$, $p\to0$}.
\end{align}
Combining \eqref{F-6.2} and \eqref{F-6.3} gives
$$
f(I+pH+M(p))=f(1)I+pf'(1)\biggl(H+{M(p)\over p}\biggr)
+p\int_{-\alpha}^\alpha t\phi(p,t)\,dE_p(t)
$$
so that
\begin{align*}
&{\|f(I+pH+M(p))-f(1)I-pf'(1)H\|_\infty\over|p|} \\
&\qquad\le|f'(1)|\,{\|M(p)\|_\infty\over|p|}+\sup_{|t|\le\alpha}|t\phi(t,p)|
\ \longrightarrow\ 0\quad\mbox{as $p\to0$}
\end{align*}
due to \eqref{F-6.4}. Hence the result follows by letting
$$
L(p):=f(I+pH+M(p))-f(1)I-pf'(1)H,\qquad p\in(-\delta,\delta).
$$
\end{proof}

\begin{proof}[Proof of Theorem \ref{T-5.1}]
We may prove that
$$
\lim_{p\to0}P_f(e^{pH},e^{pK})^{1/p}=\exp(\alpha H+(1-\alpha)K),
$$
where $H:=\log A$ and $K:=\log B$. From the Taylor expansions of $e^{pH}$ and $e^{pK/2}$ it
is clear that
$$
e^{-pK/2}e^{pH}e^{-pK/2}=I+p(H-K)+M(p)
$$
with $M(p)\in B(\cH)^{sa}$ and $\|M(p)\|_\infty/|p|\to0$ as $p\to0$. Hence by Lemma
\ref{L-5.2} there exists a $B(\cH)^{sa}$-valued function $L(p)$ on $(-\delta,\delta)$ for
some $\delta>0$ such that
$$
f(e^{-pK/2}e^{pH}e^{-pK/2})=I+p\alpha(H-K)+L(p),\qquad p\in(-\delta,\delta),
$$
$$
{\|L(p)\|_\infty\over|p|}\ \longrightarrow\ 0\quad\mbox{as $p\to0$}.
$$
Then we immediately find that
$$
P_f(e^{pH},e^{pK})=I+p(\alpha H+(1-\alpha)K)+\widetilde L(p)
$$
with $\widetilde L(p)\in B(\cH)^{sa}$ for $p\in(-\delta,\delta)$ satisfying
$\|\widetilde L(p)\|_\infty/|p|\to0$ as $p\to0$. By using Lemma \ref{L-5.2} again to the
function $\log$ it follows that there exists a $B(\cH)^{sa}$-valued function $N(p)$ on
$(-\delta',\delta')$ for some $\delta'\in(0,\delta)$ such that
$$
\log P_f(e^{pH},e^{pK})=p(\alpha H+(1-\alpha)K)+N(p),\qquad p\in(-\delta',\delta'),
$$
$$
{\|N(p)\|_\infty\over|p|}\ \longrightarrow\ 0\quad\mbox{as $p\to0$}.
$$
Therefore,
$$
{1\over p}\log P_f(e^{pH},e^{pK})\ \longrightarrow\ \alpha H+(1-\alpha)K
\quad\mbox{(in $\|\cdot\|_\infty$)\quad as $p\to0$},
$$
which yields the required assertion.
\end{proof}

%----------------------------------------------------
\subsection{Miscellaneous operator norm inequalities}
%----------------------------------------------------

Assume that $h\in\OM_+^1$ is pmi, and let $n$ be any positive integer. Theorems \ref{T-3.7}
and \ref{T-4.1} say that $P_{t^nh}$ satisfies the AH inequality in \eqref{ando-hiai}
for all $p\in(0,1/2]$. This is equivalently stated as the following operator norm inequality:
For every $A,B>0$,
$$
\|P_{t^nh}(A^p, B^p)\| \leq \|P_{t^nh}(A,B)^p\| \quad \mbox{if $0<p\le1/2$},
$$
which is also equivalently written as
\begin{equation} \label{eq:n3}
\|P_{t^nh}(A^q, B^q)^{1/q}\| \leq \|P_{t^nh}(A^p,B^p)^{1/p}\|
\quad \mbox{if $0<q \leq p/2$}.
\end{equation}
Moreover, Theorem \ref{T-3.7} says also that $P_{h^*}$ satisfies \eqref{ando-hiai} for all
$p\in[1,\infty)$, which is equivalently stated as
\begin{align}\label{F-6.5}
\|P_{h^*}(A^p,B^p)^{1/p}\|_\infty \le \|P_{h^*}(A^q,B^q)^{1/q}\|_\infty
\quad \mbox{if $0<q\le p$}.
\end{align}
Since $(t^nh)'(1)=n+h'(1)$ for any $n\in\bN$, the next corollary immediately follows by
letting $q\searrow0$ in \eqref{eq:n3} and \eqref{F-6.5} due to Theorem \ref{T-5.1}.

\begin{corollary}\label{C-5.3}
Assume that $h\in\OM_+^1$ is pmi, and let $\alpha:=h'(1)$. Then for every $A,B>0$ and all
$p>0$,
\begin{align}
\|P_{h^*}(A^p,B^p)^{1/p}\|_\infty\le\|\exp(\alpha\log A+(1-\alpha)\log B)\|_\infty,&
\label{Log-Euclid1}\\
\|\exp((n+\alpha)\log A+(1-n-\alpha)\log B)\|_\infty\le\|P_{t^nh}(A^p,B^p)^{1/p}\|_\infty,&
\quad n\in\bN. \label{Log-Euclid2}
\end{align}
\end{corollary}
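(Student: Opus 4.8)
The plan is to derive both inequalities from the operator-norm reformulations \eqref{eq:n3} and \eqref{F-6.5} of the Ando--Hiai inequalities for $P_{t^nh}$ and $P_{h^*}$, by letting the smaller exponent tend to $0$ and invoking the Lie--Trotter formula of Theorem \ref{T-5.1}. First I would record the elementary facts that make Theorem \ref{T-5.1} applicable. Since $h\in\OM_+^1$ is operator monotone it is analytic on $(0,\infty)$; hence so are $h^*(t)=h(t^{-1})^{-1}$ (which in fact lies in $\OM_+^1$) and $t^nh(t)$. All three functions are positive $C^1$ functions equal to $1$ at $t=1$, and a direct differentiation gives $(h^*)'(1)=h(1)^{-2}h'(1)=\alpha$ and $(t^nh)'(1)=nh(1)+h'(1)=n+\alpha$. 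Thus Theorem \ref{T-5.1} applies to $f=h^*$ and to $f=t^nh$, yielding, for every $A,B>0$, the norm-limits $P_{h^*}(A^q,B^q)^{1/q}\to\exp(\alpha\log A+(1-\alpha)\log B)$ and $P_{t^nh}(A^q,B^q)^{1/q}\to\exp((n+\alpha)\log A+(1-n-\alpha)\log B)$ as $q\to0$.

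For \eqref{Log-Euclid1} I would fix $A,B>0$ and $p>0$; for every $0<q\le p$, inequality \eqref{F-6.5} gives $\|P_{h^*}(A^p,B^p)^{1/p}\|_\infty\le\|P_{h^*}(A^q,B^q)^{1/q}\|_\infty$, and letting $q\searrow0$ the right-hand side converges, by the limit above and continuity of $\|\cdot\|_\infty$ for the norm topology, to $\|\exp(\alpha\log A+(1-\alpha)\log B)\|_\infty$. For \eqref{Log-Euclid2} I would fix $A,B>0$, $p>0$ and $n\in\bN$; for $0<q\le p/2$, inequality \eqref{eq:n3} gives $\|P_{t^nh}(A^q,B^q)^{1/q}\|_\infty\le\|P_{t^nh}(A^p,B^p)^{1/p}\|_\infty$, and since the constraint $q\le p/2$ holds for all sufficiently small $q>0$, letting $q\searrow0$ makes the left-hand side tend to $\|\exp((n+\alpha)\log A+(1-n-\alpha)\log B)\|_\infty$, which gives the claim.

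There is no genuine obstacle here: the only points requiring care are the verification of the $C^1$ hypothesis of Theorem \ref{T-5.1} for $h^*$ and $t^nh$ together with the derivative computations at $t=1$, and the (trivial) observation that the operator norm is continuous, so the limit may be pulled inside $\|\cdot\|_\infty$. I would also note, for completeness, that \eqref{eq:n3} is available in the present generality because $P_{t^nh}$ satisfies \eqref{ando-hiai} on $(0,1/2]$ by Theorem \ref{T-3.7} (for $n=1$) and Theorem \ref{T-4.1} (for $n\ge2$), while \eqref{F-6.5} is available because $h$ being pmi forces $h^*$ to be pmd, so that $P_{h^*}$ satisfies \eqref{ando-hiai} on $[1,\infty)$ by Theorem \ref{T-3.7}.
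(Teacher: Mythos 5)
Your proof is correct and follows essentially the same route as the paper: the corollary is obtained by letting the small exponent tend to $0$ in the norm reformulations \eqref{eq:n3} and \eqref{F-6.5} of the Ando--Hiai inequalities and invoking the Lie--Trotter formula of Theorem \ref{T-5.1}, with the derivative computations $(h^*)'(1)=\alpha$ and $(t^nh)'(1)=n+\alpha$. The extra care you take in verifying the $C^1$ hypothesis and in citing Theorems \ref{T-3.7} and \ref{T-4.1} for the availability of \eqref{eq:n3} and \eqref{F-6.5} matches what the paper leaves implicit.
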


For $\alpha\in[0,1]$ the operator $\exp(\alpha\log A+(1-\alpha)\log B)$ inside the right-hand
side of \eqref{Log-Euclid1} is called the ($\alpha$-weighted) \emph{Log-Euclidean mean} of
$A,B>0$. Since $\|e^X\|\le1$ is equivalent to $X\le0$ for $X\in B(\cH)^{sa}$,
Corollary \ref{C-5.3} also implies the following:

\begin{corollary}\label{C-5.4}
Let $h$ and $\alpha$ be as in Corollary \ref{C-5.3}. Then for any $A,B>0$ and any $n\in\bN$,
\begin{align*}
\alpha \log A+(1-\alpha)\log B\le0\ &\implies\ P_{h^*}(A,B)\le I,
\ \ \mbox{i.e.},\ \ B\sigma_{h^*}A\le I, \\
P_{t^nh}(A,B)\le I\ &\implies\ \log A\le{n+\alpha-1\over n+\alpha}\log B.
\end{align*}
\end{corollary}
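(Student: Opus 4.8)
The plan is to specialize Corollary~\ref{C-5.3} to the single value $p=1$ and invoke the elementary equivalence $\|e^X\|_\infty\le1\iff X\le0$ for $X\in B(\cH)^{sa}$ (noted just above the statement); no further machinery is needed.

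For the first implication I would take $p=1$ in \eqref{Log-Euclid1}, which gives $\|P_{h^*}(A,B)\|_\infty\le\|\exp(\alpha\log A+(1-\alpha)\log B)\|_\infty$. If $\alpha\log A+(1-\alpha)\log B\le0$, then the right-hand side is at most $1$, hence $P_{h^*}(A,B)\le I$. Since $h\in\OM_+^1$ forces $h^*\in\OM_+^1$ (the class $\OM_+$ is stable under $h\mapsto h^*$, and $h^*(1)=h(1)^{-1}=1$), the operator perspective $P_{h^*}(A,B)$ equals the Kubo--Ando mean $B\sigma_{h^*}A$, which is the asserted reformulation. For the second implication I would take $p=1$ in \eqref{Log-Euclid2}, obtaining $\|\exp\bigl((n+\alpha)\log A+(1-n-\alpha)\log B\bigr)\|_\infty\le\|P_{t^nh}(A,B)\|_\infty$; the hypothesis $P_{t^nh}(A,B)\le I$ makes the right-hand side at most $1$, so $(n+\alpha)\log A+(1-n-\alpha)\log B\le0$, i.e. $(n+\alpha)\log A\le(n+\alpha-1)\log B$. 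Dividing by the positive scalar $n+\alpha$ then yields $\log A\le\frac{n+\alpha-1}{n+\alpha}\log B$.

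The whole argument is routine once Corollary~\ref{C-5.3} is available; the only two points deserving a word of care are the identification $P_{h^*}(A,B)=B\sigma_{h^*}A$, which needs $h^*\in\OM_+^1$, and the positivity of $n+\alpha$ used in the final division by a scalar (it is positive since $n\ge1$ and $\alpha=h'(1)\ge0$, operator monotone functions being non-decreasing). I do not expect any genuine obstacle here.
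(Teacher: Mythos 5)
Your proof is correct and is exactly the argument the paper intends: the paper derives Corollary 5.4 directly from Corollary 5.3 via the remark that $\|e^X\|_\infty\le1$ is equivalent to $X\le0$ for self-adjoint $X$, which is precisely your specialization to $p=1$ together with the observation that a positive operator has norm at most $1$ iff it is $\le I$. The two points you flag (the identification $P_{h^*}(A,B)=B\sigma_{h^*}A$ and the positivity of $n+\alpha$) are handled correctly.
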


Specializing to the power functions $t^\alpha$ we state the following:

\begin{corollary}\label{C-5.5}
\begin{itemize}
\item[\rm(1)] For every $\alpha>1$ and positive invertible operators $A,B$,
\begin{align*}
\big\|(B^{-q}\#_{\alpha\over2\alpha-1}A^q)^{2\alpha-1\over q}\big\|_\infty
&\le\|\exp(\alpha\log A+(1-\alpha)\log B)\|_\infty \\
&\le\|P_{t^\alpha}(A^p,B^p)^{1/p}\|_\infty,\qquad p,q>0.
\end{align*}
\item[\rm(2)] For every $\alpha>1$ and positive definite matrices $A,B$,
\begin{align}
(B^{-q}\#_{\alpha\over2\alpha-1}A^q)^{2\alpha-1\over q}
&\prec_{\log}\exp(\alpha\log A+(1-\alpha)\log B) \nonumber\\
&\prec_{\log}P_{t^\alpha}(A^p,B^p)^{1/p},\qquad p,q>0. \label{Log-Euclid3}
\end{align}
\end{itemize}
\end{corollary}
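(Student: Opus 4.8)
The plan is to read off both assertions from tools already available: part~(1) from Corollary~\ref{C-5.3} after suitable substitutions of variables, and part~(2) from the Lie--Trotter formula of Theorem~\ref{T-5.1} combined with the log-majorization form of the Ando--Hiai inequality recorded in \eqref{F-3.6}, Proposition~\ref{P-4.6} and Remark~\ref{R-4.7}.

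For the right-hand inequality in part~(1), write $\alpha=n+\gamma$ with $n\in\bN$ and $\gamma\in(0,1]$ (possible since $\alpha>1$) and put $h(t):=t^\gamma$. Then $h\in\OM_+^1$ is pmi because $h(t^p)=h(t)^p$ for a power function, while $t^nh(t)=t^\alpha$ and $(t^nh)'(1)=n+\gamma=\alpha$; hence \eqref{Log-Euclid2} of Corollary~\ref{C-5.3} gives precisely $\|\exp(\alpha\log A+(1-\alpha)\log B)\|_\infty\le\|P_{t^\alpha}(A^p,B^p)^{1/p}\|_\infty$ for all $p>0$. For the left-hand inequality, set $\beta:=\alpha/(2\alpha-1)\in(1/2,1)$ and $h(t):=t^\beta$; then $h\in\OM_+^1$ is pmi, $h^*=h$, $h'(1)=\beta$, and $P_{t^\beta}(X,Y)=Y\#_\beta X$, so \eqref{Log-Euclid1} reads $\|P_{t^\beta}(A^p,B^p)^{1/p}\|_\infty\le\|\exp(\beta\log A+(1-\beta)\log B)\|_\infty$. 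Apply this with $A^{2\alpha-1}$ and $B^{-(2\alpha-1)}$ in place of $A$ and $B$, then substitute $q:=(2\alpha-1)p$, so that $1/p=(2\alpha-1)/q$; using $\beta(2\alpha-1)=\alpha$ and $(1-\beta)(2\alpha-1)=\alpha-1$, the right-hand side becomes $\|\exp(\alpha\log A+(1-\alpha)\log B)\|_\infty$ and the left-hand side becomes $\|(B^{-q}\#_\beta A^q)^{(2\alpha-1)/q}\|_\infty$, valid for all $q>0$. Concatenation of the two inequalities gives part~(1).

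For part~(2), I would establish two log-majorizations and combine them by transitivity. First, with $\beta:=\alpha/(2\alpha-1)\in(0,1)$ we have $\Lambda(t^\beta)=[1,\infty)$ by Proposition~\ref{P-4.6}(3), so Remark~\ref{R-4.7} yields $P_{t^\beta}(A^q,B^q)^{1/q}\prec_{\log}P_{t^\beta}(A^p,B^p)^{1/p}$ whenever $q\ge p>0$; fixing $q>0$, letting $p\searrow0$, and using Theorem~\ref{T-5.1} (with $f=t^\beta$, so $f'(1)=\beta$) together with the fact that $\prec_{\log}$ is preserved under limits, one gets $P_{t^\beta}(A^q,B^q)^{1/q}\prec_{\log}\exp(\beta\log A+(1-\beta)\log B)$ for every $q>0$; performing the same substitutions as in part~(1) ($A\mapsto A^{2\alpha-1}$, $B\mapsto B^{-(2\alpha-1)}$, $q\mapsto q/(2\alpha-1)$) turns this into $(B^{-q}\#_\beta A^q)^{(2\alpha-1)/q}\prec_{\log}\exp(\alpha\log A+(1-\alpha)\log B)$ for all $q>0$. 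Second, since $\alpha>1$, Proposition~\ref{P-4.6}(1),(2) shows $\Lambda(t^\alpha)\supseteq(0,c]$ for some $c>0$, so $P_{t^\alpha}(A^q,B^q)^{1/q}\prec_{\log}P_{t^\alpha}(A^p,B^p)^{1/p}$ for $0<q\le cp$; fixing $p>0$, letting $q\searrow0$, and invoking Theorem~\ref{T-5.1} again (now $f=t^\alpha$, $f'(1)=\alpha$) gives $\exp(\alpha\log A+(1-\alpha)\log B)\prec_{\log}P_{t^\alpha}(A^p,B^p)^{1/p}$ for every $p>0$. Transitivity of $\prec_{\log}$ then produces \eqref{Log-Euclid3}.

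I do not expect a deep obstacle: the argument is essentially a careful change of variables feeding into the already-established Corollary~\ref{C-5.3}, Theorem~\ref{T-5.1}, Proposition~\ref{P-4.6} and Remark~\ref{R-4.7}. The points needing attention are keeping the bookkeeping consistent so that the weight $\beta=\alpha/(2\alpha-1)$ and the rescaling exponent $2\alpha-1$ fit together, and justifying the two limit passages -- that is, checking that for $\alpha>1$ the set $\Lambda(t^\alpha)$ indeed contains arbitrarily small positive numbers so that $q\searrow0$ stays inside the admissible range, and that a (weak) log-majorization, being finitely many non-strict inequalities in continuous functions of the matrix, survives taking limits. Finally, since $X\prec_{\log}Y$ implies $\|X\|_\infty\le\|Y\|_\infty$, part~(2) re-proves part~(1) in the matrix case, but the general Hilbert-space statement of part~(1) still requires the Corollary~\ref{C-5.3} route above.
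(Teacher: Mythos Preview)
Your proof is correct. Part~(1) is essentially the paper's argument: the same choice $h(t)=t^{\alpha-n}$ (with a cosmetic difference in how you split $\alpha$ at integers) for the right-hand inequality, and the same $h(t)=t^{\alpha/(2\alpha-1)}=h^*(t)$ with the same rescaling for the left-hand one; the paper phrases the latter via the identity $\|\exp(\alpha\log A+(1-\alpha)\log B)\|_\infty=\|\exp(\beta\log A+(\beta-1)\log B^{-1})\|_\infty^{2\alpha-1}$ rather than an explicit substitution, but it is the same computation.

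For part~(2) you take a longer route than the paper. The paper simply observes that all three expressions in (1) are multiplicative under tensor products (they are built from powers, geometric means, and exponentials of sums of logs), so the antisymmetric tensor power technique applied directly to the norm inequalities of part~(1) upgrades them to log-majorizations in one stroke. Your approach instead invokes Remark~\ref{R-4.7} (which already encodes the tensor technique) to get log-majorizations between $P_{t^\gamma}(A^q,B^q)^{1/q}$ for varying $q$, and then passes to the Lie--Trotter limit on one side. Both are valid; the paper's is shorter and makes clearer why (2) is an \emph{immediate} consequence of (1), while yours has the minor advantage of making the limit passage fully explicit.
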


\begin{proof}
(1)\enspace
Let $\alpha>1$. Since
$$
\|\exp(\alpha\log A+(1-\alpha)\log B\|_\infty
=\bigg\|\exp\biggl({\alpha\over2\alpha-1}\log A
+{\alpha-1\over2\alpha-1}\log B^{-1}\biggr)\bigg\|_\infty^{2\alpha-1},
$$
the first inequality is a rewriting of \eqref{Log-Euclid1} for
$h(t)=t^{\alpha\over2\alpha-1}=h^*(t)$. The second is obvious from \eqref{Log-Euclid2} by
putting $h(t)=t^{\alpha-n}$ where $n\le\alpha<n+1$.

(2) is an immediate consequence of (1) by the antisymmetric tensor power technique as
mentioned in Section 3.3. (In fact, the first log-majorization in \eqref{Log-Euclid3} is
essentially in \cite[Corollary 2.3]{AH1}.)
\end{proof}

The second log-majorization in \eqref{Log-Euclid3} for $1<\alpha\le2$ was recently shown in
\cite[Theorem 4.4]{KS} and that for $\alpha\ge2$ follows from \cite[Corollary 5.2]{Hi3}.

We have the following simple characterization for operator perspectives to satisfy the
operator norm inequality such as \eqref{Log-Euclid1} or \eqref{Log-Euclid2}. (A related
result in a more general setting when $f\in\OM_+^1$ is found in \cite[Corollary 4.18]{Hi1}.)

\begin{proposition}\label{P-5.6}
Let $f>0$ be a continuous function on $(0,\infty)$.
\begin{itemize}
\item[\rm(1)] For each $\alpha\in[0,1]$ the following conditions are equivalent:
\begin{itemize}
\item[\rm(i)] $f(t)\le t^\alpha$ for all $t>0$;
\item[\rm(ii)] $\|P_f(A,B)\|_\infty\le\|\exp(\alpha\log A+(1-\alpha)\log B)\|_\infty$
for all $A,B>0$;
\item[\rm(iii)] $\|P_f(A^p,B^p)^{1/p}\|_\infty\le\|\exp(\alpha\log A+(1-\alpha)\log B)\|_\infty$
for all $A,B>0$ and all $p>0$;
\item[\rm(iv)] $P_f(A^p,B^p)^{1/p}\prec_{w\log}
\exp(\alpha\log A+(1-\alpha)\log B)$ for all positive definite matrices $A,B$ and all $p>0$.
\end{itemize}
\item[\rm(2)] For each $\alpha\in(-\infty,0]\cup[1,\infty)$ the following conditions are
equivalent:
\begin{itemize}
\item[\rm(i)$'$] $f(t)\ge t^\alpha$ for all $t>0$;
\item[\rm(ii)$'$] $\|\exp(\alpha\log A+(1-\alpha)\log B)\|_\infty\le
\|P_f(A,B)\|_\infty$ for all $A,B>0$;
\item[\rm(iii)$'$] $\|\exp(\alpha\log A+(1-\alpha)\log B)\|_\infty\le
\|P_f(A^p,B^p)^{1/p}\|_\infty$ for all $A,B>0$ and all $p>0$;
\item[\rm(iv)$'$] $\exp(\alpha\log A+(1-\alpha)\log B)\prec_{w\log}
P_f(A^p,B^p)^{1/p}$ for all positive definite matrices $A,B$ and all $p>0$.
\end{itemize}
\end{itemize}
\end{proposition}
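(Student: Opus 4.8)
The plan is to prove part (1) in the cyclic order (i)\,$\implies$\,(iv)\,$\implies$\,(iii)\,$\implies$\,(ii)\,$\implies$\,(i), and then obtain part (2) from part (1) by applying the latter to $f^*$ and using the duality \eqref{F-2.6}. The key input is the Lie–Trotter formula (Theorem \ref{T-5.1}), which for a power function $f(t)=t^\alpha$ (which is $C^1$ with $f'(1)=\alpha$) reads $\lim_{p\to0}(A^p\#_\alpha B^p\text{-type expression})^{1/p}=\exp(\alpha\log A+(1-\alpha)\log B)$; more precisely, $P_{t^\alpha}(A^p,B^p)=\exp(\alpha\log A+(1-\alpha)\log B+o(p))$ in the sense made precise there. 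Note however that $f$ in the proposition need not be $C^1$, so Theorem \ref{T-5.1} is applied only to the power comparison function $t^\alpha$, not to $f$ itself.

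First I would show (i)\,$\implies$\,(iv). Assume $f(t)\le t^\alpha$ for all $t>0$ with $\alpha\in[0,1]$. For positive definite matrices $A,B$ and $p>0$, pointwise monotonicity of the continuous functional calculus gives $P_f(A^p,B^p)=(B^p)^{1/2}f((B^p)^{-1/2}A^p(B^p)^{-1/2})(B^p)^{1/2}\le (B^p)^{1/2}((B^p)^{-1/2}A^p(B^p)^{-1/2})^\alpha(B^p)^{1/2}=P_{t^\alpha}(A^p,B^p)=B^p\#_\alpha A^p$. Hence $\lambda_k(P_f(A^p,B^p))\le\lambda_k(B^p\#_\alpha A^p)$ for every $k$, and a fortiori $\prod_{i\le k}\lambda_i(P_f(A^p,B^p)^{1/p})\le\prod_{i\le k}\lambda_i((B^p\#_\alpha A^p)^{1/p})$. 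Since $0\le\alpha\le1$, the Ando–Hiai log-majorization for weighted geometric means (see \eqref{F-3.6} with the exponent pair $q\le p$, or directly \cite{AH1}) gives $(B^p\#_\alpha A^p)^{1/p}\prec_{\log}(B^q\#_\alpha A^q)^{1/q}$ for $0<q\le p$; letting $q\searrow0$ and using Theorem \ref{T-5.1} together with continuity of eigenvalues yields $(B^p\#_\alpha A^p)^{1/p}\prec_{\log}\exp(\alpha\log A+(1-\alpha)\log B)$. Chaining the two majorizations gives (iv). The implication (iv)\,$\implies$\,(iii) is immediate by taking $k=1$ in the weak log-majorization (which controls the largest eigenvalue, i.e.\ the operator norm) and noting both sides of (iii) are finite-dimensional expressions whose inequality, once known for all matrix sizes, passes to general $\cH$ by the usual compression/strong-limit argument; (iii)\,$\implies$\,(ii) is the special case $p=1$.

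Finally, (ii)\,$\implies$\,(i): plug in commuting $A,B$, say $A=t I$ and $B=I$ for $t>0$ in a one-dimensional space (or a common eigenvector), so that $P_f(A,B)=f(t)$ and $\exp(\alpha\log A+(1-\alpha)\log B)=t^\alpha$; then $\|P_f(A,B)\|_\infty=f(t)$ and the right side is $t^\alpha$, giving $f(t)\le t^\alpha$ for all $t>0$. For part (2), set $\alpha\in(-\infty,0]\cup[1,\infty)$ and apply part (1) to $f^*(t)=f(t^{-1})^{-1}$ and the parameter $1-\alpha\in[0,1]$: condition $f(t)\ge t^\alpha$ is equivalent to $f^*(t)\le t^{1-\alpha}$, and \eqref{F-2.6} gives $P_{f^*}(A,B)=P_f(A^{-1},B^{-1})^{-1}$, under which $\|\cdot\|_\infty$ of one side becomes $\lambda_{\min}^{-1}$ (equivalently $\|\cdot\|_\infty$ after inversion) of the other, while $\exp((1-\alpha)\log A^{-1}+\alpha\log B^{-1})=\exp(\alpha\log A+(1-\alpha)\log B)^{-1}$; thus each of (i)$'$–(iv)$'$ is precisely the corresponding statement (i)–(iv) for $f^*$ after the substitution $A\mapsto A^{-1},B\mapsto B^{-1}$, so they are equivalent.

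The main obstacle I anticipate is the passage to the Log-Euclidean mean inside the log-majorization in (i)\,$\implies$\,(iv): one must be careful that the Ando–Hiai log-majorization for $\#_\alpha$ is available for the full range $\alpha\in[0,1]$ (it is, by \cite{AH1}) and that taking $q\searrow0$ is legitimate at the level of products of eigenvalues — this needs the convergence in Theorem \ref{T-5.1} to be in operator norm (hence uniform control of all eigenvalues) rather than merely entrywise, which is exactly what Theorem \ref{T-5.1} provides. Everything else is routine monotonicity of functional calculus and the duality bookkeeping via \eqref{F-2.5}–\eqref{F-2.6}.
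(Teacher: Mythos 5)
Your proof of part (1) is essentially sound and close in spirit to what the paper does: compare $P_f$ with $P_{t^\alpha}$ via functional calculus, invoke the Ando--Hiai (log-)majorization for $\#_\alpha$, and pass to the limit $q\searrow0$ with the Lie--Trotter formula. Two remarks there. First, your route $(\mathrm{iv})\implies(\mathrm{iii})$ is awkward in infinite dimensions: compressions do not commute with $P_f$ or with $\exp(\alpha\log A+(1-\alpha)\log B)$, so ``the usual compression/strong-limit argument'' is not automatic. The clean fix is to prove (iii) directly from (i) by the same chain at the level of operator norms, using \eqref{F-6.5} (valid on any $\cH$) and Theorem \ref{T-5.1}; this is what the paper does.

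The genuine gap is in part (2). Your claimed equivalence ``$f(t)\ge t^\alpha$ iff $f^*(t)\le t^{1-\alpha}$'' is false: since $f^*(t)=f(t^{-1})^{-1}$, the condition $f(t)\ge t^\alpha$ is equivalent to $f^*(t)\le t^{\alpha}$ with the \emph{same} exponent $\alpha$, which for part (2) lies in $(-\infty,0]\cup[1,\infty)$ and hence outside the range $[0,1]$ required by part (1). (The transpose gives $\widetilde f(t)\ge t^{1-\alpha}$, still with the inequality $\ge$ and with $1-\alpha$ again outside $(0,1)$; no combination of $f\mapsto f^*$ and $f\mapsto\widetilde f$ moves part (2) into the scope of part (1).) Moreover, even where the adjoint duality does apply, $P_{f^*}(A,B)=P_f(A^{-1},B^{-1})^{-1}$ converts an upper bound on $\|P_{f^*}\|_\infty$ into a lower bound on $\lambda_{\min}(P_f)$, not into the lower bound on $\|P_f\|_\infty$ asserted in (ii)$'$--(iv)$'$. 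Part (2) cannot be obtained from part (1) by duality bookkeeping; it requires the Ando--Hiai type inequality for $P_{t^\alpha}$ with $\alpha$ \emph{outside} $[0,1]$. This is exactly where the paper uses \eqref{eq:n3}, i.e.\ Theorems \ref{T-3.7} and \ref{T-4.1} applied to $h(t)=t^{\alpha-n}$ with $n\le\alpha<n+1$ (which is why only the range $0<q\le p/2$ appears there), followed by $q\searrow0$ and the Lie--Trotter formula; the reduction the paper does perform is from $\alpha\le0$ to $\alpha\ge1$ via the transpose, which stays within part (2). Your proposal omits this essential input entirely.
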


\begin{proof}
Since the proofs of (1) and (2) are similar, we give only the proof of (2).
Moreover, we may assume that $\alpha\ge1$, since the case $\alpha\le-1$ follows
from the case $\alpha\ge1$ by replacing $f$, $\alpha$ with $\widetilde f$, $1-\alpha$.

(iii)$'$\,$\implies$\,(ii)$'$ is obvious and (ii)$'$ $\implies$ (i)$'$ is easy by taking
$A=tI$ and $B=I$.

(i)$'$\,$\implies$\,(iii)$'$.\enspace
By (i)$'$ and \eqref{eq:n3} for $h(t)=t^{\alpha-n}$ where $n\le\alpha<n+1$, one has
$$
\|P_f(A^p,B^p)\|_\infty^{1/p}\ge\|P_{t^\alpha}(A^p,B^p)\|_\infty^{1/p}
\ge\|P_{t^\alpha}(A^q,B^q)\|_\infty^{1/q},\quad0<q\le p/2.
$$
By the Lie-Trotter formula as $q\searrow0$, (iii)$'$ follows.

(i)$'$\,$\implies$\,(iv)$'$.\enspace
Let $A,B$ be $n\times n$ positive definite matrices. By the antisymmetric tensor power
technique again, from (i)$'$ and \eqref{eq:n3} one has for any $k=1,\dots,n$,
$$
\prod_{i=1}^k\lambda_i^{1/p}(P_f(A^p,B^p))\ge
\prod_{i=1}^k\lambda_i^{1/p}(P_{t^\alpha}(A^p,B^p))
\ge\prod_{i=1}^k\lambda_i^{1/q}(P_{t^\alpha}(A^q,B^q)),\quad0<q\le p/2.
$$
Letting $q\searrow0$ gives (iv)$'$.
\end{proof}

\begin{Remark}\label{R-5.7}\rm
From Corollary \ref{C-5.3} and Proposition \ref{P-5.6} we notice that if $h\in\OM_+^1$ is pmd,
then $h(t)\le t^\alpha$ where $\alpha=h'(1)$ ($\in[0,1]$), which was recently pointed out in
\cite[Section 5]{wada3}. Moreover it was shown in \cite{wada3} that there is an $h\in\OM_+^1$
such that $h(t)\le t^\alpha$ for some $\alpha\in[0,1]$ but $h(t^p)\not\le h(t)^p$ for any
$p>1$ (hence $h$ is not pmd). We thus see that for $h\in\OM_+^1$, the AH inequality
$$
\|(A^p\sigma_hB^p)^{1/p}\|_\infty\le\|A\sigma_hB\|_\infty,\qquad p\ge1,
$$
is equivalent to the pmd of $h$, while the weaker inequality
$$
\|(A^p\sigma_hB^p)^{1/p}\|_\infty\le\|\exp((1-\alpha)\log A+\alpha\log B)\|_\infty,
\qquad p>0,
$$
is equivalent to $h(t)\le t^\alpha$, where $\alpha=h'(1)$.
\end{Remark}

The next corollary may be considered as the operator perspective version of 
\cite[Theorem 1]{An1} (also \cite[Theorem 1]{Ya1}).

\begin{corollary}\label{C-5.8}
Let $n\in {\mathbb N}$ and $h \in \OM_+^1$ be pmi. Set 
$\alpha:=n+h'(1)$. Then  for any $A,B>0$, the following conditions are equivalent:
\begin{itemize}
\item[$(${\rm i}$)$] $\alpha \log A + (1-\alpha) \log B < 0$; 
\item[$(${\rm ii}$)$] $\|P_{t^n h}(A^p,B^p) \|_\infty<1$ for some $p>0$; 
\item[$(${\rm iii}$)$] $\|P_{t^\alpha }(A^p,B^p) \|_\infty<1$ for some $p>0$; 
\item[$(${\rm iv}$)$]  there exists an $r\in (0,1)$ such that 
$B^{-p}\#_{\alpha\over2\alpha-1}A^p \le r^p I$
holds for all $p>0$.
\end{itemize}
\end{corollary}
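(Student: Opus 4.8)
The plan is to deduce the four-way equivalence from the Lie--Trotter formula (Theorem~\ref{T-5.1}) together with the Log-Euclidean norm estimates already recorded in Corollaries~\ref{C-5.3} and~\ref{C-5.5}; no new hard analysis is needed. The organising remark is that, setting $L:=\exp(\alpha\log A+(1-\alpha)\log B)$, condition (i) is just the assertion $\|L\|_\infty<1$ (equivalently $L<I$), since for $X\in B(\cH)^{sa}$ one has $X<0\iff\|e^X\|_\infty<1$. I would also note the elementary identity $\alpha\log A+(1-\alpha)\log B=(2\alpha-1)\bigl(\beta\log A+(1-\beta)\log B^{-1}\bigr)$ with $\beta:=\alpha/(2\alpha-1)$, so that (i) is likewise equivalent to $\|\exp(\beta\log A+(1-\beta)\log B^{-1})\|_\infty<1$; here $\beta\in[\tfrac12,1]$ because $\alpha=n+h'(1)\ge1$ (as $h'(1)\in[0,1]$ for $h\in\OM_+^1$), so $B^{-p}\#_\beta A^p$ is a genuine weighted geometric mean and equals $P_{t^\beta}(A^p,B^{-p})$. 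The degenerate case $\alpha=1$ forces $n=1$ and $h\equiv1$, whence $P_{t^nh}$, $P_{t^\alpha}$ and $\#_\beta$ all reduce to the first argument and every one of (i)--(iv) becomes equivalent to $\|A\|_\infty<1$; I would dispose of this first and assume $\alpha>1$ thereafter, where Corollary~\ref{C-5.5} is applicable.

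Next I would record the relevant limits. Since $(t^nh)'(1)=n+h'(1)=\alpha$, $(t^\alpha)'(1)=\alpha$ and $(t^\beta)'(1)=\beta$, and each of $t^nh,t^\alpha,t^\beta$ is $C^1$ (indeed analytic — an operator monotone function times a polynomial) with value $1$ at $t=1$, Theorem~\ref{T-5.1} gives, in $\|\cdot\|_\infty$,
\[
\lim_{p\to0}P_{t^nh}(A^p,B^p)^{1/p}=\lim_{p\to0}P_{t^\alpha}(A^p,B^p)^{1/p}=L,\qquad
\lim_{p\to0}(B^{-p}\#_\beta A^p)^{1/p}=L^{1/(2\alpha-1)}.
\]

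The equivalences then fall out. For (i)$\Rightarrow$(ii): if $\|L\|_\infty<1$, the first limit makes $\|P_{t^nh}(A^p,B^p)\|_\infty=\|P_{t^nh}(A^p,B^p)^{1/p}\|_\infty^{\,p}<1$ for all sufficiently small $p>0$. Conversely, \eqref{Log-Euclid2} of Corollary~\ref{C-5.3} (whose left-hand operator $\exp((n+h'(1))\log A+\cdots)$ is exactly $L$) gives $\|L\|_\infty\le\|P_{t^nh}(A^p,B^p)^{1/p}\|_\infty$ for every $p$, so (ii)$\Rightarrow$(i). Replacing $t^nh$ by $t^\alpha$ and Corollary~\ref{C-5.3} by Corollary~\ref{C-5.5}(1) yields (i)$\Leftrightarrow$(iii) verbatim. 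For (i)$\Rightarrow$(iv): the first inequality of Corollary~\ref{C-5.5}(1), rewritten as $\|B^{-p}\#_\beta A^p\|_\infty^{(2\alpha-1)/p}\le\|L\|_\infty$, gives $B^{-p}\#_\beta A^p\le r^pI$ for all $p>0$ with $r:=\|L\|_\infty^{1/(2\alpha-1)}<1$, which is (iv). Conversely, (iv) forces $\|(B^{-p}\#_\beta A^p)^{1/p}\|_\infty\le r<1$, and letting $p\to0$ in the last limit above gives $\|L^{1/(2\alpha-1)}\|_\infty\le r<1$, hence $\|L\|_\infty\le r^{2\alpha-1}<1$, i.e. (i).

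There is no genuinely hard step; the only points that need care are bookkeeping ones: getting the exponents right in the relation between $L$, $\beta$ and $P_{t^\beta}$, verifying $B^{-1}\#_\beta A=P_{t^\beta}(A,B^{-1})$ so that Corollary~\ref{C-5.5}(1) and Theorem~\ref{T-5.1} apply to this mean, confirming that $t^nh$ is $C^1$ so the Lie--Trotter formula is legitimate, and isolating the trivial case $\alpha=1$ before invoking Corollary~\ref{C-5.5}, which is stated only for $\alpha>1$.
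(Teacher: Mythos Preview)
Your proof is correct and follows essentially the same route as the paper: both arguments pivot on Theorem~\ref{T-5.1} for the limits as $p\to0$, use the Log-Euclidean bound \eqref{Log-Euclid2} (the paper equivalently invokes \eqref{eq:n3} plus the Lie--Trotter limit) for (ii)$\Rightarrow$(i), and appeal to Corollary~\ref{C-5.5}(1) for the link with (iv). Your explicit treatment of the degenerate case $\alpha=1$ and the bookkeeping around $\beta=\alpha/(2\alpha-1)$ are more careful than the paper's own write-up, but the substance is the same.
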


\begin{proof}
(i)\,$\implies$\,(ii) is immediate from  Theorem \ref{T-5.1}.
From Theorem \ref{T-5.1} and \eqref{eq:n3}, (ii) implies that
\begin{align*}
\|\exp(\alpha \log A + (1-\alpha) \log B )\|_\infty 
&=\lim_{p/2\ge q\searrow0} \|(P_{t^n h}(A^q,B^q))\|_\infty^{1/q} \\
&\le \|P_{t^n h}(A^p,B^p) \|_\infty^{1/p}<1.
\end{align*}
Hence (i)\,$\iff$\,(ii), and (i)\,$\iff$\,(iii) is seen in a similar way.
(i)\,$\implies$\,(iv) is immediate from Corollary \ref{C-5.5}\,(1). Finally,
(iv)\,$\implies$\,(i) follows from Theorem \ref{T-5.1} as
$$
\bigg\|\exp\left( {{\alpha}\over {2\alpha-1}} \log A + {{\alpha-1}\over {2\alpha-1}}\log B^{-1}    
\right)
\bigg\|_\infty \le 
\sup_{p>0}\|(B^{-p}\#_{\alpha\over2\alpha-1}A^p)^{1/p}\|_\infty\le r<1.
$$ 
\end{proof}

In the rest of the subsection, we extend \cite[Theorem 1]{An1} and \cite[Theorem 1]{Ya1} for
the (weighted) operator geometric means to general operator means having the pmd (or pmi)
representing function.

\begin{proposition}\label{P-5.9}
Let $\alpha\in (0,1)$ and $\PMD_\alpha^1$ be the set of all $h\in \OM_+^1$ such that $h$ is
pmd and $h'(1)=\alpha$. Then for any $A,B>0$ the following conditions are equivalent:
\begin{itemize}
\item[$(${\rm i}$)$] $(1-\alpha)\log A+\alpha \log B \le 0$; 
%\item[(2)] $P_{t^\alpha} (A^x,B^x)\le I$ for all $x>0$; 
\item[$(${\rm ii}$)$] $x\mapsto A^x\sigma_hB^x$ is a decreasing map from $[0,\infty)$ into
$B(\cH)^{++}$  
for all $h\in \PMD_\alpha^1$;
\item[$(${\rm iii}$)$] $x\mapsto A^x\sigma_hB^x$ is a decreasing map from $[0,\infty)$ into
$B(\cH)^{++}$ for some $h\in \PMD_\alpha^1$;
\item[$(${\rm iv}$)$]  $x\mapsto A^x\#_\alpha B^x$ is a decreasing map from $[0,\infty)$ into
$B(\cH)^{++}$. 
\end{itemize}
\end{proposition}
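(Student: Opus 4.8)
The plan is to prove the cycle $(\mathrm{i})\Rightarrow(\mathrm{ii})\Rightarrow(\mathrm{iii})\Rightarrow(\mathrm{i})$ together with the trivial observations that place (iv) inside it. First note that the power function $t^\alpha$ lies in $\PMD_\alpha^1$: it is pmd (indeed $(t^\alpha)^p=(t^p)^\alpha$) and $(t^\alpha)'(1)=\alpha$. Consequently $\PMD_\alpha^1\ne\emptyset$, and condition (iv) is exactly the instance $h(t)=t^\alpha$ of condition (iii); hence $(\mathrm{ii})\Rightarrow(\mathrm{iv})\Rightarrow(\mathrm{iii})$ and $(\mathrm{ii})\Rightarrow(\mathrm{iii})$ are immediate, and it suffices to establish $(\mathrm{i})\Rightarrow(\mathrm{ii})$ and $(\mathrm{iii})\Rightarrow(\mathrm{i})$. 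Throughout I shall use the identity $A^x\sigma_hB^x=P_h(B^x,A^x)$ and the fact that every $h\in\OM_+^1$ is analytic, hence $C^1$, so that the Lie--Trotter formula (Theorem~\ref{T-5.1}) applies and gives, with $h'(1)=\alpha$,
$$
\bigl(A^x\sigma_hB^x\bigr)^{1/x}=P_h(B^x,A^x)^{1/x}\ \longrightarrow\ \exp\bigl((1-\alpha)\log A+\alpha\log B\bigr)=:G
\qquad\text{(in $\|\cdot\|_\infty$) as $x\searrow0$.}
$$

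For $(\mathrm{iii})\Rightarrow(\mathrm{i})$, fix an $h\in\PMD_\alpha^1$ for which $x\mapsto A^x\sigma_hB^x$ is decreasing on $[0,\infty)$. Since $A^0\sigma_hB^0=h(1)I=I$, decreasingness yields $A^x\sigma_hB^x\le I$ for all $x\ge0$, and therefore $\bigl(A^x\sigma_hB^x\bigr)^{1/x}\le I$ for all $x>0$ by functional calculus. Letting $x\searrow0$ and using that the set $\{Y\in B(\cH):Y\le I\}$ is norm-closed, the Lie--Trotter limit above gives $G\le I$; since $\log$ is operator monotone this is precisely $(1-\alpha)\log A+\alpha\log B=\log G\le0$, i.e.\ (i).

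For $(\mathrm{i})\Rightarrow(\mathrm{ii})$, fix an arbitrary $h\in\PMD_\alpha^1$; by (i) we have $G\le I$, i.e.\ $\|G\|_\infty\le1$. Taking operator norms in the AH inequality \eqref{F-7.7} and then replacing $A,B$ by $A^q,B^q$ shows that $s\mapsto\|A^s\sigma_hB^s\|_\infty^{1/s}$ is non-increasing on $(0,\infty)$; combined with the Lie--Trotter limit $\|A^s\sigma_hB^s\|_\infty^{1/s}\to\|G\|_\infty$ as $s\searrow0$, this forces $\|A^s\sigma_hB^s\|_\infty\le1$, i.e.\ $A^s\sigma_hB^s\le I$, for every $s\ge0$ (the value at $s=0$ being $I$). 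Finally, for $0\le x<y$, if $x=0$ then $A^y\sigma_hB^y\le I=A^0\sigma_hB^0$, while if $x>0$ we put $p:=y/x\ge1$ and apply \eqref{F-7.7} to $A^x,B^x$:
$$
A^y\sigma_hB^y=(A^x)^p\sigma_h(B^x)^p\le\|A^x\sigma_hB^x\|_\infty^{p-1}\,(A^x\sigma_hB^x)\le A^x\sigma_hB^x,
$$
using $\|A^x\sigma_hB^x\|_\infty\le1$ and $p-1\ge0$. Thus $x\mapsto A^x\sigma_hB^x$ is decreasing for every $h\in\PMD_\alpha^1$, which is (ii), closing the cycle.

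I do not expect a serious obstacle: the proof is an assembly of Corollary~\ref{C-3.3} and the Lie--Trotter formula (Theorem~\ref{T-5.1}), in the spirit of Theorem~\ref{T-3.7}. The two points that need care are bookkeeping ones. First, one must track the order of the two operator variables (in $P_h(B^x,A^x)$ versus $A^x\sigma_hB^x$) and the placement of the weight in the Lie--Trotter limit, so that the limiting operator is $\exp((1-\alpha)\log A+\alpha\log B)$ rather than $\exp(\alpha\log A+(1-\alpha)\log B)$ --- this is what matches condition (i). Second, it is the \emph{pmd} hypothesis on $h$ that makes $s\mapsto\|A^s\sigma_hB^s\|_\infty^{1/s}$ non-increasing (the pmi case would reverse this monotonicity, and with it the whole argument), so $\PMD_\alpha^1$ cannot be carelessly enlarged; this is the same reversal of the roles of $p\ge1$ and $0<p\le1$ already visible between Sections 3.1 and 3.2.
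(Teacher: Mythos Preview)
Your proof is correct and, in one respect, more economical than the paper's. Both arguments handle $(\mathrm{i})\Rightarrow(\mathrm{ii})$ the same way in substance: the paper invokes Corollary~\ref{C-5.3} to obtain $\|A^x\sigma_hB^x\|_\infty\le\|G\|_\infty^x\le1$, while you re-derive that bound directly from \eqref{F-7.7} and the Lie--Trotter limit, and both then feed this into \eqref{F-7.7} again to get monotonicity in $x$. The real difference is the treatment of $(\mathrm{iii})$ and $(\mathrm{iv})$. The paper proves $(\mathrm{iii})\Rightarrow(\mathrm{iv})$ as a separate, non-trivial step: from $A^{x/r}\sigma_hB^{x/r}\le I$ it passes to $A^x\sigma_{h_{[1/r]}}B^x\le I$ via an external result \cite[Proposition~6.2]{HSW}, and then lets $r\to\infty$ so that $h_{[1/r]}(t)=h(t^{1/r})^r\to t^\alpha$. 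You bypass this entirely by observing that $t^\alpha\in\PMD_\alpha^1$, which makes $(\mathrm{iv})$ literally an instance of both $(\mathrm{ii})$ and $(\mathrm{iii})$; then your $(\mathrm{iii})\Rightarrow(\mathrm{i})$, proved by Lie--Trotter for an arbitrary $h\in\PMD_\alpha^1$, is simply the paper's $(\mathrm{iv})\Rightarrow(\mathrm{i})$ argument run at that generality. The upshot is that your route is self-contained within the paper (no appeal to \cite{HSW}), at the cost of not isolating the fact---implicit in the paper's $(\mathrm{iii})\Rightarrow(\mathrm{iv})$---that decreasingness for \emph{some} pmd $h$ with $h'(1)=\alpha$ forces $A^x\#_\alpha B^x\le I$ directly, without passing through $(\mathrm{i})$.
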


\begin{proof}
(i)\,$\implies$\,(ii).\enspace
From Corollary \ref{C-5.3}, 
$$
A^x\sigma_h B^x\le 
\|P_{h}(B^x,A^x)\|_\infty\le\|\exp(\alpha\log B+(1-\alpha)\log A) \|_\infty^x \le 1,
\qquad x>0.
$$
So, if $0< p<q$, then
it follows from \eqref{F-7.7} that
$$
A^q\sigma_hB^q
\le \|A^{p}\sigma_hB^{p}\|_\infty^{{q\over p}-1}(A^{p}\sigma_hB^{p}) \\
\le A^{p}\sigma_hB^{p}\le I=A^0 \sigma_h B^0. 
$$

(ii)\,$\implies$\,(iii) is obvious. 

(iii)\,$\implies$\,(iv).\enspace 
Since $A^{x/r}\sigma_hB^{x/r}\le I$ for any $x\ge0$ and $r>1$, it follows from
\cite[Proposition 6.2]{HSW} that
\begin{align}\label{F-6.7}
A^x\sigma_{h_{[1/r]}}B^x\le I,
\end{align}
where $h_{[1/r]}(t):=h(t^{1/r})^{r}$. Here, as a special case of Theorem \ref{T-5.1},
note that
$$
\lim_{r\to\infty}h(X^{1/r})^r=\lim_{r\to\infty}(I^{1/r}\sigma_hX^{1/r})^r=X^\alpha,
\qquad X\in B(\cH)^{++}.
$$
Therefore, taking the limit of \eqref{F-6.7} as $r\rightarrow \infty$ gives 
$A^x\#_\alpha B^x \le I$ for all $x\ge0$. By a similar argument to the proof of
(i)\,$\implies$\,(ii), (iv) follows. 

(iv)\,$\implies$\,(i).\enspace
From Theorem \ref{T-5.1}, 
$$\|\exp((1-\alpha)\log A+\alpha \log B)\|_\infty
=
\lim_{x\rightarrow 0+}
\|A^x\#_\alpha B^x\|_\infty^{1/x}\le I. 
$$
\end{proof}

Since $(A^x\sigma_hB^x)^{-1}=A^{-x}\sigma_{h^*}B^{-x}$, 
Proposition \ref{P-5.9} is rephrased as follows:

\begin{corollary}\label{C-5.10}
Let $\alpha\in (0,1)$ and $\PMI_\alpha^1$ be the set of all $h\in \OM_+^1$ such that $h$
is pmi and $h'(1)=\alpha$. 
Then for any $A,B>0$ the following conditions are equivalent:
\begin{itemize}
\item[$(${\rm i}$)$] $(1-\alpha) \log A + \alpha \log B \ge0$; 
\item[$(${\rm ii}$)$] $x\mapsto A^x\sigma_hB^x$ is an increasing map from $[0,\infty)$ into
$B(\cH)^{++}$ for all $h\in \PMI_\alpha^1$;
\item[$(${\rm iii}$)$] $x\mapsto A^x\sigma_hB^x$ is an increasing map from $[0,\infty)$ into
$B(\cH)^{++}$ for some $h\in \PMI_\alpha^1$;
\item[$(${\rm iv}$)$]  $x\mapsto A^x\#_\alpha B^x$ is an increasing map from $[0,\infty)$ into
$B(\cH)^{++}$. 
\end{itemize}
\end{corollary}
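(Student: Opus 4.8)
The plan is to deduce the corollary directly from Proposition \ref{P-5.9} by replacing $A,B$ with $A^{-1},B^{-1}$ and translating through the duality $h\mapsto h^*$. First I would set up the dictionary. For $h\in\OM_+^1$ one has $h^*\in\OM_+^1$, and differentiating $h^*(t)=h(t^{-1})^{-1}$ at $t=1$ gives $(h^*)'(1)=h'(1)$ since $h(1)=1$; moreover, by the very definition of pmd, $h$ is pmi if and only if $h^*$ is pmd. Hence $h\mapsto h^*$ is an involutive bijection of $\PMI_\alpha^1$ onto $\PMD_\alpha^1$. Combining this with the identity $(A^x\sigma_hB^x)^{-1}=A^{-x}\sigma_{h^*}B^{-x}$ stated just before the corollary, and with the fact that $X\mapsto X^{-1}$ is an order-reversing bijection of $B(\cH)^{++}$ onto itself, we see that $x\mapsto A^x\sigma_hB^x$ is increasing on $[0,\infty)$ into $B(\cH)^{++}$ exactly when $x\mapsto A^{-x}\sigma_{h^*}B^{-x}$ is decreasing on $[0,\infty)$ into $B(\cH)^{++}$.

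Next I would apply Proposition \ref{P-5.9} with $A^{-1},B^{-1}$ in place of $A,B$. Its condition (i), $(1-\alpha)\log A^{-1}+\alpha\log B^{-1}\le0$, is precisely $(1-\alpha)\log A+\alpha\log B\ge0$, i.e. condition (i) here. By the dictionary above, condition (ii) of Proposition \ref{P-5.9} (the map $x\mapsto A^{-x}\sigma_hB^{-x}$ is decreasing for all $h\in\PMD_\alpha^1$) translates into: $x\mapsto A^x\sigma_{k}B^x$ is increasing for all $k\in\PMI_\alpha^1$, which is condition (ii) here; likewise condition (iii) of Proposition \ref{P-5.9} becomes condition (iii) here. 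For (iv), the representing function $t^\alpha$ of $\#_\alpha$ is self-adjoint, $(t^\alpha)^*=t^\alpha$, so $(A^x\#_\alpha B^x)^{-1}=A^{-x}\#_\alpha B^{-x}$, and condition (iv) of Proposition \ref{P-5.9} applied to $A^{-1},B^{-1}$ becomes condition (iv) here. Thus the four conditions of the corollary are obtained from the four equivalent conditions of Proposition \ref{P-5.9} by a single substitution, and their equivalence follows at once.

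I do not anticipate any genuine obstacle; the only point needing a line of verification is $(h^*)'(1)=h'(1)$, which guarantees that $h\mapsto h^*$ respects the normalization of the derivative and hence maps $\PMI_\alpha^1$ bijectively onto $\PMD_\alpha^1$. Everything else is a formal translation along the two order/class-reversing bijections (operator inversion and $h\mapsto h^*$), exactly as the sentence preceding the corollary indicates.
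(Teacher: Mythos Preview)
Your proposal is correct and follows exactly the approach indicated by the paper: the sentence preceding the corollary (``Since $(A^x\sigma_hB^x)^{-1}=A^{-x}\sigma_{h^*}B^{-x}$, Proposition \ref{P-5.9} is rephrased as follows'') is the entire proof in the paper, and you have simply unpacked that rephrasing in full detail, including the verification that $(h^*)'(1)=h'(1)$ so that $h\mapsto h^*$ bijects $\PMI_\alpha^1$ onto $\PMD_\alpha^1$.
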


%------------------------------------------------------
\section{Extension of operator perspectives to non-invertible operators}
%--------------------------------------------------------

Our main concern in this section is the extension of operator perspectives $P_f$ on
$B(\cH)^{++}\times B(\cH)^{++}$ to $B(\cH)^+\times B(\cH)^+$, thus extending
some inequalities in Section 3 to non-invertible operators. A natural way to extend $P_f$
to $B(\cH)^+\times B(\cH)^+$ is to consider the limit
\begin{align}\label{F-8.1}
\lim_{\eps\searrow0}P_f(A+\eps I,B+\eps I)\qquad\mbox{(SOT)}
\end{align}
for $A,B\ge0$ as long as the limit exists in SOT (the strong operator topology). The extension
problem like this for operator perspectives has not been discussed so far except those in
\cite{HM} in the finite-dimensional case.

We shall restrict our consideration to the case where $f$ is operator convex on $(0,\infty)$
but $f$ is not assumed to be positive. The next proposition characterizes when
the limit in \eqref{F-8.1} exists unconditionally.

\begin{proposition}\label{P-6.1}
Let $f$ be an operator convex function on $(0,\infty)$. Then the following conditions are
equivalent:
\begin{itemize}
\item[$(${\rm i}$)$] the limit in \eqref{F-8.1} exists in $B(\cH)$ for all $A,B\in B(\cH)^+$;
\item[$(${\rm ii}$)$] $f(0^+)<\infty$ and $f'(\infty)<\infty$;
\item[$(${\rm iii}$)$] there exist $\alpha,\beta\in\bR$ and $h\in\OM_+\cup\{0\}$ such that
$f(t)=\alpha+\beta t-h(t)$ for all $t>0$.
\end{itemize}
\end{proposition}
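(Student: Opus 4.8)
The plan is to prove the chain of implications (iii)\,$\implies$\,(ii)\,$\implies$\,(i)\,$\implies$\,(iii). The implication (iii)\,$\implies$\,(ii) should be the easiest: if $f(t)=\alpha+\beta t-h(t)$ with $h\in\OM_+\cup\{0\}$, then $h$ is operator monotone hence concave, so the limits $h(0^+)\in[0,\infty)$ and $h'(\infty)\in[0,\infty)$ exist and are finite (a nonnegative operator monotone function on $(0,\infty)$ is bounded at $0^+$ and has finite linear growth); consequently $f(0^+)=\alpha-h(0^+)<\infty$ and $f'(\infty)=\beta-h'(\infty)<\infty$. For (ii)\,$\implies$\,(iii), I would invoke the integral representation of operator convex functions on $(0,\infty)$: every such $f$ can be written as
$$
f(t)=a+b(t-1)+c(t-1)^2+\int_{(0,\infty)}\biggl({t-1\over t+s}+{(t-1)^2\over(t+s)(1+s)}\biggr)d\mu(s)
$$
for some $a\in\bR$, $b\in\bR$, $c\ge0$ and a positive measure $\mu$ with $\int(1+s)^{-2}\,d\mu(s)<\infty$ (see, e.g., \cite{Hi2}). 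The condition $f(0^+)<\infty$ forces $c=0$ and $\int(s(1+s))^{-1}d\mu(s)<\infty$, while $f'(\infty)<\infty$ forces $c=0$ and the finiteness of $\int(1+s)^{-1}d\mu(s)$; together these make $-\int{t\over t+s}\,d\nu(s)$ (after absorbing constants) an operator monotone decreasing piece, i.e. $f(t)=\alpha+\beta t-h(t)$ with $h(t)=\int{t\over t+s}\,d\nu(s)+(\text{const})\ge0$ operator monotone, allowing $h\equiv0$ in the degenerate case. The bookkeeping of which terms survive and the sign of the resulting $h$ is routine but must be done carefully.

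The substantive implications are (iii)\,$\implies$\,(i) and (i)\,$\implies$\,(ii). For (iii)\,$\implies$\,(i): writing $f=\alpha+\beta t-h$, the perspective splits as $P_f(A,B)=\alpha B+\beta A-P_h(A,B)$, and since $h\in\OM_+\cup\{0\}$ the term $P_h(A+\eps I,B+\eps I)$ is the operator connection $(B+\eps I)\,\sigma_h\,(A+\eps I)$, which by the Kubo–Ando theory converges in SOT as $\eps\searrow0$ to $B\,\sigma_h\,A$; hence the whole limit exists. For the hard direction (i)\,$\implies$\,(ii) (equivalently $\neg$(ii)\,$\implies\,\neg$(i)), I would argue contrapositively using rank-one or two-dimensional test operators. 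If $f(0^+)=\infty$, take $\cH$ two-dimensional (or a two-dimensional subspace of an infinite-dimensional $\cH$), $B=\mathrm{diag}(1,0)$ and $A=I$; then $B+\eps I=\mathrm{diag}(1+\eps,\eps)$ and one computes $P_f(A+\eps I,B+\eps I)$ explicitly via the $2\times2$ functional calculus, and the $(2,2)$-type behavior involves $\eps\,f\bigl((1+\eps)/\eps\bigr)$ or $\eps\,f(\eps/(1+\eps))$ — I would choose the configuration so that the relevant matrix entry behaves like $\eps\,f(c/\eps)\to f'(\infty)$ or, in the other configuration, like (roughly) $f(\eps)$ times a bounded factor, which blows up when $f(0^+)=\infty$; similarly $f'(\infty)=\infty$ is detected by an unbounded entry as $\eps\searrow0$.

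I expect the main obstacle to be the implication (i)\,$\implies$\,(ii): one must produce, from the mere failure of \emph{one} of the two finiteness conditions, a \emph{concrete} pair $A,B\ge0$ (with at least one singular) for which $P_f(A+\eps I,B+\eps I)$ fails to converge, and make the divergence visible through an explicit entry computation while controlling the operator-convexity-induced cross terms. The cleanest route is probably to reduce to $2\times2$ matrices and use the identities \eqref{F-2.5}--\eqref{F-2.6} (which swap the roles of $f(0^+)$ and $f'(\infty)$ via $\widetilde f$, recalling \eqref{f(0^+)-f'(infty)}) so that it suffices to handle the case $f(0^+)=\infty$, treating the $f'(\infty)=\infty$ case by passing to $\widetilde f$. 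A secondary technical point is justifying that SOT convergence on all of $B(\cH)^+$ (including infinite-dimensional $\cH$) is equivalent to norm convergence on the $2\times2$ corners, which follows since the obstruction already appears on a two-dimensional subspace and $P_f$ respects reduction to invariant subspaces for the test operators chosen.
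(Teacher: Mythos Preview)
Your overall strategy is sound and the ingredients for each implication are correct: (iii)\,$\implies$\,(i) via the Kubo--Ando downward continuity of $\sigma_h$ matches the paper exactly, and your sketch of (ii)\,$\implies$\,(iii) via the integral representation of operator convex functions is essentially the content of \cite[Theorem 8.4]{HMPB}, which the paper simply cites.

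The one place where you work much harder than necessary is (i)\,$\implies$\,(ii). You propose $2\times2$ test operators, worry about cross terms, and invoke the transpose $\widetilde f$ to reduce one case to the other. None of this is needed: the paper takes $A=aI$ and $B=bI$ with $a,b\ge0$ scalars, so that
\[
P_f(A+\eps I,B+\eps I)=(b+\eps)\,f\!\left(\frac{a+\eps}{b+\eps}\right)I.
\]
With $a=0,\ b=1$ the limit as $\eps\searrow0$ is $f(0^+)$; with $a=1,\ b=0$ it is $f'(\infty)$. Existence of the limit in $B(\cH)$ for these two scalar pairs already forces both quantities to be finite. So what you flag as ``the main obstacle'' is in fact the easiest step, and there is no need for $2\times2$ computations, subspace reductions, or the $\widetilde f$ symmetry. (A minor organizational point: your stated chain (iii)\,$\implies$\,(ii)\,$\implies$\,(i)\,$\implies$\,(iii) never establishes (ii)\,$\implies$\,(i) directly; what you actually argue is (ii)\,$\iff$\,(iii), (iii)\,$\implies$\,(i), and (i)\,$\implies$\,(ii), which of course still closes the cycle.)
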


\begin{proof}
(i)\,$\implies$\,(ii).\enspace
For $A=aI$ and $B=bI$ with scalars $a,b\ge0$, we have
$$
P_f(A+\eps I,B+\eps I)=(b+\eps)f\biggl({a+\eps\over b+\eps}\biggr)I.
$$
When $a=0$ and $b=1$, $(1+\eps)f\bigl({\eps\over1+\eps}\bigr)\to f(0^+)$ as $\eps\searrow0$.
When $a=1$ and $b=0$, $\eps f\bigl({1+\eps\over\eps}\bigr)
=(1+\eps){\eps\over1+\eps}f\bigl({1+\eps\over\eps}\bigr)\to f'(\infty)$ as $\eps\searrow0$.
Hence (i) implies (ii).

(ii)\,$\implies$\,(iii) was shown in \cite[Theorem 8.4]{HMPB}.

(iii)\,$\implies$\,(i).\enspace
Assume (iii). For every $A,B>0$ one has
\begin{align}\label{F-8.2}
P_f(A,B)=\alpha B+\beta A-B\sigma_hA,
\end{align}
where $\sigma_h$ is the operator connection corresponding to $h$ (in Kubo-Ando's sense).
Hence (i) follows from the downward continuity of the operator connection \cite{KA}.
\end{proof}

When the equivalent conditions of Proposition \ref{P-6.1} are satisfied, one can write the
extension of $P_f$ to $B(\cH)^+\times B(\cH)^+$ as \eqref{F-8.2} for $A,B\ge0$, which is
indeed the extension of $P_f$ for $A,B>0$. Thus, the extended operator perspective $P_f$ in
this case is essentially the minus of the operator connection $\sigma_h$. Moreover, if
$A_n\searrow A$ and $B_n\searrow B$ in $B(\cH)^+$, then $P_f(A,B)=\lim_nP_f(A_n,B_n)$ in SOT. 

Here we recall the well-known fact that if $A,B\ge0$ and $A\le cB$ for some $c>0$, then there
is a unique positive operator $W$ ($\le cI$) such that $W(I-s(B))=0$ and $A=B^{1/2}WB^{1/2}$,
where $s(B)$ is the support projection of $B$ (i.e., the orthogonal projection onto the the
closure of the range of $B$). We denote this $W$ by $D(A/B)$ to specify its
dependence on $A,B$. Clearly, we have $D(A/B)=B^{-1/2}AB^{-1/2}$ whenever $B>0$.

The next two theorems are our main results of the section on extension of operator
perspectives $P_f$.

\begin{theorem}\label{T-6.2}
Let $f$ be an operator convex function on $(0,\infty)$. Then the following conditions are
equivalent:
\begin{itemize}
\item[$(${\rm i}$)$] the limit in \eqref{F-8.1} exists for every $A,B\in B(\cH)^+$ such that
$A\le cB$ for some $c>0$;
\item[$(${\rm ii}$)$] $f(0^+)<\infty$.
\end{itemize}

In this case, for every $A,B$ as in $(${\rm i}$)$,
\begin{align}\label{F-8.3}
\lim_{\eps\searrow0}P_f(A+\eps I,B+\eps I)=B^{1/2}f(D(A/B))B^{1/2}\quad\mbox{$(${\rm SOT}$)$},
\end{align}
where $f$ extends to $[0,\infty)$ by $f(0)=f(0^+)$.
\end{theorem}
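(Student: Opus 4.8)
The plan is to prove (i)\,$\Rightarrow$\,(ii) by specializing to scalar operators, and then to prove (ii)\,$\Rightarrow$\,(i) together with \eqref{F-8.3} by a single functional-calculus computation that routes through the support projection $s(B)$. For (i)\,$\Rightarrow$\,(ii), I would take $A=0$ and $B=I$, which satisfy $A\le cB$ for every $c>0$; here
$$
P_f(\eps I,(1+\eps)I)=(1+\eps)\,f\!\left({\eps\over1+\eps}\right)I,\qquad\eps>0,
$$
and since $f$ is convex it is monotone near $0$, so the scalar $(1+\eps)f\bigl({\eps\over1+\eps}\bigr)$ tends to $f(0^+)\in[-\infty,+\infty]$ as $\eps\searrow0$. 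If the limit in \eqref{F-8.1} exists in $B(\cH)$ for this pair, then that scalar must converge to a real number, whence $f(0^+)\in\bR$, i.e.\ $f(0^+)<\infty$.

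For (ii)\,$\Rightarrow$\,(i) and \eqref{F-8.3}, fix $A,B\in B(\cH)^+$ with $A\le cB$ and set $W:=D(A/B)$, so that $A=B^{1/2}WB^{1/2}$, $0\le W\le cI$ and $W(I-s(B))=0$; since $W=W^*$, also $(I-s(B))W=0$, hence $W=s(B)Ws(B)$. Writing $T_\eps:=(B+\eps I)^{-1/2}(A+\eps I)(B+\eps I)^{-1/2}$, we have $P_f(A+\eps I,B+\eps I)=(B+\eps I)^{1/2}f(T_\eps)(B+\eps I)^{1/2}$, and $A+\eps I\le\max(1,c)(B+\eps I)$ gives $0\le T_\eps\le\max(1,c)I$ for all $\eps>0$. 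The key step is to show
$$
T_\eps\ \longrightarrow\ T_0:=W+(I-s(B))\qquad\text{(SOT) as }\eps\searrow0,
$$
which follows by decomposing
$$
T_\eps=\bigl[(B+\eps I)^{-1/2}B^{1/2}\bigr]\,W\,\bigl[B^{1/2}(B+\eps I)^{-1/2}\bigr]+\eps(B+\eps I)^{-1}
$$
and using that $(B+\eps I)^{-1/2}B^{1/2}\to s(B)$ and $\eps(B+\eps I)^{-1}\to I-s(B)$ in SOT with norm $\le1$ (the scalar functions $\sqrt{t/(t+\eps)}$ and $\eps/(t+\eps)$ converge boundedly and pointwise on $[0,\|B\|_\infty]$ to $\mathbf 1_{(0,\infty)}$ and $\mathbf 1_{\{0\}}$), together with the facts that multiplication is jointly SOT-continuous on norm-bounded sets and that $s(B)Ws(B)=W$.

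Granting this, since $f(0^+)<\infty$ the function $f$ extends continuously to $[0,\infty)$ with $f(0):=f(0^+)$, hence is continuous on the compact set $[0,\max(1,c)]$ which contains the spectra of all $T_\eps$ and of $T_0$; approximating $f$ uniformly there by polynomials and again invoking joint SOT-continuity of multiplication on bounded sets gives $f(T_\eps)\to f(T_0)$ in SOT with $\sup_\eps\|f(T_\eps)\|_\infty<\infty$. Combined with $(B+\eps I)^{1/2}\to B^{1/2}$ in operator norm, this yields $P_f(A+\eps I,B+\eps I)\to B^{1/2}f(T_0)B^{1/2}$ in SOT. Finally, in the decomposition $\cH=\overline{\mathrm{ran}\,B}\oplus\ker B$ one has $T_0=W\oplus I$ while $W=D(A/B)=W|_{\overline{\mathrm{ran}\,B}}\oplus0$, so $s(B)f(T_0)s(B)=f\bigl(W|_{\overline{\mathrm{ran}\,B}}\bigr)\oplus0=s(B)f(D(A/B))s(B)$ (with the extended $f$); since $B^{1/2}=B^{1/2}s(B)$, we conclude $B^{1/2}f(T_0)B^{1/2}=B^{1/2}f(D(A/B))B^{1/2}$, which is \eqref{F-8.3}. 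I expect the only genuine obstacle to be the SOT-convergence $T_\eps\to T_0$: one must pass through $s(B)$ so that the extra summand $I-s(B)$ is correctly accounted for, and the extension value $f(0)=f(0^+)$ is exactly what the limit forces on $\ker D(A/B)$, where $T_\eps\to0$. Everything else — the polynomial approximation and the norm convergence of $(B+\eps I)^{1/2}$ — is then routine.
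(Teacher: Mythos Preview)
Your proof is correct and takes a genuinely different route from the paper. The paper proves (ii)\,$\Rightarrow$\,(i) via the integral representation of operator convex functions with $f(0^+)<\infty$,
\[
f(t)=\alpha+\beta t+\gamma t^2+\int_{(0,\infty)}\Bigl(\tfrac{t}{1+s}-\tfrac{t}{t+s}\Bigr)\,d\mu(s),
\]
and then handles each building block separately: the $t^2$ piece by an explicit computation with the Douglas factorization $A^{1/2}=VB^{1/2}$, the integrand via Kubo--Ando theory for $h_s(t)=t/(t+s)\in\OM_+$, and the integral itself by dominated convergence after a uniform bound on $\phi_s$ over $[0,c]$. Only afterwards does it identify the limit as $B^{1/2}f(W)B^{1/2}$, by proving $B\sigma_hA=B^{1/2}h(W)B^{1/2}$ for $h\in\OM_+$ --- and for that last step it uses exactly your tool, the SOT-continuity of the continuous functional calculus.

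You bypass the integral representation entirely: you keep all of $T_\eps$ inside the fixed window $[0,\max(1,c)]$, show $T_\eps\to W+(I-s(B))$ in SOT, and appeal once to polynomial approximation. This is shorter and more transparent; the paper's approach, on the other hand, exhibits $P_f$ structurally as a combination of an operator connection and $P_{t^2}$, which is of independent interest and gets reused. One small expository slip: in your closing remark you say $T_\eps\to0$ on $\ker D(A/B)$, but on the part $\ker B\subseteq\ker D(A/B)$ you actually have $T_\eps\to I$. This does not affect the formal argument, since you have already correctly identified $T_0=W\oplus I$ and the $\ker B$ component is killed by $B^{1/2}$.
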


\begin{proof}
(i)\,$\implies$\,(ii).\enspace
Take $A=0$ and $B=I$; then $f(0^+)<\infty$ follows as in the proof of (i)\,$\implies$\,(ii)
of Proposition \ref{P-6.1}.

(ii)\,$\implies$\,(i).\enspace
Assume that $f(0^+)<\infty$. Then it is known \cite[Theorem 8.1]{HMPB} that $f$ has the
integral expression
$$
f(t)=\alpha+\beta t+\gamma t^2
+\int_{(0,\infty)}\biggl({t\over1+s}-{t\over t+s}\biggr)\,d\mu(s),
\qquad t\in (0,\infty),
$$
where $\alpha,\beta\in\bR$ (note that $\alpha=f(0^+)$), $\gamma\ge0$ and $\mu$ is a positive
measure on $(0,\infty)$ satisfying $\int_{(0,\infty)}(1+s)^{-2}\,d\mu(s)<\infty$. Set
$$
\phi_s(t):={t\over1+s}-{t\over t+s},\qquad t\in(0,\infty).
$$
We can write for $\eps>0$
\begin{align}
P_f(A+\eps I,B+\eps I)&=\alpha(B+\eps I)+\beta(A+\eps I)
+\gamma P_{t^2}(A+\eps I,B+\eps I) \nonumber\\
&\qquad+\int_{(0,\infty)}P_{\phi_s}(A+\eps I,B+\eps I)\,d\mu(s). \label{F-8.4}
\end{align}
Let $A,B\ge0$ with $A\le cB$ for some $c>0$. We may assume that $c\ge1$. For any
$\eps>0$, since $(ct+\eps)/(t+\eps)\le c$ for all $t\ge0$, one has
\begin{align*}
(B+\eps I)^{-1/2}(A+\eps I)(B+\eps I)^{-1/2}
&\le(B+\eps I)^{-1/2}(cB+\eps I)(B+\eps I)^{-1/2} \\
&=(cB+\eps I)(B+\eps I)^{-1}\le cI,
\end{align*}
so that the spectrum of $(B+\eps I)^{-1/2}(A+\eps I)(B+\eps I)^{-1/2}$ is in $[0,c]$.
Note that
$$
\phi_s'(t)={1\over1+s}-{s\over(t+s)^2}={t^2+2st-s\over(1+s)(t+s)^2}
$$
and the solution of $\phi_s'(t)=0$ for $t>0$ is $t=\sqrt{s+s^2}-s<1$, from which one has
$$
\phi_s(\sqrt{s+s^2}-s)\le\phi_s(t)\le\phi_s(c),\qquad t\in[0,c].
$$
A direct computation gives
$$
-(1+s)^2\phi_s(\sqrt{s+s^2}-s)=(1+s)\bigl(\sqrt{1+s}-\sqrt s\bigr)^2
={1+s\over\bigl(\sqrt{1+s}+\sqrt s\bigr)^2}\le1,
$$
and hence $\phi_s(\sqrt{s+s^2}-s)\ge-1/(1+s)^2$. Therefore,
$$
-{1\over(1+s)^2}\,I\le\phi_s((B+\eps I)^{-1/2}(A+\eps I)(B+\eps I)^{-1/2})
\le\phi_s(c) I,
$$
so that for any $\eps\in(0,1)$ one has
$$
-{1\over(1+s)^2}(\|B\|_{\infty}+1)\le P_{\phi_s}(A+\eps I,B+\eps I)
\le\phi_s(c)(\|B\|_{\infty}+1).
$$
Now, suppose that the following limits exist:
\begin{align}
\overline P_{t^2}(A,B)&:=\lim_{\eps\searrow0}P_{t^2}(A+\eps I,B+\eps I)\quad(SOT),
\label{F-8.5}\\
\overline P_{\phi_s}(A,B)&:=\lim_{\eps\searrow0}P_{\phi_s}(A+\eps I,B+\eps I)\quad(SOT),
\qquad s\in(0,\infty). \label{F-8.6}
\end{align}
Then, since $\int_{(0,\infty)}(1+s)^{-2}\,d\mu(s)<\infty$ and
$\int_{(0,\infty)}\phi_s(c)\,d\mu(s)<\infty$, it follows from the Lebesgue convergence
theorem that
\begin{align}\label{F-8.7}
\lim_{\eps\searrow0}\int_{(0,\infty)}P_{\phi_s}(A+\eps I,B+\eps I)\,d\mu(s)
=\int_{(0,\infty)}\overline P_{\phi_s}(A,B)\,d\mu(s).
\end{align}
From \eqref{F-8.4}, \eqref{F-8.5} and \eqref{F-8.7} we obtain
\begin{align}
&\lim_{\eps\searrow0}P_f(A+\eps I,B+\eps I) \nonumber\\
&\qquad=\alpha B+\beta A+\gamma\overline P_{t^2}(A,B)
+\int_{(0,\infty)}\overline P_{\phi_s}(A,B)\,d\mu(s)\quad(SOT), \label{F-8.8}
\end{align}
and the limit in \eqref{F-8.1} exists.

Thus, it remains to prove the existence of the limits in \eqref{F-8.5} and \eqref{F-8.6}.
Since $A\le cB$, we have a bounded operator $V$ with $\|V\|\le c^{1/2}$ such that
$V(I-s(B))=0$ and $A^{1/2}=VB^{1/2}=B^{1/2}V^*$, so $W:=V^*V=D(A/B)$. We write
\begin{align*}
P_{t^2}(A+\eps I,B+\eps I)
&=(A+\eps I)(B+\eps I)^{-1}(A+\eps I) \\
&=A(B+\eps I)^{-1}A+\eps A(B+\eps I)^{-1}+\eps(B+\eps I)^{-1}A \\
&\qquad+\eps^2(B+\eps I)^{-1}I.
\end{align*}
Let $B=\int_0^{\|B\|_{\infty}}\lambda\,dE_\lambda$ is the spectral decomposition. For any
$\xi\in\cH$ note that
\begin{align*}
\|\eps A(B+\eps I)^{-1}\xi\|^2&=\|\eps B^{1/2}WB^{1/2}(B+\eps I)^{-1}\xi\|^2 \\
&\le c^2\|B\|_{\infty}\|\eps B^{1/2}(B+\eps I)^{-1}\eps\|^2 \\
&=c^2\|B\|_{\infty}\int_0^{\|B\|_{\infty}}{\eps^2\lambda\over(\lambda+\eps)^2}
\,d\|E_\lambda\xi\|^2.
\end{align*}
Since $\eps^2\lambda/(\lambda+\eps)^2\le1$ for all $\lambda\ge0$, $\eps\in(0,1)$, and
$\eps^2\lambda/(\lambda+\eps)^2\to0$ for any $\lambda\ge0$ as $\eps\searrow0$, it follows
from the bounded convergence theorem that $\|\eps A(B+\eps I)^{-1}\xi\|\to0$ as
$\eps\searrow0$, so $\eps A(B+\eps I)^{-1}\to0$ in SOT as $\eps\searrow0$. Similarly,
$\eps(B+\eps I)^{-1}A\to0$ in SOT, and $\eps^2(B+\eps I)^{-1}\to0$ is immediate.
Moreover, we write
$$
A(B+\eps I)^{-1}A=A^{1/2}VB^{1/2}(B+\eps I)^{-1}B^{1/2}V^*A^{1/2}
=A^{1/2}VB(B+\eps I)^{-1}V^*A^{1/2}.
$$
Since $B(B+\eps I)^{-1}\to s(B)$ in SOT as $\eps\searrow0$, it follows that
$A(B+\eps I)^{-1}A$ converges in SOT to
$$
A^{1/2}Vs(B)V^*A^{1/2}=B^{1/2}V^*VV^*VB^{1/2}=B^{1/2}W^2B^{1/2}.
$$
Hence \eqref{F-8.5} holds as
\begin{align}\label{F-8.9}
\lim_{\eps\searrow0}P_{t^2}(A+\eps I,B+\eps I)=B^{1/2}W^2B^{1/2}\quad\mbox{(SOT)}.
\end{align}
To prove \eqref{F-8.6}, set $h_s(t):=t/(t+s)$ for $t\in(0,\infty)$. Since $h_s\in\OM_+$,
we write
$$
P_{\phi_s}(A+\eps I,B+\eps I)
={1\over1+s}(A+\eps I)-(B+\eps I)\sigma_{h_s}(A+\eps I),
$$
where $\sigma_{h_s}$ is the operator connection corresponding to $h_s$. Hence \eqref{F-8.6}
holds as
\begin{align}\label{F-8.10}
\lim_{\eps\searrow0}P_{\phi_s}(A+\eps I,B+\eps I)={1\over1+s}\,A-B\sigma_{h_s}A\quad
\mbox{(SOT)}.
\end{align}
Thus, (i) has been shown, and from \eqref{F-8.8}--\eqref{F-8.10} the limit in \eqref{F-8.1}
is equal to
\begin{align}\label{F-8.11}
\alpha B+\beta A+\gamma B^{1/2}W^2B^{1/2}
+\int_{(0,\infty)}\biggl({1\over1+s}\,A-B\sigma_{h_s}A\biggr)\,d\mu(s).
\end{align}

Next, to show the latter assertion of the theorem, we see that for any $h\in\OM_+$,
\begin{align}\label{F-8.12}
B\sigma_hA=B^{1/2}h(W)B^{1/2}.
\end{align}
Indeed, we have
\begin{align*}
B\sigma_hA&=\lim_{\eps\searrow0}(B+\eps I)\sigma_hA \\
&=\lim_{\eps\searrow0}(B+\eps I)^{1/2}
h((B+\eps I)^{-1/2}A(B+\eps I)^{1/2})(B+\eps I)^{1/2} \\
&=\lim_{\eps\searrow0}(B+\eps I)^{1/2}
h\bigl((B+\eps I)^{-1/2}B^{1/2}WB^{1/2}(B+\eps I)^{1/2}\bigr)(B+\eps I)^{1/2}.
\end{align*}
Since $B^{1/2}(B+\eps I)^{-1/2}\to s(B)$ in SOT as $\eps\searrow0$,
$$
(B+\eps I)^{-1/2}B^{1/2}WB^{1/2}(B+\eps I)^{1/2}\ \longrightarrow\ s(B)Ws(B)=W\quad
\mbox{(SOT)}.
$$
From the SOT continuity of the functional calculus $X\in B(\cH)^+\mapsto h(X)$, it follows
that
$$
h\bigl((B+\eps I)^{-1/2}B^{1/2}WB^{1/2}(B+\eps I)^{1/2}\bigr)\ \longrightarrow\ h(W)
\quad\mbox{(SOT)}.
$$
Moreover, since $(B+\eps I)^{1/2}\to B^{1/2}$ in $\|\cdot\|_\infty$, \eqref{F-8.12} follows.
Thus, \eqref{F-8.11} is equal to
\begin{align*}
&\alpha B+\beta B^{1/2}WB^{1/2}+B^{1/2}W^2B^{1/2} \\
&\qquad\quad+\int_{(0,\infty)}
\biggl({1\over1+s}\,B^{1/2}WB^{1/2}-B^{1/2}h_s(W)B^{1/2}\biggr)\,d\mu(s) \\
&\quad=B^{1/2}\biggl[\alpha I+\beta W+\gamma W^2
+\int_{(0,\infty)}\phi_s(W)\,d\mu(s)\biggr]B^{1/2}=B^{1/2}f(W)B^{1/2}.
\end{align*}
\end{proof}

When $f(0^+)<\infty$, we extend $f$ to $[0,\infty)$ continuously by $f(0):=f(0^+)$. Then,
when $A\ge0$ and $B>0$, $P_f(A,B)$ is well defined directly by \eqref{F-2.5} and it is
equal to the expression in \eqref{F-8.3}. (This extended definition has already
been used in the proof of Lemma \ref{L-3.18}.) With this definition of $P_f(A,B)$ for
$A\ge0$ and $B>0$ we furthermore have the following:

\begin{theorem}\label{T-6.3}
Assume that $f$ is an operator convex function on $(0,\infty)$ with $f(0^+)<\infty$. Let
$A,B\ge0$ with $A\le cB$ for some $c>0$. Then for any sequence $L_n\in B(\cH)^{++}$ such that
$\|L_n\|_\infty\to0$,
\begin{align}\label{F-8.13}
\lim_{n\to\infty}P_f(A,B+L_n)=\lim_{\eps\searrow0}P_f(A,B+\eps I)
=B^{1/2}f(D(A/B))B^{1/2}.
\end{align}
\end{theorem}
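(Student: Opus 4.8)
The plan is to follow the proof of Theorem~\ref{T-6.2}, replacing the scalar perturbation $\eps I$ by the sequence $L_n$. The starting point is the integral representation of an operator convex $f$ with $f(0^+)<\infty$ recalled there (\cite[Theorem~8.1]{HMPB}):
$$
f(t)=\alpha+\beta t+\gamma t^2+\int_{(0,\infty)}\phi_s(t)\,d\mu(s),\qquad
\phi_s(t):={t\over1+s}-{t\over t+s},
$$
with $\alpha=f(0^+)$, $\gamma\ge0$ and $\int_{(0,\infty)}(1+s)^{-2}\,d\mu(s)<\infty$. We may assume $c\ge1$, so $A\le c(B+L_n)$, the spectrum of $D(A/(B+L_n))$ lies in $[0,c]$, and $P_f(A,B+L_n)$ is well defined through continuous functional calculus (with $f(0):=f(0^+)$) and
$$
P_f(A,B+L_n)=\alpha(B+L_n)+\beta A+\gamma A(B+L_n)^{-1}A
+\int_{(0,\infty)}P_{\phi_s}(A,B+L_n)\,d\mu(s).
$$
The terms $\alpha(B+L_n)$ (in operator norm) and $\beta A$ converge trivially; all remaining limits are in SOT.

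For the quadratic term I would use the factorization from the proof of Theorem~\ref{T-6.2}: $A^{1/2}=VB^{1/2}=B^{1/2}V^*$ with $\|V\|_\infty\le c^{1/2}$, $V(I-s(B))=0$, and $W:=V^*V=D(A/B)$; then $A(B+L_n)^{-1}A=B^{1/2}W\,Y_n\,WB^{1/2}$ with $Y_n:=B^{1/2}(B+L_n)^{-1}B^{1/2}$. The crucial new point is that $Y_n\to s(B)$ in SOT: from $L_n\le B+L_n$ one gets $0\le Y_n\le I$, and $Y_n=s(B)Y_ns(B)$ since $B^{1/2}(I-s(B))=0$; moreover for $\xi=B^{1/2}\eta$, which range over a dense subset of $\mathrm{ran}\,s(B)$, one computes $(I-Y_n)\xi=B^{1/2}(B+L_n)^{-1}L_n\eta$, whose norm is at most $\|B^{1/2}(B+L_n)^{-1/2}\|_\infty\,\|(B+L_n)^{-1/2}L_n^{1/2}\|_\infty\,\|L_n^{1/2}\eta\|\le\|L_n\|_\infty^{1/2}\|\eta\|\to0$, the two operator-norm factors being $\le1$ again by $L_n\le B+L_n$. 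Since $s(B)W=W$, this yields $A(B+L_n)^{-1}A\to B^{1/2}W^2B^{1/2}$, matching \eqref{F-8.9}.

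For the integral term, $P_{\phi_s}(A,B+L_n)={1\over1+s}A-(B+L_n)\sigma_{h_s}A$ with $h_s(t)=t/(t+s)\in\OM_+$. Since $B\le B+L_n\le B+\|L_n\|_\infty I$ and operator connections are monotone in each argument, $(B+L_n)\sigma_{h_s}A$ is squeezed between $B\sigma_{h_s}A$ and $(B+\|L_n\|_\infty I)\sigma_{h_s}A$, so the downward continuity of operator connections \cite{KA} forces $(B+L_n)\sigma_{h_s}A\to B\sigma_{h_s}A$ in SOT, hence $P_{\phi_s}(A,B+L_n)\to{1\over1+s}A-B\sigma_{h_s}A$ for every $s$. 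For domination, exactly as in the proof of Theorem~\ref{T-6.2}, the fact that the spectrum of $D(A/(B+L_n))$ lies in $[0,c]$ gives $-{1\over(1+s)^2}I\le\phi_s(D(A/(B+L_n)))\le\phi_s(c)I$, so once $\|L_n\|_\infty\le1$ we have $\|P_{\phi_s}(A,B+L_n)\|_\infty\le(\|B\|_\infty+1)\max\{(1+s)^{-2},\phi_s(c)\}$, a $\mu$-integrable bound. A vectorwise dominated-convergence argument then gives $\int_{(0,\infty)}P_{\phi_s}(A,B+L_n)\,d\mu(s)\to\int_{(0,\infty)}({1\over1+s}A-B\sigma_{h_s}A)\,d\mu(s)$ in SOT. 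Summing the four limits and using $A=B^{1/2}WB^{1/2}$ together with $B\sigma_{h_s}A=B^{1/2}h_s(W)B^{1/2}$ from \eqref{F-8.12}, exactly as at the end of the proof of Theorem~\ref{T-6.2}, identifies the limit as $B^{1/2}f(W)B^{1/2}=B^{1/2}f(D(A/B))B^{1/2}$; the middle expression in \eqref{F-8.13} is the special case $L_n=\eps_nI$.

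The step I expect to be the main obstacle is the quadratic term: for non-scalar $L_n$ one can no longer replace $B^{1/2}(B+L_n)^{-1}B^{1/2}$ by $B(B+L_n)^{-1}$, as was possible for $\eps I$ where all operators commute, so the SOT-convergence $B^{1/2}(B+L_n)^{-1}B^{1/2}\to s(B)$ has to be established directly as above. A secondary point requiring care is that the $\mu$-integrable majorant for $P_{\phi_s}(A,B+L_n)$ must hold uniformly for all sufficiently large $n$ simultaneously, which is precisely where the hypothesis $\|L_n\|_\infty\to0$ — rather than mere convergence $L_n\to0$ in SOT — enters.
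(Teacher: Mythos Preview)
Your argument is correct, and it takes a genuinely different route from the paper's. The paper does not attack the general perturbation $L_n$ directly through the integral representation; instead it first approximates $f$ by $f_\delta(t):=f(t+\delta)$ (uniformly on $[0,c]$) to reduce to the case $f'(0^+)>-\infty$, then subtracts a linear part to reduce further to $f\in\OC_+$ with $f(0^+)=0$. For such $f$ the perspective $P_f(A,\cdot)$ is operator decreasing in the second variable (Proposition~\ref{P-2.2}\,(vii) combined with Theorem~\ref{T-6.2}), so the monotone net $P_f(A,B+\eps I)$ and the sequence $P_f(A,B+L_n)$ are forced to have the same limit by a simple squeeze $B+\eps_1I\le B+L_n\le B+\eps_2I$; the value is then read off by repeating the computation of Theorem~\ref{T-6.2} with $A$ in place of $A+\eps I$.

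Your approach bypasses both reductions and the monotonicity trick: you run the integral representation from the outset and handle the only genuinely new piece, the quadratic term $A(B+L_n)^{-1}A$, by proving $B^{1/2}(B+L_n)^{-1}B^{1/2}\to s(B)$ in SOT via uniform boundedness plus density on $\mathrm{ran}\,B^{1/2}$. This is exactly the right substitute for the commutative step $B^{1/2}(B+\eps I)^{-1}B^{1/2}=B(B+\eps I)^{-1}$ used in Theorem~\ref{T-6.2}, and your identification of the place where $\|L_n\|_\infty\to0$ enters (both for the dominated-convergence bound and for $\|L_n^{1/2}\eta\|\to0$) is accurate. What you lose relative to the paper is the soft structural explanation that monotonicity alone forces existence of the limit; what you gain is a single self-contained computation without the $f_\delta$ detour or the reduction to $\OC_+$.
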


\begin{proof}
Set $W:=D(A/B)$; so $\|W\|_\infty\le c$.
For any $\delta>0$ define $f_\delta(t):=f(t+\delta)$ for $t>0$, which is operator convex on
$(-\delta,\infty)$. Note that
\begin{align*}
P_f(A,B+L_n)
&=(B+L_n)^{1/2}f\bigl((B+L_n)^{-1/2}A(B+L_n)^{-1/2}\bigr)(B+L_n)^{1/2}, \\
P_{f_\delta}(A,B+L_n)
&=(B+L_n)^{1/2}f\bigl((B+L_n)^{-1/2}A(B+L_n)^{-1/2}+\delta I\bigr)(B+L_n)^{1/2}.
\end{align*}
Since
\begin{align*}
\|(B+L_n)^{-1/2}A(B+L_n)^{-1/2}\|_\infty
&=  \|(B+L_n)^{-1/2}B^{1/2}WB^{1/2}(B+L_n)^{-1/2}\|_\infty \\
&\le c\|(B+L_n)^{-1/2}B(B+L_n)^{-1/2}\|_\infty \\
&=c\|B^{1/2}(B+L_n)^{-1}B^{1/2}\|_\infty\le c,
\end{align*}
one can estimate
\begin{align*}
&\sup_{n\ge1}\|P_{f_\delta}(A,B+L_n)-P_f(A,B+L_n)\|_\infty \\
&\qquad\le\sup_{n\ge1}\|B+L_n\|_\infty
\big\|f\bigl((B+L_n)^{-1/2}A(B+L_n)^{-1/2}+\delta I\bigr) \\
&\hskip5cm-f\bigl((B+L_n)^{-1/2}A(B+L_n)^{-1/2}\bigr)\big\|_\infty \\
&\qquad\le\Bigl(\|B\|_\infty+\sup_n\|L_n\|_\infty\Bigr)\sup_{t\in[0,c]}|f(t+\delta)-f(t)|
\ \longrightarrow\ 0\quad\mbox{as $\delta\searrow0$}.
\end{align*}
For every $\xi\in\cH$ with $\|\xi\|=1$, it follows that
\begin{align*}
\|(P_f(A,B+L_n)-B^{1/2}f(W)B^{1/2})\xi\|
&\le\|P_f(A,B+L_n)-P_{f_\delta}(A,B+L_n)\|_\infty \\
&\quad+\|(P_{f_\delta}(A,B+L_n)-B^{1/2}f_\delta(W)B^{1/2})\xi\| \\
&\quad+\|B^{1/2}f_\delta(W)B^{1/2}-B^{1/2}f(W)B^{1/2}\|_\infty,
\end{align*}
and the first and the third terms of the above right-hand side are arbitrarily small
independently of $n$ when $\delta>0$ is sufficiently small. Hence it suffices to show the
result for $f_\delta$ instead of $f$. So, replacing $f$ with $f_\delta$, we may and do assume
that $f'(0^+):=\lim_{t\to0^+}f'(t)>-\infty$. Now, define $f_0(t):=f(t)-\alpha-\beta t$ for
$t>0$, where $\alpha:=f(0^+)$ and $\beta:=f'(0^+)$. Then $f_0\in\OC_+$ with $f_0(0^+)=0$.
Since
\begin{align*}
P_f(A,B+L_n)&=P_{f_0}(A,B+L_n)+\alpha(B+L_n)+\beta A, \\
B^{1/2}f(W)B^{1/2}&=B^{1/2}f_0(W)B^{1/2}+\alpha B+\beta A,
\end{align*}
it suffices to show the result for $f_0$ instead of $f$. So we may finally assume that
$f\in\OC_+$ with $f(0^+)=0$. In this situation, note that if $0<B_1\le B_2$, then
$P_f(A,B_1)\ge P_f(A,B_2)$. Indeed, by Theorem \ref{T-6.2} and  Proposition
\ref{P-2.2}\,(vii) we have
$$
P_f(A,B_1)=\lim_{\eps\searrow0}P_f(A+\eps I,B_1+\eps I)
\ge\lim_{\eps\searrow0}P_f(A+\eps I,B_2+\eps I)=P_f(A,B_2).
$$
Therefore, since $L_n>0$ and $\|L_n\|_\infty\to0$, we easily see that both limits
$$
\lim_{\eps\searrow0}P_f(A,B+\eps I)\quad \mbox{and} \quad \lim_{n\to\infty}P_f(A,B+L_n)
\qquad\mbox{(SOT)}
$$
exist and are the same. Hence it remains to prove that
$\lim_{\eps\searrow0}P_f(A,B+\eps I)=B^{1/2}f(W)B^{1/2}$. The proof of this is similar
to (in fact, a bit easier than) that of Theorem \ref{T-6.2} by repeating the proof with
$A$, $B+\eps I$ in place of $A+\eps I$, $B+\eps I$. The details may be omitted here.
\end{proof}

In view of \eqref{f(0^+)-f'(infty)} and \eqref{F-2.5}, Theorems \ref{T-6.2} and
\ref{T-6.3} are rephrased as follows:

\begin{corollary}\label{C-6.4}
The following conditions are equivalent:
\begin{itemize}
\item[$(${\rm i}$)$] the limit in \eqref{F-8.1} exists for every $A,B\in B(\cH)^+$ such that
$cA\ge B$ for some $c>0$;
\item[$(${\rm ii}$)$] $f'(\infty)<\infty$.
\end{itemize}

In this case, for every $A,B$ as in $(${\rm i}$)$,
\begin{align}\label{F-8.14}
\lim_{\eps\searrow0}P_f(A+\eps I,B+\eps I)
=\lim_{\eps\searrow0}P_f(A+L_n,B)
=A^{1/2}\widetilde f(D(B/A))A^{1/2}\quad\mbox{$(${\rm SOT}$)$},
\end{align}
where $\widetilde f$ extends to $[0,\infty)$ by $\widetilde f(0)=\widetilde f(0^+)$ and
$L_n>0$, $\|L_n\|_\infty\to0$.
\end{corollary}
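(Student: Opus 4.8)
The plan is to deduce Corollary~\ref{C-6.4} from Theorems~\ref{T-6.2} and \ref{T-6.3} by passing to the transpose function $\widetilde f$, via the identity $P_{\widetilde f}(X,Y)=P_f(Y,X)$ of \eqref{F-2.5} together with the relation $\widetilde f(0^+)=f'(\infty)$ of \eqref{f(0^+)-f'(infty)}. First I would record that, since $f$ is operator convex on $(0,\infty)$, so is $\widetilde f$ (as recalled in the proof of Proposition~\ref{P-2.1}, this is \cite[Proposition~A.1]{HM}), and that $f'(\infty)<\infty$ is precisely $\widetilde f(0^+)<\infty$. When this holds, extend $\widetilde f$ to $[0,\infty)$ by $\widetilde f(0):=\widetilde f(0^+)$; then, exactly as explained after Theorem~\ref{T-6.2} for $f$, the perspective $P_{\widetilde f}(X,Y)$ is well defined by \eqref{F-2.4} for $X\ge0$ and $Y>0$, equivalently $P_f(Y,X)$ is well defined for $Y>0$ and $X\ge0$, which legitimizes the expressions below.

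Next I would apply Theorem~\ref{T-6.2} with $\widetilde f$ in place of $f$. Its condition (i) then reads: $\lim_{\eps\searrow0}P_{\widetilde f}(X+\eps I,Y+\eps I)$ exists for all $X,Y\in B(\cH)^+$ with $X\le cY$ for some $c>0$; since $P_{\widetilde f}(X+\eps I,Y+\eps I)=P_f(Y+\eps I,X+\eps I)$ by \eqref{F-2.5}, on setting $A:=Y$, $B:=X$ this is exactly condition (i) of Corollary~\ref{C-6.4}, i.e.\ existence of $\lim_{\eps\searrow0}P_f(A+\eps I,B+\eps I)$ whenever $cA\ge B$; and condition (ii) of Theorem~\ref{T-6.2}, $\widetilde f(0^+)<\infty$, is condition (ii), $f'(\infty)<\infty$, of the corollary. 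Under the same relabelling, \eqref{F-8.3} for $\widetilde f$ becomes $\lim_{\eps\searrow0}P_f(A+\eps I,B+\eps I)=A^{1/2}\widetilde f(D(B/A))A^{1/2}$, the first equality of \eqref{F-8.14}.

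Finally I would apply Theorem~\ref{T-6.3} with $\widetilde f$ in place of $f$: for $X,Y\in B(\cH)^+$ with $X\le cY$ and any $L_n\in B(\cH)^{++}$ with $\|L_n\|_\infty\to0$,
$$
\lim_{n\to\infty}P_{\widetilde f}(X,Y+L_n)=\lim_{\eps\searrow0}P_{\widetilde f}(X,Y+\eps I)=Y^{1/2}\widetilde f(D(X/Y))Y^{1/2}.
$$
Using $P_{\widetilde f}(X,Y+L_n)=P_f(Y+L_n,X)$ and $P_{\widetilde f}(X,Y+\eps I)=P_f(Y+\eps I,X)$ from \eqref{F-2.5} and again putting $A:=Y$, $B:=X$ gives $\lim_{n\to\infty}P_f(A+L_n,B)=\lim_{\eps\searrow0}P_f(A+\eps I,B)=A^{1/2}\widetilde f(D(B/A))A^{1/2}$ for $A,B\ge0$ with $cA\ge B$, which is the remaining content of \eqref{F-8.14} (the limit involving $L_n$ being the limit as $n\to\infty$).

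I do not expect a genuine mathematical obstacle, since the argument is essentially the relabelling $A\leftrightarrow B$ through \eqref{F-2.5}. The only thing needing care is the bookkeeping around the extended perspectives at non-invertible operators: one must check that \eqref{F-2.5} remains valid for $P_f(Y,X)$ with $Y>0$, $X\ge0$ once $\widetilde f$ has been continuously extended to $0$, so that the limit statements and the closed-form values transcribe verbatim from Theorems~\ref{T-6.2} and \ref{T-6.3}.
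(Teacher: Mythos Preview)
Your proposal is correct and follows exactly the paper's own route: the paper states the corollary with the single sentence ``In view of \eqref{f(0^+)-f'(infty)} and \eqref{F-2.5}, Theorems \ref{T-6.2} and \ref{T-6.3} are rephrased as follows,'' and your argument is precisely this relabelling via the transpose, spelled out with the appropriate care about the extended perspectives at non-invertible operators.
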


For simplicity of notations we set
\begin{align*}
(B(\cH)^+\times B(\cH)^+)_\le&:=\{(A,B)\in B(\cH)^+\times B(\cH)^+:
A\le cB\ \mbox{for some $c>0$}\}, \\
(B(\cH)^+\times B(\cH)^+)_\ge&:=\{(A,B)\in B(\cH)^+\times B(\cH)^+:
cA\ge B\ \mbox{for some $c>0$}\}.
\end{align*}
When $f(0^+)<\infty$ (resp., $f'(\infty)<\infty$), we extend $P_f$ to
$(B(\cH)^+\times B(\cH)^+)_\le$ (resp., $(B(\cH)^+\times B(\cH)^+)_\ge$) by defining
$P_f(A,B)$ by the expression in \eqref{F-8.3} or \eqref{F-8.13} (reps., \eqref{F-8.14}).

The joint operator convexity of $P_f$ in \cite[Theorem 2.2]{ENG} is extended as
follows, by a simple argument taking limits from Theorem \ref{T-6.2} or Corollary \ref{C-6.4}. 

\begin{proposition}\label{P-6.5}
If $f(0^+)<\infty$, then $(A,B)\mapsto P_f(A,B)$ is jointly operator convex on
$(B(\cH)^+\times B(\cH)^+)_\le$. If $f'(\infty)<\infty$, then $(A,B)\mapsto P_f(A,B)$ is
jointly operator convex on $(B(\cH)^+\times B(\cH)^+)_\ge$.
\end{proposition}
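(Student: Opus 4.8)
The plan is to reduce everything to the already established joint operator convexity of $P_f$ on $B(\cH)^{++}\times B(\cH)^{++}$ by an approximation argument. Recall first that for any operator convex function $f$ on $(0,\infty)$ the perspective $P_f$ is jointly operator convex on $B(\cH)^{++}\times B(\cH)^{++}$ by \cite[Theorem 2.2]{ENG}, that is,
$$
P_f(\lambda A_1+(1-\lambda)A_2,\lambda B_1+(1-\lambda)B_2)
\le\lambda P_f(A_1,B_1)+(1-\lambda)P_f(A_2,B_2)
$$
whenever $A_i,B_i>0$ ($i=1,2$) and $\lambda\in[0,1]$.

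Assume $f(0^+)<\infty$, let $(A_i,B_i)\in(B(\cH)^+\times B(\cH)^+)_\le$, say $A_i\le c_iB_i$, and fix $\lambda\in[0,1]$. First I would check that the convex combination stays in the domain: with $c:=\max\{c_1,c_2\}$ one has $\lambda A_1+(1-\lambda)A_2\le c(\lambda B_1+(1-\lambda)B_2)$, so $(\lambda A_1+(1-\lambda)A_2,\lambda B_1+(1-\lambda)B_2)\in(B(\cH)^+\times B(\cH)^+)_\le$ and $P_f$ there is defined by \eqref{F-8.3}. For every $\eps>0$ the operators $A_i+\eps I$ and $B_i+\eps I$ are positive invertible, so the displayed inequality applies to them; since $\lambda(A_1+\eps I)+(1-\lambda)(A_2+\eps I)=\lambda A_1+(1-\lambda)A_2+\eps I$ and similarly for the $B_i$, it reads
\begin{align*}
&P_f(\lambda A_1+(1-\lambda)A_2+\eps I,\lambda B_1+(1-\lambda)B_2+\eps I) \\
&\qquad\le\lambda P_f(A_1+\eps I,B_1+\eps I)+(1-\lambda)P_f(A_2+\eps I,B_2+\eps I).
\end{align*}
Now I would let $\eps\searrow0$. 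By Theorem \ref{T-6.2} each of the three perspectives converges in SOT, the left-hand side to $P_f(\lambda A_1+(1-\lambda)A_2,\lambda B_1+(1-\lambda)B_2)$ and the right-hand side to $\lambda P_f(A_1,B_1)+(1-\lambda)P_f(A_2,B_2)$. Since an inequality $X_\eps\le Y_\eps$ between self-adjoint operators is preserved under SOT limits (pass to the limit in $\<X_\eps\xi,\xi\>\le\<Y_\eps\xi,\xi\>$ for each $\xi\in\cH$), the desired joint operator convexity on $(B(\cH)^+\times B(\cH)^+)_\le$ follows.

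For the second assertion, when $f'(\infty)<\infty$ I would pass to the transpose function $\widetilde f$, which is operator convex on $(0,\infty)$ with $\widetilde f(0^+)=f'(\infty)<\infty$ by \eqref{f(0^+)-f'(infty)}, and which satisfies $P_f(A,B)=P_{\widetilde f}(B,A)$ on $(B(\cH)^+\times B(\cH)^+)_\ge$ by \eqref{F-2.5} as extended in Corollary \ref{C-6.4}. Since the flip $(A,B)\mapsto(B,A)$ maps $(B(\cH)^+\times B(\cH)^+)_\ge$ onto $(B(\cH)^+\times B(\cH)^+)_\le$ and commutes with taking convex combinations, the first part applied to $\widetilde f$ yields the joint operator convexity of $P_f$ on $(B(\cH)^+\times B(\cH)^+)_\ge$. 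No serious obstacle is anticipated; the only points requiring a moment's care are the closure of the extended domain under convex combinations and the identification of the extended value of $P_f$ at a convex combination with the SOT limit of its $\eps$-perturbations, both of which are immediate from Theorem \ref{T-6.2}, so the delicate part is merely making sure these bookkeeping facts are invoked correctly.
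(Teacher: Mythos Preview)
Your proof is correct and follows exactly the approach the paper indicates: extend the joint operator convexity of \cite[Theorem 2.2]{ENG} by the limit argument of Theorem~\ref{T-6.2} (and, for the second case, Corollary~\ref{C-6.4} via the transpose). The paper gives only a one-line sketch; you have supplied precisely the details it omits, including the closure of the domain under convex combinations and the preservation of the order under SOT limits.
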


Thanks to the homogeneity $P_f(\alpha A,\alpha B)=\alpha P_f(A,B)$ for $\alpha>0$, the joint
operator convexity of $P_f(A,B)$ on $(B(\cH)^+\times B(\cH)^+)_\le$ (or
$(B(\cH)^+\times B(\cH)^+)_\ge$) is equivalent to the super-additivity, i.e.,
$$
P_f(A+C,B+D)\le P_f(A,B)+P_f(C,D)
$$
for $(A,B),(C,D)\in (B(\cH)^+\times B(\cH)^+)_\le$ (or $(B(\cH)^+\times B(\cH)^+)_\ge$).

Similarly, the properties in (vii) and (viii) of Proposition \ref{P-2.2} are extended as
follows:

\begin{proposition}\label{P-6.6}
Assume that $f\in\OC_+$ with $f(0^+)=0$. Then $P_f(A,B_1)\ge P_f(A,B_2)$ if
$(A,B_1)\in(B(\cH)^+\times B(\cH)^+)_\le$ and $B_1\le B_2$. Also,
$P_{\widetilde f}(A_1,B)\ge P_{\widetilde f}(A_2,B)$ if
$(A_1,B)\in(B(\cH)^+\times B(\cH)^+)_\ge$ and $A_1\le A_2$.
\end{proposition}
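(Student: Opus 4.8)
The plan is to reduce both statements to the invertible case already proved in Proposition~\ref{P-2.2}(vii) by regularizing $A,B$ to $A+\eps I,B+\eps I$ and passing to the limit controlled by Theorem~\ref{T-6.2}. The only structural point needed is that the two extended domains are stable under the relevant perturbations: if $A\le cB_1$ and $B_1\le B_2$ then $A\le cB_2$, and if $cA_1\ge B$ and $A_1\le A_2$ then $cA_2\ge B$, so in each statement the perspective appearing on the right-hand side is indeed defined.

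For the first assertion, fix $(A,B_1)\in(B(\cH)^+\times B(\cH)^+)_\le$ with $A\le cB_1\le cB_2$ and $B_1\le B_2$. Since $f(0^+)=0<\infty$, Theorem~\ref{T-6.2} gives $P_f(A,B_i)=\lim_{\eps\searrow0}P_f(A+\eps I,B_i+\eps I)$ in SOT for $i=1,2$. For each $\eps>0$ the operators $A+\eps I$, $B_1+\eps I$, $B_2+\eps I$ are invertible and $B_1+\eps I\le B_2+\eps I$, so Proposition~\ref{P-2.2}(vii) yields $P_f(A+\eps I,B_1+\eps I)\ge P_f(A+\eps I,B_2+\eps I)$. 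Since the SOT (hence weak) limit of a net of positive operators that dominates another SOT-convergent net again dominates the latter's limit, letting $\eps\searrow0$ gives $P_f(A,B_1)\ge P_f(A,B_2)$. (This is exactly the device used inside the proof of Theorem~\ref{T-6.3}, now used without any invertibility assumption on $B_1$.)

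For the second assertion, I pass to $P_f$ via the transpose identity~\eqref{F-2.5}, $P_{\widetilde f}(A_i,B)=P_f(B,A_i)$; this identity persists on the extended domains, as one sees by comparing~\eqref{F-8.14} for $\widetilde f$ with~\eqref{F-8.3} for $f$, and since $({\widetilde f}\,)'(\infty)=f(0^+)=0<\infty$ (see~\eqref{f(0^+)-f'(infty)}) the extension of $P_{\widetilde f}$ to $(B(\cH)^+\times B(\cH)^+)_\ge$ is available. From $cA_1\ge B$ and $A_1\le A_2$ we get $B\le cA_1\le cA_2$, so $(B,A_1)$ and $(B,A_2)$ lie in $(B(\cH)^+\times B(\cH)^+)_\le$, and the first assertion---with $B$ in the role of the fixed left argument and $A_1\le A_2$ in the role of $B_1\le B_2$---gives $P_f(B,A_1)\ge P_f(B,A_2)$, i.e., $P_{\widetilde f}(A_1,B)\ge P_{\widetilde f}(A_2,B)$.

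An even shorter alternative invokes Proposition~\ref{P-6.5}: super-additivity of $P_f$ on $(B(\cH)^+\times B(\cH)^+)_\le$ applied to $(A,B_2)=(A,B_1)+(0,B_2-B_1)$ gives $P_f(A,B_2)\le P_f(A,B_1)+P_f(0,B_2-B_1)$, and $P_f(0,X)=X^{1/2}f(D(0/X))X^{1/2}=f(0^+)X=0$ because $D(0/X)=0$ (the only positive $W$ with $W(I-s(X))=0$ and $X^{1/2}WX^{1/2}=0$ is $W=0$); the second assertion then follows symmetrically from super-additivity of $P_{\widetilde f}$ on $(B(\cH)^+\times B(\cH)^+)_\ge$. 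In either route the only thing requiring attention is the bookkeeping that the relevant pairs stay inside the appropriate extended domain (and that a pair with a zero component evaluates to $0$); I do not expect any genuine analytic obstacle, so the proof should be short.
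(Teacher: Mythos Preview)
Your main argument is correct and is exactly what the paper intends: the paper states Proposition~\ref{P-6.6} immediately after Proposition~\ref{P-6.5} with the remark that the properties of Proposition~\ref{P-2.2}\,(vii)--(viii) ``are extended similarly,'' meaning by the same $\eps$-regularization and SOT-limit device (already used verbatim inside the proof of Theorem~\ref{T-6.3}). Your alternative via the super-additivity of Proposition~\ref{P-6.5} together with $P_f(0,X)=f(0^+)X=0$ is a valid and pleasant shortcut; it is not in the paper but requires nothing beyond what is already established.
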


Another important property of $P_f$ is the monotonicity under positive linear maps,
summarized as follows:

\begin{theorem}\label{T-6.7}
Let $f$ be an operator convex function on $(0,\infty)$ and $\Phi:B(\cH)\to B(\cK)$ be a positive
linear map, where $\cK$ is another Hilbert space.
\begin{itemize}
\item[\rm(1)] If $\Phi(I)$ is invertible, then
\begin{align}\label{F-8.15}
\Phi(P_f(A,B))\ge P_f(\Phi(A),\Phi(B))
\end{align}
for all $A,B>0$.
\item[\rm(2)] If $f(0^+)<\infty$ and $\Phi(I)$ is not necessarily invertible, then
\eqref{F-8.15} holds for all $(A,B)\in(B(\cH)^+\times B(\cH)^+)_\le$.
\end{itemize}
\end{theorem}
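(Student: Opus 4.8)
The plan is to establish (1) first for \emph{unital} positive maps and then to bootstrap, and to obtain (2) by localizing to the support of $\Phi(I)$ so that it, too, reduces to the unital case.

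The first ingredient is the \emph{congruence covariance} of the operator perspective: for every invertible $S\in B(\cH)$ and all $A,B>0$,
\[
P_f(S^*AS,\,S^*BS)=S^*\,P_f(A,B)\,S .
\]
Indeed $U:=B^{1/2}S(S^*BS)^{-1/2}$ is unitary ($U^*U=UU^*=I$ by a one-line computation) and conjugates $(S^*BS)^{-1/2}(S^*AS)(S^*BS)^{-1/2}$ onto $B^{-1/2}AB^{-1/2}$, whence $P_f(S^*AS,S^*BS)=(S^*BS)^{1/2}U^*f(B^{-1/2}AB^{-1/2})U(S^*BS)^{1/2}$ and $U(S^*BS)^{1/2}=B^{1/2}S$. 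Granting this, in (1) I would put $S:=\Phi(I)^{-1/2}$ and $\Psi(\,\cdot\,):=S\Phi(\,\cdot\,)S$, so that $\Psi$ is positive and unital, $\Psi(X)=S\Phi(X)S$, and conjugating \eqref{F-8.15} by the invertible positive operator $S$ shows it is equivalent to $\Psi(P_f(A,B))\ge P_f(\Psi(A),\Psi(B))$. Thus it suffices to prove (1) for unital $\Phi$.

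For unital $\Phi$ and operator convex $f$ on $(0,\infty)$, I would invoke the integral representation of operator convex functions, i.e.\ the decomposition of $f$ into an affine part $\alpha+\beta t$, a nonnegative multiple $\gamma t^2$ with $\gamma\ge0$, and a positive superposition of the operator monotone functions $h_s(t)=t/(t+s)$ (equivalently, of $1/(t+s)$ with nonnegative weights) -- when $f(0^+)<\infty$ this is exactly the representation used in the proof of Theorem~\ref{T-6.2}. Passing to perspectives this gives $P_f(A,B)=\alpha B+\beta A+\gamma\,AB^{-1}A+\int_{(0,\infty)}\bigl(\tfrac1{1+s}A-B\sigma_{h_s}A\bigr)\,d\mu(s)$. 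Applying the linear map $\Phi$ term by term: the affine terms are preserved exactly; the quadratic term obeys the Choi--Lieb--Ruskai inequality $\Phi(AB^{-1}A)\ge\Phi(A)\Phi(B)^{-1}\Phi(A)$, which points the right way since $\gamma\ge0$; and for each connection term Ando's transformer inequality \cite{KA} gives $\Phi(B\sigma_{h_s}A)\le\Phi(B)\sigma_{h_s}\Phi(A)$, so $-\Phi(B\sigma_{h_s}A)\ge-\Phi(B)\sigma_{h_s}\Phi(A)$, which survives integration against the \emph{positive} measure $\mu$. Adding these up yields $\Phi(P_f(A,B))\ge P_f(\Phi(A),\Phi(B))$, which is (1); this unital case is essentially the known monotonicity of the noncommutative perspective under positive unital maps, cf.\ \cite{Ef,ENG}.

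For (2), where $f(0^+)<\infty$ but $\Phi(I)$ need not be invertible, I would first localize to the support projection $P$ of $\Phi(I)$ in $B(\cK)$. Since $0\le\Phi(X)\le\|X\|_\infty\Phi(I)$ for $X\ge0$, the support of every $\Phi(X)$ is dominated by $P$, so $\Phi$ may be viewed as a positive linear map into $PB(\cK)P$, on which $\Phi(I)$ \emph{is} invertible. Moreover, because $A\le cB$ and $f(0^+)<\infty$, Theorem~\ref{T-6.2} shows that the extended $P_f(A,B)$ is squeezed between multiples of $B$, and $P_f(\Phi(A),\Phi(B))$ between multiples of $\Phi(B)$; hence all of $\Phi(A)$, $\Phi(B)$, $\Phi(P_f(A,B))$ and $P_f(\Phi(A),\Phi(B))$ lie in $PB(\cK)P$, so \eqref{F-8.15} is an inequality there. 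I would then rerun the proof of (1) inside $PB(\cK)P$ using the \emph{extended} perspective formula \eqref{F-8.11} in place of the invertible one: the affine terms remain linear, the transformer inequality $\Phi(B\sigma_{h_s}A)\le\Phi(B)\sigma_{h_s}\Phi(A)$ holds for all $A,B\ge0$, and the quadratic term is treated via the positive block-matrix identity $\bigl(\begin{smallmatrix}\overline P_{t^2}(A,B)&A\\ A&B\end{smallmatrix}\bigr)\ge0$ (a consequence of \eqref{F-8.9} together with $A=B^{1/2}D(A/B)B^{1/2}$) and the generalized Schur complement, using $\Phi(A)\le c\Phi(B)$. The hard part, I expect, is precisely this passage to non-invertible operators: for a non-normal positive $\Phi$ one cannot simply take the SOT limit $\eps\searrow0$ of \eqref{F-8.15} applied to $(A+\eps I,B+\eps I)$, so the argument has to avoid any limit of $\Phi$ -- which is why reducing to the corner $PB(\cK)P$ and estimating termwise through the integral representation, rather than a soft approximation, is the route I would take.
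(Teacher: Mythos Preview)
Your proof of (1) is correct but takes a longer road than the paper's. The paper does \emph{not} unitize by $\Phi(I)^{-1/2}$ and then decompose $f$ via its integral representation; instead it fixes $A,B>0$ and defines the unital positive map
\[
\Psi(X):=\Phi(B)^{-1/2}\Phi(B^{1/2}XB^{1/2})\Phi(B)^{-1/2},
\]
so that $\Phi(P_f(A,B))=\Phi(B)^{1/2}\Psi\bigl(f(B^{-1/2}AB^{-1/2})\bigr)\Phi(B)^{1/2}$ and a single application of the Choi--Davis Jensen inequality $\Psi(f(C))\ge f(\Psi(C))$ finishes. This avoids the integral representation entirely and hence works uniformly for all operator convex $f$ on $(0,\infty)$, including those with $f(0^+)=\infty$, which your decomposition (borrowed from Theorem~\ref{T-6.2}) does not directly cover. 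Note also that the ``Choi--Lieb--Ruskai'' inequality $\Phi(AB^{-1}A)\ge\Phi(A)\Phi(B)^{-1}\Phi(A)$ you invoke for a merely \emph{positive} (not $2$-positive) map is true, but its proof is precisely the paper's trick applied to $f(t)=t^2$: with $\Psi$ as above it reduces to Kadison's $\Psi(C^2)\ge\Psi(C)^2$. So your termwise route, once made rigorous, implicitly contains the paper's argument.

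Your argument for (2) has a genuine gap. Restricting to the corner $PB(\cK)P$ with $P:=s(\Phi(I))$ does \emph{not} make $\Phi(I)$ invertible in infinite dimensions: a positive operator with trivial kernel need not be bounded below (e.g.\ a compact injective positive operator). So you cannot apply (1) on that corner. The paper gets around this by exactly the kind of approximation you dismissed: it perturbs the \emph{map}, setting $\Phi_n:=\Phi+n^{-1}\omega(\cdot)I$ for a state $\omega$, so that $\Phi_n(I)$ is genuinely invertible and $\Phi_n\to\Phi$ pointwise in operator norm (not merely SOT, so there is no continuity issue on the left-hand side). After first reducing to $f\in\OC_+$ with $f(0^+)=0$ (via the $f_\delta$ trick and subtracting an affine part), the key new ingredient is the monotone decrease of $B\mapsto P_f(A,B)$ (Proposition~\ref{P-6.6}): from $\Phi_n(P_f(A,B+\eps I))\ge P_f(\Phi_n(A),\Phi_n(B+\eps I))$ one replaces the left side by $\Phi_n(P_f(A,B))$ and bounds the right side below by $P_f(\Phi_n(A),\Phi(B)+\delta I)$ for suitable $n,\eps$, then lets $n\to\infty$ and $\delta\searrow0$ using Theorem~\ref{T-6.3}. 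In short, the limit argument \emph{does} work; what makes it go through is perturbing $\Phi$ rather than only $A,B$, together with the one-sided monotonicity of the perspective after the reduction to $f(0^+)=0$.
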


\begin{proof}
(1)\enspace
Let $A,B>0$. Since $\Phi(B)$ is invertible, one can define a unital positive linear map
$$
\Psi(X):=\Phi(B)^{-1/2}\Phi(B^{1/2}XB^{1/2})\Phi(B)^{-1/2},\qquad X\in B(\cH).
$$
Then
\begin{align*}
\Phi(P_f(A,B))&=\Phi(B)^{1/2}\Psi(f(B^{-1/2}AB^{-1/2}))\Phi(B)^{1/2} \\
&\ge\Phi(B)^{1/2}f(\Psi(B^{-1/2}AB^{-1/2}))\Phi(B)^{1/2}=P_f(\Phi(A),\Phi(B)),
\end{align*}
where the inequality above is the Jensen operator inequality due to \cite[Theorem 2.1]{Ch}
and \cite{Da}.

(2)\enspace
By an approximation argument with $f_\delta(t):=f(t+\delta)$ as in the proof of Theorem
\ref{T-6.2}, we may assume that $f'(0^+)>-\infty$. Then define $f_0(t):=f(t)-\alpha-\beta t$
with $\alpha:=f(0^+)$ and $\beta:=f'(0^+)$, so $f_0\in\OC_+$ and $f_0(0^+)=0$. Since
\begin{align*}
\Phi(P_f(A,B))&=\Phi(P_{f_0}(A,B))+\alpha\Phi(B)+\beta\Phi(A), \\
P_f(\Phi(A),\Phi(B))&=P_{f_0}(\Phi(A),\Phi(B))+\alpha\Phi(B)+\beta\Phi(A),
\end{align*}
we may and do assume that $f\in\OC_+$ with $f(0^+)=0$.

Take a state $\omega(X):=\<\xi,X\xi\>$ on $B(\cH)$ where $\xi$ is any unit vector in $\cH$.
For any $n\in\bN$ set $\Phi_n(X):=\Phi(X)+n^{-1}\omega(X)I$ for $X\in B(\cH)$. For any
$(A,B)\in(B(\cH)^+\times B(\cH)^+)_\le$ and $\eps>0$, as in the proof of (1) above (with
$A\ge0$ in the present case), one can see that
$$
\Phi_n(P_f(A,B+\eps I))\ge P_f(\Phi_n(A),\Phi_n(B+\eps I)).
$$
Since $P_f(A,B+\eps I)\le P_f(A,B)$ by Proposition \ref{P-6.6},
$$
\Phi_n(P_f(A,B))\ge P_f(\Phi_n(A),\Phi_n(B+\eps I)).
$$
Now, for every $\delta>0$ one can choose an $n_0\in\bN$ and an $\eps>0$ such that
$$
\Phi_n(B+\eps I)=\Phi(B)+\eps\Phi(I)+n^{-1}\omega(B+\eps I)I\le\Phi(B)+\delta I,
\qquad n\ge n_0.
$$
Hence by Proposition \ref{P-6.6} again,
$$
\Phi_n(P_f(A,B))\ge P_f(\Phi_n(A),\Phi(B)+\delta I),\qquad n\ge n_0.
$$
Letting $n\to\infty$ implies that $\Phi(P_f(A,B))\ge P_f(\Phi(A),\Phi(B)+\delta I)$. Finally,
letting $\delta\searrow0$ gives the result due to Theorem \ref{T-6.3}.
\end{proof}

As a special case of Theorem \ref{T-6.7} we obtain the \emph{transformer inequality} of $P_f$,
opposite to that of operator connections \cite{KA}, as follows: If $f(0^+)<\infty$ and
$(A,B)\in(B(\cH)^+\times B(\cH)^+)_\le$, then for any $T\in B(\cH)$,
$$
T^*P_f(A,B)T\ge P_f(T^*AT,T^*BT),
$$
and equality holds in the above if $T$ is invertible.

\begin{proposition}\label{P-6.8}
\begin{itemize}
\item[\rm(1)] Assume that $f'(\infty)=\infty$. If $A,B\in B(\cH)^+$ and $s(A)\not\le s(B)$,
then the limit in \eqref{F-8.1} does not exist.
\item[\rm(2)] Assume that $f(0^+)=\infty$. If $A,B\in B(\cH)$ and $s(B)\not\le s(A)$, then
the limit in \eqref{F-8.1} does not exist.
\end{itemize}
\end{proposition}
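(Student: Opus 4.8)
The plan is to prove part (1) directly by exhibiting a single unit vector $\xi$ along which the diagonal matrix element $\<\xi,P_f(A+\eps I,B+\eps I)\xi\>$ blows up as $\eps\searrow0$; this already rules out convergence of $P_f(A+\eps I,B+\eps I)$ in the strong operator topology (indeed even in the weak operator topology) to a bounded operator. Part (2) will then follow from part (1) via the transpose identity \eqref{F-2.5}.

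The first step in (1) is to translate the support hypothesis. Taking orthogonal complements, $s(A)\not\le s(B)$ is equivalent to $\ker B\not\subseteq\ker A$, so there is a unit vector $\xi$ with $B\xi=0$ and $A\xi\ne0$; put $a:=\<\xi,A\xi\>=\|A^{1/2}\xi\|^2>0$. Since $B\xi=0$ we have $(B+\eps I)^{\pm1/2}\xi=\eps^{\pm1/2}\xi$, so with $X_\eps:=(B+\eps I)^{-1/2}(A+\eps I)(B+\eps I)^{-1/2}$, which is positive invertible and hence has spectrum in $(0,\infty)$, a one-line computation gives
$$
\<\xi,P_f(A+\eps I,B+\eps I)\xi\>=\eps\,\<\xi,f(X_\eps)\xi\>
\qquad\text{and}\qquad \<\xi,X_\eps\xi\>=\eps^{-1}a+1 .
$$

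Next I would exploit $f'(\infty)=\infty$. As $f$ is operator convex, it is convex and smooth with $f'(t)\to f'(\infty)=\infty$ as $t\to\infty$, so for any $c>0$ one can pick $t_c>0$ with $f'(t_c)\ge c$; the tangent line to $f$ at $t_c$ lies below $f$, hence $f(t)\ge f'(t_c)t+d_c\ge ct+d_c$ for all $t>0$, where $d_c:=f(t_c)-f'(t_c)t_c$. Applying the functional calculus to $X_\eps$ yields $f(X_\eps)\ge cX_\eps+d_cI$, whence
$$
\<\xi,P_f(A+\eps I,B+\eps I)\xi\>\ \ge\ \eps\bigl(c\,\<\xi,X_\eps\xi\>+d_c\bigr)
=ca+(c+d_c)\eps\ \longrightarrow\ ca\quad(\eps\searrow0).
$$
Since $a>0$ is fixed while $c>0$ is arbitrary, $\liminf_{\eps\searrow0}\<\xi,P_f(A+\eps I,B+\eps I)\xi\>=+\infty$; were the limit in \eqref{F-8.1} to exist, equal to some $P\in B(\cH)$, we would have $\<\xi,P_f(A+\eps I,B+\eps I)\xi\>\to\<\xi,P\xi\>\in\bR$, a contradiction. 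This proves (1).

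For (2), \eqref{F-2.5} gives $P_f(A+\eps I,B+\eps I)=P_{\widetilde f}(B+\eps I,A+\eps I)$, while $\widetilde f$ is again operator convex (\cite[Proposition A.1]{HM}) and $\widetilde f{}'(\infty)=f(0^+)=\infty$ by \eqref{f(0^+)-f'(infty)}. Since $s(B)\not\le s(A)$, applying part (1) to $\widetilde f$ and the ordered pair $(B,A)$ shows $\lim_{\eps\searrow0}P_{\widetilde f}(B+\eps I,A+\eps I)$ does not exist, which is exactly the assertion. The only step needing any care is the initial reduction of the support inequality to the existence of the test vector $\xi\in\ker B\setminus\ker A$; once $\xi$ is in hand the rest is the short computation above together with the elementary tangent-line lower bound for a convex function of infinite asymptotic slope, and no regularity of $f$ beyond ordinary convexity is actually used.
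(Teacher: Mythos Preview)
Your proof is correct. Both you and the paper reduce to the same test vector $\xi\in\ker B\setminus\ker A$ and show that $\<\xi,P_f(A+\eps I,B+\eps I)\xi\>\to\infty$; part~(2) is handled identically, via \eqref{F-2.5} and \eqref{f(0^+)-f'(infty)}.

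The difference lies in how the lower bound is obtained. The paper applies Theorem~\ref{T-6.7}\,(1) (the monotonicity of $P_f$ under the vector state $\omega(\cdot)=\<\xi,\cdot\,\xi\>$), which yields the sharp scalar minorant
$\<\xi,P_f(A+\eps I,B+\eps I)\xi\>\ge\eps f\bigl((a+\eps)/\eps\bigr)$
and then lets $\eps\searrow0$. Your route bypasses Theorem~\ref{T-6.7} entirely: from ordinary convexity you produce affine minorants $f(t)\ge ct+d_c$ of arbitrarily large slope, lift them to the operator inequality $f(X_\eps)\ge cX_\eps+d_cI$ via the functional calculus, and take the diagonal matrix element directly. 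This is more elementary and self-contained---it uses only scalar convexity and the identity $\<\xi,X_\eps\xi\>=a/\eps+1$, whereas the paper's argument invokes the Choi--Davis Jensen inequality hidden inside Theorem~\ref{T-6.7}. On the other hand, the paper's bound is quantitatively sharper (it gives the exact perspective value $P_f(a+\eps,\eps)$ rather than a family of linear minorants), though for the purpose of showing divergence either suffices.
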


\begin{proof}
(1)\enspace
Let $A,B\in B(\cH)^+$ and assume that $s(A)\not\le s(B)$. Then there is a unit vector
$\xi\in\cH$ such that $A\xi\ne0$ but $B\xi=0$. Consider a state $\omega(X):=\<\xi,X\xi\>$ on
$B(\cH)$. Note that $\alpha:=\omega(A)>0$ and $\omega(B)=0$.
From the monotonicity property of $P_f$ in Theorem \ref{T-6.7}\,(1), for any $\eps>0$
we have
\begin{align}\label{F-14}
P_f(\omega(A+\eps I),\omega(B+\eps I))\le\omega(P_f(A+\eps I,B+\eps I)).
\end{align}
Now, assume that $f'(\infty)=\infty$. The left-hand side of \eqref{F-14} is
$P_f(\alpha+\eps,\eps)$ and
$$
\lim_{\eps\searrow0}P_f(\alpha+\eps,\eps)
=\lim_{\eps\searrow0}\eps f\biggl({\alpha+\eps\over\eps}\biggr)
=\lim_{\eps\searrow0}(\alpha+\eps)
{\eps\over\alpha+\eps}\,f\biggl({\alpha+\eps\over\eps}\biggr)=\infty.
$$
Hence the right-hand side of \eqref{F-14} diverges, so
$\lim_{\eps\searrow0}P_f(A+\eps I,B+\eps I)$ does not exist.

(2) is immediate form (1) in view of \eqref{f(0^+)-f'(infty)} and \eqref{F-2.5}.
\end{proof}

\begin{Remark}\label{R-6.9}\rm
When $f(0^+)<\infty$ and $s(A)\le s(B)$ (or when $f'(\infty)<\infty$ and $s(B)\le s(A)$),
both cases where the limit in \eqref{F-8.1} does or does not exist can occur. For example,
let $\{e_n\}_{n=1}^\infty$ be an orthonormal basis of $\cH$, and let $A=\sum_na_nE_n$ and
$B=\sum_nb_nE_n$, where $a_n,b_n>0$ are bounded and $E_n$ is the rank one projection onto
$\bC e_n$. Then $s(A)=s(B)=I$, and
$$
\lim_{\eps\searrow0}P_f(A+\eps I,B+\eps I)
=\lim_{\eps\searrow0}\sum_n(b_n+\eps)f\biggl({a_n+\eps\over b_n+\eps}\biggr)E_n
\quad (SOT)
$$
exists if and only if $\sup_nb_nf(a_n/b_n)<\infty$. When $f(t)=t^2$, the limit exist if
$a_n=1/n$ and $b_n=1/n^2$, but the limit does not exists if $a_n=1/n$ and $b_n=1/n^3$.
\end{Remark}

We extend the pmi part of AH type inequalities in Corollary \ref{C-3.8} to non-invertible
operators with $A\le cB$ or $cA\ge B$.

\begin{proposition}\label{P-6.10}
If $f\in\OC_+^1$ with $f(0^+)=0$ is pmi, then $P_f$ satisfies \eqref{ando-hiai} for every
$(A,B)\in(B(\cH)^+\times B(\cH)^+)_\le$ and all $p\in(0,1]$. If $g\in\OMD_+^1$ is pmi,
then $P_g$ satisfies \eqref{ando-hiai} for every $(A,B)\in(B(\cH)^+\times B(\cH)^+)_\ge$
and all $p\in(0,1]$.
\end{proposition}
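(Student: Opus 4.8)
The plan is to deduce the non-invertible case from the invertible case (Corollary~\ref{C-3.8}) by a two-step approximation, using super-additivity of $P_f$ on $(B(\cH)^+\times B(\cH)^+)_\le$ (Proposition~\ref{P-6.5}), right operator monotonicity of $P_f$ (Proposition~\ref{P-6.6}), and the one-variable downward continuity of Theorem~\ref{T-6.3}. It is enough to treat $f$: the statement for $g$ follows by putting $f:=\widetilde g$, which by Proposition~\ref{P-2.1} lies in $\OC_+^1$ with $f(0^+)=0$ and is pmi exactly when $g$ is (immediate from $f(t)=tg(1/t)$), and by using $P_g(A,B)=P_f(B,A)$ together with $(A,B)\in(B(\cH)^+\times B(\cH)^+)_\ge\iff(B,A)\in(B(\cH)^+\times B(\cH)^+)_\le$.

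So assume $f\in\OC_+^1$ with $f(0^+)=0$ is pmi, let $(A,B)\in(B(\cH)^+\times B(\cH)^+)_\le$ with $P_f(A,B)\le I$, fix $p\in(0,1]$, and take $c\ge1$ with $A\le cB$. Two elementary facts will be used repeatedly: by L\"owner-Heinz, $X+\delta I\le c(Y+\delta I)$ implies $(X+\delta I)^p\le c^p(Y+\delta I)^p$, so the pairs appearing below all remain in $(B(\cH)^+\times B(\cH)^+)_\le$; and, since $X$ and $\delta I$ commute, subadditivity of $t\mapsto t^p$ on $[0,\infty)$ gives $0\le(X+\delta I)^p-X^p\le\delta^pI$, hence $\|(X+\delta I)^p-X^p\|_\infty\le\delta^p$. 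In particular $(A^p,B^p)\in(B(\cH)^+\times B(\cH)^+)_\le$, so $P_f(A^p,B^p)$ is well defined.

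For $\delta>0$ the pair $(A+\delta I,B+\delta I)$ is positive invertible, and super-additivity with $f(1)=1$ gives
$$
P_f(A+\delta I,B+\delta I)\le P_f(A,B)+P_f(\delta I,\delta I)=P_f(A,B)+\delta I\le(1+\delta)I .
$$
Rescaling by $1+\delta$ and applying Corollary~\ref{C-3.8} to $\tfrac1{1+\delta}(A+\delta I)$ and $\tfrac1{1+\delta}(B+\delta I)$ yields $P_f((A+\delta I)^p,(B+\delta I)^p)\le(1+\delta)^pI$. Now fix $\eps>0$; for $0<\delta\le\eps^{1/p}$ we have $(B+\delta I)^p\le B^p+\delta^pI\le B^p+\eps I$, so Proposition~\ref{P-6.6} gives
$$
P_f\bigl((A+\delta I)^p,B^p+\eps I\bigr)\le P_f\bigl((A+\delta I)^p,(B+\delta I)^p\bigr)\le(1+\delta)^pI .
$$
Since $B^p+\eps I$ is invertible and $(A+\delta I)^p\to A^p$ in $\|\cdot\|_\infty$ as $\delta\searrow0$, norm continuity of the continuous functional calculus (spectra staying in a fixed compact set) gives $P_f((A+\delta I)^p,B^p+\eps I)\to P_f(A^p,B^p+\eps I)$ in $\|\cdot\|_\infty$; letting $\delta\searrow0$ we obtain $P_f(A^p,B^p+\eps I)\le I$ for every $\eps>0$.

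Finally, Theorem~\ref{T-6.3} (applicable since $f(0^+)=0<\infty$ and $A^p\le c^pB^p$) gives $P_f(A^p,B^p+\eps I)\to P_f(A^p,B^p)$ in the strong operator topology as $\eps\searrow0$, and a strong limit of operators $\le I$ is again $\le I$; hence $P_f(A^p,B^p)\le I$, i.e.\ \eqref{ando-hiai} holds for $(A,B)$. The main obstacle is exactly that $P_f$ is not jointly SOT-continuous at pairs with a non-invertible first entry, so one cannot simply let $\delta\searrow0$ in $P_f((A+\delta I)^p,(B+\delta I)^p)\le(1+\delta)^pI$; the device above splits this into an outer limit in $\delta$, which is plain norm continuity because the second entry $B^p+\eps I$ is kept invertible, and an inner limit in $\eps$, which is precisely the one-variable approximation already handled in Theorem~\ref{T-6.3}.
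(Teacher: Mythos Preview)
Your proof is correct and follows essentially the same approach as the paper: reduce to the invertible case via super-additivity (equivalently, joint convexity plus homogeneity) and Corollary~\ref{C-3.8}, then pass to the limit in two stages---first in the $A$-variable using norm continuity of the functional calculus with the second slot kept invertible, then in the $B$-variable via Theorem~\ref{T-6.3}. Your write-up is a bit more explicit than the paper's in checking that the relevant pairs remain in $(B(\cH)^+\times B(\cH)^+)_\le$ and in invoking the operator inequality $(B+\delta I)^p\le B^p+\delta^pI$, but the substance is the same.
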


\begin{proof}
We may prove the result for $P_f$ only. Assume that $P_f(A,B)\le I$. For any $\eps>0$, by
Proposition \ref{P-6.5} one has
$$
P_f\biggl({A+\eps I\over1+\eps},{B+\eps I\over1+\eps}\biggr)
\le{P_f(A,B)+\eps I\over1+\eps}\le I.
$$
Hence Corollary \ref{C-3.8}\,(1) for $P_f$ implies that
$$
P_f\biggl(\biggl({A+\eps I\over1+\eps}\biggr)^p,\biggl({A+\eps I\over1+\eps}\biggr)^p
\biggr)\le I,
$$
so that $P_f((A+\eps I)^p,(B+\eps I)^p)\le(1+\eps)^pI$. For any $\delta>0$, since
$(B+\eps I)^p\le B^p+\delta I$ for $\eps>0$ sufficiently small, it follows from
Proposition \ref{P-2.2}\,(vii) that
$$
P_f((A+\eps I)^p,B^p+\delta I)\le (1+\eps)^pI
$$
for all $\eps>0$ sufficiently small. Letting $\eps\searrow0$ with $\delta$ fixed we obtain
$P_f(A^p,B^p+\delta I)\le I$ for any $\delta>0$. Hence, letting $\delta\searrow0$ gives the
result by Theorem \ref{T-6.3}.
\end{proof}

In the rest of the section we assume that $\cH$ is finite-dimensional. Then for
$A,B\ge0$, note that $A\le cB$ for some $c>0$ if and only if $s(A)\le s(B)$. When $f$ is
any continuous function on $[0,\infty)$, it is not difficult to see that for any $A,B\ge0$
with $s(A)\le s(B)$ and for any $p>0$,
\begin{align}
\lim_{\eps\searrow0}P_f((A+\eps I)^p,(B+\eps I)^p)
&=\lim_{\eps\searrow0}P_f((A+\eps s(A)^\perp)^p,(B+\eps s(B)^\perp)^p) \nonumber\\
&=B^{p/2}f(D(A^p/B^p))B^{p/2}\quad\mbox{(in $\|\cdot\|_\infty$)}.\label{P_f-conv}
\end{align}
Indeed, $P_f((A+\eps I)^p,(B+\eps I)^p)$ is the direct sum of
$P_f((A+\eps s(B))^p,(B+\eps s(B))^p)$ on $s(B)\cH$ and $\eps^pf(1)s(B)^\perp$, and the first
component converges to $B^{p/2}f(D(A^p/B^p))B^{p/2}$ on $s(B)\cH$ as $\eps\searrow0$. The
proof of the second limit formula in \eqref{P_f-conv} is similar. So it is easy to extend
some AH type inequalities in Section 3 to positive semidefinite matrices. For example, we
have the following:

\begin{proposition}\label{P-6.11}
If $f\in\OC_+^1$ with $f(0^+)=0$ is pmi and $A,B$ are positive semidefinite matrices with
$s(A)\le s(B)$, then
\begin{align*}
&P_f(A^p,B^p)\prec_{w\log}\|P_f(A,B)\|_\infty^{1-p}P_f(A,B)^{2p-1},
\qquad1/2\le p<1, \\
&\|P_f(A,B)\|_\infty^{1-p}P_g(A,B)^{2p-1}\prec_{w\log}P_f(A^p,B^p),
\qquad1\le p\le2.
\end{align*}
\end{proposition}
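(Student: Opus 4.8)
The plan is to deduce the statement from the matrix case already settled in Corollary \ref{C-3.14} by a regularization-and-limit argument: transport the weak log-majorizations from the positive definite pair $A+\eps I$, $B+\eps I$ down to the positive semidefinite pair $(A,B)$ by letting $\eps\searrow0$, using the norm-convergence results of Section 6 to pass the inequalities to the limit.

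First I would fix positive semidefinite matrices $A,B$ with $s(A)\le s(B)$; since $\cH$ is finite-dimensional this is the same as $A\le cB$ for some $c>0$, so $D(A/B)$ and $D(A^p/B^p)$ are well defined and, as $f$ extends continuously to $[0,\infty)$ with $f(0)=f(0^+)=0$, the extended perspectives $P_f(A,B)=B^{1/2}f(D(A/B))B^{1/2}$ and $P_f(A^p,B^p)=B^{p/2}f(D(A^p/B^p))B^{p/2}$ are meaningful. For each $\eps>0$ the matrices $A+\eps I$ and $B+\eps I$ are positive definite, so Corollary \ref{C-3.14}\,(1) applied to $P_f$ (the pmi case, where the factors $\|g(C^p)/g(C)^p\|_\infty$ of Proposition \ref{P-3.13} have dropped out) gives, for $1/2\le p<1$,
\begin{align*}
P_f((A+\eps I)^p,(B+\eps I)^p)
&\prec_{w\log}\|P_f(A+\eps I,B+\eps I)\|_\infty^{1-p}P_f(A+\eps I,B+\eps I)^{2p-1},
\end{align*}
together with the reverse weak log-majorization for $1\le p\le2$.

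Next I would let $\eps\searrow0$ in both inequalities. By \eqref{P_f-conv} the left-hand side converges to $P_f(A^p,B^p)$ in $\|\cdot\|_\infty$. Since $f(0^+)=0<\infty$ and $A\le cB$, Theorem \ref{T-6.2} (formula \eqref{F-8.3}, which in finite dimensions is norm convergence) gives $P_f(A+\eps I,B+\eps I)\to P_f(A,B)$ in $\|\cdot\|_\infty$; because $2p-1\ge0$ and $t\mapsto t^{2p-1}$ is continuous on $[0,\infty)$, this also yields $P_f(A+\eps I,B+\eps I)^{2p-1}\to P_f(A,B)^{2p-1}$ and $\|P_f(A+\eps I,B+\eps I)\|_\infty^{1-p}\to\|P_f(A,B)\|_\infty^{1-p}$, so the right-hand side converges in $\|\cdot\|_\infty$ to $\|P_f(A,B)\|_\infty^{1-p}P_f(A,B)^{2p-1}$. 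Since the ordered eigenvalues of a positive semidefinite matrix -- hence the partial products $\prod_{i=1}^k\lambda_i(\cdot)$ appearing in the definition \eqref{F-3.4} of $\prec_{w\log}$ -- depend continuously on the matrix in $\|\cdot\|_\infty$, the weak log-majorization (and its reverse) survives the passage to the limit, giving exactly the asserted conclusion; the degenerate case $A=0$, where both sides vanish, is covered by the same continuity. There is no serious obstacle here beyond bookkeeping: the only point that needs care is that the inequality must be transported in a topology in which the eigenvalue partial products are continuous, and norm convergence of both sides -- supplied by \eqref{P_f-conv} on the left and by Theorem \ref{T-6.2} for the scalar and power factors on the right -- is precisely what makes this work.
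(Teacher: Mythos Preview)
Your proposal is correct and follows exactly the approach the paper intends: the paper does not write out a proof for Proposition~\ref{P-6.11} but indicates just before it that the inequalities from Section~3 extend to positive semidefinite matrices via the convergence in \eqref{P_f-conv}, which is precisely the regularization-and-limit argument you carry out (invoking Corollary~\ref{C-3.14}\,(1) for $P_f$ and passing to the limit using continuity of eigenvalues under norm convergence).
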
 

\begin{proposition}\label{P-6.12}
If $h\in OM_+^1$ is pmi and $A,B$ are positive semidefinite matrices with $s(A)\le s(B)$,
then
$$
P_{t^nh}(A,B)\le I\ \implies\ P_{t^nh}(A^p,B^p)\le I
$$
holds for all $p\in (0,1/2]$ and $n\ge 2$.
\end{proposition}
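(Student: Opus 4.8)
The plan is to reduce the non-invertible case to the invertible case of Theorem~\ref{T-4.1}(1) by an $\varepsilon$-regularization, using the finite-dimensional norm-convergence formula \eqref{P_f-conv} to pass to the limit. Note first that, since $\cH$ is finite-dimensional, $s(A)\le s(B)$ is equivalent to $A\le cB$ for some $c>0$, and moreover $s(A^p)=s(A)\le s(B)=s(B^p)$, so $(A^p,B^p)$ again lies in the admissible class; here $f:=t^nh$ satisfies $f(0^+)=0$ (because $h(0^+)\in[0,\infty)$), hence extends continuously to $[0,\infty)$, and $P_{t^nh}(A^p,B^p)=B^{p/2}f(D(A^p/B^p))B^{p/2}$ is well defined by the extension discussed just before \eqref{P_f-conv}.

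First I would apply \eqref{P_f-conv} with exponent $1$ to get $P_{t^nh}(A+\varepsilon I,B+\varepsilon I)\to P_{t^nh}(A,B)$ in operator norm as $\varepsilon\searrow0$. Combined with the hypothesis $P_{t^nh}(A,B)\le I$, this yields, for any fixed $\delta>0$, an $\varepsilon_0>0$ such that $P_{t^nh}(A+\varepsilon I,B+\varepsilon I)\le(1+\delta)I$ whenever $0<\varepsilon<\varepsilon_0$. Since $A+\varepsilon I$ and $B+\varepsilon I$ are invertible, the degree-one homogeneity $P_{t^nh}(\lambda X,\lambda Y)=\lambda P_{t^nh}(X,Y)$ gives $P_{t^nh}\bigl((1+\delta)^{-1}(A+\varepsilon I),(1+\delta)^{-1}(B+\varepsilon I)\bigr)\le I$, and then Theorem~\ref{T-4.1}(1) (which asserts that $P_{t^nh}$ satisfies \eqref{ando-hiai} for all $p\in(0,1/2]$ when $h$ is pmi) applies to these invertible operators to produce $P_{t^nh}\bigl((1+\delta)^{-p}(A+\varepsilon I)^p,(1+\delta)^{-p}(B+\varepsilon I)^p\bigr)\le I$, i.e.\ $P_{t^nh}((A+\varepsilon I)^p,(B+\varepsilon I)^p)\le(1+\delta)^pI$.

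Finally I would let $\varepsilon\searrow0$ with $\delta$ fixed: by \eqref{P_f-conv} (now with exponent $p$) the left-hand side converges in norm to $P_{t^nh}(A^p,B^p)$, so $P_{t^nh}(A^p,B^p)\le(1+\delta)^pI$; letting $\delta\searrow0$ gives $P_{t^nh}(A^p,B^p)\le I$, as desired.

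The main point to be careful about is that, unlike the case $n=1$ treated in Proposition~\ref{P-6.10}, the perspective $P_{t^nh}$ is not jointly operator convex for $n\ge2$ (already $t^3$ is not operator convex), so one cannot pass from $P_{t^nh}(A,B)\le I$ to a bound on $P_{t^nh}(A+\varepsilon I,B+\varepsilon I)$ by convexity as done there; the role of convexity is instead taken over by the finite-dimensional norm-continuity statement \eqref{P_f-conv}, which is precisely where finite-dimensionality is genuinely used. Beyond that, the only bookkeeping is to keep the order of limits straight ($\varepsilon\searrow0$ first, then $\delta\searrow0$) and to carry the normalization through Theorem~\ref{T-4.1}(1) via the homogeneity of $P_{t^nh}$.
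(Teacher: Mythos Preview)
Your proof is correct and follows exactly the approach the paper indicates: the paper states Proposition~\ref{P-6.12} without proof, merely noting after \eqref{P_f-conv} that ``it is easy to extend some AH type inequalities in Section~3 to positive semidefinite matrices,'' and your argument supplies precisely those details---using the norm convergence \eqref{P_f-conv} (at exponents $1$ and $p$) together with homogeneity to reduce to Theorem~\ref{T-4.1}(1). Your observation that joint convexity is unavailable for $n\ge2$, so that the finite-dimensional continuity \eqref{P_f-conv} is genuinely needed in place of the convexity argument of Proposition~\ref{P-6.10}, is also on point.
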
 

Furthermore, Proposition \ref{P-3.9} for $P_f$ can be extended to positive semidefinite
matrices under an assumption on $f$.

\begin{proposition}\label{P-6.13}
Let $f\in\OC_+^1$ with $f(0^+)=0$ and assume that $\lim_{t\to0^+}f(t^p)/f(t)^p$ exists for
all $p\in(0,1)$. Let $A,B$ be positive semidefinite matrices $A,B$ with $s(A)\le s(B)$.
Then for every $p\in[1/2,1]$, 
$$
P_f(A,B)\le I\ \implies\ P_f(A^p,B^p)\le
\bigg\|{f(D(A/B)^p)\over f(D(A/B))^p}\,s(B)+s(B)^\perp\bigg\|_\infty I,
$$
where ${f(D(A/B)^p)\over f(D(A/B))^p}s(B)$ is defined as the functional calculus of
$D(A/B)s(B)$ by the function $f(t^p)/f(t)^p$ on $[0,\infty)$ whose value at $t=0$ is
$\lim_{t\to0^+}f(t^p)/f(t)^p$.
\end{proposition}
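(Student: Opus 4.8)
The plan is to deduce the statement from its positive definite counterpart, Proposition \ref{P-3.9} (for $P_f$), by the perturbation $A\mapsto A+\eps I$, $B\mapsto B+\eps I$ and then letting $\eps\searrow0$, using the finite-dimensional norm-convergence \eqref{P_f-conv}. Throughout put $W:=D(A/B)$; since $\cH$ is finite-dimensional and $s(A)\le s(B)$ we have $A\le cB$ for some $c>0$, so $W\ge0$, $W=Ws(B)=s(B)W$, $A=B^{1/2}WB^{1/2}$, and $P_f(A,B)$ (in its extended sense) equals $B^{1/2}f(W)B^{1/2}$ with $f(0):=f(0^+)=0$. Also set $\phi_p(t):=f(t^p)/f(t)^p$ for $t>0$; by hypothesis $\phi_p$ extends to a continuous function on $[0,\infty)$ with $\phi_p(0):=\lim_{t\to0^+}f(t^p)/f(t)^p$ and $\phi_p(1)=1$.

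Fix $p\in[1/2,1]$ and assume $P_f(A,B)\le I$. For $\eps>0$ the pair $(A+\eps I,B+\eps I)$ lies in $(B(\cH)^+\times B(\cH)^+)_\le$, and the super-additivity of $P_f$ (from Proposition \ref{P-6.5}) together with $P_f(\eps I,\eps I)=\eps I$ gives $P_f(A+\eps I,B+\eps I)\le P_f(A,B)+\eps I\le(1+\eps)I$. Putting $A_\eps:=(A+\eps I)/(1+\eps)>0$ and $B_\eps:=(B+\eps I)/(1+\eps)>0$, homogeneity yields $P_f(A_\eps,B_\eps)\le I$, so Proposition \ref{P-3.9} applies: with $C_\eps:=B_\eps^{-1/2}A_\eps B_\eps^{-1/2}=(B+\eps I)^{-1/2}(A+\eps I)(B+\eps I)^{-1/2}$ one has $f(C_\eps^p)/f(C_\eps)^p=\phi_p(C_\eps)$ since $C_\eps>0$, and
$$
P_f(A_\eps^p,B_\eps^p)\le\big\|\phi_p(C_\eps)\big\|_\infty I,
\qquad\text{hence}\qquad
P_f((A+\eps I)^p,(B+\eps I)^p)\le(1+\eps)^p\big\|\phi_p(C_\eps)\big\|_\infty I
$$
again by homogeneity.

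Now let $\eps\searrow0$. By \eqref{P_f-conv} the left-hand side converges in $\|\cdot\|_\infty$ to $B^{p/2}f(D(A^p/B^p))B^{p/2}=P_f(A^p,B^p)$. For the right-hand side, write
$$
C_\eps=\big[(B+\eps I)^{-1/2}B^{1/2}\big]\,W\,\big[(B+\eps I)^{-1/2}B^{1/2}\big]+\eps(B+\eps I)^{-1};
$$
in finite dimension $(B+\eps I)^{-1/2}B^{1/2}\to s(B)$ and $\eps(B+\eps I)^{-1}\to s(B)^\perp$ in norm, so $C_\eps\to s(B)Ws(B)+s(B)^\perp=W+s(B)^\perp$, an operator with spectrum in $[0,\infty)$ that acts as $W$ on $s(B)\cH$ and as the identity on $s(B)^\perp\cH$. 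By norm-continuity of the continuous functional calculus and $\phi_p(1)=1$, $\phi_p(C_\eps)\to\phi_p(W+s(B)^\perp)=\phi_p(W)\,s(B)+s(B)^\perp$, which is precisely the operator $\tfrac{f(D(A/B)^p)}{f(D(A/B))^p}\,s(B)+s(B)^\perp$ of the statement; in particular $\|\phi_p(C_\eps)\|_\infty$ converges to its norm. Since $(1+\eps)^p\to1$, letting $\eps\searrow0$ in the displayed inequality gives the claim.

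I expect the one genuinely delicate point to be the behaviour of $\phi_p(t)=f(t^p)/f(t)^p$ at $t=0$, an indeterminate form because $f(0^+)=0$: the extra hypothesis that $\lim_{t\to0^+}f(t^p)/f(t)^p$ exists is exactly what makes $\phi_p$ continuous on $[0,\infty)$, which is what allows one to pass to the limit in $\|\phi_p(C_\eps)\|_\infty$ even though $C_\eps$ degenerates to $W+s(B)^\perp$, an operator that genuinely has $0$ in its spectrum when $W=D(A/B)$ is not invertible on $s(B)\cH$. Everything else — the reduction to Proposition \ref{P-3.9} through perturbation, super-additivity and homogeneity, and the norm limits of $(B+\eps I)^{-1/2}B^{1/2}$ and $\eps(B+\eps I)^{-1}$ — is routine in the finite-dimensional setting.
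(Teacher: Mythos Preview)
Your proof is correct and follows essentially the same approach as the paper: perturb to $(A+\eps I,B+\eps I)$, use super-additivity (equivalently joint convexity, Proposition \ref{P-6.5}) and homogeneity to reduce to Proposition \ref{P-3.9}, then let $\eps\searrow0$ using \eqref{P_f-conv} and the convergence $C_\eps\to D(A/B)+s(B)^\perp$ together with the assumed continuity of $\phi_p$ at $0$. The only cosmetic difference is that the paper computes $C_\eps$ via the direct sum decomposition along $s(B)$ (writing $C_\eps=(B+\eps s(B))^{-1/2}(A+\eps s(B))(B+\eps s(B))^{-1/2}+s(B)^\perp$), whereas you obtain the same limit by expanding $C_\eps=(B+\eps I)^{-1/2}B^{1/2}WB^{1/2}(B+\eps I)^{-1/2}+\eps(B+\eps I)^{-1}$.
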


\begin{proof}
For each $\eps>0$, since $P_f\bigl({A+\eps I\over1+\eps},{B+\eps I\over1+\eps}\bigr)\le I$,
Proposition \ref{P-3.9} implies that
$$
P_f((A+\eps I)^p,(B+\eps I)^p)\le(1+\eps)^p
\bigg\|{f(C_\eps^p)\over f(C_\eps)^p}\bigg\|_\infty I,
$$
where $C_\eps:=(B+\eps I)^{-1/2}(A+\eps I)(B+\eps I)^{-1/2}$. Note that
\begin{align*}
C_\eps&=(B+\eps s(B))^{-1/2}(A+\eps s(B))(B+\eps s(B))^{-1/2}+s(B)^\perp \\
&\longrightarrow D(A/B)+s(B)^\perp\quad\mbox{as $\eps\searrow0$}.
\end{align*}
Hence, under the assumption that $f(t^p)/f(t)^p$ is continued at $t=0$ as stated, we have
$$
{f(C_\eps^p)\over f(C_\eps)^p}\ \longrightarrow
\ {f(D(A/B)^p)\over f(D(A/B))^p}\,s(B)+s(B)^\perp,
$$
which with \eqref{P_f-conv} implies the assertion.
\end{proof}

\begin{Remark}\label{R-6.14}\rm
Since $f(t)=th(t)$ with $h\in\OM_+^1$ by Proposition \ref{P-2.1}, the assumption on $f$ in
Proposition \ref{P-6.13} is equivalent to that $\lim_{t\to0^+}h(t^p)/h(t)^p$ exists for
all $p\in(0,1)$. From Lemma \ref{L-3.16}, this condition holds  if $h$ is geometrically convex.
But it is not always the case. For any $h\in\OM_+^1$ let $h^\perp(t):=t/h(t)$,
the \emph{dual} function of $h$ (\cite{KA}). Then
$$
\lim_{t\to0^+}h^\perp (t^p)/h^\perp (t)^p =
\Bigl(\lim_{t\to0^+}h(t^p)/h(t)^p\Bigr)^{-1}
$$
as long as the limit in the right-hand side exists in $[0,\infty]$. When $h(t):=(t-1)/\log t$,
the representing function of the \emph{logarithmic mean},
$\lim_{t\to0^+}h(t^p)/h(t)^p=0$, so $\lim_{t\to0^+}h^\perp (t^p)/h^\perp (t)^p=\infty$.
\end{Remark}

Finally, we extend some operator norm inequalities in Section 5.2 to positive semidefinite
matrices. For positive semidefinite matrices $A,B$ and $\alpha,\beta\in\bR\setminus\{0\}$, we
define
$$
\exp(\alpha\log A\,\dot+\,\beta\log B)
:=P_0\exp(\alpha P_0(\log A)P_0+\beta P_0(\log B)P_0),
$$
where $P_0:=s(A)\wedge s(B)$, the orthogonal projection onto the intersection of the supports
of $A,B$. The next lemma is useful.

\begin{lemma}\label{L-6.15}
Let $A,B$ be positive semidefinite matrices. Assume either that $\alpha,\beta>0$, or that
$s(A)\le s(B)$, $\beta<0$ and $\alpha+\beta>0$. Then
\begin{align*}
\exp(\alpha\log A\,\dot+\,\beta\log B)
&=\lim_{\eps\searrow0}\exp\{\alpha\log(A+\eps s(A)^\perp)
+\beta\log(B+\eps s(B)^\perp)\}.
\end{align*}
\end{lemma}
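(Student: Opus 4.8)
The plan is to reduce the statement to an elementary ``diverging-penalty'' limit for a fixed family of self-adjoint matrices, and then to evaluate that limit by a resolvent contour argument. First I would rewrite the exponents: since $A$ annihilates $s(A)^\perp\cH$ and $A|_{s(A)\cH}>0$, one has $A+\eps s(A)^\perp=(A|_{s(A)\cH})\oplus\eps I_{s(A)^\perp\cH}$, hence $\log(A+\eps s(A)^\perp)=\widehat{\log A}+(\log\eps)\,s(A)^\perp$, where $\widehat{\log A}$ is the logarithm of $A|_{s(A)\cH}$ extended by $0$ off $s(A)\cH$; likewise for $B$. Setting $H:=\alpha\widehat{\log A}+\beta\widehat{\log B}$ and $Y:=\alpha s(A)^\perp+\beta s(B)^\perp$ (both independent of $\eps$), the matrix whose exponential must be controlled is $N_\eps:=H+(\log\eps)Y$, with $\log\eps\to-\infty$ as $\eps\searrow0$. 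The crucial structural claim is that, \emph{in both cases}, $Y\ge0$, $YP_0=0$, and $\ker Y=P_0\cH$, where $P_0=s(A)\wedge s(B)$: in Case~1 this is immediate from $\langle\xi,Y\xi\rangle=\alpha\|s(A)^\perp\xi\|^2+\beta\|s(B)^\perp\xi\|^2$, while in Case~2 the hypothesis $s(A)\le s(B)$ gives $P_0=s(A)$ and $s(A)^\perp=s(B)^\perp+R$ with $R:=s(B)-s(A)$ an orthogonal projection orthogonal to $s(B)^\perp$, whence $Y=(\alpha+\beta)s(B)^\perp+\alpha R$ with $\alpha+\beta>0$ and $\alpha>-\beta>0$ --- this is exactly where all three hypotheses of Case~2 are used.

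Granting this, I would pass to the block decomposition $\cH=P_0\cH\oplus P_0^\perp\cH$, in which $Y=0\oplus Y_1$ with $Y_1>0$ and
$$
N_\eps=\begin{pmatrix}K & V\\ V^* & W_\eps\end{pmatrix},\qquad
K:=P_0HP_0=\alpha P_0(\log A)P_0+\beta P_0(\log B)P_0,\quad V:=P_0HP_0^\perp,
$$
with $K,V$ fixed and $W_\eps:=P_0^\perp HP_0^\perp+(\log\eps)Y_1$ satisfying $\lambda_{\max}(W_\eps)\to-\infty$. Since $\exp(\alpha\log A\,\dot+\,\beta\log B)=P_0\exp(K)$ equals $\begin{pmatrix}e^K&0\\0&0\end{pmatrix}$ (with $e^K$ computed on $P_0\cH$) by definition, it remains to show $e^{N_\eps}\to\begin{pmatrix}e^K&0\\0&0\end{pmatrix}$ in operator norm. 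Put $d_0:=\dim P_0\cH$. The Courant--Fischer principle with the test subspace $P_0\cH$, on which $N_\eps$ compresses to $K$, gives $\lambda_k(N_\eps)\ge\lambda_k(K)$ for $1\le k\le d_0$, and Weyl's inequality gives $\lambda_1(N_\eps)\le\max\{\lambda_1(K),\lambda_1(W_\eps)\}+\|V\|_\infty$, so the top $d_0$ eigenvalues of $N_\eps$ stay in a fixed bounded interval for small $\eps$; Courant--Fischer from above (again with $P_0\cH$) gives $\lambda_{d_0+1}(N_\eps)\le\lambda_{\max}(W_\eps)\to-\infty$. Fixing a contour $\Gamma$ that encloses a neighbourhood of the eigenvalues of $K$ and of that bounded interval, $\Gamma$ separates, for small $\eps$, the bounded part of the spectrum of $N_\eps$ from the divergent part, so
$$
e^{N_\eps}=\frac1{2\pi i}\oint_\Gamma e^z(z-N_\eps)^{-1}\,dz+R_\eps,\qquad
\|R_\eps\|_\infty\le e^{\lambda_{d_0+1}(N_\eps)}\longrightarrow0.
$$
Finally, since the distance from $\Gamma$ to the spectrum of $W_\eps$ tends to $\infty$ uniformly, the Schur-complement block-inversion formula shows $(z-N_\eps)^{-1}\to\begin{pmatrix}(z-K)^{-1}&0\\0&0\end{pmatrix}$ uniformly on $\Gamma$, and Cauchy's formula gives $\frac1{2\pi i}\oint_\Gamma e^z(z-N_\eps)^{-1}\,dz\to\begin{pmatrix}e^K&0\\0&0\end{pmatrix}$, which completes the argument.

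The step I expect to be the main obstacle is conceptual rather than computational: the family $N_\eps$ does \emph{not} converge --- its lower-right block blows up --- so continuity of the exponential is unavailable, and the content of the lemma is exactly that this blow-up ``projects out'' the subspace $P_0^\perp\cH$ and leaves the compressed exponent $K$ on $P_0\cH$. Concretely, the one genuinely delicate ingredient is verifying $\ker Y=P_0\cH$ in Case~2, i.e.\ that $Y$ is positive with precisely the right kernel even though $\beta<0$; this is what the extra hypotheses $s(A)\le s(B)$, $\beta<0$, $\alpha+\beta>0$ buy. Once that positivity is secured, the eigenvalue localisation via Courant--Fischer and the uniform resolvent convergence on the fixed contour are routine, and the same argument handles both cases simultaneously.
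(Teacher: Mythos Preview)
Your argument is correct, and it is genuinely different from the paper's. The paper does not prove Case~1 at all: it simply cites \cite[Lemma~4.1]{HiPe}. For Case~2 the paper observes that, since $s(A)\le s(B)$, the space decomposes as $s(B)\cH\oplus s(B)^\perp\cH$; on $s(B)^\perp\cH$ the exponent is scalar $(\alpha+\beta)\log\eps$, so its exponential is $\eps^{\alpha+\beta}\to0$, while on $s(B)\cH$ one rewrites $\beta\log B=(-\beta)\log(B^{-1}s(B))$ to obtain two \emph{positive} coefficients $\alpha$ and $-\beta$, and then invokes Case~1 on $s(B)\cH$ with the matrices $A$ and $B^{-1}s(B)$. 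Thus the paper's proof is an algebraic reduction of Case~2 to Case~1, leaning on an external reference.

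Your route, by contrast, is self-contained and handles both cases uniformly. The reformulation $N_\eps=H+(\log\eps)Y$ together with the observation that the hypotheses force $Y\ge0$ with $\ker Y=P_0\cH$ is exactly the right structural insight, and it makes transparent \emph{why} the three hypotheses of Case~2 are precisely what is needed. After that, the Courant--Fischer localisation of the spectrum and the contour/Schur-complement computation of the limit are standard and correctly executed. What you gain over the paper is a direct, reference-free proof that treats both cases on the same footing; what the paper's reduction buys is brevity (given the cited lemma) and avoidance of the resolvent machinery.
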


\begin{proof}
When $\alpha,\beta>0$, the asserted formula was shown in \cite[Lemma 4.1]{HiPe}. Now,
assume that $s(A)\le s(B)$, $\beta<0$ and $\alpha+\beta>0$. Note that
\begin{align}
&\exp\bigl\{\alpha\log(A+\eps s(A)^\perp)+\beta\log(B+\eps s(B)^\perp)\bigr\} \nonumber\\
&\quad=s(B)\exp\bigl\{\alpha s(B)\log(A+\eps(s(B)-s(A))
+(-\beta)s(B)\log(B^{-1}s(B)))\bigr\} \nonumber\\
&\qquad\quad+\eps^{\alpha+\beta}s(B)^\perp. \label{F-6.18}
\end{align}
From the first case applied to $A$ and $B^{-1}$ restricted to the range of $s(B)$,
the right-hand side of \eqref{F-6.18} converges as $\eps\searrow0$ to
$$
s(A)\exp\bigl\{\alpha s(A)\log A+(-\beta)s(A)(\log(B^{-1}s(B)))s(A)\bigr\}
=\exp(\alpha\log A\,\dot+\,\beta\log B).
$$
\end{proof}

The next proposition extends Corollary \ref{C-5.3} to the non-invertible case, though
restricted to matrices. (Related results for infinite-dimensional operators are found in
\cite[Section 4]{Hi1}.)

\begin{proposition}\label{P-6.16}
Assume that $h\in\OM_+^1$ is pmi and $\alpha:=h'(1)\in(0,1)$ (equivalently, $h\ne1,t$). Let
$A,B$ be positive semidefinite matrices and $p>0$. Then
\begin{align}\label{F-6.19}
\|P_{h^*}(B^p,A^p)^{1/p}\|_\infty
\le\|\exp(\alpha\log A\,\dot+\,(1-\alpha)\log B)\|_\infty.
\end{align}
Moreover, if $s(A)\le s(B)$, then any $n\in\bN$,
\begin{align}\label{F-6.20}
\|\exp((n+\alpha)\log A\,\dot+\,(1-n-\alpha)\log B)\|_\infty
\le\|P_{t^nh}(A^p,B^p)^{1/p}\|_\infty,
\end{align}
where $P_{t^nh}(A^p,B^p)$ is defined by the limit in \eqref{P_f-conv}.
\end{proposition}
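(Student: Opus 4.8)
The plan is to deduce both inequalities from Corollary~\ref{C-5.3}, which is their invertible counterpart, by a perturbation-and-limit argument. For $\eps>0$ set $A_\eps:=A+\eps\,s(A)^\perp$ and $B_\eps:=B+\eps\,s(B)^\perp$; these are positive invertible matrices, and $A_\eps^p\to A^p$, $B_\eps^p\to B^p$ in $\|\cdot\|_\infty$ as $\eps\searrow0$ for each fixed $p>0$. Applying \eqref{Log-Euclid1} and \eqref{Log-Euclid2} of Corollary~\ref{C-5.3} to the pair $(A_\eps,B_\eps)$ gives, for all $\eps>0$ and $p>0$,
\[
\|P_{h^*}(A_\eps^p,B_\eps^p)^{1/p}\|_\infty
\le\|\exp(\alpha\log A_\eps+(1-\alpha)\log B_\eps)\|_\infty
\]
and $\|\exp((n+\alpha)\log A_\eps+(1-n-\alpha)\log B_\eps)\|_\infty\le\|P_{t^nh}(A_\eps^p,B_\eps^p)^{1/p}\|_\infty$ for $n\in\bN$. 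The whole task is then to pass both sides of each inequality to the limit $\eps\searrow0$.

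On the exponential (Log-Euclidean) side, Lemma~\ref{L-6.15} is exactly what delivers the limits, and continuity of $\|\cdot\|_\infty$ converts operator convergence into convergence of the bounds. For \eqref{F-6.19} the weights are $\alpha>0$ and $1-\alpha>0$ (since $\alpha\in(0,1)$), so the first case of Lemma~\ref{L-6.15} gives $\exp(\alpha\log A_\eps+(1-\alpha)\log B_\eps)\to\exp(\alpha\log A\,\dot+\,(1-\alpha)\log B)$. For \eqref{F-6.20} the weights are $n+\alpha>0$ and $1-n-\alpha<0$ with sum $1>0$, so, using the standing hypothesis $s(A)\le s(B)$, the second case of Lemma~\ref{L-6.15} gives $\exp((n+\alpha)\log A_\eps+(1-n-\alpha)\log B_\eps)\to\exp((n+\alpha)\log A\,\dot+\,(1-n-\alpha)\log B)$.

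On the perspective side, \eqref{F-6.20} is the easy one: $t^nh$ is continuous on $[0,\infty)$ with $(t^nh)(0)=0$, so by \eqref{P_f-conv}, using $s(A)\le s(B)$, one has $P_{t^nh}(A_\eps^p,B_\eps^p)\to B^{p/2}(t^nh)(D(A^p/B^p))B^{p/2}$ in $\|\cdot\|_\infty$, which is by definition $P_{t^nh}(A^p,B^p)$ in the extended sense; hence the norms converge and \eqref{F-6.20} follows. For \eqref{F-6.19} one uses that $h^*\in\OM_+^1$ always has $h^*(0^+)=1/h(\infty)<\infty$, so by Theorems~\ref{T-6.2} and \ref{T-6.3} the perspective $P_{h^*}$ extends continuously to $(B(\cH)^+\times B(\cH)^+)_\le$; after compressing to $P_0\cH$ with $P_0=s(A)\wedge s(B)$ --- on which the perturbed pairs have comparable support --- \eqref{P_f-conv} again yields $P_{h^*}(A_\eps^p,B_\eps^p)\to P_{h^*}(A^p,B^p)$, and passing norms to the limit gives \eqref{F-6.19}.

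The routine ingredients are the continuity of $\|\cdot\|_\infty$ and of the functional calculus. The one genuinely delicate point --- and the main obstacle --- is the perspective side of \eqref{F-6.19} when $s(A)$ and $s(B)$ are not comparable: one must check that $P_{h^*}(A_\eps^p,B_\eps^p)$ neither blows up along the ``uncontrolled'' directions nor fails to converge. This is precisely where the boundedness $h^*(0^+)<\infty$ is needed, through the extension theory of Section~6 (Theorems~\ref{T-6.2}--\ref{T-6.3}), and where restricting attention to $P_0\cH$ identifies what object actually appears on the left-hand side.
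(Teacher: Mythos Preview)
Your overall strategy --- perturb to invertible matrices via $A_\eps=A+\eps\,s(A)^\perp$, $B_\eps=B+\eps\,s(B)^\perp$, apply Corollary~\ref{C-5.3}, then pass to the limit using Lemma~\ref{L-6.15} and \eqref{P_f-conv} --- is exactly the paper's approach, and your treatment of \eqref{F-6.20} and of both Log-Euclidean sides is correct.

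The gap is in your handling of the perspective side of \eqref{F-6.19}. Theorems~\ref{T-6.2} and~\ref{T-6.3} only extend $P_{h^*}$ to $(B(\cH)^+\times B(\cH)^+)_\le$, i.e., to pairs with $s(A)\le s(B)$, which is \emph{not} assumed for \eqref{F-6.19}; so those theorems do not by themselves deliver the limit you need. The ``compress to $P_0\cH$'' manoeuvre does not rescue this: the perturbed matrices $A_\eps,B_\eps$ are invertible, so there is nothing to compress before the limit; neither $A$ nor $B$ need commute with $P_0=s(A)\wedge s(B)$, so compression does not interact well with $P_{h^*}$; and the limit $P_{h^*}(A^p,B^p)$ is not in general supported on $P_0\cH$ (think of $h^*(t)=(1+t)/2$, where $P_{h^*}(A,B)=(A+B)/2$ has support $s(A)\vee s(B)$). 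The correct argument is much simpler than what you attempt: since $h^*\in\OM_+^1$, the perspective $P_{h^*}(X,Y)=Y\sigma_{h^*}X$ is a Kubo--Ando operator \emph{mean}, hence already defined on all of $B(\cH)^+\times B(\cH)^+$ and downward continuous (see \cite{KA}, or the remark after Proposition~\ref{P-6.1}). Because $A_\eps^p=A^p+\eps^p s(A)^\perp\searrow A^p$ and $B_\eps^p\searrow B^p$, one gets $P_{h^*}(A_\eps^p,B_\eps^p)\to P_{h^*}(A^p,B^p)$ immediately, with no support hypothesis and no compression. (The paper's own citation of \eqref{P_f-conv} at this step is itself a bit loose, since \eqref{P_f-conv} is stated under $s(A)\le s(B)$; the real reason is Kubo--Ando downward continuity.)
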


\begin{proof}
It follows from \eqref{Log-Euclid1} that
\begin{align*}
&\|P_{h^*}((A+\eps s(A)^\perp)^p,(B+\eps s(B)^\perp)^p)^{1/p}\|_\infty \\
&\qquad\le\|\exp(\alpha\log(A+\eps s(A)^\perp)+(1-\alpha)\log(B+\eps s(B)^\perp))\|_\infty.
\end{align*}
Hence letting $\eps\searrow0$ gives \eqref{F-6.19} by \eqref{P_f-conv} and Lemma \ref{L-6.15}.
When $s(A)\le s(B)$, \eqref{F-6.20} follows similarly from \eqref{Log-Euclid2},
\eqref{P_f-conv} and Lemma \ref{L-6.15}.
\end{proof}

In particular, for power functions $t^\alpha$ we state the following:

\begin{corollary}\label{C-6.17}
Let $A,B$ be positive semidefinite matrices and $p>0$. For any $\alpha\in(0,1)$,
$$
(B^p\#_\alpha A^p)^{1/p}\prec_{\log}\exp(\alpha\log A\,\dot+\,(1-\alpha)\log B).
$$
If $s(A)\le s(B)$, then for any $\alpha>1$,
$$
\exp(\alpha\log A\,\dot+\,(1-\alpha)\log B)\prec_{\log}
P_{t^\alpha}(A^p,B^p)^{1/p}.
$$
\end{corollary}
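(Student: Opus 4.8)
The plan is to deduce Corollary \ref{C-6.17} from Proposition \ref{P-6.16} by the standard antisymmetric tensor power technique, exactly as log-majorizations were extracted from operator norm inequalities in Section 3.3 and in the proof of Corollary \ref{C-5.5}\,(2). First I would treat the case $\alpha\in(0,1)$. Here $h(t):=t^\alpha$ is in $\OM_+^1$ and is pmi (actually a geodesic mean), with $h'(1)=\alpha\in(0,1)$, and $h^*(t)=t^\alpha$ as well, so $P_{h^*}(B^p,A^p)=P_{t^\alpha}(B^p,A^p)=B^p\#_\alpha A^p$. Thus \eqref{F-6.19} gives
$$
\|(B^p\#_\alpha A^p)^{1/p}\|_\infty\le\|\exp(\alpha\log A\,\dot+\,(1-\alpha)\log B)\|_\infty
$$
for positive semidefinite matrices $A,B$ and all $p>0$.

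Next I would upgrade this operator norm inequality to a log-majorization. For $1\le k\le n$ apply the above inequality with $A,B$ replaced by their $k$-fold antisymmetric tensor powers $\wedge^kA$, $\wedge^kB$ on $\wedge^k\cH$. Since $\|\wedge^kX\|_\infty=\prod_{i=1}^k\lambda_i(X)$ for $X\ge0$, and since $P_{t^\alpha}$, the power map $X\mapsto X^{1/p}$, and the operations $\log$, $P_0(\cdot)P_0$ and $\exp$ all commute appropriately with $\wedge^k$ (this requires checking that $s(\wedge^kA)\wedge s(\wedge^kB)=\wedge^k(s(A)\wedge s(B))$, which holds because the support of $\wedge^kX$ is $\wedge^k s(X)$ and $\wedge^k$ preserves meets of projections of fixed rank $\ge k$), one gets
$$
\prod_{i=1}^k\lambda_i\bigl((B^p\#_\alpha A^p)^{1/p}\bigr)
\le\prod_{i=1}^k\lambda_i\bigl(\exp(\alpha\log A\,\dot+\,(1-\alpha)\log B)\bigr),
\qquad 1\le k\le n,
$$
which is the weak log-majorization. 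To promote it to log-majorization one checks equality for $k=n$, i.e.\ equality of determinants; but on the subspace $P_0\cH$ both sides have the same determinant since $\det(B^p\#_\alpha A^p)^{1/p}=(\det A)^\alpha(\det B)^{1-\alpha}$ when restricted to $P_0\cH$ (using $\det X\#_\alpha Y=(\det X)^{1-\alpha}(\det Y)^\alpha$), which equals $\det\exp(\alpha\log A\,\dot+\,(1-\alpha)\log B)$ on $P_0\cH$, and both sides are supported on $P_0\cH$. Hence the first assertion follows. For the second assertion, with $s(A)\le s(B)$, I would argue the same way starting from \eqref{F-6.20}: take $h(t):=t^{\alpha-n}$ with $n\le\alpha<n+1$ so that $h\in\OM_+^1$ is pmi with $h'(1)=\alpha-n\in[0,1)$ and $P_{t^nh}=P_{t^\alpha}$; then \eqref{F-6.20} reads
$$
\|\exp(\alpha\log A\,\dot+\,(1-\alpha)\log B)\|_\infty\le\|P_{t^\alpha}(A^p,B^p)^{1/p}\|_\infty,
$$
and applying $\wedge^k$ and matching determinants on $P_0\cH$ (here $P_0=s(A)$ since $s(A)\le s(B)$) yields the log-majorization.

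The main obstacle I anticipate is the bookkeeping with non-invertible matrices, specifically verifying that the generalized Log-Euclidean mean $\exp(\alpha\log A\,\dot+\,\beta\log B)$ and the extended perspective $P_{t^\alpha}(A^p,B^p)$ (defined via \eqref{P_f-conv}) genuinely commute with the antisymmetric tensor power $\wedge^k$ and that the support/determinant considerations on $P_0\cH$ work out consistently when $\alpha>1$ (where the perspective is a "complement" rather than a mean). This is where one must be careful: $\wedge^k$ of a projection of rank $r$ is a projection of rank $\binom{r}{k}$, and meets are preserved, so $s(\wedge^kA)\wedge s(\wedge^kB)=\wedge^k(s(A)\wedge s(B))$; with this in hand the functional calculus identities $\wedge^k\exp(M)=\exp(d\wedge^k(M))$ on the relevant subspace and $\wedge^k(X^{1/p})=(\wedge^kX)^{1/p}$ go through routinely. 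Everything else is the by-now standard passage from norm inequality to log-majorization via antisymmetric tensors together with a determinant identity, so once the support arithmetic is settled the proof is short; I would in fact simply write "The assertions follow from Proposition \ref{P-6.16} by the antisymmetric tensor power technique, as in the proof of Corollary \ref{C-5.5}\,(2), together with the determinant identities for $\#_\alpha$ and for the perspective $P_{t^\alpha}$ on the subspace $P_0\cH$."
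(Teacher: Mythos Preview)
Your approach is correct and is precisely what the paper intends: Corollary~\ref{C-6.17} is stated without proof because it follows from Proposition~\ref{P-6.16} by the antisymmetric tensor power technique, exactly as Corollary~\ref{C-5.5}\,(2) was derived from (1). One minor slip to fix: $P_{t^\alpha}(B^p,A^p)=A^p\#_\alpha B^p$, not $B^p\#_\alpha A^p$; the argument order in \eqref{F-6.19} is a typo (compare its proof with \eqref{Log-Euclid1}), and the correct reading $P_{h^*}(A^p,B^p)=B^p\#_\alpha A^p$ then gives the first assertion as you wrote it.
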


%----------------------------------------------------
\subsection*{Acknowledgements}

The work of F.~Hiai and Y.~Seo was supported in part by JSPS KAKENHI Grant Numbers
JP17K05266 and JP19K03542, respectively.
%----------------------------------------------------

\end{document}